\tikzset{
	subseteq/.style={
		draw=none,
		edge node={node [sloped, allow upside down, auto=false]{$\subseteq$}}},
	Subseteq/.style={
		draw=none,
		every to/.append style={
			edge node={node [sloped, allow upside down, auto=false]{$\subseteq$}}}
	},
}
\tikzset{
	symbol/.style={
		draw=none,
		every to/.append style={
			edge node={node [sloped, allow upside down, auto=false]{$#1$}}}
	}
}
\newtheorem{thm}{Theorem}[section]
\newtheorem{theorem}[thm]{Theorem} 
\newtheorem{cor}[thm]{Corollary}
\newtheorem{lem}[thm]{Lemma}
\newtheorem{prop}[thm]{Proposition}
\theoremstyle{definition}
\newtheorem{exm}[thm]{Example}
\newtheorem{exms}[thm]{Examples}
\newtheorem{ex}[thm]{Example}
\newtheorem{defn}[thm]{Definition}
\newtheorem{dfn}[thm]{Definition}
\newtheorem{rmk}[thm]{Remark}
\newtheorem{rmks}[thm]{Remarks}
\newtheorem{remark}[thm]{Remark}
\newtheorem{exr}[thm]{Exercise}
\newtheorem{exercise}[thm]{Exercise}
\newtheorem{quest}[thm]{Question}
\newtheorem{question}[thm]{Question}
\newtheorem{claim}[thm]{Claim}
\newtheorem{conv}[thm]{Convention}
\newtheoremstyle{nonum}{}{}{\itshape}{}{\bfseries}{.}{ }{\thmnote{#3}}
\theoremstyle{nonum}
\DeclareMathOperator{\rk}{rank} \DeclareMathOperator{\tr}{tr}
 \DeclareMathOperator{\spn}{span}
 \DeclareMathOperator{\ind}{ind}
\DeclareMathOperator{\dist}{dist}
 \DeclareMathOperator{\im}{im}
\DeclareMathOperator{\grph}{graph}
\DeclareMathOperator{\length}{length}
 \DeclareMathOperator{\diam}{diam}
\DeclareMathOperator{\proj}{proj}
\DeclareMathOperator{\spec}{spec}
\DeclareMathOperator{\Spec}{Spec}
\DeclareMathOperator{\dis}{dis}
\DeclareMathOperator{\Totaldim}{Totaldim}
\DeclareMathOperator{\coim}{coim}
\newcommand{\mrm}{\mathrm}
\newcommand{\calA}{{\mathcal{A}}}
\newcommand{\calB}{{\mathcal{B}}}
\newcommand{\calC}{{\mathcal{C}}}
\newcommand{\calD}{{\mathcal{D}}}
\newcommand{\calF}{{\mathcal{F}}}
\newcommand{\calH}{{\mathcal{H}}}
\newcommand{\calL}{{\mathcal{L}}}
\newcommand{\calT}{{\mathcal{T}}}
\newcommand{\calU}{{\mathcal{U}}}
\newcommand{\epsi}{\varepsilon}
\newcommand{\vphi}{\varphi}
\newcommand{\T}{\mathbb T}
\newcommand{\setm}{\setminus}
\newcommand{\id}{\mathbbm 1}
\newcommand{\de}{\partial}
\renewcommand{\d}{d}
\newcommand{\restr}{\big|}  
\newcommand{\Symp}{\mrm{Symp}}
\newcommand{\ol}{\overline}
\newcommand{\til}{\tilde}
\newcommand{\into}{\hookrightarrow}
\newcommand{\RN}[1]{%
	\textup{\uppercase\expandafter{\romannumeral#1}}%
}
\def\ep{\epsilon}
\def\d{d_{\rm Hofer}}
\def\f{{\mathfrak{f}}}
\def\g{{\mathfrak{g}}}
\def\R{\mathbb{R}}
\def\Z{\mathbb{Z}}
\def\N{\mathbb{N}}
\def\C{\mathbb{C}}
\def\F {\mathbb{F}}
\def\CF{{\rm CF}}
\def\HF{{\rm HF}}
\def\SH{{\rm SH}}
\def\Ham{{\rm Ham}}
\def\SBM{{\rm SBM}}
\def\Diff{{\rm Diff}}
\DeclarePairedDelimiter\floor{\lfloor}{\rfloor}
\DeclarePairedDelimiter\ceil{\lceil}{\rceil}
\newcommand{\Osc}{{\rm Osc}}
\begin{document}

\frontmatter

\title{Topological Persistence in Geometry and Analysis}
\date{}
\author{Leonid Polterovich, Daniel Rosen, Karina Samvelyan, Jun Zhang}

\maketitle

\section*{An erratum to this arXiv version}

Since this preprint was posted to arXiv in April 2019, the
text has been updated, see the web-page
\url{https://sites.google.com/site/polterov/miscellaneoustexts/topological-persistence-in-geometry-and-analysis},
and eventually polished and published by the American Mathematical Society, University Lecture Series, Volume 74 (2020). Its print ISBN is 978-1-4704-5495-1. For more details, please go to the website: \url{https://bookstore.ams.org/ulect-74}. \\

For reader's convenience, in this erratum we point out and correct several errors {\it of the current arXiv version}. Most of them are corrected in later versions of the text. We are also aware that in this arXiv version, several references in the bibliography are either missing or appear
not in their most updated format. We refer the reader to the published version for updated references and acknowledgments, as well as for
an improved exposition of various topics discussed in this preprint.  

\bigskip

\noindent $\bullet$ The paragraph above Exercise \ref{exr:end_interval_module} that addresses the second proof of the uniqueness of the Normal Form is inaccurate. In fact, the proof given on Page 19 follows the proof of Theorem 3.6 in Nathan Jacobson's {\it Basic Algebra {\rm II}} (1980). \\

\noindent $\bullet$ The proper persistence modules that are defined and discussed in Section \ref{sec-prop-pm} are re-named as {\it persistence modules of locally finite type} in the updated and also published versions. This seems to be a more commonly-used name that circulates around applied algebraic topologists. \\

\noindent $\bullet$ In the item (2) in subsection \ref{ssec-proof-3.2.2}, the second diagram should be 
\begin{center}
		\begin{tikzpicture}
		\matrix (m) [matrix of math nodes,row sep=2em,column sep=2em,minimum width=2em]
		{
			\im \Phi_W^{2\delta} & \im f[\delta] & W[2\delta] \\
		};
		\path[-stealth, decoration={snake,segment length=4,amplitude=3,
			post=lineto,post length=10pt}]
		(m-1-1) edge node [above] {$j$} (m-1-2)
		(m-1-2) edge node [above] {$i$} (m-1-3)
		(m-1-1) edge[bend left=-20] node [below] {$k$} (m-1-3);
		\end{tikzpicture} \;.
	\end{center}
for some morphism $k$ (instead of $\Phi_W^{2\delta}$). Similarly, in its following diagram, it should be $\mu_{inj}(k)$ instead of $\mu_{inj}(\Phi_W^{2\delta})$, as well as $\mu_{inj} (k) (b,d-2\delta] = (b-2\delta, d - 2\delta]$. Finally, in the item (3), $\mathcal B(W)$ should be $\mathcal B(W[2\delta])$, and  $\mu_{inj} (k) (a+2\delta, d] = \mu_{inj} (i) (b,d] = (a, d]$. \\

\noindent $\bullet$ In Figure \ref{fig: multiplicity_func}, $\mu_1$ should be equal to $\frac{a_4-a_1}{4}$. \\

\noindent $\bullet$ In Example \ref{ex-4.4.9}, both $((V', \pi), \rho^{V})$ and $((W', \theta), \rho^{W})$ are subrepresentations of the type that is discussed in Example \ref{ex-power}. \\

\noindent $\bullet$ In the line above the equation (\ref{eq-baran-graded}), it should be $\phi_k: I_k \to \Omega_{m_{k-1}} \backslash I_{k-1}$. Similarly, in its following paragraph, the index $j$ is chosen from $\Omega_{m_k} \backslash (I_k \sqcup \phi_{k+1}(I_{k+1}))$. \\

\noindent $\bullet$ After equation \eqref{eq-spectral-norm}: $C^0$-continuity of the spectral norm was proved by Seyfaddini \cite{Sey13} in the case of surfaces, and by Buhovsky, Humili\'{e}re and Seyfaddini \cite{BHS18} for general closed
symplectically aspherical manifolds. \\

\noindent $\bullet$ The equation (\ref{c0-sn}) from a recent work by A.~Kislev and E.~Shelukhin in \cite{KS18} holds only with {\it shifted} barcodes. More precisely, the correct version should be 
\[ d_{bot}(\mathcal B(\phi), \mathcal B(\psi)[a]) \leq \frac{1}{2} \gamma(\psi^{-1} \circ \phi)\;,\]
where $a = -\frac{1}{2}(c(\psi^{-1} \circ \phi, [M]) + c(\psi^{-1} \circ \phi, [pt]))$. Here, $\mathcal B(\psi)[a]$ is the image of $\mathcal B(\psi)$ under the shift $t \to t-a$. \\

\noindent $\bullet$ The equation (\ref{c0-b}) should be
\[  \mathcal B: (\overline{{\rm Ham}}(M, \omega), d_{C^0}) \to (\overline{\rm Barcodes}, d_{bot})\;.\]
Here $\overline{{\rm Ham}}(M, \omega)$ stands for the
Hamiltonian homeomorphism group of $(M, \omega)$.  \\

\noindent $\bullet$ Section \ref{sec-barhomeo}, end: Roughly speaking, weakly conjugate elements cannot be distinguished by
any {\bf conjugation invariant} continuous functional on the group
{\it (this omission pertains to the published version as well)}.\\

\noindent $\bullet$ The definition of symplectic Banach-Mazur distance in Section \ref{sec-sbm} involving Definition \ref{dfn-SBM} and Definition \ref{dfn-SBM-2} missed a key condition (called the {\it unknottedness condition} by Gutt-Usher in \cite{GU17}). The correct Definition \ref{dfn-SBM} should be given as follows. 

\begin{dfn} \label{dfn-SBM-new} Let $U, V \in \mathcal S^{2n}$. A real number $C>1$ is called {\it (U,V)-admissible} if there exists a pair of symplectomorphisms $\phi, \psi \in {\rm Symp}_{ex}(M)$ such that $\frac{1}{C}U \xhookrightarrow{\phi} V \xhookrightarrow{\psi} CU$ and there exists an isotopy of Liouville morphisms from $\frac{1}{C}U$ to $CU$, denoted by $\{\Phi_s\}_{s \in [0,1]}$, such that $\Phi_0 = \mathds{1}$ and $\Phi_1 = \psi \circ \phi$. Note that by the definition of a Liouville morphism, for every $s \in [0,1]$, $\Phi_s(\overline{\frac{1}{C} U}) \subset CU$.  \end{dfn}

This extra second condition (unknottedness condition) in Definition \ref{dfn-SBM-new} enables the applications of symplectic persistence module theory that is elaborated in Section \ref{sec-spm}. However, it makes the definition of being $(U,V)$-admissible as above less symmetric (in fact, it is in general not symmetry due to Theorem 1.4 in \cite{Ush18}). Therefore, Definition \ref{dfn-SBM-2} should be corrected as follows. 

\begin{dfn} (Ostrover, Polterovich, Gutt, Usher) Define the {\it symplectic Banach-Mazur distance} between $U$ and $V$ by
\[ d_{\rm SBM}(U, V) = \inf\{\ln C>0 \,|\, \mbox{$C$ is both $(U,V)$-admissible and $(V,U)$-admissible}\}. \]
\end{dfn}

\tableofcontents

\chapter*{Preface}
\addcontentsline{toc}{chapter}{\protect\numberline{}Preface}

The theory of persistence modules is an emerging field of algebraic topology which originated in topological data analysis and which lies on the crossroads of several disciplines including metric geometry
and the calculus of variations. Persistence modules were introduced by G.~Carlsson and A.~Zamorodian \cite{ZC05} in 2005 as an abstract algebraic language for dealing with persistent homology, a version of homology theory
invented by H.~Edelsbrunner, D.~Letscher and A.~Zomorodian \cite{edelsbrunner2000topological} at the beginning of the millennium aimed, in particular, at extracting robust information from noisy topological patterns.
We refer to the articles  by H.~Edelsbrunner and J.~Harer \cite{edelsbrunner_harer_08}, R.~Ghrist \cite{ghrist_barcodes_2008}, G.~Carlsson \cite{carlsson_09_top&data}, S.~Weinberger \cite{weinberger_11_whatis}, U.~Bauer and M.~Lesnick \cite{Bauer_Lesnick_13-16} and the monographs by H.~Edelsbrunner \cite{edelsbrunner_14short}, S.~Oudot \cite{oudot_persistence_book_15}, F.~Chazal, V.~de Silva, M.~Glisse and S.~Oudot \cite{Chazal_DeSila_Glisee_Oudot} for various aspects of this rapidly developing subject. In the past few years, the theory of persistence modules expanded its ``sphere of influence" within pure mathematics exhibiting a fruitful interaction with function theory and symplectic geometry.  The purpose of these notes is to provide a concise introduction into this field and to give an account on some of the recent advances emphasizing applications to geometry and analysis. The material should be accessible to readers with a basic background in algebraic and differential topology. More advanced preliminaries in geometry and function theory will be reviewed.

\medskip

Topological data analysis deals with data clouds modeled by finite metric spaces. Its main motto is
$$\text{geometry} + \text{scale}= \text{topology}\;.$$ In case when a finite metric space appears as
a discretization of a Riemannian manifold $M$, the above equation enables one to infer the topology of
$M$ provided one knows the mesh. In general,  given a scale $t >0$, one can associate a topological
space $R_t$ called {\it the Vietoris-Rips complex} associated to any abstract metric space $(X,d)$. By definition,
$R_t$ is a subcomplex of the full simplex $\Sigma$ formed by the points of $X$, where $\sigma \subset X$ is a simplex of $R_t$
whenever the diameter of $\sigma$ is $< t$. For instance, the Rips complex for the vertices of the unit square in
the plane is presented in \Cref{fig: square_rips_barcode} a). Thus we get a filtered topological space, a.k.a., a collection
of topological spaces $R_t$, $t \in \R$ with $R_s \subset R_t$ for $s < t$. Let us mention that $R_t$ is empty
for $t \leq 0$ and $R_t = \Sigma$ for $t > \text{diam}(X,d)$. Some authors call this structure {\it a topological signature}
of the data cloud $(X,d)$. Rips complexes, which were originated in geometric group theory \cite{Bridson_Haefliger_2011},  play also an important role in detecting low-dimensional topological patterns in big data, nowadays an active area of applied mathematics (see e.g. \cite{Niyogi_Smale_Weinberger_08}.

\begin{figure}[!ht]
	\centering
	\includegraphics[scale=1]{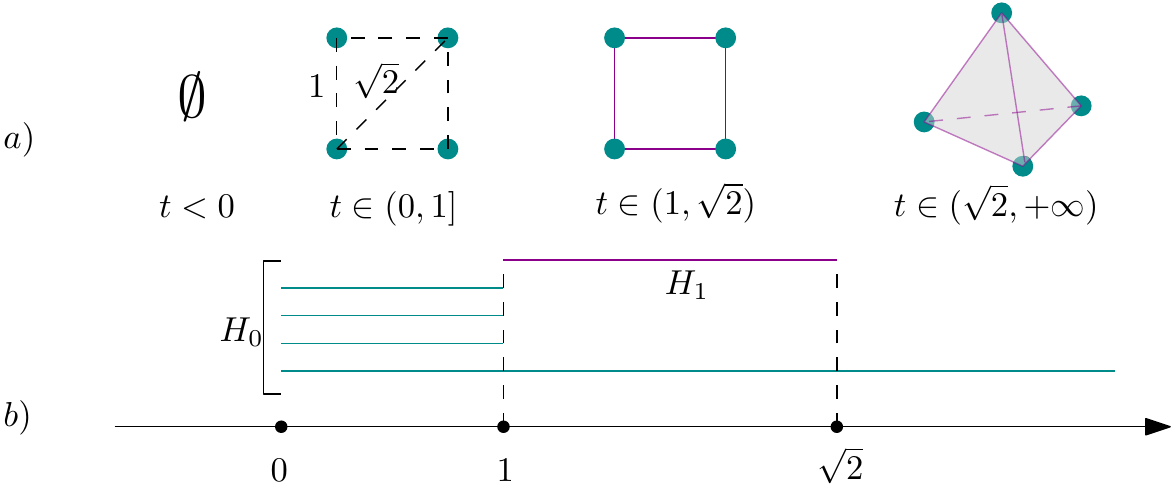}
	\caption{The Rips complex of a square and the corresponding barcode.}
	\label{fig: square_rips_barcode}
\end{figure}

The calculus of variations studies critical points and critical values of functionals, the simplest case
being smooth functions on manifolds. Sublevel
sets $R_t:= \{f < t\}$ of a function $f$ on a closed manifold $M$ induce a structure of a filtered topological space.
According to Morse theory, the topology of $R_t$, $t \in \R$  changes exactly when the parameter $t$
hits a critical value of $f$. Note that $R_t = \emptyset$ when $ t \leq \min f$ and $R_t = M$ when $t > \max f$. See \Cref{fig: sphere_homol_barcode} a) illustrating
sublevel sets of a function on the two dimensional sphere with two local maxima and one local minima.

\begin{figure}[!ht]
	\centering
	\includegraphics[scale=1]{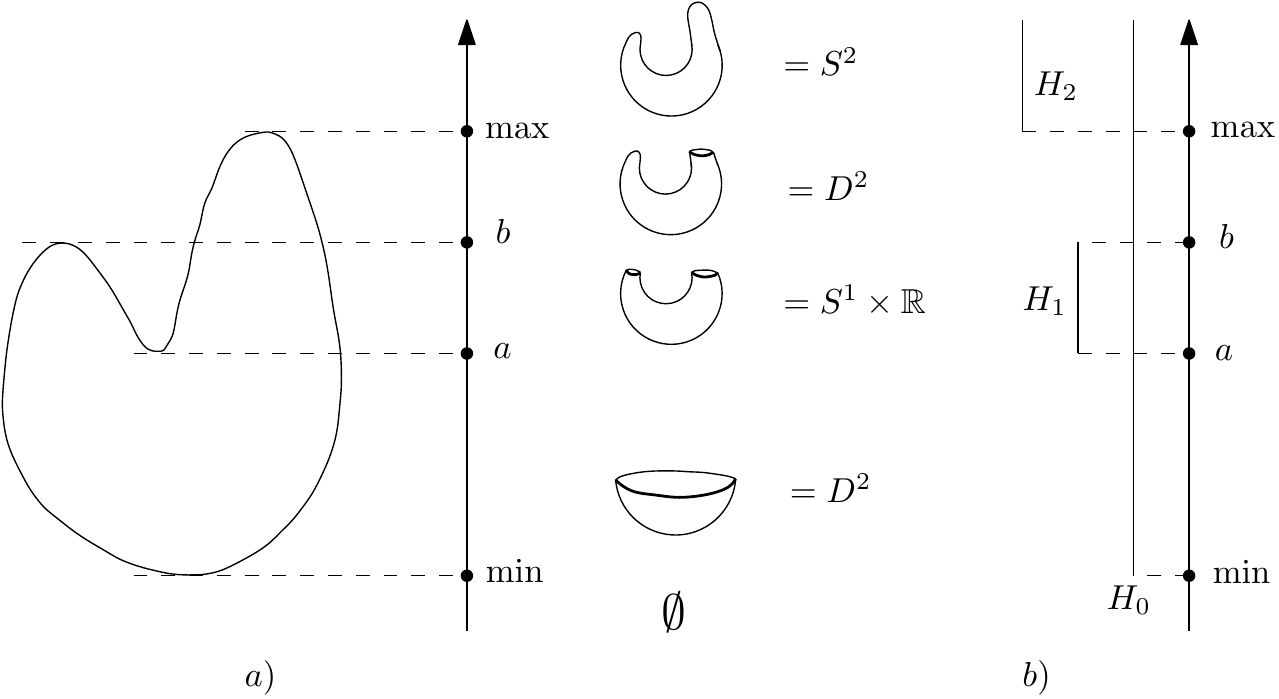}
	\caption{The height function on the (topological) sphere and the corresponding barcode.}
	\label{fig: sphere_homol_barcode}
\end{figure}

We are going to study filtered topological spaces by using algebraic tools.
Fix a field $\F$ and look at the homology $V_t:= H(R_t,\F)$ of spaces $R_t$ as above with coefficients in $\F$. The family of vector spaces
$V_t$, $t \in \R$ together with the morphisms $V_s \to V_t$, $s < t$ induced by the inclusions, form an algebraic object called {\it a persistence module},  which plays a central role in the present notes.

\medskip

Let us discuss the contents of the book in more detail. Part \ref{part-I} lays foundations of the theory of persistence modules and introduces basic examples. It turns out that persistence modules (which are defined in Chapter
\ref{chp1_definition_first_examples}) are classified by simple combinatorial
objects, the barcodes, which are defined as collections of intervals and rays in $\R$ with multiplicities. While the real meaning of barcodes will be clarified later on, some intuition can be gained by looking at Figures \ref{fig: square_rips_barcode} b) and \ref{fig: sphere_homol_barcode} b). In this figures, for illustrative purposes,
the bars are equipped with an additional decoration corresponding to the degree of homology they represent. The number of bars in degree $k$ over a point $t \in \R$ equals the $k$-th Betti number of the space $R_t$. For instance, for the bar in degree $1$  manifests that the spaces
$R_t$ possess non-trivial first homology for $t \in (1,\sqrt{2}]$ on \Cref{fig: square_rips_barcode} b) and for $t \in (a,b]$ on \Cref{fig: sphere_homol_barcode} b). Look also at the bars in
degree $0$ on \Cref{fig: sphere_homol_barcode} b), that is $(0,1)$ taken with multiplicity $3$ and $(0,+\infty)$. This carries the following information:
$H_0(R_s) = \F^4$ for $s \in (0,1]$, $H_0(R_t) = \F$ for $t >1$, and the map $H_0(R_s) \to H_0(R_t)$ does not vanish. Very roughly speaking,
this means that one (and only one) of the four generators of $H_0(R_s)$ persists when $s$ increases and hits the value $1$.
In Chapter \ref{chp2_barcodes} we will make this intuitive picture rigorous.

A highlight of the theory of persistence modules is an isometry between the space of persistence modules equipped with a certain algebraic distance, which naturally appears in applications but is hard to calculate, and the space of barcodes equipped with a user-friendly {\it bottleneck distance} of a combinatorial nature. This is a difficult fact
discovered by F.~Chazal, D.~Cohen-Steiner, M.~Glisse, L.~Guibas and S.~Oudot in \cite{chazal2009proximity}.
It will be proved below (see Chapters \ref{chp2_barcodes} and \ref{chp: isometry_thm_proof}) following the approach
by U.~Bauer and M.~Lesnick \cite{Bauer_Lesnick_13-16}.

Thus one can associate to a Morse function on a closed manifold or to a finite metric space
a barcode. Remarkably, this correspondence is stable, or, more precisely, Lipschitz with respect to the the uniform norm on functions and the Gromov-Hausdorff distance on metric spaces. This fundamental phenomenon was discovered by D.~Cohen-Steiner, H.~Edelsbrunner, and J.~Harer
\cite{cohen2007stability} for functions, and by F.~Chazal, V.~de Silva and S.~Oudot \cite{CDO14} for metric spaces.
In particular,  metric spaces whose barcodes are remote in the bottleneck distance are far from being isometric, and a small $C^0$-perturbation of a function cannot significantly change its barcode. The stability of barcodes with respect to $C^0$-perturbations of functions paves way to applications of persistence modules to topological function theory, a theme which we
develop in Chapter \ref{chap5a-functheory}.

In Chapter \ref{chp3_read_from_a_barcode} we discuss some natural Lipschitz functionals on the space of barcodes
which yield interesting numerical invariants of functions and metric spaces. They include, for instance, the end-points of infinite rays, which in the case of functions correspond to the homological min-max.
Another example is given by the length of the longest finite bar in the barcode which is called {\it the boundary depth}, an invariant introduced by M.~Usher in \cite{Ush11}. The boundary depth gives rise to a non-negative functional on smooth functions on a manifold which is Lipschitz in the uniform norm, invariant under the action of diffeomorphisms on functions, and sends each function to the difference between a pair of its critical values. The very existence of such a functional different from $f \mapsto \max f - \min f$ is not at all obvious.  We conclude
with the multiplicity function, an invariant which appears in the study of representations of finite groups on persistence modules and which will be useful for applications to symplectic geometry in
Chapter \ref{chp7_hamiltonian_persistence_modules}.

\medskip

In Part \ref{part-II} of the book we elaborate applications of persistence modules
to metric geometry and function theory. Chapter  \ref{chap5-rips} focuses on Rips complexes. After reviewing their origins in geometric group theory (here our exposition closely follows a book \cite{Bridson_Haefliger_2011}
by M.~Bridson and A.~Haefliger), we discuss appearance of Rips complexes in data analysis.
We present a toy version of manifold learning motivated by a seminal paper \cite{Niyogi_Smale_Weinberger_08}
by P.~Nyogi, S.~Smale and S.~Weinberger.

Chapter \ref{chap5a-functheory} deals with topological function theory which
studies features of smooth functions on a manifold that are invariant under
the action of the diffeomorphism group. The theory of persistence modules provides a wealth of invariants coming
from the homology of the sublevel sets of a function. We shall focus, roughly speaking, on the ``size" of the barcode which can be considered as a useful measure of oscillation of a function. We prove bounds on this size in terms of norms of a function and its derivatives and discuss links to approximation theory. This chapter is mostly based on papers \cite{CSEHM} by D.~Cohen-Steiner, H.~Edelsbrunner, J.~Harer and Y.~Mileyko,
\cite{Polterovich-Sodin} by L.~Polterovich and M.~Sodin and \cite{Polterovich2-Stojisavljevic} by
I.~Polterovich, L.~Polterovich and V.~Stojisavljevi\'c. In the course of exposition we present also
an algorithm for finding a canonical normal form of filtered complexes with a preferred basis due
to S.~Barannikov \cite{barannikov1994framed}.

\medskip

In Part \ref{part: applications_symp}, after a crash-course on symplectic geometry and Hamiltonian dynamics
(see Chapter \ref{chp-intro-sg}), we discuss their interactions with the theory of persistence modules. Here instead of functions on a finite-dimensional manifolds the object of interest is the classical action functional on the loop space of a symplectic manifold. It was a great insight due to A.~Floer \cite{Flo89} that by using the theory of
elliptic PDEs and Gromov's theory of pseudo-holomorphic curves in symplectic manifolds \cite{Gro85}
one can properly define a Morse-type homology theory for sublevel sets of the action functional.
L.~Polterovich and E.~Shelukhin \cite{PS16}
and M.~Usher and J.~Zhang \cite{UZ16} showed that  filtered Floer homology gives rise to persistence modules and barcodes. We shall elaborate this construction in two different contexts: Hamiltonian diffeomorphisms of symplectic manifolds (\Cref{chp7_hamiltonian_persistence_modules})
and starshaped domains of Liouville manifolds (\Cref{chp8_symplectic_persistence_modules}).
The group of Hamiltonian diffeomorphisms is equipped
with Hofer's bi-invariant metric introduced by H.~Hofer in 1990 \cite{Hof90}, which is playing a central role in symplectic topology for almost 3 decades, while
the space of starshaped domains also has a natural structure of a metric space with respect to a
non-linear analogue of the Banach-Mazur classical distance on convex bodies (\Cref{chp8_symplectic_persistence_modules}).
The exploration of the non-linear Banach-Mazur distance, which has been introduced following  unpublished ideas of Y.~Ostrover and L.~Polterovich circa 2015 with an important modification by M.~Usher and J.~Gutt \cite{GU17}, nowadays is making its very first steps, see papers \cite{SZ18} by V.~Stojisavljevi\'c and J.~Zhang and \cite{Ush18} by M.~Usher. We shall outline the proof of symplectic stabilities theorems stating that the correspondence sending a Hamiltonian diffeomorphism (resp., a starshaped domain) to its barcode is Lipschitz with respect to Hofer's (resp., non-linear Banach-Mazur) distance.

Barcodes of Hamiltonian diffeomorphisms carry some interesting information.
For instance, one can read from them spectral invariants introduced by C.~Viterbo \cite{Vit92}, M.~Schwarz \cite{Sch00}  and Y.-G.~Oh \cite{Oh05} , as
well as the above-mentioned boundary depth \cite{Ush11}. Furthermore, the natural action by  conjugation of a diffeomorphism on the Floer homology of its power gives rise to a basic representation theory of the cyclic group $\Z_p$ on Floer's barcodes, yielding in turn applications to geometry and dynamics. In Chapter  \ref{chp7_hamiltonian_persistence_modules} we discuss some of these advances due to
L.~Polterovich and E.~Shelukhin \cite{PS16}, J.~Zhang \cite{Zha16}, and  L.~Polterovich, E.~Shelukhin and V.~Stojisavljevi\'c \cite{PSS17}.

Persistence modules associated to starshaped domains have meaningful applications to embedding problems in symplectic topology. We illustrate this by presenting a proof of M.~Gromov's famous non-squeezing theorem \cite{Gro85}
in Chapter \ref{chp8_symplectic_persistence_modules}.

\medskip

The notes are based on various mini-courses given by L.P. at Tel Aviv University, University of Chicago,
Kazhdan's Sunday seminar in the Hebrew University, CIRM at Luminy and MSRI,
as well as on several seminar talks by L.P. and J.Z. We thank the speakers of the guided reading courses at Tel Aviv University, Arnon Chor, Yaniv Ganor, Pazit Haim-Kislev, Asaf Kislev and Shira Tanny for their input.
In particular, our exposition of the Bauer-Lesnick proof of the isometry theorem used unpublished notes
due to Asaf Kislev. The authors cordially thank Lev Buhovsky, David Kazhdan, Yaron Ostrover, Iosif Polterovich, Egor Shelukhin, Vuka\v sin Stojisavljevi\'c, Michael Usher  and Shmuel Weinberger for numerous useful discussions on persistent homology. Special thanks go to Peter Albers for very useful comments on an early draft of this book.
L.P., D.R. and J.Z. were partially supported by the European Research Council Advanced grant 338809.
K.S. was partially supported by the Israel Science Foundation grant 178/13.

\mainmatter

\part{A primer of persistence modules}\label{part-I}

\chapter{Definition and first examples} \label{chp1_definition_first_examples}


\section{Persistence modules}

Initially developed in the realm of topological data analysis, persistence homology has been found useful in keeping information coming from various homology theories that appear in symplectic topology.
We introduce the category of persistence modules and discuss several examples to get started.

Let us fix a field $\F$.

\begin{defn} \label{defn: pm}
	A \emph{persistence module} is a pair $(V, \pi)$, where
	$V$ is a collection $\{ V_t \}$, ${t\in \R}$, of finite dimensional vector spaces over $\F$,
	and
	$\pi$ is a collection $\{ \pi_{s,t} \}$ of linear maps $\pi_{s,t} : V_s \to V_t$ for all $s\leq t$ in $\R$,
	so that
	\begin{enumerate} [(1)]
		\item (\emph{Persistence})
			For any $s \leq t \leq r$ one has $\pi_{s,r} = \pi_{t,r} \circ \pi_{s,t}$, i.e.\ the following diagram commutes:
			\begin{center}
				\begin{tikzpicture}[scale=1.5, baseline=0]		
				\node (V_s) at (-1,0) {$V_s$};
				\node (V_t) at (0,0) {$V_t$};
				\node (V_r) at (1,0) {$V_r$};
				\path[->,font=\scriptsize,>=angle 90]
				%
				(V_s) edge node[below]{$\pi_{s,t}$} (V_t)
				(V_t) edge node[below]{$\pi_{t,r}$} (V_r)
				(V_s) edge[bend left=30] node[above]{$\pi_{s,r}$} (V_r);
				\end{tikzpicture}
				\;.
			\end{center}
		\item
			For all but a finite number of points $t\in \R$ there exists a neighborhood $U$ of $t$, such that $\pi_{s,r}$ is an isomorphism for any $s<r$ in $U$.
		\item (\emph{Semicontinuity})
			For any $t\in \R$ and any $s \leq t$ sufficiently close to $t$, the map $\pi_{s,t}$ is an isomorphism.
		\item
			There exists some $s_{-} \in \R$, such that $V_s = 0$ for any $s\leq s_{-}$.
	\end{enumerate}
	
\end{defn}

	\noindent
	Let us elaborate on the various conditions in Definition \ref{defn: pm}. The \emph{persistence} condition (1) is the heart of the definition, and some authors take it as the sole condition in the definition of a persistence module. Conditions (2) and (4) are sometimes called ``finite-type" assumptions, and they greatly simplify the presentation. As we will see, adopting these restrictions still allows for interesting examples of persistence modules, although they are sometimes omitted in favor of a more general definition (see \Cref{chp8_symplectic_persistence_modules}). Finally, condition (3) is superficial, and is included simply to allow uniqueness of decomposition of persistence modules into basic ``blocks" (see the Normal Form Theorem \ref{thm: normal_form_thm}).


\begin{rmks} \label{rmks: on_defn_of_pm_giving_def_of_V_infty}
	\begin{enumerate}
		\item
			Note that by conditions (1) and (3), for any $t\in \R$, $\pi_{t,t} = id_{V_t}$.
		\item
			One may check that by condition (2) there is $s_{+} \in \R$, such that for any $t>s\geq s_{+}$,
			$\pi_{s,t} : V_s \to V_t$ is an isomorphism, i.e.\ the collection $\{V_t\}$ stabilizes starting at some $s_+$.	We will use the notation $V_\infty$ when referring to this ``terminal" vector space, i.e.\ $V_\infty = V_t$ for $t$ large enough. Note also that $V_\infty$ is the direct limit of the system $\{V_t, \pi_{s,t} \}$.
	\end{enumerate}
\end{rmks}

Let us present now two fundamental examples that will reappear in the exposition.
\begin{exm} [Morse theory] \label{exm: morse_theory_pm}
	Let $X$ be a closed manifold (i.e.\, a smooth compact manifold without boundary) and let $f:X \to \R$ be a Morse function. Fix $0 \leq k\in \Z$ and put $V_t = H_k (\{ f<t \})$ (taking homology with coefficients in $\F$ throughout the text, where $\F$ is an arbitrary fixed field, unless stated otherwise).
	Consider the natural inclusion $\xymatrix{ \{ f<s \} \ \ar@{^(->}[r]_{i_{s,t}} & \{ f<t \} }$
	 for $s \leq t$. It induces the map
	 $\pi_{s,t} := (i_{s,t})_{*} : V_s \to V_t$ in homology, and one can verify that we get a persistence module.
\end{exm}

\begin{rmk}
	Later on, we will also write $V_t = H_* (\{f<t\})$, referring to homology of some arbitrary degree $*$.
\end{rmk}

\begin{exm} [Finite metric spaces, Rips complex] \label{exm: finite_metric_sp_Rips_pm}
	Let $(X,d)$ be a finite metric space.
	For $0< \alpha \in \R$ define the simplicial complex $R_\alpha(X)$, called the \emph{Rips complex}, as follows: the vertices of $R_\alpha (X)$ are the points of $X$, and $k+1$ points in $X$ determine a $k$-simplex $\sigma = [x_0, \ldots, x_k]$ if and only if $d(x_i, x_j)<\alpha$ for all $i,j$.
	This construction is illustrated in \Cref{fig: rips_exm} (see also a discussion in the preface).
	Note that in fact the Rips complex is completely determined by its $1$-skeleton, it is in fact a \emph{flag complex}. Due to this feature, the Rips complex is relatively easy to compute, which on the other hand might result in loss of information regarding the original space (as opposed to other complexes that might be attached to $(X,d)$, see \Cref{subsec: Cech_VS_Rips}).
	
	\begin{figure}[!ht]
		\centering
		\includegraphics[scale=1]{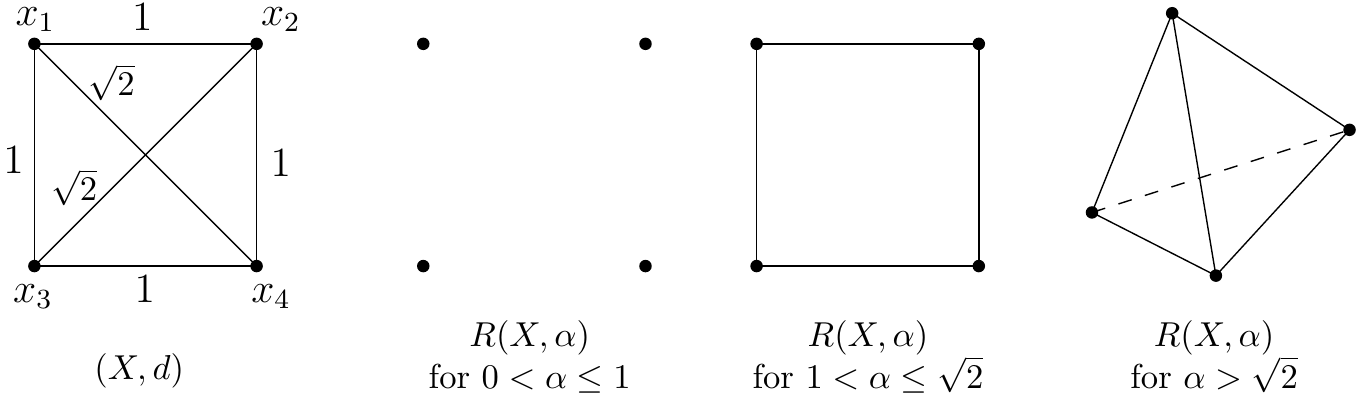}
		\caption{An example of Rips complex of a given metric space consisting of four points.}
		\label{fig: rips_exm}
	\end{figure}

	Note that
	for $0 < \alpha \leq \min_{x,y\in X, \ x\neq y} d(x,y) $ the complex
		$R_\alpha (X)$ is a finite collection of points,
	while for
		$\alpha > \diam (X)$, $R_\alpha (X)$ is a simplex of dimension $|X| - 1$.
		
	\noindent	
	For $\alpha \leq \beta$, there is a natural simplicial map
		$i_{\alpha, \beta} : R_\alpha (X) \to R_\beta (X)$.
	Thus, taking $V_\alpha (X) = H_{*} (R_\alpha (X))$ and $\pi_{\alpha,\beta} = (i_{\alpha,\beta})_{*}$, we get a persistence module, which will be referred to as the \emph{Rips module}.
	
	Let us mention that Rips complexes were first introduced by Vietoris in \cite{vietoris1927}. Rips reintroduced them in order to study hyperbolic groups.
	We will follow Gromov \cite{Gromov1987hyperbolic} and stick to the name Rips, although they are sometimes called \emph{Vietoris} or \emph{Vietoris-Rips} complexes, see \cite{Hausmann95}.
\end{exm}

\begin{defn}\label{def: persistence_homology}
	Let $(V, \pi)$ be a persistence module. The collection of spaces $P_{s,t} = \im (\pi_{s,t})$ will be called the \emph{persistent homology} of $V$.
	Note that in fact, by condition (2) in \Cref{defn: pm}, it would be enough to record only a finite number of such spaces $P_{s,t}$, since there is a finite number of ``jump" points when $\pi_{s,t} \neq \id$.
\end{defn}

\section{Morphisms}

Let $(V,\pi)$ and $(V',\pi')$ be two persistence modules.
\begin{defn}
	A \emph{morphism}
	$A: (V,\pi) \to (V',\pi')$ is a family of linear maps $A_t : V_t \to V_t'$, such that the following diagram commutes for all $s \leq t$:
\begin{center} 
	\begin{tikzpicture}[scale=1.5]
	\node (s) at (0,0) {$V_s$};
	\node (s') at (0,-1) {$V_s'$};
	\node (t) at (1,0) {$V_t$};
	\node (t') at (1,-1) {$V_t'$};
	\path[->,font=\scriptsize,>=angle 90]
	%
	(s) edge[->] node[left]{$A_s$} (s')
	(t) edge[->] node[right]{$A_t$} (t')
	%
	(s) edge node[above]{$\pi_{s,t}$} (t)
	(s') edge node[below] {$\pi'_{s,t}$} (t');
	\end{tikzpicture}
\end{center}
\end{defn}

\noindent
Thus, one can now speak of the category of persistence modules.

In particular, we have the notion of an \emph{isomorphism}: two persistence modules $(V,\pi)$ and $(V', \pi')$ are \emph{isomorphic} if there exists two morphisms $A: V \to V'$ and $B: V' \to V$ so that both compositions $A\circ B$ and $B \circ A$ are the identity morphisms on the corresponding persistence module. (The \emph{identity morphism} on $V$ is the identity on $V_t$ for all $t$.)

\begin{exm}[Shift] \label{exm: pm_shift_def}
	For a persistence module $(V,\pi)$ and $\delta \in \R$, define a persistence module
	$\left( V[\delta], \pi[\delta] \right)$
	by taking
	$\left( V [\delta] \right)_t = V_{t+\delta}$ and $\left( \pi [\delta] \right)_{s,t} = \pi_{s+\delta, t+\delta}$.
	This new persistence module is called a \emph{$\delta$-shift} of $V$.
	For $\delta > 0$, the map $\Phi^\delta : (V,\pi) \to (V[\delta], \pi[\delta])$ defined by
	$\Phi_t^\delta = \pi_{t,t+\delta}$ is a morphism of persistence modules (it will be referred to as \emph{$\delta$-shift morphism}).
	Also, if we have a morphism $F: V \to W$ between two persistence modules, let us denote by
	$F[\delta] : V[\delta] \to W[\delta]$ the corresponding morphism between their $\delta$-shifts.
\end{exm}

\begin{exr} Prove that $\Phi^\delta$ is a morphism indeed.
\end{exr}

\begin{defn} \label{defn: persistence_submodule}
	Let $(V, \pi)$ be a persistence module. A \emph{persistence submodule} $(W, \til \pi)$ of $V$ is a collection of subspaces $W_s \subseteq V_s$ for all $s\in \R$, such that the maps
	$\til \pi_{s,t} := \pi_{s,t} \restr_{W_s}: W_s \to W_t$ are well-defined for all $s\leq t$, and yield a persistence module $(W, \til \pi)$.
\end{defn}

\begin{exr} \label{exr: def_of_ker_im_are_sub_pm}
	Let $\Phi: V \to V'$ be a morphism between two persistence modules $(V, \pi)$ and $(V', \pi')$. We can define the kernel and the image of $\Phi$ as follows.
	The kernel $(\ker \Phi, \pi^{\ker \Phi})$ is a collection of the vector spaces $\{ \ker \Phi_t \}_t$ for all $t\in \R$, equipped with a collection of linear maps $\pi_{s,t} \restr_{\ker \Phi_s}$ for all $s\leq t$.
	Similarly, the image $(\im \Phi, \pi^{\im \Phi})$ of $\Phi$ is a collection $\{ \im \Phi_t \}_t$ of vector spaces, $t\in \R$, equipped with a collection of linear maps $\pi_{s,t} \restr_{\im \Phi_s}$ for all $s\leq t$ in $\R$.
	Prove that $\ker \Phi$ and $\im \Phi$ are persistence submodules of $V$ and $V'$ respectively.
\end{exr}

\begin{conv}
	We will use the notation $(a,b]$ with $-\infty < a <b \leq +\infty$, meaning either a bounded interval when $b< \infty$, or a ray of the form $(a,+\infty)$, when $b=+\infty$.
\end{conv}

\begin{exm}[Interval modules] \label{exm: pm_intervals_pw_const}
	For an interval $(a,b]$ (with $b \leq +\infty$), define a persistence module $\F(a,b]$ as follows:
	$$
		\F(a,b]_t = \left\{
		\begin{array}{ll}
			\F  & \text{if } t\in(a,b] \\
			0 	& \text{otherwise}
		\end{array}
		\right. \;,
		\pi_{s,t} = \left\{
		\begin{array}{ll}
		\mathds{1}  & \text{if } s,t\in(a,b] \\
		0			& \text{otherwise}
		\end{array}
		\right. \;.
	$$

\noindent
Such persistence modules will be called \emph{interval modules}.

Consider the natural inclusions $\F(1,2] \xrightarrow{} \F(1,3]$ and $\F(2,3] \xrightarrow{} \F(1,3]$.
Are they morphisms? As one can check, the first one is not a morphism, while the second one is.
(See \Cref{fig: pm_segments_morphism_or_not}.)
\begin{figure}[!ht]
	\centering
	\includegraphics[scale=1]{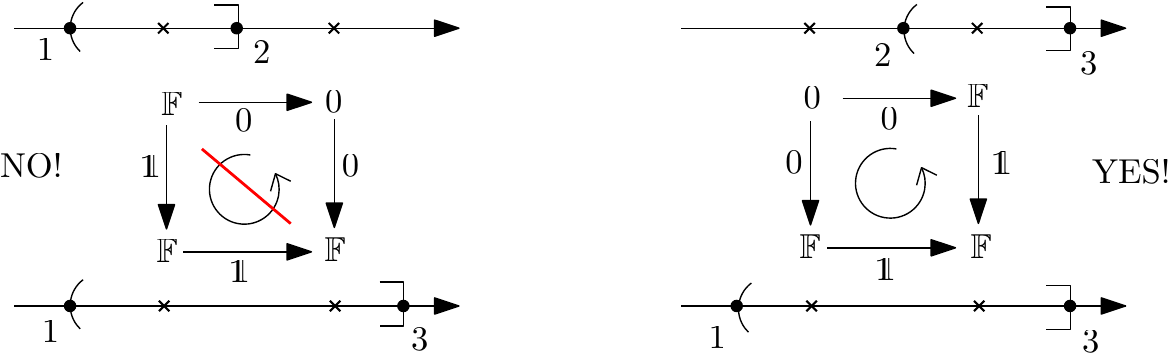}
	\caption{Comparison of two situations.}
	\label{fig: pm_segments_morphism_or_not}
\end{figure}

\begin{exr} \label{exr: morphism_between_Q(segment)s}
	More generally, check that for two intersecting intervals $(a,b]$ and $(c,d]$, there is a non-zero morphism $\F(a,b] \to \F(c,d]$ if and only if
	$c\leq a$ and $a < d\leq b$.
	(Moreover, any morphism between $\F(a,b]$ and $\F(c,d]$ is given by multiplication by some element $\lambda \in \F$.)
\end{exr}

\begin{defn}\label{def: direct_sum_of_pm}
	Let $(V,\pi)$ and $(V',\pi')$ be two persistence modules. Their direct sum $(W,\theta)$ is a persistence module whose underlying vector spaces are $W_t = V_t \oplus V'_t$ (direct sum of vector spaces) and accordingly, $\theta_{s,t} = \pi_{s,t} \oplus \pi'_{s,t}$.
\end{defn}

Following the example illustrated in \Cref{fig: pm_segments_morphism_or_not}, let us note that in general $\F(a,b] \approx \F(a,c] / \F(b,c]$ (as vector spaces for each $t$), and we have an exact sequence of persistence modules
	$$
		0 \to \F(b,c] \to \F(a,c] \to \F(a,b] \to 0  \;.
	$$
However, one can check that $\F(a,c] \neq \F(a,b] \oplus \F(b,c] $!
(Follow \Cref{def: direct_sum_of_pm}, see \Cref{fig: pm_cannot_split_interval}.)
This will also follow later from \Cref{thm: normal_form_thm}.
In other words, an exact sequence in the category of persistence modules does not necessarily split.
In fact, $\F (a,b]$ is not a submodule of $\F (a,c]$. (In other terms, it means that a direct summand, e.g. $\F (a,b]$, is not necessarily a submodule, where a summand of $\F (a,b]$ is a subset $S$ that can be completed to the whole space by a submodule, i.e. there is a submodule $T \subseteq \F (a,b]$ s.t. $S\oplus T = \F(a,b]$.)

\begin{figure}[!ht]
	\centering
	\includegraphics[scale=1]{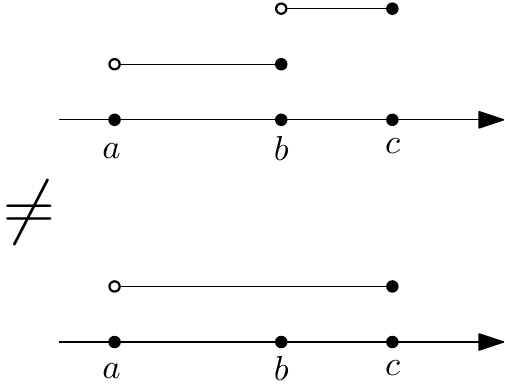}
	\caption{$\F (a,b] \oplus \F (b,c] \neq \F(a,c]$.}
	\label{fig: pm_cannot_split_interval}
\end{figure}

\end{exm}

\begin{exm} \label{exm: shift_morphism_image}
	Let us give a concrete example of a $\delta$-shift persistence module and a $\delta$-shift morphism.
	Consider $V = \F (0,1]$ and $\delta = \frac{1}{3}$.
	Then $V[\delta] = \F (-\frac{1}{3}, \frac{2}{3}]$, but $\im \Phi^\delta = \F (0, \frac{2}{3}]$.
	(See \Cref{fig: shift_morphism}, and definition of an image of a morphism in \Cref{exr: def_of_ker_im_are_sub_pm}.).
	So $\Phi^\delta$ in fact ``chops" $V$ by $\delta$ from the right.
\end{exm}

\begin{figure}[!ht]
	\centering
	\includegraphics[scale=1]{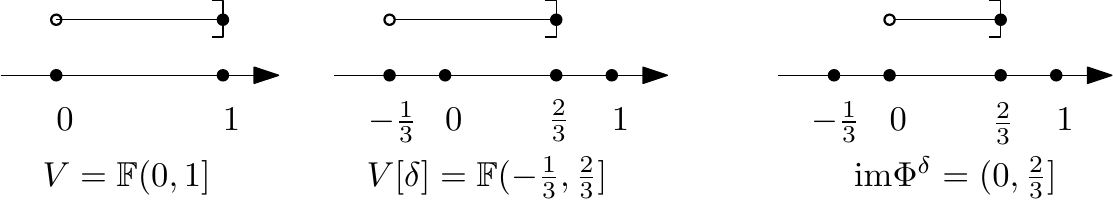}
	\caption{$\Phi^\delta$ ``chops" $V$ from the right.}
	\label{fig: shift_morphism}
\end{figure}

\section{Interleaving distance}
We would like to have a metric, or at least a pseudo-metric, on the space of persistence modules.

\begin{defn} \label{defn: interleaved_pm_interleaving_morph}
	Given a $\delta > 0$, we say that two persistence modules $(V,\pi)$ and $(W,\theta)$ are \emph{$\delta$-interleaved} if
	there exist two morphisms $F: V \to W[\delta]$ and $G:W\to V[\delta]$,
	such that the following diagrams commute:

	\begin{center} %
		\begin{tikzpicture}
		\matrix (m) [matrix of math nodes,row sep=2em,column sep=2em,minimum width=2em]
		{
			V & W[\delta] & V[2\delta] \\
		};
		\path[-stealth, decoration={snake,segment length=4,amplitude=3,
			post=lineto,post length=10pt}]
		(m-1-1) edge node [above] {$F$} (m-1-2)
		(m-1-2) edge node [above] {$G[\delta]$} (m-1-3)
		(m-1-1) edge[bend left=-20] node [below] {$\Phi^{2\delta}_V$} (m-1-3);
		\end{tikzpicture}
\hspace{1cm}
		\begin{tikzpicture}
		\matrix (m) [matrix of math nodes,row sep=2em,column sep=2em,minimum width=2em]
		{
			W & V[\delta] & W[2\delta] \\
		};
		\path[-stealth, decoration={snake,segment length=4,amplitude=3,
			post=lineto,post length=10pt}]
		(m-1-1) edge node [above] {$G$} (m-1-2)
		(m-1-2) edge node [above] {$F[\delta]$} (m-1-3)
		(m-1-1) edge[bend left=-20] node [below] {$\Phi^{2\delta}_W$} (m-1-3);
		\end{tikzpicture} \;,
	\end{center}
	
	\noindent
	where $\Phi^{2\delta}_V$ and $\Phi^{2\delta}_W$ are the shift morphisms (see \Cref{exm: pm_shift_def}).
	We will also refer to such a pair of morphisms $F$ and $G$ as \emph{$\delta$-interleaving morphisms}.
\end{defn}

\begin{exr} \label{exr: basic_properties_of_interleaved_pm}
	\begin{enumerate}
		\item
			Show that two persistence modules $(V,\pi)$ and $(W,\theta)$ are $\delta$-interleaved with finite $\delta$ if and only if
			$\dim V_\infty = \dim W_\infty$ (see definition in \Cref{rmks: on_defn_of_pm_giving_def_of_V_infty}.).
			
		\item
			Prove that if $V,W$ are $\delta$-interleaved, then they are $\delta'$-interleaved for any $\delta' > \delta$.
		\item
			Prove that if $V,W$ are $\delta_1$-interleaved and $W,Z$ are $\delta_2$-interleaved,
			then $V,Z$ are $(\delta_1+\delta_2)$-interleaved.
	\end{enumerate}
\end{exr}

\begin{defn} \label{defn: interleaving_dist}
	For two persistence modules $(V,\pi)$ and $(W, \theta)$, define the interleaving distance between them to be
	\begin{equation*}
	d_{int} (V,W) = \inf\ \{ \delta > 0\ |\  (V,\pi) \text{ and } (W,\theta) \text{ are } \delta \text{-interleaved} \} \;.
	\end{equation*}
	(For brevity, we use the notation $d_{int}(V,W)$, writing just $V$ instead of $(V,\pi)$ and similarly for $(W,\theta)$, unless there could be a confusion.)
\end{defn}

\noindent
Note that in this way we get a pseudo-metric on isomorphism classes of persistence modules with the same $V_\infty$. A priori, it might happen that $d_{int}(V,W)$ vanishes for non-isomorphic $V$ and $W$.
%
However, because of the semicontinuity condition we pose on persistence modules, we will be able to show that $d_{int}$ is a genuine metric, i.e.\ that it is non-degenerate (see \Cref{thm: isometry_thm} and \Cref{exr: d_bot_is_non_degenerate_hence_d_int_also}).\\


\subsection{First example: interval modules}

\begin{claim} \label{claim: _int_distance_two_intervals}
	Fix $a,b,c,d < \infty$, with $a<b,\ c<d$, and consider $d_{int} (\F(a,b], \F(c,d] )$, between the persistence modules $\F(a,b]$ and $\F (c,d]$ are as defined in \Cref{exm: pm_intervals_pw_const}. Then
	\begin{equation} \label{eq: upper_bound_d_int}
	d_{int} \big( \F(a,b], \F(c,d] \big) \leq
	\min \bigg( \max \Big( \frac{b-a}{2}, \frac{d-c}{2} \Big), \max \big( |a-c|, |b-d| \big) \bigg) \;.
	\end{equation}
	We will see later that in fact an equality holds.
	For now, let us prove this inequality by exploring two strategies of interleaving $\F(a,b]$ and $\F(c,d]$.
	
	\begin{enumerate}[I.]
		\item
		Take $\delta = \max \left( |a-c|, |b-d| \right)$.
		We want to show that $\F(a,b]$ and $\F(c,d]$ are $\delta$-interleaved.
		By definition,
		$a - 2\delta \leq c-\delta \leq a$ and $b - 2\delta \leq d-\delta \leq b$.
		In view of \Cref{exr: morphism_between_Q(segment)s}, one can take the morphisms
		$F: \F(a,b] \to \F(c-\delta, d - \delta]$ and
		$G: \F(c,d] \to \F(a-\delta, b - \delta]$.
		They might be zero, e.g. if $d-\delta<a$ then $F=0$.
		(see \Cref{fig: intervals_interleaving_I}.)
		\begin{figure}[!ht]
			\centering
			\includegraphics[scale=1]{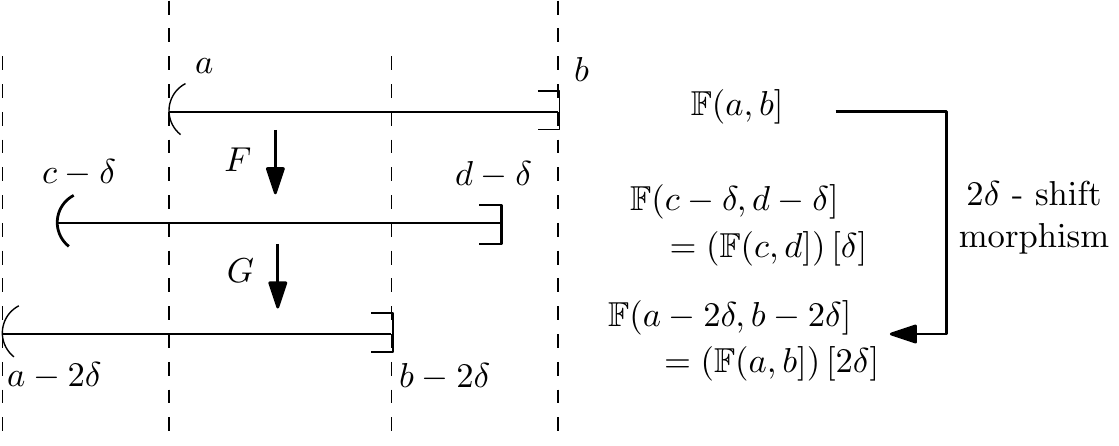}
			\caption{First method of interleaving $\F(a,b]$ and $\F(b,d]$: by $\delta = \max\left(|a-c|,|b-d|\right)$.}
			\label{fig: intervals_interleaving_I}
		\end{figure}

		\item
		Put this time $\delta = \max\left(\frac{b-a}{2}, \frac{d-c}{2}\right)$.
		Note that the shift morphism by $2\delta$ vanishes for both modules, see \Cref{fig: intervals_interleaving_II} (e.g., the shift between $\F(a,b]$ and $\F(a-2\delta, b - 2\delta]$ vanishes, as $b - 2\delta \leq a$, i.e.\ $(a,b] \cap (a-2\delta, b-2\delta] = \emptyset$).
		Taking the interleaving morphisms to be $0$ we get the desired result.
		
		\begin{figure}[!ht]
			\centering
			\includegraphics[scale=1]{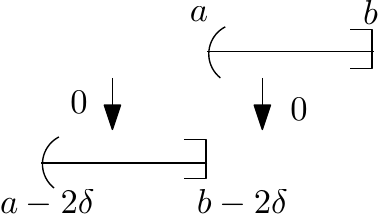}
			\caption{Second method of interleaving. The shift morphism vanishes.}
			\label{fig: intervals_interleaving_II}
		\end{figure}
		
	\end{enumerate}
\end{claim}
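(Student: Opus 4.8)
The statement already organizes the argument into the two interleaving strategies I and II; the plan is to make each of them precise and then take the minimum of the two bounds they produce.

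\medskip

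\textbf{Strategy I.} Set $\delta=\max(|a-c|,|b-d|)$. I would first record the elementary consequences of this choice: since $\delta\ge|a-c|$ we have $c-\delta\le a$ and $a-2\delta\le c-\delta$, and since $\delta\ge|b-d|$ we have $d-\delta\le b$ and $b-2\delta\le d-\delta$, together with the two symmetric inequalities obtained by exchanging the roles of the two intervals. By \Cref{exr: morphism_between_Q(segment)s}, these say precisely that the only possibly-failing condition for the existence of a nonzero morphism $F\colon\F(a,b]\to\F(c-\delta,d-\delta]=\F(c,d][\delta]$ is the inequality $a<d-\delta$, and likewise the only possibly-failing condition for $G\colon\F(c,d]\to\F(a-\delta,b-\delta]=\F(a,b][\delta]$ is $c<b-\delta$. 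I define $F$ (resp.\ $G$) to be the ``multiplication by $1$'' morphism when its extra condition holds and the zero morphism otherwise; in either case this is a genuine morphism of persistence modules, so $F$ and $G$ are the candidate $\delta$-interleaving morphisms of \Cref{defn: interleaved_pm_interleaving_morph}.

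\medskip

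\textbf{Commutativity of the interleaving triangles.} The real content is to verify $G[\delta]\circ F=\Phi^{2\delta}_V$ and $F[\delta]\circ G=\Phi^{2\delta}_W$. I would observe that the shift morphism $\Phi^{2\delta}_V\colon\F(a,b]\to\F(a-2\delta,b-2\delta]$ is, componentwise, the identity exactly on the set $(a,b-2\delta]$ and zero elsewhere; in particular $\Phi^{2\delta}_V=0$ whenever $2\delta\ge b-a$. Now split into cases. If $2\delta\ge b-a$, then \Cref{exr: morphism_between_Q(segment)s} forces \emph{every} morphism $\F(a,b]\to\F(a-2\delta,b-2\delta]$ to vanish (the condition $a<b-2\delta$ fails), so $G[\delta]\circ F=0=\Phi^{2\delta}_V$ automatically. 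If $2\delta<b-a$, I claim $F$ and $G$ are both nonzero: if $d-\delta\le a$ then $\delta\ge d-a$, and combined with $\delta\ge b-d$ this gives $2\delta\ge(d-a)+(b-d)=b-a$, a contradiction; symmetrically, if $b-\delta\le c$ then $\delta\ge b-c$ together with $\delta\ge c-a$ gives $2\delta\ge b-a$, again a contradiction. Hence $F$, $G$, and therefore $G[\delta]$, are all ``multiplication by $1$'' morphisms, so $G[\delta]\circ F$ is, componentwise, the identity exactly on $(a,b]\cap(c-\delta,d-\delta]\cap(a-2\delta,b-2\delta]$. Using $c-\delta\le a$ and $b-2\delta\le d-\delta$ one checks this triple intersection equals $(a,b-2\delta]$, which is exactly the support of $\Phi^{2\delta}_V$; hence the first triangle commutes. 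The second triangle is identical after exchanging the pairs $(a,b)$ and $(c,d)$ (note $|a-c|$ and $|b-d|$ are symmetric in this exchange). Thus $\F(a,b]$ and $\F(c,d]$ are $\max(|a-c|,|b-d|)$-interleaved, giving $d_{int}\le\max(|a-c|,|b-d|)$.

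\medskip

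\textbf{Strategy II and conclusion.} For $\delta=\max(\tfrac{b-a}{2},\tfrac{d-c}{2})$ we have $2\delta\ge b-a$ and $2\delta\ge d-c$, so by the computation above \emph{both} shift morphisms $\Phi^{2\delta}_V$ and $\Phi^{2\delta}_W$ are zero; taking $F=0$ and $G=0$ makes the two interleaving triangles commute trivially, so $\F(a,b]$ and $\F(c,d]$ are $\max(\tfrac{b-a}{2},\tfrac{d-c}{2})$-interleaved. Combining the two strategies and passing to the infimum in \Cref{defn: interleaving_dist} yields \eqref{eq: upper_bound_d_int}. The only step that requires any thought is the commutativity in Strategy I --- specifically the two short ``triangle-inequality'' arguments that force $F$ and $G$ to be nonzero precisely when $\Phi^{2\delta}$ is, and the interval bookkeeping identifying the support of the composite with that of the shift morphism; everything else is immediate from \Cref{exr: morphism_between_Q(segment)s}, \Cref{exm: pm_shift_def}, and the definitions.
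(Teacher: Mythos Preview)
Your proof is correct and follows the same two-strategy approach as the paper. The paper leaves the verification of the interleaving diagrams in Strategy~I to the reader via the reference to \Cref{exr: morphism_between_Q(segment)s}, whereas you carry out that verification explicitly (the case split on whether $2\delta\ge b-a$, and the support computation for $G[\delta]\circ F$); this added rigor is welcome but does not change the method.
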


\begin{exr}
	For two infinite intervals, $d_{int} \big( \F(a,\infty), \F(c,\infty) \big) = |a-c|$.
\end{exr}

\begin{exm}
	In order to get the flavor of this bound let us list some concrete examples (we write $\delta_{\RN{1}}$ and $\delta_{\RN{2}}$ for $\delta$ taken as in the cases $\RN{1}$ and $\RN{2}$ of \Cref{claim: _int_distance_two_intervals} respectively):
	\begin{enumerate}
		\item
		For $\F(1,2]$ and $\F(1,3]$, $\delta_{\RN{1}} = \delta_{\RN{2}} = 1$, so $d_{int} \leq 1$.
		\item
		For $\F(1,2]$ and $\F(2,3]$, $\delta_{\RN{1}} = 1$, $\delta_{\RN{2}} = \frac{1}{2}$, so $d_{int} \leq \frac{1}{2}$.
		\item
		For $\F(1,4]$ and $\F(2,5]$, $\delta_{\RN{1}} = 1$, $\delta_{\RN{2}} = \frac{3}{2}$, so $d_{int} \leq 1$.
	\end{enumerate}
	
	\noindent
	As we remarked above regarding (\ref{eq: upper_bound_d_int}), these bounds are in fact the exact values of $d_{int}$.
\end{exm}

\section{Morse persistence modules and approximation}
	\label{sec: morse_approx_question}
	Take a closed manifold $M$ and a Morse function $f:M \to \R$.
	Put $\|f\| = \max|f|$ (the uniform norm of $f$).
	
	As before, we define a persistence module $V(f)$ by setting $V_t (f) = H_{*} (\{ f<t \})$.
	\noindent
	Note that in these notations $V (f-\delta) = V (f)[\delta]$.
	Also, if $g: M \to \R$ is another Morse function and $f\leq g$, then $\{ g<t \} \subset \{ f<t \}$, and we get a natural morphism
	$F : V(g) \to V (f)$.

	For any $f,g:M\to \R$ we have $f - \| f-g \| \leq g$. Denote $\delta = \|f-g\|$. By the above considerations, since $\{ g<t \} \subseteq \{ f-\delta < t \}$, there is a natural morphism
		$F: V(g) \to V (f)[\delta]$.
	Similarly, $g - \delta \leq f$, hence we have another morphism $G : V(f) \to V (g) [\delta]$.
	Combining these two inequalities, we obtain $f - 2\delta \leq g - \delta \leq f$, that is, we actually have three natural morphisms, yielding the following commutative diagram:
	
	\begin{center}	
		\begin{tikzpicture}
		\matrix (m) [matrix of math nodes,row sep=2em,column sep=2em,minimum width=2em]
		{
			V(f) & V(g)[\delta] & V(f) [2\delta] \\
		};
		\path[-stealth, decoration={snake,segment length=4,amplitude=3,
			post=lineto,post length=10pt}]
		(m-1-1) edge node [above] {$G$} (m-1-2)
		(m-1-2) edge node [above] {$F[\delta]$} (m-1-3)
		(m-1-1) edge[bend left=-20] node [below] {$\Phi^{2\delta}_{V(f)}$} (m-1-3);
		\end{tikzpicture} \;,
	\end{center}

\noindent
	where $\Phi_{V(f)}^{2\delta}$ stands for the $2\delta$-shift morphism of $V(f)$.	
	By a symmetric argument, we get the second diagram required by \Cref{defn: interleaved_pm_interleaving_morph}, hence
	$d_{int} \big( V(f), V(g) \big) \leq \delta = \| f-g \|$.
	
	Note that for any $\vphi \in \Diff (M)$, the persistence modules $V(f)$ and $V(\vphi^* f)$ are isomorphic, hence
	\begin{equation}\label{eq: interleaving_dist_smaller_than_functions_norm}
		d_{int} \big( V(f), V(g) \big) \leq
		\inf_{\vphi \in \Diff(M)} \| f - \vphi^* g \| \;.
	\end{equation}
	
	Let us have a closer look at this inequality by considering a sub-example. Take a Morse function $f:S^2 \to \R$.
	How well can it be $C^0$-approximated by a Morse function with exactly two critical points? (See \Cref{fig: approx_morse_sphere}.)
	
	\begin{figure}[!ht]
		\centering
		\includegraphics[scale=0.75]{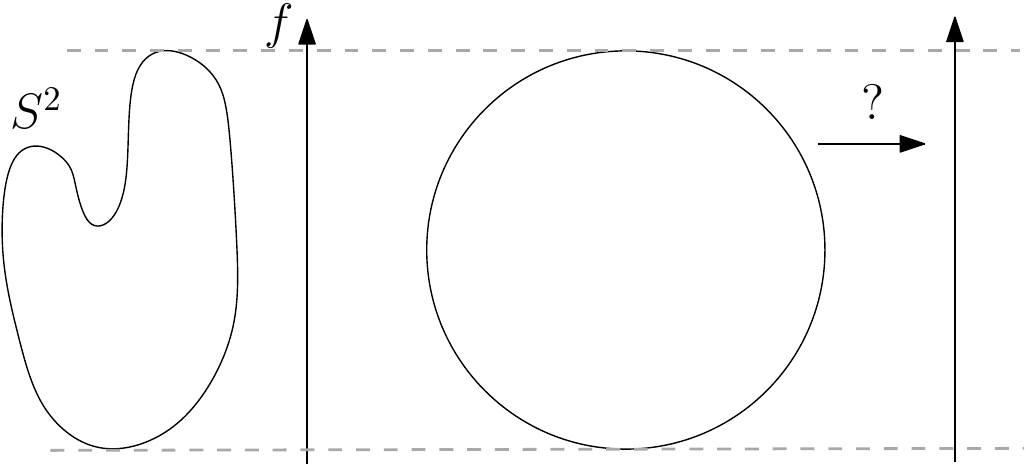}
		\caption{Approximation question}
		\label{fig: approx_morse_sphere}
	\end{figure}
	
	\noindent
	For such functions illustrated in \Cref{fig: approx_morse_sphere}, we shall calculate the lower bound given in (\ref{eq: interleaving_dist_smaller_than_functions_norm}) in \Cref{exm: approx_function_heart_sphere} below.

\medskip

In Chapter \ref{chap5a-functheory} we discuss further applications of  persistence modules to function theory and
to approximation.

\section{Rips modules and the Gromov-Hausdorff distance}

Let $X,Y$ be finite sets. A \emph{surjective correspondence} $C : X \rightrightarrows Y$ between $X$ and $Y$ is a subset
$C \subset X \times Y$, such that $\proj _X (C) = X$ and $\proj_Y (C) = Y$. The \emph{inverse correspondence} $C^T : Y \rightrightarrows X$ is defined by
$C^T = \{ (y,x):\ (x,y)\in C \}$. Let us note that $C$ is a surjective correspondence if and only if there exist $f: X\to Y$ and $g: Y\to X$, such that $\grph (f) \subset C$ and $\grph (g) \subset C^T$.

\begin{defn}\label{defn: distortion_of_surj_correspondence}
Assume now that $(X,\rho),\ (Y,r)$ are finite metric spaces. The \emph{distortion} of a surjective correspondence $C : X \rightrightarrows Y$ is
\begin{equation}
	\dis (C) = \max_{(x,y), (x',y')\in C} | \rho (x,x') - r (y,y') | \;.
\end{equation}
\end{defn}

\noindent
For instance, if we take $C$ to be a graph of a function $f:X\to Y$, then $(x,y)\in C$ means $y = f(x)$, and so
$$
	\dis (C) = \max_{x, x' \in X} | \rho (x,x') - r(f(x), f(x')) | \;.
$$
\noindent
In particular, $\dis(C) = 0$ if and only if $f$ is an isometry.

Let us adopt the following notion of distance between metric spaces:
\begin{defn}
	The Gromov-Hausdorff distance between two finite metric spaces $(X,\rho)$ and $(Y,r)$ is
	$$
		d_{GH} \big( (X,\rho), (Y, r) \big) = \frac{1}{2} \min_C \dis(C) \;,
	$$
	where the minimum is taken over all surjective correspondences $C : X \rightrightarrows Y$.
\end{defn}

\begin{exr}
	Prove that $d_{GH}$ is a distance between isometry classes of finite metric spaces.
\end{exr}

For the finite metric space $(X,\rho)$, consider its Rips complex $R_t (X) $  and accordingly the Rips persistence module $V_t (X) = H_* \big( R_t (X) \big)$.
(See \Cref{exm: finite_metric_sp_Rips_pm}.)

\begin{thm}[See \cite{CDO14}]
	\label{thm: GH_dist_bounded_below_by_interleaving_distance}
	$$
		d_{GH} \big( (X,\rho), (Y,r) \big) \geq
		\frac{1}{2} d_{int} \big( V(X,\rho), V(Y,r) \big) \;.
	$$
\end{thm}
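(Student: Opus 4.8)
The plan is to turn a surjective correspondence realizing (up to $\varepsilon$) the Gromov--Hausdorff distance into a pair of interleaving morphisms between the Rips persistence modules. Fix a surjective correspondence $C:X\rightrightarrows Y$ with $\dis(C)=2d_{GH}((X,\rho),(Y,r))$ (the minimum is attained since $X,Y$ are finite), and set $\delta=\tfrac12\dis(C)=d_{GH}$. By the remark preceding \Cref{defn: distortion_of_surj_correspondence}, choose $f:X\to Y$ and $g:Y\to X$ with $\grph(f)\subset C$ and $\grph(g)\subset C^T$. The key geometric observation is that the bound on $\dis(C)$ forces $f$ to be ``$2\delta$-Lipschitz'' in the discrete sense: for all $x,x'\in X$, $r(f(x),f(x'))\le \rho(x,x')+2\delta$, and symmetrically for $g$. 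Consequently, if $[x_0,\dots,x_k]$ is a simplex of $R_\alpha(X)$, i.e. $\rho(x_i,x_j)<\alpha$ for all $i,j$, then $r(f(x_i),f(x_j))<\alpha+2\delta$, so $[f(x_0),\dots,f(x_k)]$ is a simplex of $R_{\alpha+2\delta}(Y)$ (allowing for repeated vertices, which just gives a lower-dimensional face). Hence $f$ induces a simplicial map $R_\alpha(X)\to R_{\alpha+2\delta}(Y)$ compatible with the inclusions as $\alpha$ grows.

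Next I would assemble these simplicial maps into a morphism of persistence modules. Passing to homology $H_*(\,\cdot\,;\F)$, the maps $f_\#:R_\alpha(X)\to R_{\alpha+2\delta}(Y)$ yield linear maps $F_\alpha:V_\alpha(X)=H_*(R_\alpha(X))\to H_*(R_{\alpha+2\delta}(Y))=V_{\alpha+2\delta}(Y)=(V(Y)[2\delta])_\alpha$, and functoriality of homology together with compatibility of the simplicial maps with the structural inclusions shows the required squares commute, so $F:V(X)\to V(Y)[2\delta]$ is a morphism. Symmetrically, $g$ gives a morphism $G:V(Y)\to V(X)[2\delta]$. (Here I am writing $V(X)$ for $V(X,\rho)$ etc.) It remains to check the two interleaving triangles of \Cref{defn: interleaved_pm_interleaving_morph}: that $G[2\delta]\circ F$ equals the shift morphism $\Phi^{4\delta}_{V(X)}$ and likewise $F[2\delta]\circ G=\Phi^{4\delta}_{V(Y)}$. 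On the level of spaces, $G[2\delta]\circ F$ is induced by the simplicial map $g\circ f:R_\alpha(X)\to R_{\alpha+4\delta}(X)$, while $\Phi^{4\delta}_{V(X)}$ is induced by the inclusion $R_\alpha(X)\hookrightarrow R_{\alpha+4\delta}(X)$. These two simplicial maps are not equal, but they are \emph{contiguous}: for any simplex $[x_0,\dots,x_k]$ of $R_\alpha(X)$, the vertex set $\{x_0,\dots,x_k,g(f(x_0)),\dots,g(f(x_k))\}$ has all pairwise distances $<\alpha+4\delta$ — this uses $\rho(x_i, g f(x_i))\le 2\delta$ (again a consequence of $\dis(C)\le 2\delta$, since $(x_i,f(x_i))\in C$ and $(x_i', g f(x_i))$ can be compared through a common point via $C$ and $C^T$) combined with the triangle inequality. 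Contiguous simplicial maps induce the same map on homology, so both triangles commute. Therefore $V(X)$ and $V(Y)$ are $2\delta$-interleaved, giving $d_{int}(V(X),V(Y))\le 2\delta=2d_{GH}$, which is the claim; letting $\varepsilon\to 0$ is unnecessary since the minimum was attained.

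The main obstacle — and the place requiring the most care — is the bookkeeping around the ``extra'' $\delta$'s and the contiguity argument: one must verify precisely which shift amounts appear and confirm that the composite $g\circ f$ displaces each point by at most $2\delta$ in the $\rho$-metric (so that $R_\alpha(X)$ maps into $R_{\alpha+4\delta}(X)$ and is contiguous to the inclusion). Establishing $\rho(x, g f(x))\le 2\delta$ from the distortion bound is the single non-formal step: given $x$, pick $y=f(x)$ so $(x,y)\in C$; since $g$'s graph lies in $C^T$ we have $(g(y),y)\in C$; then $|\rho(x,g(y))-r(y,y)|\le\dis(C)\le 2\delta$ forces $\rho(x,g(y))\le 2\delta$. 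Once this lemma is in hand, the rest is the standard verification that homology is functorial and homotopy- (here contiguity-) invariant, and that the interleaving diagrams commute. One should also note the normalization subtlety: the strict inequalities in the definition of the Rips complex mean $f$ sends $R_\alpha(X)$ into $R_{\alpha+2\delta}(Y)$ with the closed-versus-open issue automatically handled by the strictness, so no auxiliary $\varepsilon$ is needed there either.
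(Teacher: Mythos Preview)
Your proof is correct and follows essentially the same route as the paper's: pick $f,g$ subordinate to a correspondence, use the distortion bound to get simplicial maps between shifted Rips complexes, and verify the interleaving triangles via contiguity of $g\circ f$ with the inclusion. The only cosmetic differences are that the paper parametrizes by $\delta>\dis(C)$ (so it shows a $\delta$-interleaving and then takes the infimum), whereas you set $2\delta=\dis(C)$ exactly and rely on the strict inequality in the Rips definition to absorb the non-strict bounds; and the paper computes the cross-distances $\rho(gf(x_i),x_j)$ directly from the distortion inequality rather than first isolating the lemma $\rho(x,gf(x))\le 2\delta$.
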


\begin{proof}
	Take a surjective correspondence $C: X \rightrightarrows Y$, and any $\delta > \dis (C)$.
	We need to show that $V(X)$ and $V(Y)$ are $\delta$-interleaved.
	
	Pick any $f: X \to Y$ with $\grph (f) \subset C$.
	Note that since $\delta > \dis (C)$, we have $r \big( f(x), f(x') \big) < \rho (x,x') + \delta$, so
	$f$ induces a simplicial map $F: R_t (X) \to R_{t+\delta} (Y)$.
	%
	Let $F_{*} : V_t (X) \to V_{t+\delta} (Y) = \big( V (Y)[\delta] \big) _t$ be the induced map on homology. Similarly, taking $g: Y\to X$ for which $\grph (g) \subset C^T$, we get a map
	$G: R_t (Y) \to R_{t+\delta} (X)$, which induces a map
	$G_{*} : V_t(Y) \to \big( V(X)[\delta ] \big) _t$ in homology.
	
	We claim that the maps $F_{*}$ and $G_{*}$ are $\delta$-interleaving morphisms.
	To prove it, we have to show that the following diagram and a similar diagram for the converse composition both commute:
	\begin{center}
		\begin{tikzpicture}
		\matrix (m) [matrix of math nodes,row sep=2em,column sep=2em,minimum width=2em]
		{
			V(X) & V(Y)[\delta] & V(X) [2\delta] \\
		};
		\path[-stealth, decoration={snake,segment length=4,amplitude=3,
			post=lineto,post length=10pt}]
		(m-1-1) edge node [above] {$F_*$} (m-1-2)
		(m-1-2) edge node [above] {$G_*[\delta]$} (m-1-3)
		(m-1-1) edge[bend left=-20] node [below] {$i_*$} (m-1-3);
		\end{tikzpicture} \;.
	\end{center}
	(Here $i: R_t(X) \to R_{t+2\delta} (X)$ is the natural inclusion.)

	We recall that two simplicial maps $H, H' : K \to L$ (between simplicial complexes $K, L$) are called \emph{contiguous} if for any simplex $\sigma \in K$, $H(\sigma) \cup H'(\sigma)$ is a simplex in $L$.
	For contiguous maps $H$ and $H'$, one gets that $H_* = H'_*$ (see \cite[Theorem 12.5]{Munkres_alg_top_84}).

	Let us show that $G\circ F$ and $i$ are contiguous as maps $R_t(X) \to R_{t+2\delta}(X)$.
	Choose any simplex $[x_0, \ldots, x_k] \in R_t(X)$. Note that $i(x_j) = x_j$.
	Thus, we have to check that $[gf(x_0), \ldots, gf(x_k), x_0, \ldots, x_k]$ is a simplex in $R_{t+2\delta}(X)$.

	By definition of the distortion of $C$, we know that for any $x,x'\in X,\ y,y'\in Y$ that satisfy $(x,y), (x',y')\in C$, we have $|\rho (x,x') - r(y,y')|\leq \dis (C) < \delta$.
	So for all $0\leq i, j \leq k$,
	\begin{align*}
		\rho \big( gf(x_i), x_j \big)
		< r \big( f(x_i), f(x_j) \big) + \delta
			%
		< \rho (x_i, x_j) + 2\delta
			%
		< t + 2\delta \;.
	\end{align*}
	Here the first inequality holds since $\big( gf(x_i), f(x_i) \big), \big( x_j, f(x_j) \big) \in C\ $ for all $i,j$, the second one follows, as again $(x_i, f(x_i)), (x_j, f(x_j)) \in C$, and the last one is by the definition of $R_t(X)$.
	\noindent
	Similarly, we get that
	$$
		\rho \big( gf(x_i), gf(x_j) \big) < r \big( f(x_i), f(x_j) \big) + \delta < t+ 2\delta \;.
	$$
	\noindent
	So $G \circ F$ and $i$ are contiguous, hence the result follows.
\end{proof}

\medskip

See Chapter \ref{chap5-rips} for further discussion on persistence modules associated to Rips complexes.

\chapter{Barcodes} \label{chp2_barcodes}

\begin{defn}\label{defn_barcode}
	A \emph{barcode} $\calB$ is a finite multiset of intervals, i.e. it is a finite collection $\{ (I_i, m_i) \}$ of intervals $I_i$ with given multiplicities $m_i\in \N$. For us, the intervals $I_i$ are all either finite of the form $(a,b]$ or infinite of the form $(a,+\infty)$.
	The intervals in a barcode will be sometimes called \emph{bars}.
\end{defn}

\section{The Normal Form Theorem} \label{sec-normal-form}

The main result of this section is that any persistence module can be expressed as a direct sum of ``simple" persistence modules of the form $\F(I)$ (as were defined in \Cref{exm: pm_intervals_pw_const}), with $I$ being either a left-opened right-closed interval, or an infinite ray open on the left.

\begin{thm}[Normal Form Theorem] \label{thm: normal_form_thm}
	Let $(V,\pi)$ be a persistence module. Then there exists a finite collection $\{ (I_i, m_i) \}_{i=1}^N$ of intervals $I_i$ with their multiplicities $m_i$, where $I_i=(a_i,b_i]$ or $I_i = (a_i, \infty)$, $m_i \in \N$, $I_i \neq I_j$ for $i\neq j$,
	such that
	$$
		V = \bigoplus_{i=1}^N \F(I_i)^{m_i} \;.
	$$
	\noindent
	By equality here we mean that they are isomorphic as persistence modules.
	
	\noindent
	Moreover, this data is unique up to permutations, i.e., to any persistence module there corresponds a unique barcode $\calB (V)$, which consists of the intervals $I_i$ with multiplicity $m_i$. This barcode will be called \emph{the barcode of $V$}.
\end{thm}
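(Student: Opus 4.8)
I would prove the two assertions separately --- first \emph{existence} of a decomposition into interval modules, then \emph{uniqueness} of the resulting multiset of intervals --- after a reduction of the problem to a finite diagram of vector spaces. The finite-type conditions (2) and (4) are exactly what makes this reduction possible, and the semicontinuity condition (3) is what pins down the left-open, right-closed convention for the bars.

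\textbf{Step 1: reduction to a finite diagram.} By condition (2) the set of ``bad'' points where local isomorphy fails is finite; call it $c_1 < \cdots < c_N$ and set $c_0 = -\infty$, $c_{N+1} = +\infty$. Off this set $\pi$ consists of isomorphisms, and condition (3) forces $\pi_{s,t}$ to be an isomorphism whenever $s \to t$ from the left; combined with condition (4) and \Cref{rmks: on_defn_of_pm_giving_def_of_V_infty}, this shows that $V$ is isomorphic to the persistence module reconstructed from the finite sequence
\[ 0 = W_0 \xrightarrow{f_1} W_1 \xrightarrow{f_2} \cdots \xrightarrow{f_N} W_N = V_\infty , \]
where $W_i$ is the constant value of $V$ on $(c_i,c_{i+1})$ and $f_i$ represents $\pi$ across $c_i$. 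The same left-continuity rules out left-closed or right-open bars. Conversely, any decomposition of the diagram $W_\bullet$ into interval diagrams $B[p,q]$ (a copy of $\F$ at positions $p,\dots,q$ joined by identities, $0$ elsewhere) translates back into a decomposition of $V$: a summand $B[p,q]$ with $q<N$ gives the bar $\F(c_p,c_{q+1}]$, and $B[p,N]$ gives the ray $\F(c_p,\infty)$. So it remains to decompose $W_\bullet$ and to show this decomposition is essentially unique.

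\textbf{Step 2: existence, and the main obstacle.} I would prove by induction on $N$ that $W_\bullet$ is a direct sum of interval diagrams. Truncating to $0 = W_0 \to \cdots \to W_{N-1}$ and invoking the inductive hypothesis yields such a decomposition for the truncation; the work is to analyze the last map $f_N$, a linear map from $W_{N-1} = \bigoplus_\alpha \F$ (one summand for each interval reaching position $N-1$) to $W_N$, and to modify the chosen bases of $W_{N-1}$ and $W_N$ --- propagating each modification backward only along the summands still alive at the relevant positions --- so that $f_N$ becomes block-diagonal for a new interval decomposition extending the old one. \emph{This step is the crux}: the splittings at the individual vertices cannot be chosen independently, since the decomposition must be simultaneously compatible with the whole chain of maps, so the inductive step is a delicate, order-respecting surgery on bases rather than a routine diagonalization. (A slicker route packages this bookkeeping into standard algebra: form the finitely generated graded module $M = \bigoplus_{i\ge 0} W_i$ over the graded principal ideal domain $\F[t]$, with $t$ acting by the maps $f_i$ and $W_i := V_\infty$ for $i \ge N$, and apply the structure theorem for finitely generated graded $\F[t]$-modules; the free summands $\F[t](n)$ produce the rays and the torsion summands $(\F[t]/(t^m))(n)$ the finite bars.)

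\textbf{Step 3: uniqueness.} Suppose $V \cong \bigoplus_i \F(I_i)^{m_i}$. On the summand $\F(a,b]$ the map $\pi_{s,t}$ (with $s \le t$) has rank $1$ exactly when $a < s$ and $t \le b$, and on $\F(a,\infty)$ exactly when $a < s$; consequently
\[ \rk \pi_{s,t} = \sum_{i:\ a_i < s,\ t \le b_i} m_i \]
(with the convention $b_i = +\infty$ for rays), a quantity unchanged by isomorphism and hence intrinsic to $(V,\pi)$. This rank function changes only as $s$ or $t$ crosses a bad point, which already forces every bar endpoint to be one of the $c_i$. Taking $\eps > 0$ smaller than the gaps to the neighbouring bad points, the double difference
\[ m_{(\alpha,\beta]} = \big(\rk\pi_{\alpha+\eps,\beta} - \rk\pi_{\alpha,\beta}\big) - \big(\rk\pi_{\alpha+\eps,\beta+\eps} - \rk\pi_{\alpha,\beta+\eps}\big) \]
recovers the multiplicity of each candidate finite bar $(\alpha,\beta]$, and replacing $\beta, \beta+\eps$ by a single $T$ larger than all bad points gives $m_{(\alpha,\infty)} = \rk\pi_{\alpha+\eps,T} - \rk\pi_{\alpha,T}$. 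Since every multiplicity is thus determined by the rank function of $V$ alone, any two barcodes decomposing $V$ must coincide. (Alternatively, uniqueness follows from the Krull--Schmidt property of the category of finite-type persistence modules once one observes, via \Cref{exr: morphism_between_Q(segment)s}, that each $\F(I)$ has endomorphism ring $\F$ and is therefore indecomposable.)
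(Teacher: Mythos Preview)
Your proposal is correct, and your uniqueness argument (Step~3) matches the paper's almost exactly: the paper recovers multiplicities via the same double-difference of ranks, $m_{ij} = b_{i+1,j} + b_{i,j+1} - b_{i,j} - b_{i+1,j+1}$, and also offers the Krull--Schmidt alternative you mention (using that $\End\F(I)\cong\F$).

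For existence, however, your route differs from the paper's. You induct on the length $N$ of the discretized diagram, extending a decomposition of the truncation $W_0\to\cdots\to W_{N-1}$ across the last map $f_N$ (or, alternatively, invoke the structure theorem for finitely generated graded $\F[t]$-modules). The paper instead builds the decomposition \emph{from below}: starting from $W(0)=0$, it shows that any proper \emph{semi-surjective} submodule $W\subsetneq V$ can be enlarged to $W_\sharp\cong W\oplus\F(I)$ for a single interval $I$, and iterates until the total dimension $\sum_i\dim V^i$ is exhausted. Your approach is conceptually slicker, especially the $\F[t]$-module route; the paper's method is more algorithmic and has the virtue of generalizing verbatim to the infinite-spectrum ``proper'' persistence modules treated later in \Cref{exr-normal-form-proper}, where the total dimension is not finite and your finitely-generated-module argument would not apply directly. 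Your inductive basis-surgery idea does reappear in the paper in a different guise: the Barannikov algorithm of \Cref{sec-Jordan} is essentially the triangular Gaussian elimination you sketch, carried out on a filtered complex with a preferred basis.
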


Let us note here that this statement holds also under weaker assumptions (with a more general definition of a barcode), namely, assuming that the persistence module is point-wise finite dimensional, i.e. $V_t$ are finite dimensional for all $t$ (see \cite{crawley2015_normalform}). Let us mention that the normal form theorem for homology of filtered complexes
was proved by S.~Barannikov  \cite{barannikov1994framed} in 1994. The ``birth-death" diagrams introduced in \cite{barannikov1994framed} encoding the canonical form of filtered complexes are equivalent to what later was called barcodes.

We start with some preparations towards proving \Cref{thm: normal_form_thm}.

\begin{defn}\label{def-spectral}
	A point $t\in \R$ is called \emph{spectral} for a persistence module $(V, \pi)$ if for any neighborhood $U\ni t$ there exist $s < r$ in $U$, such that $\pi_{s,r}: V_s \to V_r$ is not an isomorphism.
\end{defn}

\noindent
Denote by $\Spec V =\Spec(V,\pi)$ the collection of spectral points of $(V,\pi)$ together with $+\infty$ (artificially added), this set will be called the \emph{spectrum} of $V$. We will omit $\pi$ unless there is an ambiguity.
By condition (2) in \Cref{defn: pm}, $\Spec V$ is a finite set.

\begin{exr}
	Assume that $s,t$ belong to the same connected component of $\R \setm \Spec V$.
	Prove that $\pi_{s,t} : V_s \to V_t$ is an isomorphism.	
\end{exr}

\begin{exr} \label{exr: spec_pm_iso_invariant}
	Show that $\Spec V$ is an isomorphism invariant of persistence modules.
\end{exr}

\begin{exr} \label{exr: spec_of_intervals_directsum}
	Find the spectrum of the direct sum $\bigoplus_{i=1}^N \F(I_i)^{m_i}$, where $(I_i, m_i)$ are as in \Cref{thm: normal_form_thm}.
\end{exr}

Let $(V,\pi)$ be a persistence module and let $\Spec V = \{ a_1, \ldots, a_N \}\cup\{+\infty\}$ be its spectrum, where $a_1<\ldots<a_N < a_{N+1} = +\infty$ (see \Cref{fig: spec_Vi}).
\noindent
We also set $a_0 = -\infty$ in order to have more pleasant notations,
but we warn the reader that it is not considered to be a spectral point.\\
\noindent
Denote by $Q_i = (a_{i-1},a_{i}]$ for $1\leq i \leq N$ and $Q_{N+1} = (a_N, \infty)$ the intervals defined by adjacent $a_i$-s.
Note that these $Q_i$ are not the intervals $I_i$ we search for in \Cref{thm: normal_form_thm}, as $I_i$ would not necessarily be defined by adjacent points of $\Spec V$.\\

For any $i\in \{1,\ldots, N+1 \} $, define the limit vector space $V^i$ by considering the direct limit of the direct system $\{V_s\}$ for $s\in Q_i$:
%
\begin{equation}\label{dfn-vi}
		V^i = \coprod_{s \in Q_i} V_s \Big/ \sim \;,
\end{equation}
where $V_s \ni v_s \sim v_t \in V_t$
	for $s<t$ if $\pi_{s,t} (v_s) = v_t$.
	
Let us observe that $V^i$ is naturally isomorphic to $V_{a_i}$ since $\pi_{s,t}$ are all isomorphisms for any $s,t\in Q_i$.
We equip this collection $\{V^i\}$ with the natural morphisms $p_{i,j} : V^i \to V^j$ (for $i\leq j$) induced by $\pi_{s,t}$.
Denote
$
	\Totaldim(V) = \sum_i \dim V^i \;.
$\\

\begin{figure}[!ht]
	\centering
	\includegraphics[scale=1]{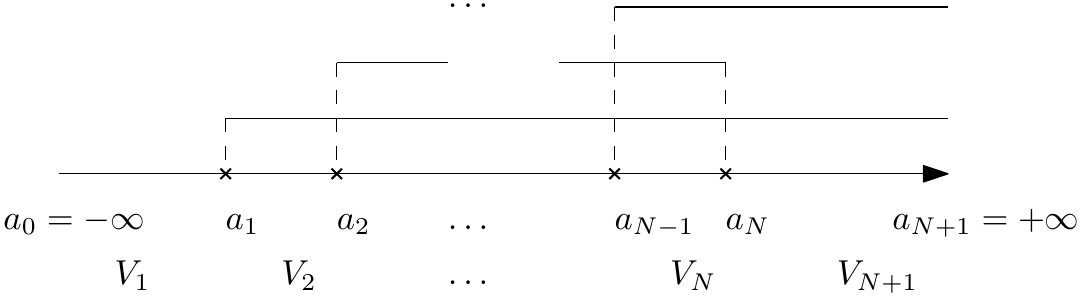}
	\caption{Spectral points and limit spaces $V^i$.}
	\label{fig: spec_Vi}
\end{figure}

Let $W \subset V$ be a persistence submodule (recall \Cref{defn: persistence_submodule}).

\begin{defn}\label{def: semi_surj_submodule}
	We will say that a submodule $W$ of $V$ is \emph{semi-surjective} if there exists  $r\in \R$, such that:
	\begin{enumerate}[(a)]
		\item
			$W_t = V_t$ for all $t\leq r$,
		\item
			$\pi_{s,t} : W_s \to W_t$ is onto if $r<s<t$.
	\end{enumerate}
\end{defn}

\begin{exm}
	$\F(0,\infty)$ is a semi-surjective submodule of $\F(0,\infty) \oplus \F (1,2]$.
\end{exm}

\begin{exr} \label{exr: r_semi_surj_witness_supremal}
	Let $W \subset V$ be a semi-surjective submodule. Show that
	\begin{enumerate}
		\item
			$\Spec W \subset \Spec V$ and $\Totaldim W \leq \Totaldim V$,
		\item
			$r:=\sup \{ t:\ W_s = V_s\ \forall s\leq t \} \in \Spec V$. This $r$ satisfies the conditions in \Cref{def: semi_surj_submodule}.
			(As an illustration, in \Cref{fig: spec_V_W} the smallest $i$ for which $W^i \subsetneq V^i$ is $i=5$, and $r = a_4$.)
	\end{enumerate}
\end{exr}

We shall encode semi-surjective submodules $W$ of $V$ by the data $W^i \subseteq V^i$, still taking the indices $i = 1, \ldots, N+1$ according to the intervals $Q_i$ (that were associated to the spectrum of $V$).
Note that as $a_i$ needs not be a spectral point of $W$, $p_{i,i+1} : W^i \to W^{i+1}$ may be an isomorphism.  See \Cref{fig: spec_V_W}.

\begin{figure}[!ht]
	\centering
	\includegraphics[scale=1]{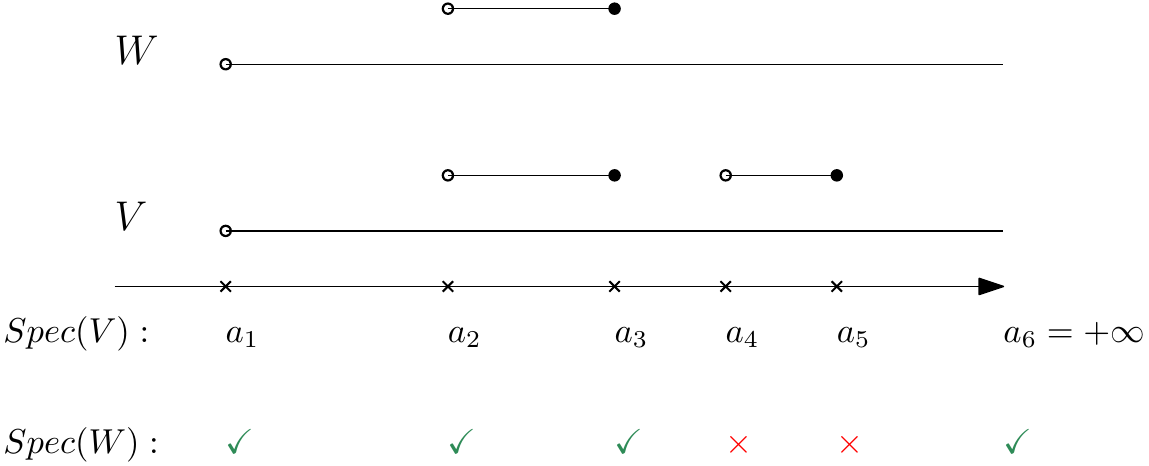}
	\caption{A point in $\Spec V $ might be not in $\Spec W$.}
	\label{fig: spec_V_W}
\end{figure}

\begin{lem}\label{lem: semi_surj_submodule}
	Let $W \subsetneq V$ be a semi-surjective submodule.
	Then there exists a semi-surjective submodule $W_{\sharp} \subset V$, such that
	$W_\sharp \cong W\oplus \F(I)$, where $I = (a,b]$ with $a,b\in \Spec V$.
\end{lem}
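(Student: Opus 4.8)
The plan is to prove the lemma by an explicit ``add one bar'' construction carried out on the level spaces $V^i$. Write $\Spec V=\{a_1<\dots<a_N\}\cup\{+\infty\}$. Since $W\subsetneq V$ is semi-surjective, its witness $r$ from \Cref{exr: r_semi_surj_witness_supremal} is a finite spectral point, say $r=a_{j-1}$ with $1\le j-1\le N$; translating conditions (a) and (b) of \Cref{def: semi_surj_submodule} to the level spaces, this means that $W^i=V^i$ for $i<j$, that $W^j\subsetneq V^j$, and that $p_{i,i'}\colon W^i\to W^{i'}$ is onto whenever $j\le i<i'\le N+1$. Put $a:=a_{j-1}$. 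Now fix any $v_0\in V^j\setminus W^j$ and measure how long it survives outside $W$: since $W$ is a submodule, $p_{i,i+1}(W^i)\subseteq W^{i+1}$, so once an image $p_{j,i}(v_0)$ lands in $W^i$ it stays in $W$ for all larger indices, and hence there is a largest index $m$, with $j\le m\le N+1$, for which $p_{j,m}(v_0)\notin W^m$; then automatically $p_{j,i}(v_0)\notin W^i$ for all $j\le i\le m$. If $m=N+1$, put $v:=v_0$ and $b:=+\infty$. If instead $m\le N$, I would correct $v_0$ using the surjectivity clause: $p_{j,m+1}\colon W^j\to W^{m+1}$ is onto and $p_{j,m+1}(v_0)\in W^{m+1}$, so there is $w\in W^j$ with $p_{j,m+1}(w)=p_{j,m+1}(v_0)$, and I set $v:=v_0-w$. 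Subtracting an element of $W^j$ leaves the exit index unchanged, so still $v\notin W^j$ and $p_{j,i}(v)\notin W^i$ for $j\le i\le m$, but now $p_{j,m+1}(v)=0$ and therefore $p_{j,i}(v)=0$ for every $i>m$; set $b:=a_m$.

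Next I would define the enlarged submodule $W_\sharp$ by $W_\sharp^i=W^i\oplus\langle p_{j,i}(v)\rangle$ for $j\le i\le m$ and $W_\sharp^i=W^i$ for all other $i$, the sum being direct because $p_{j,i}(v)\notin W^i$ (so in particular $p_{j,i}(v)\ne0$). This is a persistence submodule of $V$: the only thing to check is that each $p_{i,i+1}$ carries $W_\sharp^i$ into $W_\sharp^{i+1}$, and the single nonobvious index is $i=m$, where $p_{m,m+1}\big(p_{j,m}(v)\big)=p_{j,m+1}(v)=0\in W^{m+1}$ --- which is precisely what the correction of $v_0$ achieved (at $i=j-1$ one uses $p_{j-1,j}(V^{j-1})\subseteq W^j$). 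Moreover $W_\sharp$ is again semi-surjective with the same witness $a$: condition (a) is inherited from $W$, and for (b) every $p_{i,i'}\colon W_\sharp^i\to W_\sharp^{i'}$ with $j\le i<i'$ is onto, because $p_{i,i'}(W^i)=W^{i'}$ already while the extra rank-one summand either maps onto $\langle p_{j,i'}(v)\rangle$ or, when $i'>m$, is harmlessly absorbed.

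Finally I would exhibit the splitting $W_\sharp\cong W\oplus\F(I)$ with $I=(a,b]=(a_{j-1},b]$, whose endpoints both lie in $\Spec V$. The interval module $\F(I)$ has level space $\F$ precisely for the indices $i$ with $j\le i\le m$ (for all $i\ge j$ when $b=+\infty$), with identity transition maps between consecutive nonzero levels and, in the finite case, the zero map out of level $m$. Hence the family of maps that is the identity on the $W$-summand of $W_\sharp$ and that sends the generator of $\F(I)$ at level $i$ to $p_{j,i}(v)$ is an isomorphism of persistence modules $W\oplus\F(I)\xrightarrow{\sim}W_\sharp$: commutation with the transition maps is checked level by level, and at level $m$ it holds because both the generator of $\F(I)$ and $p_{j,m}(v)$ map to $0$.

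I expect the middle step --- the choice and correction of the vector --- to be the real obstacle. The naive move is to follow $v_0$ forward and declare the bar to stop at $a_m$, but then the top level $\langle p_{j,m}(v_0)\rangle$ maps into $W^{m+1}$ rather than to $0$, and $\F(I)$ is not split off as a direct summand; replacing $v_0$ by $v_0-w$ so the new bar dies to exactly $0$ repairs this, and it is possible only because of the surjectivity half of semi-surjectivity.
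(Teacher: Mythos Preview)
Your proof is correct and follows essentially the same route as the paper: pick an element at the first level where $W\subsetneq V$, push it forward until it enters $W$, then use the surjectivity clause to correct it so that it dies to exactly zero (the paper's $y^i=z^i-x^i$ is precisely your $v=v_0-w$), and split off the resulting interval summand. The only cosmetic difference is that the paper builds the interval submodule $P$ from explicit pointwise representatives $(y^k)_s\in V_s$, whereas you work directly on the level spaces $V^i$; these are equivalent since the persistence module is determined by its level spaces and the maps $p_{i,i'}$.
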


\begin{proof}[Proof of \Cref{lem: semi_surj_submodule}]
	Since $W \subseteq V$ is a semi-surjective submodule, we have $W_t=V_t$ for $t\leq r$ up to some $r$, and hence also $W^i = V^i$ up to some index.
	Take the minimal $i$ for which $W^i \subsetneq V^i$ and choose $z^i \in V^i \setm W^i$.
(Note that looking back at the representatives in the persistence modules, this means that the smallest value of $t$ for which $W_t \subsetneq V_t$ is $a_{i-1}$.)\\

\noindent
Set $z^k = p_{i,k} (z^i) \in V^k$ for $k>i$. Two cases are possible:
\begin{enumerate}
	\item
		For all $k>i$, $z^k\notin W^k$. (This case corresponds to an infinite interval $I$ that starts at $a_{i-1}$.)
	\item
		Otherwise, there exists some $k>i$ for which $z^k$ falls into $W^k$.
		(This case corresponds to adding a finite interval $I$.)
\end{enumerate}

 We will describe the rest of the construction for the second case, and then comment on the first case.\\

Choose the minimal $j > i$ for which $z^j \in W^j$.
Since $p_{i,j} : W^i \to W^{j}$ is onto, there is an $x^i \in W^i$ such that
$p_{i,j} (z^i) = z^j = p_{i,j} (x^i)$. Put $y^i = z^i - x^i$
From now on, we shall work with $y^i$ instead of $z^i$. See a diagram below.

\begin{center}
	\begin{tikzpicture}[xscale = 2]
	\node (V1) at 	(0,1) {$V^1$};
	\node (dts11) at (1,1) {$\ldots$};
	\node (Vi) at 	(2,1) {$V^i$};
	\node (dts12) at (3,1) {$\ldots$};
	\node (Vj) at 	(4,1) {$V^j$};
	\node (Vj+1) at (5,1) {$V^{j+1}$};
	\node (dts13) at (6,1) {$\ldots$};
	\node (zi) at 	(2,0) {$z^i$};
	\node (dts22) at (3,0) {$\ldots$};
	\node (zj) at 	(4,0) {$z^j$};
	\node (W1) at 	(0,-1) {$W^1$};
	\node (dts31) at (1,-1) {$\ldots$};
	\node (Wi) at 	(2,-1) {$W^i$};
	\node (dts32) at (3,-1) {$\ldots$};
	\node (Wj) at 	(4,-1) {$W^j$};
	\node (Wj+1) at (5,-1) {$W^{j+1}$};
	\node (dts33) at (6,-1) {$\ldots$};
	\node (yi) at 	(2,-2) {\small{$0\neq y^i = z^i - x^i$}};
	\node (dts42) at (3,-2) {$\ldots$};
	\node (zero) at (4,-2) {$y^j = 0$};
	\node (zero2) at (5,-2) {$0$};
	\node (zero3) at (6,-2) {$\ldots$};
	\path[->,font=\scriptsize,>=angle 90]
	(V1) edge node[above]{} (dts11)
	(dts11) edge (Vi)
	(Vi) edge (dts12)
	(dts12) edge (Vj)
	(Vj) edge (Vj+1)
	(Vj+1) edge (dts13)
	(zi) edge[draw=none] node [sloped] {$\mapsto$} (dts22)
	(dts22) edge[draw=none] node [sloped] {$\mapsto$} (zj)
	(W1) edge node[above]{} (dts31)
	(dts31) edge (Wi)
	(Wi) edge (dts32)
	(dts32) edge (Wj)
	(Wj) edge (Wj+1)
	(Wj+1) edge (dts33)
	(yi) edge[draw=none] node [sloped] {$\mapsto$} (dts42)
	(dts42) edge[draw=none] node [sloped] {$\mapsto$} (zero)
	(zero) edge[draw=none] node [sloped] {$\mapsto$} (zero2)
	(zero2) edge[draw=none] node [sloped] {$\mapsto$} (zero3)
	%
	(zi) edge[draw=none] node [sloped, allow upside down] {$\in$} (Vi)
	(zi) edge[draw=none] node [sloped, allow upside down] {$\notin$} (Wi)
	(zj) edge[draw=none] node [sloped, allow upside down] {$\in$} (Vj)
	(zj) edge[draw=none] node [sloped, allow upside down] {$\in$} (Wj);
	\end{tikzpicture}
\end{center}
Note that $p_{i,k}(y^i) \notin W^k$ for all $i<k<j$ (since $j$ is the minimal index after $i$ for which $z^j$ lands at $W^j$).
Also, $p_{i,j} (y^i) = 0$, by linearity of $p_{i,j}$, and hence $p_{i,k} (y^i) = (p_{j,k} \circ p_{i,j}) (y^i) = 0$ for all $k\geq j$). That is, $y^j$ is where $p_{i,j} (y^i)$ vanishes for the first time (and after which it stays zero).

\noindent
Denote $y^k = p_{i,k} (y^i)$.
We shall build a submodule $P$ of $V$ using the following data: for the element $y^k \in V^k$, which is an equivalence class, we take its representatives $(y^k)_s \in V_s$ for $s\in  (a_{k-1}, a_{k}]$, and construct:
$$
P_s = \left\{
	\begin{array}{ll}
		0  					& s \notin (a_{i-1}, a_{j-1}]  \\
		\spn_{\F}((y^k)_s) 	& s \in (a_{k-1}, a_{k}] \subseteq (a_{i-1},a_{j-1}],\ k=i, \ldots, j-1
	\end{array}
		\right. \;,
$$
where the persistence morphisms are induced from the morphisms $\pi^V_{s,t}$ of $V$, i.e.
$$
\pi^P_{s,t} = \left\{
\begin{array}{ll}
	\pi^V_{s,t}  	& s, t \in (a_{i-1}, a_{j-1}]  \\
	0 				& \text{otherwise}
\end{array}
\right. \;.
$$

\noindent
Clearly, $P = \{P_s\}$ it is a submodule of $V$ isomorphic to $\F (a_{i-1}, a_{j-1}]$.
\begin{claim} \label{claim: W_sharp_adding_submodule}
	Take $W_{\sharp} = W + P$. Then:
\begin{enumerate}
	\item
		$W_{\sharp} = W \oplus P$,
	\item
		$W_{\sharp}$ is a semi-surjective submodule of $V$.
\end{enumerate}
\end{claim}

\noindent
This finishes the proof of \Cref{lem: semi_surj_submodule} for the second case.\\

For the first case, i.e. if $z^j \notin W^j$ for all $j > i$, we shall build a suitable submodule $P$ using $z^i$ instead. We take a submodule $P$ which corresponds to $I = (a_{i-1}, +\infty)$ in a manner similar to the previous case, by setting
$$
P_s = \left\{
\begin{array}{ll}
0  					& s \leq a_{i-1}  \\
\spn_{\F}((z^k)_s) 	& s \in (a_{k-1}, a_{k}] \subseteq (a_{i-1}, +\infty),\ k= i, i+1, \ldots
\end{array}
\right. \;.
$$
Then $W_{\sharp} = W \oplus P$ is again a semi-surjective submodule of $V$ and $P = \{P_s\}$ is isomorphic to $\F (a_{i-1}, +\infty)$, thus finishing the proof for the first case of \Cref{lem: semi_surj_submodule}.
\end{proof}

\begin{proof}[Proof of \Cref{claim: W_sharp_adding_submodule}]
\begin{enumerate}
	\item
We need to show that for every $s\in \R$, $W_s \cap P_s = \{0\}$.
Note that if $s \notin (a_{i-1}, a_{j-1}]$, then $P_s = \{0\}$, and hence clearly $W_s \cap P_s = \{0\}$.
Let $s\in (a_{k-1}, a_{k}] \subset (a_{i-1}, a_{j-1}]$. We only have to show that $V_s \ni (y^k)_s \notin W_s$.\\
Assume on the contrary that $(y^k)_s \in W_s$.
Take $r = \sup \{ t\ :\ W_s=V_s \ \forall s\leq t \}$, which satisfies the definition of semi-surjectivity of $W$ (in fact, $r=a_{i-1}$ in the notations of the proof of \Cref{lem: semi_surj_submodule}).
Then for every $r \leq a_{k-1} < t < s$ there is an element $w_t \in W_t$ which satisfies $\pi_{t,s} (w_t)=(y^k)_s$.
Consider the element $\til w \in W^k$ whose representatives in each $W_t$ are:
$$
	(\til w)_t = \left\{
	\begin{array}{ll}
	w_t  					& a_{k-1} < t < s  \\
	(y^k)_s 	& t=s \\
	\pi_{st} ((y^k)_s) & s < t \leq a_{k}
	\end{array}
	\right. \;.
$$
Note that $\tilde{w}$ is well-defined, in the sense that its definition is consistent with the persistence morphisms of $W$. In fact, $\til w = y^k$, hence $y^k \in W^k$.
But this conclusion contradicts the minimality of $j$. Hence $(y^k)_s \notin W_s$ for all $s\in (a_{i-1}, a_{j-1}]$.

	\item
	First of all, let us note that $W_\sharp$ is a submodule of $V$, as it is a direct sum of two submodules of $V$.
	Denote by $\pi_{s,t}^{P}$ the persistence morphisms of the persistence module $P = \F(a_i, a_j]$.
Let $r$ be as in the proof of the first part. Then by definition and \Cref{exr: r_semi_surj_witness_supremal}, for any $t\leq r$ we have $W_t = V_t$. Since for $t<r=a_{i-1}$ by construction $P_t=0$, we have also $(W_\sharp)_t=V_t$ for all $t\leq r$.
Next, note that the persistence morphisms of $W_{\sharp}$ are obtained by taking direct sum of the morphisms of $W$ and of $P$: $\pi_{s,t}^{W} \oplus \pi_{s,t}^{P}$.
Both of these morphisms are onto for any $t>s>r$, hence also their direct sum is onto.
\end{enumerate}

\end{proof}

\begin{proof}[Proof of the Normal Form Theorem]
	
	First, the existence of the described decomposition follows from \Cref{lem: semi_surj_submodule}.
	Indeed, take $W(0)=\{0\}$ and inductively build a sequence $W(i)$ of semi-surjective submodules by taking $W(i+1)=W(i)_{\sharp}$ from \Cref{lem: semi_surj_submodule}. At each step, we increase the total dimension of $W(i)$ (at least by $1$), hence this process will terminate when we reach $\Totaldim V$.\\

	Let us now show uniqueness of the decomposition. (See another proof at the end of this section.)
	
	\noindent
	Recall from \Cref{exr: spec_pm_iso_invariant} that the spectrum of a persistence module is isomorphism invariant, hence given a persistence module $V$, the set $\Spec(V)$ determines the end-points of the intervals $I$ that should appear in its Normal Form decomposition (see also \Cref{exr: spec_of_intervals_directsum}). Hence we only have to show that given $V$ it is possible to reconstruct the multiplicities of the intervals in such a decomposition uniquely.
	
	Let $\calB = \{ (I_i, m_i) \}$ be a barcode satisfying \Cref{thm: normal_form_thm} for $V$, that is, $V = \bigoplus_i \F(I_i)^{m_i}$.
	Consider all of their end-points $a_1 < a_2 <\ldots < a_N < a_{N+1}=+\infty$
	(noting that $a_1, \ldots, a_{N+1}$ form the spectrum of $V$).
	
	Denote by $\hat \calB$ the collection of all intervals of the form $I_{ij}=(a_i, a_j]$ for $1\leq i<j\leq N+1$, with multiplicities $\hat m_{ij}$, where $\hat m_{ij} = m_{ij}$ if $I_{ij}$ is present in $\calB$ and $0$ otherwise.
	
	In order to prove uniqueness of the decomposition, we shall recover the multiplicities $m_{ij}$ that correspond to $V$. Let us consider the limit persistence module ${V^i}$ with the natural morphisms $p_{i,j}: V^i \to V^j$.
	Denote $b_{ij} = \rk p_{i,j}$,
	setting also $p_{i,j} = 0$ if $i\leq0$ or $j>N+1$.

	Every interval in $I_{\alpha \beta}$ in $\hat \calB$ that begins before $a_i$ and ends after or at $a_j$ contributes $m_{\alpha \beta}$ to $b_{i,j}$, hence we have
	\begin{equation} \label{eq: b_ij_formula}
		b_{ij} = \sum_{\alpha < i,\ \beta \geq j} m_{\alpha \beta} = \sum_{\alpha \leq i-1,\ \beta \geq j} m_{\alpha \beta} \;.
	\end{equation}
	From this expression, one obtains the following formula for $m_{ij}$,
	\begin{equation} \label{eq: m_ij_formula_from_ranks}
		m_{ij} = b_{i+1,j} + b_{i,j+1} - b_{i,j} - b_{i+1,j+1} \;,
	\end{equation}
	
	\noindent	
	thus reconstructing the multiplicities from the data encapsulated in the collection $\{V^i\}$ that corresponds to $V$.
\end{proof}

\begin{exr}
	Verify that (\ref{eq: m_ij_formula_from_ranks}) follows from (\ref{eq: b_ij_formula}).
	
\end{exr}

\begin{exm}
	As an illustration of (\ref{eq: m_ij_formula_from_ranks}), one can consider the persistence module $\F(a_1, +\infty)$. Then $\Spec \left( \F(a_1, +\infty) \right) = \{a_1, a_2=+\infty\} $, $V^1 = {0}$, $V^2 = \F$, and indeed we have
	$$
		m_{12} = b_{22} + b_{13} - b_{23} - b_{12} = 1 \;.
	$$
	(Only $b_{22} = 1$ is non-zero in the expression for $m_{12}$. See \Cref{fig: one_infinite_bar_spec}.)
	\begin{figure}[!ht]
		\centering
		\includegraphics[scale=1]{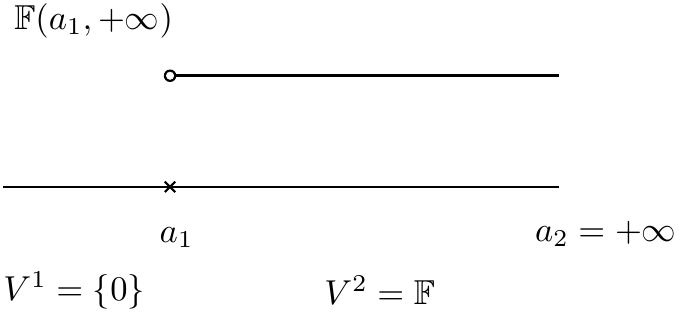}
		\caption{The persistence module $\F (a_1, +\infty)$.}
		\label{fig: one_infinite_bar_spec}
	\end{figure}
\end{exm}

For the sake of completeness, let us present a second proof of the uniqueness of the Normal Form. The argument below is taken from \cite{Chazal_DeSila_Glisee_Oudot}, and is a baby version of the Krull--Remak--Schmidt--Azumaya Theorem \cite{azumaya}. It relies on the following property of interval modules.

\begin{exr}\label{exr:end_interval_module}
Let $I$ be an interval, and consider the persistence module $\F(I)$. Prove that its endomorphism ring is isomorphic to $\F$.
\end{exr}	
	
\begin{proof}[An alternative proof of the uniqueness in the Normal Form theorem]
%
%
%

	Suppose that we have two isomorphic persistence modules: $V = \bigoplus_{i=1}^N \F(I_i)$ and $W = \bigoplus_{j=1}^L \F(J_j)$. We want to show that $N=L$, and that the two collections of intervals are the same up to permutation.
	Suppose that $f : V \to W$ is an isomorphism and $g : W \to V$ is its inverse.
	The proof proceeds by induction on $N$, the base case $N=0$ being trivial.
	Suppose that the claim holds for $N-1$. We shall prove that for $I_1$ there exists $1 \leq j\leq L$ so that $I_1 = J_j$.
	Consider the following compositions for each $1 \leq j \leq L$:
	\begin{align*}
	f_j : \F(I_1) \into V \xrightarrow{\,\,\, f \,\,\, } W \twoheadrightarrow \F(J_j) \,\,\text{ and }\,\,
	g_j : \F(J_j) \into W \xrightarrow{\,\,\, g \,\,\, } V \twoheadrightarrow \F(I_1) \;.
	\end{align*}
	Here the first arrow in each composition is a natural inclusion and the last arrow in each composition is the natural projection ($\twoheadrightarrow$ denotes a surjection). By definition,
	\begin{equation}\label{eq: sum_of_comp_directsum_is_identity}
		\sum_j g_j \circ f_j = \id_{\F(I_1)} \;.
	\end{equation}
	
	In particular, at least one of the summands, which by reordering we may assume to be $g_1 f_1$, is non-zero. By \Cref{exr:end_interval_module}, the only non-invertible element in the endomorphism ring of $\F(I_1)$ is the zero endomorphism. Therefore $g_1 f_1$ is invertible, and it easily follows that $\F(J_1) \simeq \F(I_1)$, and hence clearly $J_1 = I_1$. We also get $\oplus_{i=2}^N \F(I_i) \simeq \oplus_{j=2}^L \F(J_j)$, and so by the induction hypothesis, $L=N$ and, up to reordering, $J_i= I_i$ for $2 \leq i \leq N$. This completes the proof.
\end{proof}

	Let us explain (\ref{eq: sum_of_comp_directsum_is_identity}).
	Denote by $e_1 : \F (I_1) \into V$ and $\iota_j : \F (J_j) \into W$ the natural embeddings, and by $p_j : W \to  \F (J_j)$ and $\pi_1 : V \to \F (I_1)$ the natural projections.
	Let $v \in \F (I_1)$ be a vector and denote $y = \left( f \circ e_1 \right) (v) \in W$. Then
	\begin{align*}
		\left( \sum_j g_j \circ f_j \right)(v) & =
			\sum_j g_j \circ p_j \circ f \circ e_1 \ (v) = \sum_j g_j \circ p_j \ (y) =\\
			& = \sum_j \pi_1 \circ g \circ \iota_j \circ p_j \ (y) =
				\pi_1 \circ g \circ \left( \sum_j \iota_j \circ p_j \right) (y) = \\
			& = \left( \pi_1 \circ g \right) (y) = \left( \pi_1 \circ g \circ f \circ e_1 \right) (v) = \left( \pi_1 \circ e_1 \right) (v) = v \;.
	\end{align*}

\section{Bottleneck distance and the Isometry theorem}
Let us introduce a distance on the space of barcodes.
Given an interval $I = (a,b]$, denote by $I^{-\delta} = (a-\delta, b+\delta]$ the interval obtained from $I$ by expanding by $\delta$ on both sides.
Let $\calB$ be a barcode. For $\epsi > 0$, denote by ${\calB}_\epsi$ the set of all bars from $\calB$ of length greater than $\epsi$. (That is, by considering ${\calB}_\epsi$ we neglect ``short bars".)

A \emph{matching} between two finite multi-sets $X,Y$ is a bijection $\mu : X' \to Y'$, where $X' \subset X,\ Y' \subset Y$. In this case, $X' = \coim \mu,\ Y' = \im \mu$, and we say that elements of $X'$ and $Y'$ are \emph{matched}. If an element appears in the multi-set several times, we treat its different copies separately, e.g. it could happen that only part of its copies are matched.

\begin{defn} \label{defn: delta_matching}
	A \emph{$\delta$-matching} between two barcodes $\calB$ and $\calC$ is a matching $\mu : {\calB}\to {\calC}$, such that:
	\begin{enumerate}
		\item
			${\calB}_{2\delta} \subset \coim \mu$,
		\item
			${\calC}_{2\delta} \subset \im \mu$ ,
		\item
			If $\mu (I) = J$, then $I \subset J^{-\delta},\ J \subset I^{-\delta}$.
	\end{enumerate}
\end{defn}

\begin{exr}
	Show that if $\calB, \calC$ are \emph{$\delta$-matched} (i.e., there is a $\delta$-matching between them) and $\calC, \calD$ are $\gamma$-matched, then $\calB, \calD$ are $(\delta+\gamma)$-matched.
\end{exr}

\begin{defn} \label{defn: bottleneck_distance}
	The \emph{bottleneck distance}, $d_{bot} (\calB, \calC)$, between two barcodes $\calB, \calC$ is defined to be the infimum over all $\delta$ for which there is a $\delta$-matching between $\calB$ and $\calC$.
\end{defn}

\begin{exr}
	Two barcodes $\calB$ and $\calC$ are $\delta$-matched with a finite $\delta$ if and only if they have the same number of infinite rays.
\end{exr}

\begin{cor}
	$d_{bot}$ is a distance on the space of barcodes with the same amount of infinite rays.
\end{cor}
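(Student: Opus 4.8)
The plan is to verify the three metric axioms for $d_{bot}$ on the set of barcodes having a fixed (finite) number of infinite rays. The symmetry $d_{bot}(\calB,\calC) = d_{bot}(\calC,\calB)$ is immediate, since the inverse of a $\delta$-matching $\mu : \calB \to \calC$ is a $\delta$-matching $\mu^{-1} : \calC \to \calB$ (conditions (1)--(3) in \Cref{defn: delta_matching} are manifestly symmetric in $\calB$ and $\calC$). The triangle inequality $d_{bot}(\calB,\calD) \leq d_{bot}(\calB,\calC) + d_{bot}(\calC,\calD)$ follows from the exercise just above \Cref{defn: bottleneck_distance}: if $\calB,\calC$ are $\delta$-matched and $\calC,\calD$ are $\gamma$-matched, then $\calB,\calD$ are $(\delta+\gamma)$-matched; taking infima over all such $\delta$ and $\gamma$ and using that the relevant infima are approached gives the claim. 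Finiteness of $d_{bot}(\calB,\calC)$ for barcodes with the same number of infinite rays is exactly the content of the preceding exercise, so the pair is always $\delta$-matched for some finite $\delta$, and the infimum defining $d_{bot}$ is over a nonempty set.

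The only axiom requiring real work is non-degeneracy: I must show $d_{bot}(\calB,\calC) = 0$ implies $\calB = \calC$ as multisets. Suppose $d_{bot}(\calB,\calC)=0$. Then for every $n$ there is a $\delta_n$-matching $\mu_n : \calB \to \calC$ with $\delta_n \to 0$. Since $\calB$ and $\calC$ are finite multisets, there are only finitely many possible matchings (as bijections between sub-multisets), so by pigeonhole one matching $\mu$ occurs for infinitely many $n$ with $\delta_n \to 0$; hence $\mu$ is a $\delta$-matching for every $\delta > 0$. I would then argue: (i) condition (3) forces, for each matched pair $\mu(I) = J$, that $I \subseteq J^{-\delta}$ and $J \subseteq I^{-\delta}$ for all $\delta > 0$, whence $I = J$ by letting $\delta \to 0$ (the endpoints of $I$ and $J$ differ by at most $\delta$); (ii) conditions (1) and (2) force every unmatched bar of $\calB$ or $\calC$ to have length $\leq 2\delta$ for all $\delta > 0$, i.e.\ length $0$ — but a bar $(a,b]$ has positive length by definition ($a < b$), so there are no unmatched bars. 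Thus $\mu$ is a bijection between all of $\calB$ and all of $\calC$ sending each bar to an equal bar, proving $\calB = \calC$.

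The main obstacle is the non-degeneracy step, and within it the subtlety is that the $\delta$-matchings $\mu_n$ realizing $\delta_n \to 0$ need not be the *same* matching a priori — one must extract a single matching valid for all $\delta>0$, which the finiteness of the barcodes makes possible via pigeonhole. One should also double-check the edge case of infinite rays: condition (3) applied to a matched pair of infinite rays $(a,\infty)$ and $(c,\infty)$ gives $|a - c| \leq \delta$ for all $\delta$, hence $a = c$; and unmatched infinite rays are ruled out because conditions (1)--(2) would demand an infinite ray have length $\le 2\delta$, which is absurd (consistent with the preceding exercise that a finite matching exists iff the two barcodes have equally many infinite rays). Everything else is routine bookkeeping with \Cref{defn: delta_matching}.
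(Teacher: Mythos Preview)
Your proof is correct. The paper itself does not give a proof of this corollary: it states the result as an immediate consequence of the preceding exercises (composition of matchings gives the triangle inequality; existence of a finite matching iff the numbers of infinite rays agree gives finiteness), and in fact defers the non-degeneracy statement to a later exercise (\Cref{exr: d_bot_is_non_degenerate_hence_d_int_also}). Your pigeonhole argument for non-degeneracy---extracting a single matching $\mu$ from the finitely many possible ones that works for a sequence $\delta_n \to 0$, then observing that matched bars must coincide and no bar can remain unmatched---is a clean and correct way to fill in that exercise, and your treatment of the infinite-ray edge case is fine.
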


\begin{exm}
	Consider the persistence modules $\F(a,b] \text{ and } \F(c,d]$ of two intervals ($a,b,c,d\in \R$)
	and the corresponding barcodes $\calB =\{ (a,b] \}$ and $\calC = \{ (c,d] \}$.
	Then there is either an empty $\delta$-matching between them for $\delta = \max \big( \frac{b-a}{2}, \frac{d-c}{2} \big)$ (as then the lengths of both intervals do not exceed $2\delta$), or a matching $(a,b] \to (c,d]$ for $\delta = \max (|a-c|, |b-d|)$.
	Therefore  $d_{bot} (\calB, \calC ) \leq \min \Big( \max \big( \frac{b-a}{2}, \frac{d-c}{2} \big), \max (|a-c|, |b-d|) \Big)$,
	(cf. \Cref{claim: _int_distance_two_intervals}).
\end{exm}

\begin{exr} \label{exr: bot_to_interl_for_two_intervals}
	Let $I$, $J$ be two $\delta$-matched intervals. Show that the corresponding interval modules $\F (I)$ and $\F (J)$ are $\delta$-interleaved.
\end{exr}

Recall that we denote by $\calB (V)$ the barcode corresponding to a persistence module $V$, as given by \Cref{thm: normal_form_thm}.

\begin{thm}[Isometry Theorem] \label{thm: isometry_thm}
	The map $V \mapsto \calB (V)$ is an isometry, i.e.\ for any two persistence modules $V, W$, we have
	$
		d_{int} (V,W) = d_{bot} (\calB (V), \calB(W)) \;.
	$
\end{thm}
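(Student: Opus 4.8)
The plan is to prove the two inequalities $d_{int}(V,W) \le d_{bot}(\calB(V),\calB(W))$ and $d_{bot}(\calB(V),\calB(W)) \le d_{int}(V,W)$ separately, since the equality splits naturally into an ``easy'' direction and a ``hard'' direction.

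\textbf{The easy direction: $d_{int} \le d_{bot}$.} First I would reduce to matched and unmatched bars. Suppose $\mu$ is a $\delta$-matching between $\calB(V)$ and $\calB(W)$. By the Normal Form Theorem \ref{thm: normal_form_thm}, $V \cong \bigoplus_i \F(I_i)$ and $W \cong \bigoplus_j \F(J_j)$. Decompose both direct sums according to $\mu$: the matched pairs $(I,\mu(I))$, the unmatched bars of $\calB(V)$, and the unmatched bars of $\calB(W)$. For a matched pair, Exercise \ref{exr: bot_to_interl_for_two_intervals} gives that $\F(I)$ and $\F(\mu(I))$ are $\delta$-interleaved. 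For an unmatched bar $I$ of $\calB(V)$: by conditions (1) and (2) of Definition \ref{defn: delta_matching}, $I \notin \calB(V)_{2\delta}$, so $I$ has length $\le 2\delta$, and then (as in case \RN{2} of Claim \ref{claim: _int_distance_two_intervals}) the shift morphism $\Phi^{2\delta}_{\F(I)}$ vanishes, so $\F(I)$ is $\delta$-interleaved with the zero module; similarly for unmatched bars of $\calB(W)$. Finally I would observe that $\delta$-interleavings add up under direct sums — if $V = \bigoplus V_\alpha$, $W = \bigoplus W_\alpha$ and each pair $(V_\alpha, W_\alpha)$ is $\delta$-interleaved via $(F_\alpha, G_\alpha)$, then $(\bigoplus F_\alpha, \bigoplus G_\alpha)$ is a $\delta$-interleaving of $V$ and $W$ — so assembling the pieces yields a $\delta$-interleaving of $V$ and $W$. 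Taking infima over $\delta$ gives the inequality.

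\textbf{The hard direction: $d_{bot} \le d_{int}$.} This is the substantive part, and following Bauer--Lesnick the key device is the notion of \emph{induced matching} coming from a careful analysis of images of morphisms. Given a $\delta$-interleaving $(F,G)$ between $V$ and $W$, one considers the morphism $\Phi^{2\delta}_V = G[\delta]\circ F$ and its image submodule, and one shows that passing to an image of a morphism changes the barcode in a controlled way: a monomorphism $V \hookrightarrow V'$ induces a canonical injective matching of $\calB(V)$ into $\calB(V')$ that only moves right endpoints to the right, and an epimorphism $V' \twoheadrightarrow V$ induces one moving left endpoints; combining, the barcode of $\im \Phi^{2\delta}_V$ is obtained from $\calB(V)$ by deleting bars of length $\le 2\delta$ and shrinking the survivors by at most $2\delta$. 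One then factors $F$ through these image modules on both sides and, chasing the interleaving triangles, produces from the induced matchings a genuine $\delta$-matching between $\calB(V)$ and $\calB(W)$. Since the Isometry Theorem is cited as being proved in Chapters \ref{chp2_barcodes} and \ref{chp: isometry_thm_proof} ``following the approach by U.~Bauer and M.~Lesnick'', I expect the text to set up precisely this machinery of induced matchings, and the main obstacle — the part requiring real work rather than bookkeeping — is establishing the structure theorem for how barcodes transform under injections and surjections of persistence modules, which in turn rests on the Normal Form Theorem together with the rank formula \eqref{eq: m_ij_formula_from_ranks} to track multiplicities. Once that is in place, the reduction from interleaving to matching is a diagram chase, and taking infima closes the argument.
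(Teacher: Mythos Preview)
Your proposal is correct and follows precisely the paper's approach: the easy direction is proved exactly as in Theorem \ref{thm: converse_algebraic_stability_aka_easier_half_of_isometry_thm}, and the hard direction is the Bauer--Lesnick induced-matching argument carried out in Chapter \ref{chp: isometry_thm_proof} (factor the interleaving morphism $f$ as a surjection onto $\im f$ followed by an injection into $W[\delta]$, build $\mu(f)=\mu_{inj}\circ\mu_{sur}$, and use the interleaving identities to bound how far endpoints move). One small slip worth correcting: under an injection the induced matching fixes \emph{right} endpoints and can only move left endpoints to the left (Proposition \ref{prop: coim_mu_inj}), while under a surjection it fixes \emph{left} endpoints and can only move right endpoints to the left (Proposition \ref{prop: im_mu_sur}) --- you have these swapped in your sketch.
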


A proof will be given in \Cref{chp: isometry_thm_proof}.

\begin{exr} \label{exr: d_bot_is_non_degenerate_hence_d_int_also}
	Prove that for any two barcodes $\calB$ and $\calC$ we have $d_{bot} (\calB, \calC) = 0$ if and only if $\calB = \calC$.\\
	Deduce that $d_{int} (V,W) = 0$ if and only if $V = W$.
\end{exr}

\section{Proper persistence modules}\label{sec-prop-pm}
For applications to manifold learning in Section \ref{subsec: Cech_VS_Rips} and to symplectic topology in \Cref{chp8_symplectic_persistence_modules} we shall need a slightly
more sophisticated version of persistence modules than the one we discussed so far. A family of finite-dimensional vector spaces and morphisms is called a {\it proper persistence module} if it satisfies items (1) (persistence) and (3) (semicontinuity) of Definition \ref{defn: pm} while item (2) is modified as follows: the set of exceptional points (i.e., spectral points, see Definition \ref{def-spectral}) is assumed to be a closed discrete bounded from below subset of $\R$ (but not necessarily finite, as in the original definition). Let us emphasize that we do not assume anymore item (4) of
Definition \ref{defn: pm}, that is, the spaces $V_{-t}$ may not vanish for $t$ sufficiently large.
However, since the space of exceptional points is bounded from below, these spaces are pair-wise isomorphic.

We also have to modify accordingly Definition \ref{defn_barcode} of a barcode. A {\it proper barcode} is
a countable collection of bars of the form $(a,b]$, $-\infty \leq a < b \leq +\infty$ with multiplicities
such that
\begin{itemize}
\item for every $c \in \R $ the number of bars (with multiplicities)  containing $c$ is finite;
\item the real endpoints of the bars form a closed discrete bounded from below subset of $\R$.
\end{itemize}
Let us emphasize that in contrast to the original definition we allow (a finite number of) bars of the form $(-\infty,+\infty)$ and $(-\infty, b]$. The theory developed in this chapter (the normal form theorem and the isometry theorem) easily extends  to proper persistence modules and proper barcodes. \footnote{In fact, it extends even further to so called point-wise finite dimensional persistence modules which we do not discuss in this book, see e.g. \cite{Bauer_Lesnick_13-16}.}

\medskip
\noindent\begin{exr}\label{exr-normal-form-proper}
{\rm Prove the analogue of the normal form theorem (Theorem \ref{thm: normal_form_thm}) for proper persistence
modules along the following lines. Let $(V,\pi)$ be a proper persistence module. Write $a_i$, $i \geq 0$
for the points of its spectrum and define vector spaces $V^i$ associated to the interval $(a_{i-1},a_i]$ as (\ref{dfn-vi}) in Section \ref{sec-normal-form}.
Note that for proper persistence modules the spectrum could be infinite, in which case
the total dimension  $\Totaldim (V)$ is not defined anymore. We shall go round this difficulty as follows.
Put
$$\Totaldim_k(V) = \sum_{a_i \leq k}V^i, \;\; k \in \N\;.$$  Define a submodule $W^0 \subset V$  by
$W^0_t = {\rm{im}}(\pi_{-\infty,t})$,  where $\pi_{-\infty,t}$ stands for $\pi_{-s,t}$ with  $s$ sufficiently large.
It is easy to see that the normal form theorem holds for $W^0$. Its barcode $\calB^0$ consists of rays
of the type $(-\infty, b)$ for some $-\infty < b \leq +\infty$.

Next, starting with $W^0$, apply the algorithm whose step is described in the proof of Lemma \ref{lem: semi_surj_submodule}.
At the $j$-th step we get a semi-surjective submodule
$$W^{j}=W^{j-1} \oplus \mathbb F(c_j,d_j]$$ with $c_j > -\infty$.
This eventually yields an increasing sequence of semi-surjective persistence submodules $W^0 \subset W^1 \subset \dots$. Our algorithm guarantees that for given $k \in \N$, at each step of this process $\Totaldim_k(W^j)$
increases with $j$ until it reaches $\Totaldim_k(V)$. Roughly speaking, this means that for every $k \in \N$, the sequence of persistence modules $W^j$ stabilizes on $(-\infty, k]$ for sufficiently large $j$. In particular, this procedure
yields a proper barcode $\calB = \calB^0 \oplus \mathbb F(c_j,d_j]$. It follows that $V = \oplus_{I \in \calB} \mathbb F(I)$,
which completes the proof.
}
\end{exr}

\medskip

Consider now the space of proper barcodes equipped with the bottleneck distance which is defined
exactly as before. We say that two barcodes are equivalent if the bottleneck distance between them is
finite. We do not know a transparent description of the space of equivalence classes.

\medskip
\noindent\begin{exm}\label{exm-geodesics}{\rm Let $(M,g)$ be a closed Riemannian manifold. For $a \in \R$, denote by
$\Lambda^a M$ the space of smooth loops $\gamma: S^1 \mapsto M$ with $\text{length}_g(\gamma) < e^a$.
For a generic metric $g$, the homology $H_*(\Lambda^a M,\mathbb{F})$ with coefficients in a field $\mathbb{F}$ form a
proper persistence module denoted by $V(M,g)$. Note that since for any other metric $g'$ on $M$ there exists
a constant $C>0$ such that $C^{-1}g \leq g' \leq Cg$, the interleaving distance between
the persistence modules $V(M,g)$ and $V(M,g')$ is $\leq \frac{1}{2}\log(C)$. It follows that the equivalence class
of the barcode of $V(M,g)$ is a topological invariant of the manifold (see \cite{Weinberger-alternance}).
We refer to \cite{Weinberger-alternance} for unexpected applications of $V(M,g)$ to variational theory of geodesics.
}
\end{exm}

\chapter{Proof of the Isometry theorem} \label{chp: isometry_thm_proof}
In this chapter we give a proof of \Cref{thm: isometry_thm}. We closely follow \cite{Bauer_Lesnick_13-16}, see also a historical exposition therein.

Note that one of the directions admits a quick proof using the Normal Form theorem (\Cref{thm: normal_form_thm}):
\begin{thm}
	\label{thm: converse_algebraic_stability_aka_easier_half_of_isometry_thm}
	Let $V$ and $W$ be persistence modules. If there is a $\delta$-matching between their barcodes, then $V$ and $W$ are $\delta$-interleaved. In particular,
	$d_{int} (V,W) \leq d_{bot} \big( \calB (V), \calB (W) \big)$.
\end{thm}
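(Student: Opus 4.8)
The plan is to reduce the statement about arbitrary persistence modules to the case of interval modules, using the Normal Form Theorem (\Cref{thm: normal_form_thm}) to write $V = \bigoplus_{i} \F(I_i)$ and $W = \bigoplus_j \F(J_j)$, and then to exploit the fact that a $\delta$-matching is, almost by definition, a device for pairing up bars of $V$ with bars of $W$ together with a collection of leftover bars that are individually $\delta$-close to the zero module. So the two building blocks I would establish first are: (a) if $I$ and $J$ are $\delta$-matched intervals then $\F(I)$ and $\F(J)$ are $\delta$-interleaved (this is exactly \Cref{exr: bot_to_interl_for_two_intervals}); and (b) if $I$ is a bar of length $\leq 2\delta$, i.e. a bar that need not be matched, then $\F(I)$ is $\delta$-interleaved with the zero module $0$ — this is the content of the second interleaving strategy in \Cref{claim: _int_distance_two_intervals}, where the $2\delta$-shift morphism vanishes because $(a,b]\cap(a-2\delta,b-2\delta]=\emptyset$, so the zero morphisms serve as $\delta$-interleaving morphisms.

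**Next I would assemble these pieces additively.** Let $\mu$ be a $\delta$-matching between $\calB(V)$ and $\calB(W)$. Partition the bars of $\calB(V)$ into those in the coimage of $\mu$ and those not; by \Cref{defn: delta_matching}(1) every unmatched bar of $V$ has length $\leq 2\delta$, hence is $\delta$-interleaved with $0$ by (b). Symmetrically for $W$ using \Cref{defn: delta_matching}(2). For a matched pair $I \mapsto J$, condition \Cref{defn: delta_matching}(3) says precisely that $I$ and $J$ are $\delta$-matched intervals, so $\F(I)$ and $\F(J)$ are $\delta$-interleaved by (a). Now I need the elementary but essential observation that $\delta$-interleaving is compatible with direct sums: if $\F(I_i)$ and $\F(J_{\mu(i)})$ are $\delta$-interleaved via morphisms $F_i, G_i$ for each $i$ in an index set, then $\bigoplus_i \F(I_i)$ and $\bigoplus_i \F(J_{\mu(i)})$ are $\delta$-interleaved via $\bigoplus_i F_i$ and $\bigoplus_i G_i$ — because the defining commutative triangles in \Cref{defn: interleaved_pm_interleaving_morph} and the shift morphisms $\Phi^{2\delta}$ all respect direct sums. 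Taking the direct sum over the matched pairs together with the unmatched bars on each side (paired with $0$-summands on the other side), and regrouping, gives that $V = \bigoplus_i \F(I_i)$ and $W = \bigoplus_j \F(J_j)$ are $\delta$-interleaved.

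**The conclusion** $d_{int}(V,W) \leq d_{bot}(\calB(V),\calB(W))$ then follows by taking the infimum over $\delta$ admitting a $\delta$-matching: for any such $\delta$ we have shown $d_{int}(V,W)\leq\delta$, and if no finite $\delta$-matching exists the inequality is vacuous. (One might also invoke \Cref{exr: basic_properties_of_interleaved_pm}(2) to pass from $\delta$-interleaved to $\delta'$-interleaved for $\delta'>\delta$ in order to handle the infimum cleanly.)

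**The main obstacle** is really just bookkeeping rather than a deep point: one has to be careful that the index sets match up correctly — every matched bar of $V$ is paired with a matched bar of $W$ and vice versa, while the unmatched bars on each side are absorbed by padding the other module with zero summands — and one must check honestly that direct sums of $\delta$-interleavings are $\delta$-interleavings, which amounts to noting that all the structure maps in \Cref{defn: interleaved_pm_interleaving_morph} (the morphisms $F$, $G$, the shifts $\Phi^{2\delta}_V$, $\Phi^{2\delta}_W$, and the composites $G[\delta]\circ F$, $F[\delta]\circ G$) are formed componentwise under direct sum, so the two interleaving triangles for the sum are the direct sums of the interleaving triangles of the summands and hence commute. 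There is also the mild subtlety that one of the two bars in a matched pair could be ``short'' (length $\leq 2\delta$) while still being matched; this causes no trouble since \Cref{exr: bot_to_interl_for_two_intervals} already covers all $\delta$-matched pairs regardless of length.
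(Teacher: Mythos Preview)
Your proposal is correct and follows essentially the same approach as the paper: decompose $V$ and $W$ via the Normal Form Theorem, use \Cref{exr: bot_to_interl_for_two_intervals} to interleave matched interval-module pairs, interleave each unmatched (hence short) bar with the zero module, and take the direct sum of all these interleavings. The paper presents this a bit more tersely by writing $V=V_Y\oplus V_N$ and $W=W_Y\oplus W_N$ (matched versus unmatched summands) and defining $f|_{V_Y}=f_Y$, $f|_{V_N}=0$, but the content is identical to what you outline.
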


\begin{proof}
	[Proof of \Cref{thm: converse_algebraic_stability_aka_easier_half_of_isometry_thm}]
	(Following \cite{Bauer_Lesnick_13-16} and \cite{Lesnick_intdist_multidim_pm_2011})
	By the Normal Form theorem, there are two finite collections of intervals, such that
	$$
		V = \bigoplus_{I \in \calB (V)} \F (I) \ \text{ and }  W = \bigoplus_{J \in \calB (W)} \F (J) \;.
	$$
	Assume that $\mu: \calB (V) \to \calB (W)$ is a $\delta$-matching.
	In order to construct a $\delta$-interleaving between $V$ and $W$, we shall use the matched intervals and neglect the unmatched, which are relatively small.
	Denote:
	\begin{align*}
		V_Y & = \bigoplus_{I \in \coim \mu} \F (I),\ &
		W_Y & = \bigoplus_{J \in \im \mu} \F (J), \\
		V_N & = \bigoplus_{I \in \calB (V) \setm \coim \mu} \F (I),\ &
		W_N & = \bigoplus_{J \in \calB (W) \setm \im \mu} \F (J)	\;. \\
	\end{align*}
		
	Clearly, $V = V_Y \oplus V_N$ and $W = W_Y \oplus W_N$.
	Now, for any matched pair $I,J$, we know that $I \subseteq J^{-\delta}$ and $J \subseteq I^{-\delta}$, so we can choose a pair of $\delta$-interleaving morphisms
	$f_I : \F (I) \to \F(J) [\delta]$ and $g_J : \F (J) \to  \F(I) [\delta]$ (see \Cref{exr: bot_to_interl_for_two_intervals}).
	(Recall the notations $(b,c]^{-\delta} = (b-\delta, d+\delta]$ and $ \F(I)[\delta] = \F (I-\delta)$ for a $\delta$-shift of a persistence module.)
	
	\noindent
	These pairs induce a pair of $\delta$-interleaving morphisms
		$$f_Y : V_Y \to W_Y[\delta] \  \text{ and } g_Y : W_Y \to V_Y [\delta] \;. $$
	Since the intervals that are not matched by $\mu$ are of length $< 2 \delta$, $V_N$ is $\delta$-interleaved with the empty set, and so is $W_N$.
	Overall, using $f_Y$ and $g_Y$ we can produce $\delta$-interleaving morphisms between $V$ and $W$ as follows: take $f: V\to W$ to be $f \restr_{V_Y} = f_Y$, $f \restr_{V_N} = 0$ and similarly for $g: W\to V$.
\end{proof}

Let us state separately the second direction of \Cref{thm: isometry_thm}, also called the \emph{Algebraic Stability Theorem}.
\begin{thm} \label{thm: thm_algebraic_stability}
	Let $V$ and $W$ be persistence modules and $\calB (V)$, $\calB (W)$ be their barcodes.
	Then $d_{int} (V,W) \geq d_{bot} \big( \calB (V), \calB (W) \big)$.
\end{thm}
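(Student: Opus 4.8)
\textbf{Proof proposal for the Algebraic Stability Theorem (Theorem \ref{thm: thm_algebraic_stability}).}

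The plan is to show that any $\delta$-interleaving between persistence modules $V$ and $W$ can be converted into a $\delta$-matching of their barcodes; taking infima over $\delta$ then yields $d_{bot}(\calB(V),\calB(W)) \le d_{int}(V,W)$. The core difficulty is that an interleaving is a pair of morphisms of \emph{direct sums} of interval modules, and there is no reason the interleaving morphisms respect the direct sum decompositions; we cannot simply match summand-to-summand. So the argument must be more global. Following Bauer--Lesnick, I would first reduce to the case where the interleaving is injective (or surjective) by a general principle: given a $\delta$-interleaving $(F,G)$ between $V$ and $W$, one factors $F\colon V \to W[\delta]$ through its image and shows that $V$ is $\delta$-interleaved with $\im F$ and that $\im F$ is $\delta$-interleaved with $W$ in a ``one-sided'' way, thereby reducing the estimate $d_{bot}\le\delta$ to two estimates for monomorphisms/epimorphisms of persistence modules. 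Here I would use that kernels and images of morphisms are persistence submodules (Exercise \ref{exr: def_of_ker_im_are_sub_pm}) and that interleaving is transitive up to adding shifts (Exercise \ref{exr: basic_properties_of_interleaved_pm}).

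The heart of the proof is then the following statement: if $\Phi\colon V \to W$ is a morphism of persistence modules whose image $\im\Phi$ is ``$\delta$-large'' inside $W$ (precisely: $\Phi$ fits into the $\delta$-interleaving position, so $W$ is sandwiched between $V$ and $V[2\delta]$ via shift morphisms), then there is a matching of $\calB(V)$ with $\calB(W)$ realizing the bound. For an \emph{injective} morphism, one uses the Normal Form theorem (Theorem \ref{thm: normal_form_thm}) to write both modules as sums of interval modules, and then constructs the matching by an inductive/greedy procedure on the bars ordered by their left endpoints (or by length): each bar $(a,b]$ of $\calB(V)$ must map into a bar of $\calB(W)$ whose left endpoint is $\le a$ and which extends far enough to the right; the injectivity (plus the shift constraints) forces enough room that a system of distinct representatives exists. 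The existence of the matching is essentially a combinatorial lemma of Hall-type about interval systems, exploiting the total order on $\R$; the key numerical input is that the ``death'' and ``birth'' values of bars can only move by $\le\delta$ and only in a monotone fashion dictated by the commuting interleaving triangles. The epimorphism case is dual. Then one glues: the unmatched bars left over in this process are automatically of length $<2\delta$ (they are precisely the ones ``killed'' by the sandwiching shift morphism $\Phi^{2\delta}$), so conditions (1) and (2) of Definition \ref{defn: delta_matching} are met, and condition (3) is built into the construction.

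I expect the main obstacle to be the combinatorial matching lemma for the one-sided (mono/epi) case: proving that the local constraints (each $V$-bar can be sent to a $W$-bar compatible with the $\delta$-shifts) can be satisfied \emph{simultaneously} by an injection. This is where one must genuinely use the structure of $\R$ as an ordered set and the rank data $b_{ij}=\rk p_{i,j}$ from the proof of the Normal Form theorem—essentially tracking, across each spectral interval, how many bars are ``alive'' on the $V$ side versus the $W$ side and showing the interleaving forces these counts to be close. A clean way to package this is via a back-and-forth argument on the spectral points, or via the ``induced matching'' of Bauer--Lesnick: define the matching directly from how $\Phi$ acts on the interval summands after choosing compatible bases, and verify functoriality and the $\delta$-estimate. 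The remaining pieces—the reduction to the one-sided case and the gluing of the two matchings into one $\delta$-matching—are formal manipulations with shifts and images that I would treat as routine, citing Exercises \ref{exr: def_of_ker_im_are_sub_pm} and \ref{exr: basic_properties_of_interleaved_pm} and the definition of $d_{bot}$.
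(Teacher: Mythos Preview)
Your overall architecture is the same as the paper's (Bauer--Lesnick): factor the interleaving morphism $f\colon V\to W[\delta]$ through its image, build an induced matching $\mu_{sur}$ for the surjection $V\twoheadrightarrow\im f$ and $\mu_{inj}$ for the injection $\im f\hookrightarrow W[\delta]$ via the longest-first greedy rule, and compose. The ``Hall-type'' combinatorics you anticipate is exactly what the paper packages as Propositions~\ref{prop: inj_between_pm_monotonicity_of_B-} and~\ref{prop: surj_between_pm_monotonicity_of_B+} (monotonicity of the bar counts $\#\calB_I^{\pm}$ under injections/surjections), which makes the greedy matching well-defined.

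However, your reduction step is misstated in a way that would cost you a factor of~$2$. You write that one ``shows that $V$ is $\delta$-interleaved with $\im F$ and that $\im F$ is $\delta$-interleaved with $W$'' and then invoke transitivity of interleaving (Exercise~\ref{exr: basic_properties_of_interleaved_pm}). Neither interleaving holds in general, and even if it did, transitivity would only yield a $2\delta$-matching. The correct mechanism is different and sharper: the surjection-induced matching $\mu_{sur}$ \emph{fixes left endpoints} and moves right endpoints by at most $2\delta$ (Lemma~\ref{lem: pm_surj}), while the injection-induced matching $\mu_{inj}$ \emph{fixes right endpoints} and moves left endpoints by at most $2\delta$ (Lemma~\ref{lem: pm_inj}). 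These displacements are orthogonal, not additive: composing $\mu_{inj}\circ\mu_{sur}$ sends $(b,d]$ to some $(b',d']$ with $b'\in[b-2\delta,b]$ and $d'\in[d-2\delta,d]$, and a final rigid shift by $+\delta$ recenters this to a genuine $\delta$-matching into $\calB(W)$. Proving these two lemmas requires more than the bare existence of a surjection or injection---you must use the full interleaving diagrams (e.g.\ that $g[\delta]\circ f=\Phi_V^{2\delta}$ forces $\im\Phi_V^{2\delta}\subset g[\delta](\im f)$) together with functoriality of the induced matching under compositions of injections (resp.\ surjections), which is Claim~\ref{claim: diagram_commutes_on_level_of_barcodes_surj_inj}. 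Your sketch gestures at this (``killed by the sandwiching shift morphism $\Phi^{2\delta}$''), but the argument does not go through the intermediate interleavings you describe; drop that framing and work directly with the endpoint-preservation properties of $\mu_{sur}$ and $\mu_{inj}$.
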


\begin{proof}[Proof of \Cref{thm: isometry_thm}]
	The Isometry theorem follows from \Cref{thm: converse_algebraic_stability_aka_easier_half_of_isometry_thm} and \Cref{thm: thm_algebraic_stability}.
\end{proof}

The proof of \Cref{thm: thm_algebraic_stability} occupies the rest of this chapter.

\section{Preliminary claims}

\subsection{Monotonicity with respect to injections and surjections}
Let $(V, \pi)$ and $(W, \theta)$ be two persistence modules with barcodes $\calB$ and $\calC$ respectively.
For an interval $I= (b,d]$, with $d\in \R \cup \{+\infty\}$, denote by $\calB_I^- \subseteq \calB$ the collection of all bars $(a,d]\in \calB$ with $a \leq b$, i.e.\ bars that begin no later than $b$ and end exactly at $d$ (taken with multiplicity). See \Cref{fig: barcode_minus}.

\begin{figure}[!ht]
	\centering
	\includegraphics[scale=1]{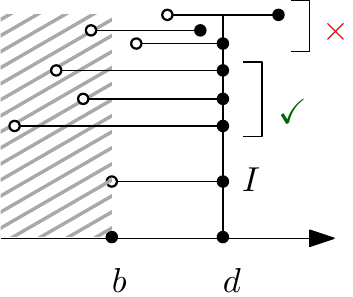}
	\caption{
		Bars $(a,d]$ to be included (or not) in $\calB_I^{-}$ for $I = (b,d]$.
	}
	\label{fig: barcode_minus}
\end{figure}

\begin{prop} \label{prop: inj_between_pm_monotonicity_of_B-}
	Let $I = (b,d]$ be an interval. Assume that there exists an injective morphism $\iota: (V, \pi) \to (W, \theta)$. Then
		$\# \calB_I^- \leq \# \calC_I^-$.
\end{prop}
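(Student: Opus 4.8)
The plan is to exploit the Normal Form Theorem (\Cref{thm: normal_form_thm}) to reduce the statement to a combinatorial count of bars ending exactly at $d$. Fix the interval $I = (b,d]$. Write $V = \bigoplus_{P \in \calB} \F(P)$ and $W = \bigoplus_{Q \in \calC} \F(Q)$. The key idea is to pick a parameter $t_0 \leq b$ that lies to the left of every spectral point of both $V$ and $W$ among those endpoints that are $\leq b$ (for instance, take $t_0$ smaller than the smallest spectral point, using that the spectra are finite and bounded below), so that a bar $(a,d]$ with $a \leq b$ is exactly a bar that is ``alive at time $t_0$ and dies at $d$''. I would make this precise by considering, for each value $t$, the subspace of $V_t$ consisting of classes that eventually die at $d$ — concretely, $\ker(\pi_{t,d}) \cap \im(\pi_{t_0,t})$ for a suitable reference time, or more cleanly the vector space $\ker\big(V_\infty^{<d} \text{-type map}\big)$; the cleanest bookkeeping is via the limit spaces $V^i$ from \eqref{dfn-vi} and the rank formula \eqref{eq: m_ij_formula_from_ranks}.

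More precisely, here is the route I would take. For a persistence module $U$ with barcode $\calD$, and for the fixed right endpoint $d$, define $N_U(d) := \#\calD_I^-$, the number of bars (with multiplicity) of the form $(a,d]$ with $a \le b$. One checks that $N_U(d)$ equals the rank of the natural map
\[
\ker\big(\pi^U_{t_0,d}\colon U_{t_0} \to U_d\big) \;\longrightarrow\; U_{t_0},
\]
i.e.\ simply $\dim \ker(\pi^U_{t_0,d})$, provided $t_0$ is chosen $\le b$ and below the smallest endpoint that is $\le b$ appearing in $\calD$ — because a bar $(a,d]$ contributes a $1$-dimensional summand to $U_{t_0}$ exactly when $a < t_0$, hence (as $a$ is a spectral point $\le b$ or $a = -\infty$) exactly when $a \le b$, and that summand lies in $\ker(\pi^U_{t_0,d})$ since the bar ends at $d < $ anything bigger; conversely bars with larger right endpoint, or with $a > b$, either survive past $d$ or are not yet born at $t_0$. (If $d = +\infty$, then $\ker \pi^U_{t_0,d}$ should be read as the kernel of the map to $U_\infty$, and the same count holds.) So after choosing a single $t_0 \le b$ that works simultaneously for $V$ and $W$ — possible since $\Spec V \cup \Spec W$ is finite — we have reduced the claim to
\[
\dim \ker\big(\pi_{t_0,d}\colon V_{t_0}\to V_d\big) \;\le\; \dim \ker\big(\theta_{t_0,d}\colon W_{t_0}\to W_d\big).
\]

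The final step is the linear-algebra core, and this is the only real content. The injective morphism $\iota\colon V \to W$ gives a commuting square with $\iota_{t_0}\colon V_{t_0}\hookrightarrow W_{t_0}$ and $\iota_d\colon V_d \hookrightarrow W_d$ both injective. Since $\theta_{t_0,d}\circ \iota_{t_0} = \iota_d \circ \pi_{t_0,d}$, the injectivity of $\iota_d$ forces $\iota_{t_0}\big(\ker \pi_{t_0,d}\big) \subseteq \ker \theta_{t_0,d}$; indeed if $v \in \ker\pi_{t_0,d}$ then $\theta_{t_0,d}(\iota_{t_0}v) = \iota_d(\pi_{t_0,d}v) = \iota_d(0) = 0$. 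Because $\iota_{t_0}$ is injective it restricts to an injection $\ker\pi_{t_0,d} \hookrightarrow \ker\theta_{t_0,d}$, whence $\dim\ker\pi_{t_0,d} \le \dim\ker\theta_{t_0,d}$, which is exactly the inequality $\#\calB_I^- \le \#\calC_I^-$ we wanted. I expect the main obstacle to be purely bookkeeping: verifying carefully that with the chosen $t_0$ the quantity $\#\calD_I^-$ really equals $\dim\ker(\pi^U_{t_0,d})$, handling the corner cases $a = -\infty$ and $d = +\infty$, and making sure a single $t_0$ can be chosen to work for both modules at once; the homological/linear-algebra heart is then immediate from injectivity of $\iota$.
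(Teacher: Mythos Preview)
Your linear-algebra core (use commutativity of the square and injectivity of $\iota$ to compare dimensions of certain natural subspaces) is exactly the right idea, and it is what the paper does. But the specific subspace you propose is wrong, and this is a genuine gap, not just bookkeeping.

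First, your choice of $t_0$: if $t_0$ lies below the smallest spectral point then $V_{t_0}=0$ by axiom (4), so $\ker\pi_{t_0,d}=0$ and your formula gives $0$ regardless of the barcode. More importantly, even with a sensible $t_0$ (say $t_0\in(b,d)$ just to the right of $b$ and below every spectral point in $(b,d)$), the identity $\#\calB_I^- = \dim\ker(\pi_{t_0,d})$ fails. Bars in this paper are of the form $(a,c]$, closed on the right, so a bar $(a,d]$ is \emph{alive} at $d$ and $\pi_{t_0,d}$ is the identity on that summand; it does \emph{not} lie in $\ker\pi_{t_0,d}$. What $\ker\pi_{t_0,d}$ actually counts is bars $(a,c]$ with $a<t_0\le c<d$, i.e.\ bars born before $t_0$ and dying strictly before $d$. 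For a concrete failure take $V=\F(0,2]$ and $I=(1,3]$: then $\#\calB_I^-=0$ (no bar ends at $3$), but with $t_0=1.5$ we have $V_{t_0}=\F$, $V_3=0$, so $\dim\ker\pi_{t_0,3}=1$.

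The paper repairs this by working inside $V_d$ rather than $V_{t_0}$ and using \emph{both} an image condition and a kernel condition:
\[
E_I^-(V)\;=\;\bigcap_{b<s<d}\im\pi_{s,d}\;\cap\;\bigcap_{r>d}\ker\pi_{d,r}\;\subseteq V_d.
\]
The image part picks out classes born at some $a\le b$; the kernel part (maps \emph{out of} $d$) picks out classes that die immediately after $d$, i.e.\ with right endpoint exactly $d$. One checks directly from the normal form that $\dim E_I^-(V)=\#\calB_I^-$. Then your commutativity argument goes through verbatim: any morphism $p$ satisfies $p_d\bigl(\im\pi_{s,d}\bigr)\subseteq\im\theta_{s,d}$ and $p_d\bigl(\ker\pi_{d,r}\bigr)\subseteq\ker\theta_{d,r}$, so $p_d(E_I^-(V))\subseteq E_I^-(W)$, and injectivity of $\iota_d$ yields the dimension inequality.
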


\begin{exm}
	For $V= \F(b,d]$ and $W= \F (a,d]$ with $b\geq a$ we have a natural injection $\iota : V \to W$, and indeed for any interval $I$, $\# \calB_I^- \leq \# \calC_I^-$, see also \Cref{fig: interval_search_for_longer_on_left}.
	\begin{figure}[!ht]
		\centering
		\includegraphics[scale=1]{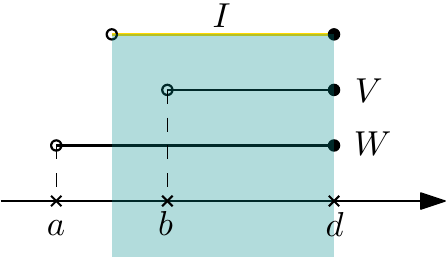}
		\caption{Monotonicity with respect to injections.}
		\label{fig: interval_search_for_longer_on_left}
	\end{figure}
\end{exm}

\begin{proof}[Proof of \Cref{prop: inj_between_pm_monotonicity_of_B-}]
	Put $\displaystyle{ E_I^- (V) = \bigcap_{b<s<d} \im \pi_{s,d} \cap \bigcap_{r>d} \ker \pi_{d,r} \subseteq V_d }$.
	The set $E_I^-(V)$ consists of the elements in $V_d$ which come from all $V_s$, $s\in (b,d)$ and disappear in all $V_r$, $r>d$.
	So $\dim E_I^- (V) = \# \calB_I^-$.
	Note that for every morphism $p: (V,\pi) \to (W, \theta)$ the diagram
	\noindent
	\begin{center} %
		\begin{tikzpicture}[scale=1.5]
		\node (s) at (0,0) {$V_s$};
		\node (r) at (1,0) {$V_r$};
		\node (sw) at (0,-1) {$W_s$};
		\node (rw) at (1,-1) {$W_r$};
		\path[->,font=\scriptsize,>=angle 90]
		%
		(s) edge[->] node[above]{$\pi_{s,r}$} (r)
		(sw) edge[->] node[above]{$\theta_{s,r}$} (rw)
		%
		(s) edge node[left]{$p_s$} (sw)
		(r) edge node[right] {$p_r$} (rw);
		\end{tikzpicture}
	\end{center}
	\noindent
	commutes, so $p_r(\im \pi_{s,r}) \subseteq \im \theta_{s,r}$ and $p_s (\ker \pi_{s,r}) \subseteq \ker \theta_{s,r}$. Using the first inclusion for $r = d$ and every $b<s<d$ and the second for $s=d$ and every $r>d$, we get $p_d (E_I^- (V)) \subseteq E_I^- (W)$.
	Applying this result for an injection we obtain $\dim E_I^- (V) \leq \dim E_I^- (W)$.
\end{proof}

%

Analogously, for $I = (b,d]$, denote by $\calB_I^+ \subseteq \calB$ the collection of all bars of the form $(b,c]$ in $\calB$ with $c\geq d$ (counting with multiplicity).
Imitating the proof above, one can prove the following claim, which we leave as an exercise:

\begin{prop} \label{prop: surj_between_pm_monotonicity_of_B+}
	Using the notations above,
	if there exists a surjection from $V$ to $W$, then $\# \calB_I^+ \geq \# \calC_I^+$.
\end{prop}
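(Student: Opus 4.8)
The plan is to mirror the proof of \Cref{prop: inj_between_pm_monotonicity_of_B-}, with the roles of ``image'' and ``kernel'' swapped, and with injections replaced by surjections. First I would introduce, for $I=(b,d]$, the subquotient of $V_b$ that geometrically records exactly the bars starting at $b$ and surviving to at least $d$. The natural candidate is
\[
E_I^+(V) \;=\; \Bigl(\bigcap_{r \geq d}\, \im\pi_{b,r}\Bigr) \Big/ \Bigl(\bigcap_{r \geq d}\, \im\pi_{b,r} \,\cap\, \bigcup_{s<b}\,\im\pi_{s,b}\Bigr),
\]
or, perhaps more cleanly in the decomposition picture, one works directly with $V = \bigoplus_{J\in\calB}\F(J)$ and checks that $\dim E_I^+(V) = \#\calB_I^+$: a bar $(a,c]$ contributes to $\bigcap_{r\ge d}\im\pi_{b,r}$ at parameter $b$ iff $a<b$ and $c\geq d$ (it is hit from $V_a$ and still alive at every $r\ge d$) and contributes to the numerator-only part iff $a<b$; a bar $(b,c]$ with $c\ge d$ contributes to the quotient but not to the ``comes from earlier'' subspace, so it survives in $E_I^+$. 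One must be slightly careful that all the intersections over $r\ge d$ stabilize (they do, by condition (2) in \Cref{defn: pm}, since $\Spec V$ is finite), so these are genuinely finite-dimensional and the count is exact.

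Next, the functoriality step: for any morphism $p\colon (V,\pi)\to(W,\theta)$, the commuting squares $\theta_{s,t}\circ p_s = p_t \circ \pi_{s,t}$ give $p_t(\im\pi_{s,t}) \subseteq \im\theta_{s,t}$ and, dually, $p_s^{-1}$ behaves well enough that $p$ carries the ``born at $b$, alive past $d$'' part of $V$ into that of $W$. Concretely, restricting $p_b$ to $\bigcap_{r\ge d}\im\pi_{b,r}$ lands in $\bigcap_{r\ge d}\im\theta_{b,r}$ and sends $\bigcup_{s<b}\im\pi_{s,b}$ into $\bigcup_{s<b}\im\theta_{s,b}$, hence descends to a linear map $\bar p\colon E_I^+(V)\to E_I^+(W)$. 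Now if $p$ is surjective, I claim $\bar p$ is surjective: given $[w]\in E_I^+(W)$ with $w\in\bigcap_{r\ge d}\im\theta_{b,r}$, pick $v\in V_b$ with $p_b(v)=w$ — but this $v$ need not lie in $\bigcap_{r\ge d}\im\pi_{b,r}$, so the naive lift fails, and this is the subtlety. The fix is to lift along the image system: for $r$ large (past $\Spec V\cup\Spec W$), surjectivity of $p$ together with the surjectivity of $\pi_{b,r}|_{\cdots}$ onto its image and a diagram chase, or more simply passing to the terminal spaces $V_\infty\twoheadrightarrow W_\infty$, lets one choose the preimage inside $\bigcap_{r\ge d}\im\pi_{b,r}$. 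Surjectivity of $\bar p$ then gives $\dim E_I^+(V)\ge \dim E_I^+(W)$, i.e. $\#\calB_I^+ \ge \#\calC_I^+$, as required.

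The main obstacle I anticipate is precisely this lifting: unlike the injective case in \Cref{prop: inj_between_pm_monotonicity_of_B-}, where one simply restricts an injection and injectivity is inherited for free, here surjectivity of $p_b$ on the nose does not immediately yield surjectivity on the relevant subquotient, because a preimage of an element of $\im\theta_{b,r}$ need not sit in $\im\pi_{b,r}$. I would resolve it either (i) by working with the explicit interval decompositions from the Normal Form Theorem \ref{thm: normal_form_thm} and tracking which summands $\F(J)$ with $J\in\calB_I^+$ must appear in $V$ to account for those in $\calC_I^+$ — a surjection $\bigoplus\F(I_i)\twoheadrightarrow\bigoplus\F(J_j)$ forces, for each $J_j\in\calC_I^+$, some $I_i$ with $I_i \supseteq J_j$ sharing the left endpoint $b$ (using \Cref{exr: morphism_between_Q(segment)s}), giving an injection $\calC_I^+\hookrightarrow\calB_I^+$ — or (ii) by the functional-analytic route above, choosing the lift at a large parameter $r$ where all maps in sight are the stabilized isomorphisms and $p_r\colon V_r\to W_r$ is onto, then pulling back. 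Approach (i) is cleaner and is the ``imitation of the proof above'' the proposition advertises; the rest is routine bookkeeping with intersections and direct sums.
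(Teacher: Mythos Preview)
Your formula for $E_I^+(V)$ is not well-defined: for distinct $r,r' \geq d$ the images $\im\pi_{b,r} \subseteq V_r$ and $\im\pi_{b,r'} \subseteq V_{r'}$ live in different ambient spaces, so their intersection has no meaning. (Separately, $\bigcup_{s<b}\im\pi_{s,b} = V_b$ by the semicontinuity axiom, so even if the numerator were a subspace of $V_b$ the quotient would collapse.) This is not merely cosmetic: the ``subtlety'' you then wrestle with --- that a $p_b$-preimage of an element of the numerator need not lie in the numerator --- is an artifact of trying to force a \emph{subspace} construction where the honest dual of \Cref{prop: inj_between_pm_monotonicity_of_B-} calls for a \emph{quotient}.

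The imitation the paper has in mind goes as follows. For $I=(b,d]$ put
\[
E_I^+(V) \;:=\; \frac{\bigcap_{b<s<d}\im\pi_{s,d}}{\im\pi_{b,d}}
\;=\;\frac{\im\pi_{s_0,d}}{\im\pi_{b,d}}
\qquad\text{for any $s_0$ with }(b,s_0]\cap\Spec V=\emptyset,
\]
a subquotient of $V_d$. From the normal form, the bars $(a,c]$ contributing to $V_d$ have $a<d\leq c$; among these, $\im\pi_{s_0,d}$ is spanned by those with $a\leq b$ and $\im\pi_{b,d}$ by those with $a<b$, so $\dim E_I^+(V)=\#\calB_I^+$. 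For \emph{any} morphism $p$ the commuting squares give $p_d(\im\pi_{s,d})\subseteq\im\theta_{s,d}$, so $p_d$ descends to $E_I^+(V)\to E_I^+(W)$. If moreover $p$ is surjective, then for $w=\theta_{s,d}(w')\in\im\theta_{s,d}$ one lifts $w'=p_s(v')$ and gets $w=p_d(\pi_{s,d}(v'))$; hence $p_d$ maps $\im\pi_{s,d}$ \emph{onto} $\im\theta_{s,d}$, the induced map on quotients is surjective, and $\dim E_I^+(V)\geq\dim E_I^+(W)$. There is no lifting obstruction once one works on the quotient side.

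Your fallback approach (i), arguing directly with interval summands via \Cref{exr: morphism_between_Q(segment)s}, is a genuinely different route but is incomplete as written. Showing that for each $J_j\in\calC_I^+$ there is \emph{some} $I_i\in\calB_I^+$ with $I_i\supseteq J_j$ is easy; showing that these $I_i$ can be chosen \emph{injectively} is precisely the content of the proposition and requires a separate argument (a rank count, or a Hall-marriage step). The quotient construction above avoids this entirely and is what ``imitating the proof above'' means here.
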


\subsection{Induced matchings construction}
Given a morphism between persistence modules, we need a procedure that will associate a matching to it, that will be called an \emph{induced matching}, following \cite{Bauer_Lesnick_13-16}. We start with the case of such a morphism being either an injection or a surjection, using which we later explain how to associate a matching to a general morphism.

As before, let $(V, \pi)$ and $(W, \theta)$ be two persistence modules and denote by $\calB$ and $\calC$ the corresponding barcodes.
\begin{defn} \label{defn: induced_matching_inj}
	Suppose that there exists an \textbf{injection} $\iota: V \to W$.
	Let us define the \emph{induced matching} $\mu_{inj} : \calB \to \calC$ as follows. For every $d\in \R \cup \{\infty\}$, sort the bars of $\calB$ of the form $(\cdot, d]$ in ``longest-first" order:
	$$
	(b_1, d] \supset (b_2, d] \supset \ldots \supset (b_k, d] \ \text{ in $\calB$, with } b_1\leq b_2 \leq \ldots \leq b_k \;,
	$$
	and similarly for $\calC$:
	$$
	(c_1, d] \supset (c_2, d] \supset \ldots \supset (c_l, d] \ \text{ in $\calC$, with } c_1\leq c_2 \leq \ldots \leq c_l \;.
	$$	
	\noindent
	Note that by \Cref{prop: inj_between_pm_monotonicity_of_B-}, $k\leq l$.
	Now, match the bars according to the ``longest-first" order, i.e., at each step, take the longest interval from the first list and match it with the longest interval of the second list.
	Do the same for all $d \in \R \cup \{\infty\}$ to obtain a matching $\mu_{inj} : \calB \to \calC$.
\end{defn}

\begin{prop} \label{prop: coim_mu_inj}
	If there is an injection from $(V, \pi)$ to $(W, \theta)$, then the induced matching $\mu_{inj}: \calB \to \calC$ satisfies:
	\begin{enumerate}[(1)]
		\item
			$\coim \mu_{inj} = \calB$,
		\item
			For all $(b,d] \in \calB$, $\mu_{inj} (b,d] = (c,d]$ with $c\leq b$.
	\end{enumerate}	
\end{prop}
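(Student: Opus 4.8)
The plan is to extract both statements directly from the combinatorial recipe in Definition \ref{defn: induced_matching_inj}, the only external input being the monotonicity inequality of Proposition \ref{prop: inj_between_pm_monotonicity_of_B-} applied to a whole family of truncated intervals.

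First I would fix $d \in \R \cup \{\infty\}$ and recall the two ``longest-first'' lists used to build $\mu_{inj}$ at $d$: the bars $(b_1,d] \supseteq (b_2,d] \supseteq \cdots \supseteq (b_k,d]$ of $\calB$ (so $b_1 \leq \cdots \leq b_k$) and the bars $(c_1,d] \supseteq \cdots \supseteq (c_l,d]$ of $\calC$ (so $c_1 \leq \cdots \leq c_l$), with $\mu_{inj}(b_i,d] = (c_i,d]$ for $1 \leq i \leq k$. Since $k \leq l$ — this is already recorded in Definition \ref{defn: induced_matching_inj} via Proposition \ref{prop: inj_between_pm_monotonicity_of_B-} — each of these $k$ bars of $\calB$ genuinely acquires a partner. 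As $d$ ranges over $\R \cup \{\infty\}$ these lists partition (with multiplicity) all of $\calB$, and each member of each list is matched; hence $\coim \mu_{inj} = \calB$, which is claim (1).

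For claim (2) the key move is to feed Proposition \ref{prop: inj_between_pm_monotonicity_of_B-} not the whole picture but the truncated interval $I_i := (b_i,d]$, for each $i \in \{1,\ldots,k\}$. By definition $\calB_{I_i}^-$ consists of the bars $(a,d] \in \calB$ with $a \leq b_i$; since $b_1 \leq \cdots \leq b_i$, it contains the $i$ distinct copies $(b_1,d],\ldots,(b_i,d]$, so $\# \calB_{I_i}^- \geq i$. Monotonicity then yields $\# \calC_{I_i}^- \geq i$. But $\calC_{I_i}^-$ is exactly the multiset of bars $(c_j,d] \in \calC$ with $c_j \leq b_i$, and the $c_j$ are enumerated in non-decreasing order; at least $i$ of them being $\leq b_i$ forces $c_i \leq b_i$. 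Therefore $\mu_{inj}(b_i,d] = (c_i,d]$ with $c_i \leq b_i$, which is claim (2).

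The argument is essentially bookkeeping, and the only place demanding care is the consistent treatment of multiplicities and of ties (repeated values among the $b_j$'s or the $c_j$'s): one must count copies of an interval separately — as stipulated in the definition of a matching preceding Definition \ref{defn: delta_matching} — so that ``$\# \calB_{I_i}^- \geq i$'' and the passage from ``$\# \calC_{I_i}^- \geq i$'' to ``$c_i \leq b_i$'' are both literally correct. With that convention fixed there is no genuine obstacle.
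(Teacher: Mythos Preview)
Your proof is correct and follows essentially the same route as the paper: both parts are obtained by applying Proposition~\ref{prop: inj_between_pm_monotonicity_of_B-} to the intervals $(b_i,d]$ for each $i$, and reading off $k\le l$ and $c_i\le b_i$ from the resulting inequalities. Your write-up spells out the counting (the ``$\#\calB_{I_i}^-\ge i \Rightarrow \#\calC_{I_i}^-\ge i \Rightarrow c_i\le b_i$'' step, including the handling of ties and multiplicities) more explicitly than the paper's terse one-line appeal to induction, but the underlying argument is identical.
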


\begin{proof}
	As mentioned, the first part follows from \Cref{prop: inj_between_pm_monotonicity_of_B-} applying it to the interval $(b_k,d]$, i.e. $k\leq l$, which implies that all the bars from $\cal B$ are matched. Since we match ``longest-first", inductively we get that $\mu (b_i,d] = (c_i, d]$. For the second part, by the same proposition applied to the intervals $(b_i, d]$, yields inductively that $b_i \leq c_i$ for each $1\leq i\leq k$.
\end{proof}

\begin{rmk} \label{rmk: matching_independ_of_particular_inj}
	Note that the induced matching does not depend on the injection $\iota$, but only on the assumption that there exists an injection.
\end{rmk}

Now, assume instead that there exists a \textbf{surjection} $\sigma: V \to W$ between the two persistence modules.
\begin{defn} \label{defn: induced_matching_sur}
	Define the \emph{induced matching} $\mu_{sur}: \calB \to \calC$ as follows.
	For every $b\in \R$, sort the intervals $(b, \cdot] \in \calB$ in decreasing order:
	$$
	(b, d_1] \supset (b, d_2] \supset \ldots \supset (b, d_k] \ \text{ in $\calB$, with } d_1\geq d_2 \geq \ldots \geq d_k \;,
	$$
	and similarly for $\calC$:
	$$
	(b, e_1] \supset (b, e_2] \supset \ldots \supset (b, e_l] \ \text{ in $\calC$, with } e_1\geq e_2 \geq \ldots \geq e_l \;.
	$$
\noindent
Then match them according to the ``longest-first" principle, and assemble these matchings for all $b$.
\end{defn}

This construction again is independent of the particular surjection $\sigma$ (see \Cref{rmk: matching_independ_of_particular_inj}).
We have the following analogue of \Cref{prop: coim_mu_inj}, which we leave as an exercise.
\begin{prop} \label{prop: im_mu_sur}
	If there exists a surjection from $(V, \pi)$ to $(W, \theta)$, then the induced matching $\mu_{sur}: \calB \to \calC$ satisfies:
	\begin{enumerate} [(1)]
		\item
			$\im \mu_{sur} = \calC$,
		\item
			$\mu_{sur} (b,d] = (b,e]$ with $d \geq e$. \\
	\end{enumerate}	
\end{prop}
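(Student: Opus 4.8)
\textbf{Proof proposal for Proposition \ref{prop: im_mu_sur}.}

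The plan is to mirror the proof of \Cref{prop: coim_mu_inj}, replacing its input \Cref{prop: inj_between_pm_monotonicity_of_B-} by the surjective counterpart \Cref{prop: surj_between_pm_monotonicity_of_B+}. Fix $b \in \R$ and list the bars of $\calB$ of the form $(b,\cdot]$ in longest-first order $(b,d_1] \supseteq (b,d_2] \supseteq \cdots \supseteq (b,d_k]$ (with $d_1 \geq d_2 \geq \cdots \geq d_k$), and similarly the bars of $\calC$ of the form $(b,\cdot]$ as $(b,e_1] \supseteq \cdots \supseteq (b,e_l]$. By \Cref{defn: induced_matching_sur}, $\mu_{sur}$ sends $(b,d_i]$ to $(b,e_i]$ for $1 \leq i \leq \min(k,l)$, and all matched pairs arise this way as $b$ ranges over $\R$.

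First I would establish $l \leq k$, which immediately yields $\im \mu_{sur} = \calC$ (part (1)). Apply \Cref{prop: surj_between_pm_monotonicity_of_B+} to the interval $I = (b, e_l]$: every bar $(b,e_j]$ with $1 \leq j \leq l$ ends no earlier than $e_l$, so $\#\calC_I^+ = l$, whence $\#\calB_I^+ \geq \#\calC_I^+ = l$ forces $k \geq l$. Ranging over all $b \in \R$ shows that every bar of $\calC$ is matched, i.e.\ $\im \mu_{sur} = \calC$.

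Next I would prove part (2). It suffices to show $d_i \geq e_i$ for each $1 \leq i \leq l$, since $\mu_{sur}(b,d_i] = (b,e_i]$. Apply \Cref{prop: surj_between_pm_monotonicity_of_B+} to $I = (b, e_i]$: because $e_1 \geq \cdots \geq e_i$, at least $i$ bars of $\calC$ of the form $(b,\cdot]$ end no earlier than $e_i$, so $\#\calC_I^+ \geq i$, and therefore $\#\calB_I^+ \geq i$. Thus $\calB$ contains at least $i$ bars starting at $b$ with right endpoint $\geq e_i$, so the $i$-th largest such endpoint satisfies $d_i \geq e_i$, as desired. (The case $e_i = +\infty$ is handled the same way, reading $(b,+\infty]$ as $(b,+\infty)$.)

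The argument is formal once \Cref{prop: surj_between_pm_monotonicity_of_B+} is in hand; the only point needing a little care is the bookkeeping with multiplicities — bars are counted with multiplicity and equal endpoint values are absorbed by the ``$\geq$'' in the sorting — but no genuine obstacle appears beyond what already occurred in the injective case. As in \Cref{rmk: matching_independ_of_particular_inj}, I would also remark that $\mu_{sur}$ depends only on the existence of a surjection $\sigma : V \to W$ and not on its choice, since \Cref{defn: induced_matching_sur} never refers to $\sigma$ itself.
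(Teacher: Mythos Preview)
Your proposal is correct and is exactly the argument the paper intends: the paper does not write out a proof but instead leaves \Cref{prop: im_mu_sur} as an exercise, instructing the reader to imitate the proof of \Cref{prop: coim_mu_inj} with \Cref{prop: surj_between_pm_monotonicity_of_B+} in place of \Cref{prop: inj_between_pm_monotonicity_of_B-}. Your inductive application of \Cref{prop: surj_between_pm_monotonicity_of_B+} to the intervals $I=(b,e_l]$ and $I=(b,e_i]$ is precisely the mirror of the injective argument, and the bookkeeping with multiplicities and the remark on independence from the particular surjection are handled correctly.
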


Let us now present the strategy of the proof of \Cref{thm: thm_algebraic_stability}, the details would be carried out in the next sections.
For any morphism $f: V \to W$, we can write the following decomposition: 

\noindent
\begin{center} %
	\begin{tikzpicture}[column sep=large] 
	\node (V) at (0,0) {$V$};
	\node (imf) at (2,0) {$\im f$};
	\node (W) at (4,0) {$W$};

	\path[->,font=\scriptsize,>=angle 90]
	%
	(V) edge[->] node[above]{surjection} (imf)
	(imf) edge[->] node[above]{injection} (W);
	%
	%
	\end{tikzpicture} \;.
\end{center}

\noindent
According to \Cref{prop: im_mu_sur} and \Cref{prop: coim_mu_inj}, having these two maps, we can build the induced matchings
	$\mu_{sur}: \calB(V) \to \calB(\im f)$ and
	$\mu_{inj}: \calB(\im f)\to \calB(W)$.
	
\begin{defn} \label{defn: induced_matching_general}
	For a general morphism $f$ we define the \emph{induced matching} $\mu(f) : \calB (V) \to \calB (W)$ to be
	the composition $\mu(f) = \mu_{inj} \circ \mu_{sur}$, which is defined $\im \mu_{sur} = \calB (\im f) = \coim (\mu_{inj})$.
\end{defn}
Note that in fact $\mu(f)$ depends only on $\im f$, but not on $f$ itself. (See \Cref{rmk: matching_independ_of_particular_inj}.)

\begin{rmk}
	Via this construction we in fact associate a matching to \emph{any} mapping between persistence modules, not only injections or surjections.
	In case $f: V \to W$ is either an injection or a surjection, the induced matching $\mu(f)$ coincides either with $\mu_{inj}$ or with $\mu_{sur}$ respectively.
\end{rmk}

\begin{exm}
	\footnote{Taken from \cite{Bauer_Lesnick_13-16}.}
	Take $V = \F (1,3] \oplus \F (1,2]$ and $W = \F (3,4] \oplus \F (0,2]$, and a morphism $f: V \to W$ defined by
	$f\restr_{\F(1,3]} = 0$ and
	$f \restr_{\F(1,2]} : \F(1,2] \to \F(0,2]$ corresponds to multiplication by $1$ (recall \Cref{exr: morphism_between_Q(segment)s}).\\
	\noindent
	Then $\im f = 0 \oplus \F(1,2] \subset \F(3,4] \oplus \F(0,2]$. (See \Cref{fig: sur_image_inj}.)
	
	\begin{figure}[!ht]
		\centering
		\includegraphics[scale=1]{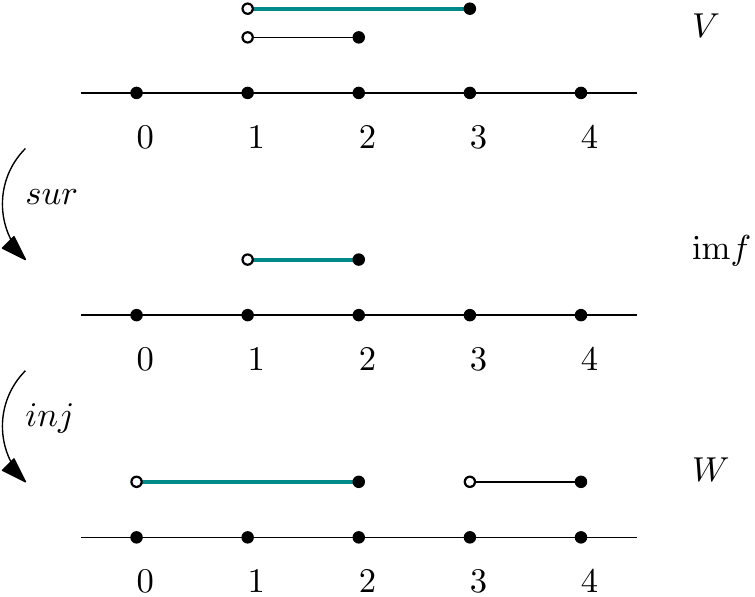}
		\caption{Composing two matchings.}
		\label{fig: sur_image_inj}
	\end{figure}
	
	\noindent
	So $\mu_{sur} : (1,3] \mapsto (1,2]$, $\mu_{inj} (1,2] \mapsto (0,2]$.
	Thus, the map $\mu(f): \calB(V) \to \calB(W)$ takes $\mu (f) : (1,3] \mapsto (0,2]$, despite the fact that $f \restr_{\F(1,3]} = 0$.
\end{exm}


	Let us consider the category of barcodes with morphisms being matchings. We have established a correspondence between the objects of the category of persistence modules and these of the category of barcodes, namely, a persistence module corresponds to its barcode, $V \mapsto \calB (V)$.
	Moreover, having a morphism $f: V\to W$ between two persistence modules, we have  defined a matching $\mu (f)$ between $\calB (V)$ and $\calB (W)$. But does this mapping give a functor between the two categories?

	It turns out that in general this is not a functorial correspondence.
	
	\begin{exm} \label{exm: non-commutative_if_general_morphism}
		\footnote{
		This example is a modification of an example in \cite{Bauer_Lesnick_13-16}.
		}
		Let $I$ be any interval and consider the following persistence modules:
		$$
		U = V  = \F(I) \oplus \F (I),\
		W  = \F(I)  \;,
		$$
		and two morphisms $f:U \to V $ and $g:V\to W$ given by:
		$$
		f(s,t) = (s,0) \text{ and } g(s,t)= t \;.
		$$
		Thus, $\mu (f)$ matches one copy of $I$ to a copy of $I$ in $\calB (V)$, and the second copy remains unmatched. Then, $\mu (g)$  matches again one copy of $I$ to $I$. So overall, $\mu (g) \circ \mu (f)$ matches one copy of $I$ to the bar $I$ of $\calB (W)$ and the second one stays unmatched.
		On the other hand, $g \circ f = 0$, the reader can check that the corresponding matching is empty.
	\end{exm}
	
	However, if we restrict the morphisms between persistence modules to be either only injections or only surjections, the mapping that takes a persistence module $V$ to its barcode $\calB (V)$ and a morphism $f: V \to W$ to the induced matching (either $\mu_{inj}$ or $\mu_{sur}$) is a functor, as stated in \Cref{claim: diagram_commutes_on_level_of_barcodes_surj_inj}.

\begin{claim} \label{claim: diagram_commutes_on_level_of_barcodes_surj_inj}
	Consider the following commutative diagram in the category of persistence modules with either injections only or surjections only:
	\noindent
	\begin{center} %
		\begin{tikzpicture}
		\matrix (m) [matrix of math nodes,row sep=2em,column sep=4em,minimum width=2em]
		{
			U & V & W \\
		};
		\path[-stealth, decoration={snake,segment length=4,amplitude=3,
			post=lineto,post length=10pt}]
		(m-1-1) edge node [above] {$f$} (m-1-2)
		(m-1-2) edge node [above] {$g$} (m-1-3)
		(m-1-1) edge[bend left=-20] node [below] {$h$} (m-1-3);
		\end{tikzpicture} \;.
	\end{center}
	Then the corresponding diagram on the level of barcodes commutes as well:
	\noindent
	\begin{center} %
		\begin{tikzpicture}
		\matrix (m) [matrix of math nodes,row sep=2em,column sep=4em,minimum width=2em]
		{
			\calB(U) & \calB (V) & \calB (W) \\
		};
		\path[-stealth, decoration={snake,segment length=4,amplitude=3,
			post=lineto,post length=10pt}]
		(m-1-1) edge node [above] {$\mu_{\natural} (f)$} (m-1-2)
		(m-1-2) edge node [above] {$\mu_{\natural} (g)$} (m-1-3)
		(m-1-1) edge[bend left=-20] node [below] {$\mu_{\natural} (h)$} (m-1-3);
		\end{tikzpicture} \;,
	\end{center}
	where $\mu_\natural$ denotes either $\mu_{inj}$ or $\mu_{sur}$ respectively.
\end{claim}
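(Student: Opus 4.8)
The plan is to reduce everything to the purely combinatorial description of the induced matchings in Definitions \ref{defn: induced_matching_inj} and \ref{defn: induced_matching_sur}, together with the counting inequalities in Propositions \ref{prop: inj_between_pm_monotonicity_of_B-} and \ref{prop: surj_between_pm_monotonicity_of_B+}. The crucial observation (already recorded in Remark \ref{rmk: matching_independ_of_particular_inj} and the remark after Definition \ref{defn: induced_matching_general}) is that for injections $\mu_{inj}$ depends only on the ordered multisets of \emph{right endpoints} and $\mu_{sur}$ only on the ordered multisets of \emph{left endpoints}; hence $\mu_\natural(f)$, $\mu_\natural(g)$ and $\mu_\natural(h)$ are determined entirely by the barcodes $\calB(U),\calB(V),\calB(W)$ and not by the actual maps. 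So the statement to prove is really a statement about three barcodes that are linked by the \emph{existence} of suitable injections (resp. surjections) in a commuting triangle, and one must verify $\mu_\natural(g)\circ\mu_\natural(f)=\mu_\natural(h)$.

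First I would treat the injective case. Here the matchings $\mu_{inj}(f)$, $\mu_{inj}(g)$, $\mu_{inj}(h)$ all fix the right endpoint $d$ of each bar and act separately for each fixed $d$. So fix $d\in\R\cup\{\infty\}$ and let $u_1\le u_2\le\cdots\le u_p$, $v_1\le v_2\le\cdots\le v_q$, $w_1\le w_2\le\cdots\le w_r$ be the (sorted, with multiplicity) left endpoints of the bars of $\calB(U),\calB(V),\calB(W)$ of the form $(\,\cdot\,,d]$. By Definition \ref{defn: induced_matching_inj}, $\mu_{inj}(f)$ matches the $i$-th bar of $\calB(U)$ (in longest-first order, i.e. $u_i$) to the $i$-th bar of $\calB(V)$ (i.e. $v_i$) for $i=1,\dots,p$; similarly $\mu_{inj}(g)$ matches $v_i\mapsto w_i$ and $\mu_{inj}(h)$ matches $u_i\mapsto w_i$. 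Since $p\le q$ (by Proposition \ref{prop: inj_between_pm_monotonicity_of_B-}, using the injection $f$) and $p\le r$ (using $h$), the composition $\mu_{inj}(g)\circ\mu_{inj}(f)$ sends the $i$-th bar of $\calB(U)$ to the $i$-th bar of $\calB(V)$ and then to the $i$-th bar of $\calB(W)$, for $i=1,\dots,p$, which is exactly $\mu_{inj}(h)$. Running over all $d$ assembles this into the desired equality of matchings. The surjective case is strictly dual: $\mu_{sur}$ fixes the left endpoint $b$ of each bar, acts for each fixed $b$ separately, and orders bars longest-first by right endpoint; one repeats the argument verbatim, using Proposition \ref{prop: surj_between_pm_monotonicity_of_B+} (applied to the surjections $f$ and $h$) in place of Proposition \ref{prop: inj_between_pm_monotonicity_of_B-}.

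The only subtlety — and the place I expect to need care rather than cleverness — is the bookkeeping when the three bar-counts over a fixed endpoint differ, so that some bars of $\calB(V)$ or $\calB(W)$ are left unmatched by one of the three matchings. One must check that $\mu_{inj}(f)$ lands among exactly the $p$ longest bars over $d$ in $\calB(V)$, that $\mu_{inj}(g)$ restricted to those $p$ bars picks out exactly the $p$ longest bars over $d$ in $\calB(W)$, and that these coincide with the targets of $\mu_{inj}(h)$; this is immediate from the longest-first rule but deserves an explicit sentence. Note also that the commutativity hypothesis $g\circ f = h$ is used only indirectly — what actually matters is that $h$ is again an injection (resp. surjection), which holds automatically since a composition of injections is an injection (and likewise for surjections), so the triangle hypothesis is there precisely to guarantee that $\mu_\natural(h)$ is defined and is the induced matching of a bona fide injection/surjection. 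Once these remarks are in place the proof is a one-paragraph index chase in each of the two cases.
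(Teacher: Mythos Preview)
Your proposal is correct and follows essentially the same route as the paper: fix an endpoint ($d$ for injections, $b$ for surjections), list the bars in longest-first order, invoke Proposition~\ref{prop: inj_between_pm_monotonicity_of_B-} (resp.~\ref{prop: surj_between_pm_monotonicity_of_B+}) to get the nested inequalities on the bar counts, and observe that each of the three matchings sends the $i$-th bar to the $i$-th bar, so the composition agrees with $\mu_\natural(h)$. Your remark that the hypothesis $h=g\circ f$ is only used to guarantee $h$ is itself an injection (resp.\ surjection) is a correct sharpening that the paper leaves implicit.
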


We prove functoriality in the case of injections, leaving the second case to the reader.
\begin{proof}
	By \Cref{defn: induced_matching_inj} and \Cref{prop: inj_between_pm_monotonicity_of_B-}, for any $d\in \R \cup \{+\infty\}$, the barcodes corresponding to $U, V, W$ consist of the following bars that end at $d$:
	\begin{align*}
		\calB (U) &:\ (a_1, d] \supseteq \ldots \supseteq (a_k, d]\\
		\calB (V) &:\ (b_1, d] \supseteq \ldots \supseteq (b_k, d] \supseteq \ldots \supseteq (b_l, d]\\
		\calB (W) &:\ (c_1, d] \supseteq \ldots \supseteq (c_k, d] \supseteq \ldots \supseteq (c_l, d] \supseteq \ldots \supseteq (c_q, d] \;,
	\end{align*}
	where $k\leq l\leq q$. Moreover, $\mu_{inj}(f) (a_i,d] = (b_i,d]$, $\mu_{inj} (g) (b_i, d] = (c_i, d]$ and $\mu_{inj} (h) (a_i,d] = (c_i, d]$ for any $1\leq i \leq k$. This holds for any $d$, so the required diagram on the level of barcodes commutes.
\end{proof}

%


\section{Main lemmas and proof of the theorem} \label{section: main_lemmas_and_the_proof}

	Assume that $(V, \pi^V)$ and $(W, \pi^W)$ are $\delta$-interleaved, i.e.\ there exist two morphisms
	$f: V \to W[\delta]$ and $g: W \to V[\delta]$, such that
	$g[\delta] \circ f = \Phi^{2\delta}_V$ and $f[\delta] \circ g = \Phi^{2\delta}_W$, where
	$\Phi^{2\delta}_V = \pi^V_{t,t+2\delta}$ and similarly for $\Phi^{2\delta}_W$.
	Our aim is to build a $\delta$-matching between $\calB(V)$ and $\calB(W)$.
	
	Recall the notation $\calB_{\epsi}$ for the collection bars of length $>\epsi$ in a barcode $\calB$.
	
\begin{lem} \label{lem: pm_surj}
	Assume that we have two $\delta$-interleaved persistence modules $(V, \pi^V)$ and $(W, \pi^W)$, with $\delta$-interleaving morphisms $f: V \to W[\delta]$ and $g: W \to V[\delta]$.
	Consider the surjection $f: V \to \im f$
	and the induced matching $\mu_{sur}: \calB(V) \to \calB(\im f)$ (see \Cref{defn: induced_matching_sur}). Then
	\begin{enumerate}[(1)]
		\item
			$\coim \mu_{sur} \supseteq  \calB (V) _{2\delta}$,
		\item
			$\im \mu_{sur} = \calB (\im f)$, and 
		\item
			$\mu_{sur}$ takes $(b,d] \in \coim \mu_{sur}$ to $(b,d']$ with $d' \in [d-2\delta, d]$.
	\end{enumerate}
\end{lem}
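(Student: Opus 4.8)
The plan is to analyze the surjection $f\colon V \to \im f$ together with the information coming from the interleaving, namely that $g[\delta]\circ f = \Phi^{2\delta}_V$ equals the $2\delta$-shift morphism of $V$. Part (2) is immediate from Proposition \ref{prop: im_mu_sur}(1) applied to the surjection $f\colon V\to \im f$, so the content is in (1) and (3). The key structural fact I would extract first is a description of $\calB(\im f)$ in terms of $\calB(V)$: since $\im f \subseteq W[\delta]$, the module $\im f$ is a persistence submodule, and by the Normal Form Theorem both $V$ and $\im f$ split into interval modules, so $f$ restricted to each summand $\F(b,d]$ of $V$ lands, after projecting, into summands of $\im f$. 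The decisive observation is that the composition $V \xrightarrow{f} \im f \hookrightarrow W[\delta] \xrightarrow{g[\delta]} V[2\delta]$ is exactly $\Phi^{2\delta}_V$, whose restriction to the summand $\F(b,d]$ of $V$ is the $2\delta$-shift morphism $\F(b,d]\to\F(b-2\delta,d-2\delta]$, i.e. it is \emph{non-zero on the part of $\F(b,d]$ living over $(b,d-2\delta]$}. This forces: whenever $d-b>2\delta$, the summand $\F(b,d]$ cannot be killed by $f$, and moreover the image bar it contributes must still be ``alive'' far enough to the right that $g[\delta]$ can carry it across a further $\delta$-shift to reach all of $(b,d-2\delta]$.

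Concretely, I would argue as follows. Fix a starting point $b\in\R$ and sort, as in Definition \ref{defn: induced_matching_sur}, the bars $(b,d_1]\supseteq\cdots\supseteq(b,d_k]$ of $\calB(V)$ and $(b,e_1]\supseteq\cdots\supseteq(b,e_l]$ of $\calB(\im f)$ starting at $b$; by Proposition \ref{prop: im_mu_sur} we have $l\le k$ (surjectivity of $f$ implies $\#\calB_J^+ \ge \#\calC_J^+$ in the notation of Proposition \ref{prop: surj_between_pm_monotonicity_of_B+}, applied to $J=(b,c]$) and $\mu_{sur}(b,d_i]=(b,e_i]$ with $e_i\le d_i$. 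This already gives the first coordinate claim in (3): matched bars keep their left endpoint $b$. To get $e_i\ge d_i-2\delta$ and the coimage claim (1), I would run the analogous count one level down: apply the inclusion $\im f \hookrightarrow W[\delta]$ and the surjection $g[\delta]\colon W[\delta]\to\im(g[\delta])$, whose composite with $f$ recovers $\Phi^{2\delta}_V$, a self-map of $V$ whose restriction to $\F(b,d]$ has image $\F(b,d-2\delta]$. Chasing the induced matchings through this triangle and using Claim \ref{claim: diagram_commutes_on_level_of_barcodes_surj_inj} (functoriality for surjections) together with Proposition \ref{prop: surj_between_pm_monotonicity_of_B+} applied to the interval $J=(b, d-2\delta]$, one sees that the number of bars of $\calB(\im f)$ starting at $b$ and reaching past $d-2\delta$ is at least the number of bars of $\calB(V)$ starting at $b$ and reaching past $d$. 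Comparing the sorted lists termwise yields $e_i \ge d_i - 2\delta$ for every $i\le l$, and also that if $d_i - b > 2\delta$ then $i\le l$, i.e. $(b,d_i]$ is in the coimage — this is precisely (1) and the second coordinate bound in (3).

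The step I expect to be the main obstacle is making the triangle argument rigorous at the level of barcodes: one must be careful that $\im(\Phi^{2\delta}_V)$, as a submodule of $V[2\delta]$, has barcode obtained from $\calB(V)$ by deleting all bars of length $\le 2\delta$ and shortening every surviving bar $(b,d]$ to $(b, d-2\delta]$ — and that the induced matching $\mu_{sur}$ for the factorization $V \twoheadrightarrow \im(\Phi^{2\delta}_V)$ realizes exactly this, so that intertwining it with $\mu_{sur}(f)$ and $\mu_{sur}(g[\delta])$ via functoriality pins down the right endpoints. Equivalently, one can bypass functoriality and argue purely with the rank/counting inequalities of Propositions \ref{prop: inj_between_pm_monotonicity_of_B-} and \ref{prop: surj_between_pm_monotonicity_of_B+}: the inequalities $\#\calB(V)_{(b,d]}^+ \ge \#\calB(\im f)_{(b,d]}^+ \ge \#\calB(\im(\Phi^{2\delta}_V))_{(b,d]}^+ = \#\calB(V)_{(b,d-2\delta]}^+ \ge \#\calB(V)_{(b,d]}^+ $, valid when $d$ itself is not an endpoint issue, squeeze everything and give the termwise comparison directly. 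I would adopt this second, more hands-on route to avoid any functoriality subtleties, and finish by unwinding the ``longest-first'' matching to read off (1) and (3).
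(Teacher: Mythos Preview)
Your first route—via the commutative triangle of surjections $V \twoheadrightarrow \im f \twoheadrightarrow \im \Phi_V^{2\delta}$ together with functoriality of $\mu_{sur}$ (Claim \ref{claim: diagram_commutes_on_level_of_barcodes_surj_inj})—is exactly the paper's proof. The paper writes this triangle directly (the detour through the inclusion $\im f \hookrightarrow W[\delta]$ is unnecessary here; that inclusion is what drives the companion Lemma \ref{lem: pm_inj}), observes that $\coim \mu_{sur}(\Phi_V^{2\delta}) = \calB(V)_{2\delta}$ and $\mu_{sur}(\Phi_V^{2\delta})(b,d] = (b,d-2\delta]$, and then reads off (1) and (3) by a two-line case split on whether $d-b > 2\delta$.

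Your second, ``hands-on'' counting route has a slip that makes the squeeze fail as written: the equality $\#\calB(\im \Phi_V^{2\delta})_{(b,d]}^+ = \#\calB(V)_{(b,d-2\delta]}^+$ is wrong—since the bars of $\im \Phi_V^{2\delta}$ are of the form $(b,c-2\delta]$ with $(b,c]\in\calB(V)$, the correct identity is $\#\calB(\im \Phi_V^{2\delta})_{(b,d]}^+ = \#\calB(V)_{(b,d+2\delta]}^+$, and then the displayed chain does not close up. The inequality you actually want (and in fact stated correctly in the preceding paragraph) is obtained by applying Proposition \ref{prop: surj_between_pm_monotonicity_of_B+} to the interval $(b,\,d_i-2\delta]$:
\[
\#\calB(\im f)_{(b,\,d_i-2\delta]}^+ \;\ge\; \#\calB(\im \Phi_V^{2\delta})_{(b,\,d_i-2\delta]}^+ \;=\; \#\calB(V)_{(b,d_i]}^+ \;\ge\; i,
\]
which gives $e_i \ge d_i - 2\delta$ (and hence also (1) when $d_i - b > 2\delta$). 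With this correction the counting route goes through, but it is nothing more than the functoriality argument unpacked, so there is no real gain in avoiding Claim \ref{claim: diagram_commutes_on_level_of_barcodes_surj_inj}.
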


\begin{lem} \label{lem: pm_inj}
	Assume that we have two $\delta$-interleaved persistence modules $(V, \pi^V)$ and $(W, \pi^W)$, with $\delta$-interleaving morphisms $f: V \to W[\delta]$ and $g: W \to V[\delta]$.
	Consider the injection $\im f \to W[\delta]$ and
	the induced matching $\mu_{inj} : \calB(\im f) \to \calB(W[\delta])$ (see \Cref{defn: induced_matching_inj}). Then
	\begin{enumerate}[(1)]
		\item
			$\coim \mu_{inj} = \calB (\im f)$, 
		\item
			$\im \mu_{inj} \supseteq  \calB (W[\delta]) _{2\delta}$, and
		\item
			$\mu_{inj}$ takes $(b,d'] \in \coim \mu_{inj}$ to $(b', d']$ with $b' \in [b-2\delta, b]$.
	\end{enumerate}
\end{lem}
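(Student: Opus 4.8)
The plan is to work with the submodule $M:=\im f$ of $C:=W[\delta]$, the inclusion $\iota\colon M\hookrightarrow C$ being precisely the injection in the statement; write $\pi^C$ for the structure maps of $C$. Part~(1) is then immediate from Proposition~\ref{prop: coim_mu_inj}(1) applied to $\iota$, and the inequality $b'\le b$ in part~(3) is Proposition~\ref{prop: coim_mu_inj}(2) (note $\mu_{inj}$ never changes a right endpoint). So it remains to prove two things: every bar of $\calB(C)$ of length $>2\delta$ lies in $\im\mu_{inj}$, and a matched pair $(b,d']\mapsto(b',d']$ satisfies $b'\ge b-2\delta$.

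The interleaving enters only through the following structural fact. Shifting the relation $f[\delta]\circ g=\Phi^{2\delta}_W$ by $\delta$ gives $f[2\delta]\circ g[\delta]=\Phi^{2\delta}_{W[\delta]}=\Phi^{2\delta}_C$, and since this morphism factors through $\im f[2\delta]=M[2\delta]$ we obtain $\im\bigl(\pi^C_{s,s+2\delta}\bigr)\subseteq M_{s+2\delta}$ for every $s\in\R$; equivalently, the quotient module $C/M$ is ``$2\delta$-trivial'', i.e.\ its $2\delta$-shift morphism vanishes. (Getting the shifts right, so that $2\delta$ rather than $\delta$ appears here, is the first thing to check.)

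The combinatorial heart is a monotonicity estimate opposite to Proposition~\ref{prop: inj_between_pm_monotonicity_of_B-}. Recall the subspace $E_I^-(\cdot)$ from the proof of Proposition~\ref{prop: inj_between_pm_monotonicity_of_B-}, with $\dim E_I^-=\#\calB_I^-$. I claim that for $I=(b,d]$ with $b+2\delta<d$ (and, with the evident modification, for $d=+\infty$) one has $E_{(b,d]}^-(C)\subseteq E_{(b+2\delta,\,d]}^-(M)$, so that $\#\calB(C)_{(b,d]}^-\le\#\calB(M)_{(b+2\delta,\,d]}^-$. Indeed, take $v\in E_{(b,d]}^-(C)\subseteq C_d$: since $b<d-2\delta<d$, $v\in\im\pi^C_{d-2\delta,d}\subseteq M_d$ by the structural fact; then for each $s'\in(b+2\delta,d)$, writing $s=s'-2\delta\in(b,d)$ and $v=\pi^C_{s,d}(w)$ with $w\in C_s$, the element $u:=\pi^C_{s,s'}(w)=\pi^C_{s'-2\delta,s'}(w)$ lies in $M_{s'}$ (structural fact again), whence $v=\pi^C_{s',d}(u)=\pi^M_{s',d}(u)\in\im\pi^M_{s',d}$ as $M$ is a submodule; and $v\in\ker\pi^M_{d,r}$ for $r>d$ because $v\in M_d\cap\ker\pi^C_{d,r}$. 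Thus $v\in E_{(b+2\delta,\,d]}^-(M)$.

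Combining this claim with Proposition~\ref{prop: inj_between_pm_monotonicity_of_B-} itself, fix a right endpoint $d$ and list the bars of $\calB(M)$, resp.\ $\calB(C)$, ending at $d$ in longest-first order as $(b_1,d]\supseteq\cdots\supseteq(b_k,d]$, resp.\ $(c_1,d]\supseteq\cdots\supseteq(c_l,d]$, so that $\mu_{inj}$ sends $(b_i,d]\mapsto(c_i,d]$ for $i\le k$. The two propositions give $\#\{i:b_i\le x\}\le\#\{j:c_j\le x\}\le\#\{i:b_i\le x+2\delta\}$ for all $x<d-2\delta$. Taking $x=b_i$ in the left inequality recovers $c_i\le b_i$; if instead $c_i<b_i-2\delta$ then $c_i<d-2\delta$, and the right inequality at $x=c_i$ forces $i\le\#\{i':b_{i'}\le c_i+2\delta\}\le i-1$, a contradiction, so $b'=c_i\ge b_i-2\delta$, which is the missing bound in~(3). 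For~(2): a bar $(c_j,d]\in\calB(C)$ with $d-c_j>2\delta$ has $c_j<d-2\delta$, so the right inequality at $x=c_j$ gives $j\le\#\{j':c_{j'}\le c_j\}\le\#\{i:b_i\le c_j+2\delta\}\le k$, i.e.\ $(c_j,d]$ is among the matched bars. The infinite rays are handled separately: since $\delta$ is finite, evaluating the interleaving identities on the terminal spaces shows $f_\infty$ is an isomorphism, hence $M_\infty=C_\infty$, every infinite ray of $\calB(C)$ is matched, and the endpoint bound follows from the same argument with $d=+\infty$. The main obstacle is the monotonicity claim of the third paragraph --- once it is available, parts~(2) and~(3) are sorting arguments --- together with careful handling of the shifts in the structural fact.
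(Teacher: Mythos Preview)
Your proof is correct but takes a genuinely different route from the paper's. Both arguments rest on the same structural input---in your language, that $C/M$ has trivial $2\delta$-shift; in the paper's (shifted) notation, the chain of inclusions $\im\Phi^{2\delta}_W\hookrightarrow\im f[\delta]\hookrightarrow W[2\delta]$---but they process it differently. The paper invokes the functoriality of induced matchings for injections (Claim~\ref{claim: diagram_commutes_on_level_of_barcodes_surj_inj}): applying $\mu_{inj}$ to that three-term chain, and using that the barcode of $\im\Phi^{2\delta}_W$ is explicit (it consists of the bars $(b,d-2\delta]$ for $(b,d]\in\calB(W)$ with $d-b>2\delta$), it reads off parts~(2) and~(3) from the known composite matching $(b,d-2\delta]\mapsto(b-2\delta,d-2\delta]$. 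You instead bypass Claim~\ref{claim: diagram_commutes_on_level_of_barcodes_surj_inj} entirely, extracting from the structural fact a direct reverse inequality $\#\calB(C)_{(b,d]}^-\le\#\calB(M)_{(b+2\delta,d]}^-$ on the $E_I^-$-counts of Proposition~\ref{prop: inj_between_pm_monotonicity_of_B-}, and then finishing with an elementary sorting argument. The paper's approach is more streamlined once functoriality is in hand and mirrors the proof of the companion Lemma~\ref{lem: pm_surj}; yours is more self-contained (it needs only Proposition~\ref{prop: inj_between_pm_monotonicity_of_B-} and the $E_I^-$ spaces from its proof) and isolates cleanly that the only consequence of the interleaving actually used is the $2\delta$-triviality of the quotient $C/M$.
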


\noindent
The proofs of these lemmas appear in \Cref{section: proofs_of_lems_for_isometry}.\\

\begin{proof}[Proof of \Cref{thm: thm_algebraic_stability}]
Let us consider the induced matching $\mu (f) = \mu_{inj} \circ \mu_{sur}$, (post)composing it with the map $\Psi_\delta : \calB (W[\delta]) \to \calB (W)$ defined for all bars by $(a,b] \mapsto (a+\delta, b+\delta]$ (this map shifts each bar by $\delta$ to the right).

We claim that $\Psi_{\delta} \circ \mu (f)$ is a $\delta$-matching between $\calB (V)$ and $\calB (W)$.
using \Cref{lem: pm_surj} and \Cref{lem: pm_inj}, we get the following diagram:

\noindent
\begin{center} %
	\begin{tikzpicture}
		\matrix (m) [matrix of math nodes,row sep=2em,column sep=4em,minimum width=2em]
		{
			 \calB(V) & \  &
			 \calB(W[\delta]) _{2\delta} &
			 \calB(W) _{2\delta} \\
			 \calB(V) _{2\delta} & \calB (\im f) & \im \mu_{inj} & \calB (W) \\
			(b,d] & (b,d'] & (b',d'] & (b'+\delta, d'+\delta]\\
		};
		\path[-stealth, decoration={snake,segment length=4,amplitude=.9,
			post=lineto,post length=2pt}]
		(m-2-1) edge node [above] {$\mu_{sur}$} (m-2-2)
		(m-2-2) edge node [above] {$\mu_{inj}$} (m-2-3)
		(m-2-3) edge node [above] {$\Psi_{\delta}$} (m-2-4)
		%
		(m-3-1) edge[decorate] (m-3-2)
		(m-3-2) edge[decorate] (m-3-3)
		(m-3-3) edge[decorate] (m-3-4)
		%
		(m-3-1) edge[draw=none] node [sloped] {$\in$} (m-2-1)
		(m-3-2) edge[draw=none] node [sloped] {$\in$} (m-2-2)
		(m-3-3) edge[draw=none] node [sloped] {$\in$} (m-2-3)
		(m-3-4) edge[draw=none] node [sloped] {$\in$} (m-2-4)
		(m-2-1) edge[draw=none] node [sloped] {$\subseteq$} (m-1-1)
		(m-2-3) edge[draw=none] node [sloped] {$\supseteq$} (m-1-3)
		(m-2-4) edge[draw=none] node [sloped] {$\supseteq$} (m-1-4);
	\end{tikzpicture}
\end{center}

\noindent
Here by \Cref{lem: pm_inj},
a bar $(b,d]\in \calB (V)_{2\delta}$ is mapped to $\mu_{sur}(b,d] = (b,d'] \in \calB (\im f)$, where $d-2\delta \leq d' \leq d$.
This bar is then mapped to $\mu_{inj}(b,d']= (b',d'] \in \big(  \calB (W[\delta]) \big) _{2\delta}$, where $b - 2\delta \leq b' \leq b$, by \Cref{lem: pm_surj}.
Finally, $(b',d']$ is shifted by $\delta$ to the right, i.e. it is mapped to $(b
+\delta, d'+\delta]$ by $\Psi_{\delta}$.

Note that it follows in particular that every bar in $\calB (V)_{2\delta}$ (i.e. a ''long enough" bar) is indeed matched by $\mu (f)$, and similarly one can check that any bar in $\calB (W)_{2\delta}$ is matched.
Moreover, from the information about $b', d'$ we obtain
$$
\left\{
\begin{array}{ll}
	d - 2\delta \leq d' \leq d \\
	b - 2\delta \leq b' \leq b
\end{array}
\right.
\Rightarrow		
\left\{
\begin{array}{ll}
	d-\delta \leq d' + \delta \leq d+ \delta \\
	b - \delta \leq b'+\delta \leq b+ \delta
\end{array}
\right. \;,
$$

\noindent
so $\Psi_\delta \circ \mu (f)$ is a $\delta$-matching between $\calB(V)$ and $\calB(W)$.
Hence
	$$d_{bot} \big( \calB(V), \calB(W) \big) \leq d_{int} (V,W) \;.$$
\end{proof}

\section{Proofs of Lemma \protect\ref{lem: pm_surj} and Lemma \protect\ref{lem: pm_inj} } \label{section: proofs_of_lems_for_isometry}

Recall our setting: for two persistence modules $(V, \pi^V)$ and $(W, \pi^W)$ and $\delta>0$, we assume that $f: V \to W[\delta]$ and $g: W\to V[\delta]$ are $\delta$-interleaving morphisms, i.e. $g[\delta]\circ f = \Phi_V^{2\delta}$ and $f[\delta] \circ g = \Phi_W^{2\delta}$, where
$\Phi^{2\delta}_V = \pi^V_{t,t+2\delta}$ and similarly for $\Phi^{2\delta}_W$.

\subsection{Proof of Lemma \protect\ref{lem: pm_surj}.}
	In order to avoid confusion, along the proof, we will denote by $\mu_{sur}(f)$ the matching that corresponds to the mapping $f: V \to \im f$ (and similarly for the other maps), although the matching itself  \emph{does not depend} on the map $f$.


\begin{proof}[Proof of \Cref{lem: pm_surj}]
	\begin{enumerate} [(1)]
		\item
			By assumption, we have the following commutative diagram
			\begin{center}
				\begin{tikzpicture}
				\matrix (m) [matrix of math nodes,row sep=2em,column sep=2em,minimum width=2em]
				{
					V & \im f & \im \Phi_V^{2\delta} \\
				};
				\path[-stealth, decoration={snake,segment length=4,amplitude=3,
					post=lineto,post length=10pt}]
				(m-1-1) edge node [above] {$f$} (m-1-2)
				(m-1-2) edge node [above] {$g[\delta]$} (m-1-3)
				(m-1-1) edge[bend left=-20] node [below] {$\Phi^{2\delta}_V$} (m-1-3);
				\end{tikzpicture} \;,
			\end{center}
			
			\noindent
			where all the three maps are surjective: $f$ and $\Phi_V^{2\delta}$ are by definition onto their images,
			and since the diagram commutes, $g[\delta]$ restricted to $\im f$ is onto $\im \Phi_V^{2\delta}$.

			By \Cref{claim: diagram_commutes_on_level_of_barcodes_surj_inj}, the following diagram commutes:
						\noindent
						\begin{center} %
							\begin{tikzpicture}
							\matrix (m) [matrix of math nodes,row sep=2em,column sep=4em,minimum width=2em]
							{
								\calB(V) & \calB (\im f) & \calB (\im \Phi_V^{2\delta}) \\
							};
							\path[-stealth, decoration={snake,segment length=4,amplitude=3,
								post=lineto,post length=10pt}]
							(m-1-1) edge node [above] {$\mu_{sur} (f)$} (m-1-2)
							(m-1-2) edge node [above] {$\mu_{sur} (g[\delta])$} (m-1-3)
							(m-1-1) edge[bend left=-20] node [below] {$\mu_{sur} ( \Phi^{2\delta}_V )$} (m-1-3);
							\end{tikzpicture} \;.
						\end{center}
			
			Note that $\coim \mu_{sur} ( \Phi^{2\delta}_V ) = \calB(V) _{2\delta}$
			by construction of the matching $\mu_{sur}$, i.e. all ``too short" intervals are forgotten during this process. In detail, for each starting (left) point, we list all bars of $\calB(V)$ and $\calB (\im \Phi_V^{2\delta}) = \{ (b,d-2\delta]\ :\ (b,d]\in \calB(V), d-b>2\delta \}$ in length-non-increasing order and then match them according to ``longest-first" order. In this case, each bar $(b,d] \in\calB(V)$ is matched with the bar $(b, d-2\delta] \in \calB (\im \Phi_V^{2\delta})$, as long as $d-b > 2\delta$, and intervals of smaller length are not matched.
			Thus, we obtain that
			$
			\coim \mu_{sur} (f) \supseteq \coim \mu_{sur} (\Phi^{2\delta}_V)
			= \big(  \calB(V) \big) _{2\delta} \;.
			$
		\item
			Follows from \Cref{prop: im_mu_sur}.
		\item
			Let $(b,d] \in \calB(V)$. We need to examine $\mu_{sur} \big( (b,d] \big)$.
			There are two cases:
			\begin{enumerate}[(i)]
				\item
					If $d-b > 2\delta$, then $(b,d]$ is mapped to $(b,d']$ by $\mu_{sur}(f)$, where by \Cref{prop: im_mu_sur} $d' \leq d$. The interval $(b,d']$ is in turn mapped to some interval $(b,d'']$ by $\mu_{sur}(g[\delta])$, with $d'' \leq d'$. On the other hand, by commutativity of the above diagram, we know that $(b,d''] = (b,d-2\delta]$, i.e.\ $d'' = d - 2\delta$, so in particular $d - 2\delta \leq d' \leq d$.
					
					\begin{center} %
						\begin{tikzpicture}
						\matrix (m) [matrix of math nodes,row sep=2em,column sep=4em,minimum width=2em]
						{
							\calB (V)_{2\delta} & \calB (\im f) & \calB (\im \Phi_V^{2\delta}) \\
							(b,d] & (b,d'] & (b,d''] \\
							 &  &  (b, d- 2\delta]\\
						};
						\path[-stealth, decoration={snake,segment length=4,amplitude=.9,
							post=lineto,post length=2pt}];
						%
						\draw[|->] (m-2-1) edge node [above] {$\mu_{sur} (f)$} (m-2-2);
						\draw[|->] (m-2-2) edge node [above] {$\mu_{sur} (g[\delta])$} (m-2-3);
						\draw[|->] (m-2-1) to[out=-80, in=180, looseness=1] node[above] {$\mu_{sur} ( \Phi^{2\delta}_V )$} (m-3-3) ;
						%
						\draw (m-2-1) edge[draw=none] node [sloped] {$\in$} (m-1-1);
						\draw (m-2-2) edge[draw=none] node [sloped] {$\in$} (m-1-2);
						\draw (m-2-3) edge[draw=none] node [sloped] {$\in$} (m-1-3);
						\draw (m-2-3) edge[draw=none] node [sloped] {$=$} (m-3-3);
						\end{tikzpicture}
					\end{center}
				\item
					If $d-b \leq 2\delta$, then $(b,d]$ (if it is in the coimage of $\mu_{sur}(f)$) is matched to $(b,d']$ with $d\geq d'$.
					But $d' > b \geq d - 2\delta$, so again $d'\in [d-2\delta,d]$.
			\end{enumerate}
	\end{enumerate}
\end{proof}

\subsection{Proof of Lemma \protect\ref{lem: pm_inj}.} \label{ssec-proof-3.2.2}
We shall give a proof all ''shifted" be $\delta$, to have more pleasant notations.

\begin{proof}[Proof of \Cref{lem: pm_inj}.]

\begin{enumerate} [(1)]
	
	\item
	Follows from \Cref{prop: coim_mu_inj}.
	
	\item
	By assumption, $f[\delta] \circ g = \Phi_W^{2\delta}$, i.e. the following diagram commutes:
	\begin{center}
		\begin{tikzpicture}
		\matrix (m) [matrix of math nodes,row sep=2em,column sep=2em,minimum width=2em]
		{
			W & \im g & W[2\delta] \\
		};
		\path[-stealth, decoration={snake,segment length=4,amplitude=3,
			post=lineto,post length=10pt}]
		(m-1-1) edge node [above] {$g$} (m-1-2)
		(m-1-2) edge node [above] {$f[\delta]$} (m-1-3)
		(m-1-1) edge[bend left=-20] node [below] {$\Phi_W^{2\delta}$} (m-1-3);
		\end{tikzpicture} \;.
	\end{center}
	
	 Thus, $\im \Phi_W^{2\delta} \subseteq \im f[\delta] \subseteq W[2\delta]$, i.e. there exist natural injections $j$ and $i$ respectively, so that the following diagram commutes:
	\begin{center}
		\begin{tikzpicture}
		\matrix (m) [matrix of math nodes,row sep=2em,column sep=2em,minimum width=2em]
		{
			\im \Phi_W^{2\delta} & \im f[\delta] & W[2\delta] \\
		};
		\path[-stealth, decoration={snake,segment length=4,amplitude=3,
			post=lineto,post length=10pt}]
		(m-1-1) edge node [above] {$j$} (m-1-2)
		(m-1-2) edge node [above] {$i$} (m-1-3)
		(m-1-1) edge[bend left=-20] node [below] {$\Phi_W^{2\delta}$} (m-1-3);
		\end{tikzpicture} \;.
	\end{center}

	Since all the morphisms here are injections, by \Cref{claim: diagram_commutes_on_level_of_barcodes_surj_inj}, we have a commutative diagram on the level of barcodes:
	
	\noindent
	\begin{center} \label{commut_inj_barcodes}
		\begin{tikzpicture}
		\matrix (m) [matrix of math nodes,row sep=2em,column sep=4em,minimum width=2em]
		{
			\calB( \im \Phi_W^{2\delta} ) & \calB ( \im f[\delta] ) & \calB (W[2 \delta]) \\
		};
		\path[-stealth, decoration={snake,segment length=4,amplitude=3,
			post=lineto,post length=10pt}]
		(m-1-1) edge node [above] {$\mu_{inj} (j)$} (m-1-2)
		(m-1-2) edge node [above] {$\mu_{inj} (i)$} (m-1-3)
		(m-1-1) edge[bend left=-20] node [below] {$\mu_{inj} (\Phi_W^{2\delta})$} (m-1-3);
		\end{tikzpicture} \;.
	\end{center}
	
	Note that
	\begin{align*}
		\calB (\im \Phi_W^{2\delta}) &=
			\{ (b, d-2\delta] \ :\ (b,d)\in \calB(W),\ d-b>2\delta \} \;,\\
		\calB (W[2\delta]) &=
			\{ (b-2\delta, d-2\delta] \ :\ (b,d] \in \calB (W)\} \;,	
	\end{align*}
	and $\mu_{inj} (\Phi_W^{2\delta}) (b,d-2\delta] = (b-2\delta, d - 2\delta]$.
	
	Hence $\im \mu_{inj} (i) \supseteq \im \mu_{inj} (\Phi_W^{2\delta}) = \calB (W[2\delta])_{2\delta}$, which is what we wanted to prove, written in shifted by $\delta$ notations.
	\item
	For an interval $(b,d] \in \calB (\im f[\delta])$, denote $\mu_{inj} (b, d] = (a ,d]$ for some $a$, such that $(a,d] \in \calB (W)$. By \Cref{prop: coim_mu_inj} we know that $a \leq b$.
	If $d-b \leq 2\delta$, then $a\geq d - 2\delta > b-2\delta$.
	If otherwise, $d-b > 2\delta$, and there exists an interval $(a+2\delta,d] \in \calB (\im \Phi_W^{2\delta})$ such that $\mu_{inj} (\Phi_W^{2\delta}) (a+2\delta, d] = \mu_{inj} (i) (b,d] = (a, d]$, so $b \leq a+2\delta$, hence again $b-2\delta \leq a\leq b$.
\end{enumerate}
\end{proof} 

\chapter{What can we read from a barcode?} \label{chp3_read_from_a_barcode}

\section{Infinite bars and characteristic exponents}
As a prelude, let us consider the case of barcodes that consist of infinite bars only.
Suppose $\calB$ is a barcode that consists of $N$ infinite bars:
$$
	(b_1, +\infty),\ (b_2, +\infty),\ \ldots,\ (b_N, +\infty),\ b_1\leq b_2 \leq \ldots \leq b_N \;.
$$

Let us note that infinite bars cannot be discarded when computing $d_{bot} (\calB, \calC)$ between $\calB$ and another barcode $\calC$, in the sense that while searching for a $\delta$-matching we can only ignore bars of length less than $2\delta$, that is, all infinite bars should appear in the coimage and in the image of the matchings we take into account.
In particular, for a barcode $\calC$, we would have $d_{bot} (\calB , \calC) < +\infty$ if and only if $\calB$ and $\calC$ contain exactly the same amount of infinite bars (cf. \Cref{exr: basic_properties_of_interleaved_pm}.1).

Take indeed another such barcode $\calC$ consisting of the intervals:
$$
	(c_1, +\infty),\ (c_2, +\infty),\ \ldots,\ (c_N, +\infty),\ c_1\leq c_2 \leq \ldots \leq c_N \;.
$$

\noindent
Thus, $d_{bot} (\calB, \calC) \geq \min_{\sigma\in S_N} \max_{i} |b_i - c_{\sigma(i)}|$, where $\sigma$ goes over all possible permutations of $L$ elements. (\Cref{fig: infinite_bars_matching} demonstrates the case $N=1$.)\\
We shall see right away that the right-hand-side in the above inequality can be simplified.

\begin{figure}[!ht]
	\centering
	\includegraphics[scale=1]{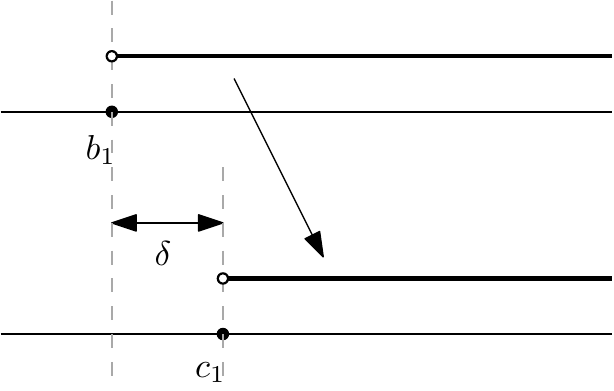}
	\caption{A $\delta$-matching between two infinite bars, $\delta = |b_1 - c_1|$.}
	\label{fig: infinite_bars_matching}
\end{figure}

\begin{lem}[The Matching Lemma] \label{lem: matching_lemma}
	\footnote{
	This claim is well known in the theory of optimal transportation, see e.g. formula (2.3) in \cite{bobkov_ledoux_2014_transport}.
	}
	For any two sets of points in $\R$, $b_1 \leq b_2 \leq \ldots \leq b_N$ and $c_1 \leq c_2 \leq \ldots \leq~c_N$, we have
	$$
		\min_{\sigma\in S_N} \max_i |b_i - c_{\sigma(i)}| = \max_i |b_i - c_i| \;.
	$$
\end{lem}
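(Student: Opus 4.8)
The plan is to establish the nontrivial inequality $\min_{\sigma \in S_N} \max_i |b_i - c_{\sigma(i)}| \geq \max_i |b_i - c_i|$, since the reverse inequality is immediate upon taking $\sigma = \mathrm{id}$. Equivalently, I want to show that the identity permutation already realizes the minimum, i.e.\ that $\max_i |b_i - c_{\sigma(i)}| \geq \max_i |b_i - c_i|$ for every $\sigma \in S_N$.

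The key technical ingredient is a two-point exchange estimate: if $b_i \leq b_j$ and $c_p \leq c_q$, then
\[
\max\big(|b_i - c_p|,\, |b_j - c_q|\big) \;\leq\; \max\big(|b_i - c_q|,\, |b_j - c_p|\big).
\]
I would prove this by bounding each term on the left by the right-hand side via a short case split. For $|b_i - c_p|$: if $b_i \leq c_p$, then $|b_i - c_p| = c_p - b_i \leq c_q - b_i = |b_i - c_q|$ (using $b_i \leq c_p \leq c_q$); if $b_i > c_p$, then $|b_i - c_p| = b_i - c_p \leq b_j - c_p = |b_j - c_p|$ (using $b_i \leq b_j$). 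Symmetrically, for $|b_j - c_q|$: if $b_j \geq c_q$, then $|b_j - c_q| = b_j - c_q \leq b_j - c_p = |b_j - c_p|$; if $b_j < c_q$, then $|b_j - c_q| = c_q - b_j \leq c_q - b_i = |b_i - c_q|$ (using $b_i \leq b_j$). Taking maxima of the two left-hand terms gives the claim.

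With this in hand I would run a bubble-sort argument. Given $\sigma \neq \mathrm{id}$, since $\sigma$ is a bijection of $\{1,\dots,N\}$ there exists an index $i$ with $\sigma(i) > \sigma(i+1)$ (otherwise $\sigma$ would be increasing, hence the identity). Apply the exchange estimate with $j = i+1$, $p = \sigma(i+1)$, $q = \sigma(i)$: the permutation $\sigma'$ obtained from $\sigma$ by swapping its values at positions $i$ and $i+1$ satisfies $\max_k |b_k - c_{\sigma'(k)}| \leq \max_k |b_k - c_{\sigma(k)}|$, while $\sigma'$ has strictly fewer inversions than $\sigma$. Iterating this finitely many times reaches the identity permutation without ever increasing the cost, so $\max_k |b_k - c_k| \leq \max_k |b_k - c_{\sigma(k)}|$, as desired. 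The only (mild) obstacle is the bookkeeping in the exchange estimate and checking that an adjacent inversion always exists; there is no genuine difficulty here, and the statement is classical in optimal transport.
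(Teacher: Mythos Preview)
Your proof is correct and follows essentially the same approach as the paper: both reduce to a two-point exchange inequality and then run a bubble-sort argument via adjacent transpositions to reach the identity. Your version is in fact slightly cleaner, since you prove the exchange estimate directly by case analysis (the paper leaves the $N=2$ case as an exercise) and you do not need the preliminary reduction to pairwise distinct points.
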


\begin{cor}\label{cor: cor_matching_lower_bound_on_botneck_dist}
	Let $V$ and $W$ be two persistence modules with barcodes $\calB = \calB (V), \calC = \calB (W)$, each containing $N$ infinite bars. Denote by $b_i \text{ and } c_i $ the corresponding end-points of the \emph{infinite} bars in $\calB$ and $\calC$ (ordered as in \Cref{lem: matching_lemma}). Then we have the following lower bound on the bottleneck distance between the two barcodes:
	\begin{equation} \label{eq: matching_lemma_statement}
		d_{bot} ( \calB, \calC ) \geq \max_i | b_i - c_i | \;.
	\end{equation}
		
\end{cor}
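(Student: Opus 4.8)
The plan is to pass from the bottleneck distance to an explicit $\delta$-matching, extract from it a permutation of the infinite endpoints satisfying a pointwise bound, and then apply the Matching Lemma to replace that permutation by the order-preserving one.

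First I would fix any $\delta > d_{bot}(\calB,\calC)$ and choose a $\delta$-matching $\mu : \calB \to \calC$ witnessing it. The first observation is that $\mu$ must match every infinite bar of $\calB$ with an infinite bar of $\calC$, and conversely. Indeed, an infinite bar has length $+\infty > 2\delta$, so by conditions (1)--(2) in \Cref{defn: delta_matching} every infinite bar of $\calB$ lies in $\coim\mu$ and every infinite bar of $\calC$ lies in $\im\mu$; and if $\mu(I)=J$ with $I=(a,+\infty)$ infinite, then condition (3) forces $I \subseteq J^{-\delta}$, which is impossible when $J$ is finite, since then $J^{-\delta}$ is a bounded interval. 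Hence the restriction of $\mu$ to the infinite bars of $\calB$ is a bijection onto the infinite bars of $\calC$; as both barcodes have exactly $N$ infinite bars, this bijection is encoded by a permutation $\sigma \in S_N$ for which $\mu$ sends the bar $(b_i,+\infty)$ to the bar $(c_{\sigma(i)},+\infty)$, using the chosen increasing ordering of the endpoints.

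Next I would read off the quantitative content of condition (3) of the $\delta$-matching on these matched pairs. For $I = (b_i,+\infty)$ and $J = (c_{\sigma(i)},+\infty)$ the inclusions $I \subseteq J^{-\delta}$ and $J \subseteq I^{-\delta}$ amount to $b_i \geq c_{\sigma(i)} - \delta$ and $c_{\sigma(i)} \geq b_i - \delta$, i.e. $|b_i - c_{\sigma(i)}| \leq \delta$ for every $i$. Consequently
$$
\min_{\tau \in S_N}\ \max_i\ |b_i - c_{\tau(i)}|\ \leq\ \max_i\ |b_i - c_{\sigma(i)}|\ \leq\ \delta\;,
$$
and by the Matching Lemma (\Cref{lem: matching_lemma}) the left-hand side equals $\max_i |b_i - c_i|$. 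Thus $\max_i |b_i - c_i| \leq \delta$ for every $\delta > d_{bot}(\calB,\calC)$, and letting $\delta \downarrow d_{bot}(\calB,\calC)$ gives \eqref{eq: matching_lemma_statement}.

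There is no serious obstacle here: the only point requiring care is the bookkeeping that an infinite bar can only be matched to another infinite bar and that every infinite bar is necessarily matched --- this is precisely what prevents discarding infinite bars in the definition of the bottleneck distance --- together with the invocation of the Matching Lemma to exchange the arbitrary matching permutation $\sigma$ for the order-preserving pairing $b_i \leftrightarrow c_i$.
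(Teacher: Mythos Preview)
Your proof is correct and follows essentially the same approach as the paper: the paper observes that infinite bars cannot be discarded and must be matched among themselves, yielding $d_{bot}(\calB,\calC) \geq \min_{\sigma\in S_N}\max_i |b_i - c_{\sigma(i)}|$, and then invokes the Matching Lemma to identify this minimum with $\max_i|b_i - c_i|$. Your write-up is in fact more explicit about why an infinite bar can only be matched to another infinite bar and about passing to the limit $\delta \downarrow d_{bot}(\calB,\calC)$, but the underlying argument is the same.
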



\begin{proof}[Proof of the Matching Lemma]
	Without loss of generality, assume that $b_1 < \ldots < b_N$ and $c_1 < \ldots < c_N$.
	(Otherwise, the claim follows from this case of pairwise distinct points by continuity of both sides of \Cref{eq: matching_lemma_statement}.)\\
	For $\sigma \in S_N$, put $T(\sigma) = \max_i |b_i - c_{\sigma(i)}|$.
	Let
	
		\[
			\sigma =
			\begin{pmatrix}
			1 & \ldots & N \\
			\sigma(1) & \ldots & \sigma(N)
			\end{pmatrix} \;
		\]
	be a permutation and assume that $\sigma (i+1) < \sigma (i)$.
	We can modify $\sigma$ to be	
		\[
		\begin{pmatrix}
		1		&\ldots&	 i & 			i+1 & 	\ldots &	N \\
		\sigma(1) & \ldots &\sigma(i+1)& \sigma(i)& \ldots & \sigma(N)
		\end{pmatrix} \;.
		\]
	by making a transposition of $\sigma(i)$ and $\sigma(i+1)$. This would be called an \emph{elementary modification}.
	\begin{exr}
		Prove that by a sequence of such elementary modifications, any $\sigma \in S_N$ can be transformed into the identity permutation, $\mathds{1}$.
		(\emph{Hint:} Consider any $\sigma \in S_N$ presented as above in two rows. One can first make the element $1$ of the second row be back at its place by subsequently permuting it with elements adjacent to it from the left. Then similarly "track" $2$ back to its place, and so on. This process terminates.)
	\end{exr}
	
	
	We claim that the identity permutation gives the minimal $T(\sigma)$ among all $\sigma \in S_N$. This general conclusion will follow from the case $N=2$.
	
	\begin{exr} \label{exr: matching_case_Nis2}
		Let $N=2$. Then $S_N$ contains two elements: $\mathds{1}$ and the transposition $(12)$. Note that one can get from $(12)$ to $\mathds{1}$ by an elementary modification.
		Let $b_1 < b_2$ and $c_1 < c_2$ be two pairs of pairwise distinct points. There are three different arrangements of these points on the line with respect to each other.
		Prove that in each case $T(\mathds{1})$ gives the minimum among $T (\sigma)$ (for $T$ as defined above).
		(\emph{Hint:} See \Cref{fig: matching_lemma_for_2}.)
	\end{exr}
	
	\begin{figure}[!ht]
		\centering
		\includegraphics[scale=1]{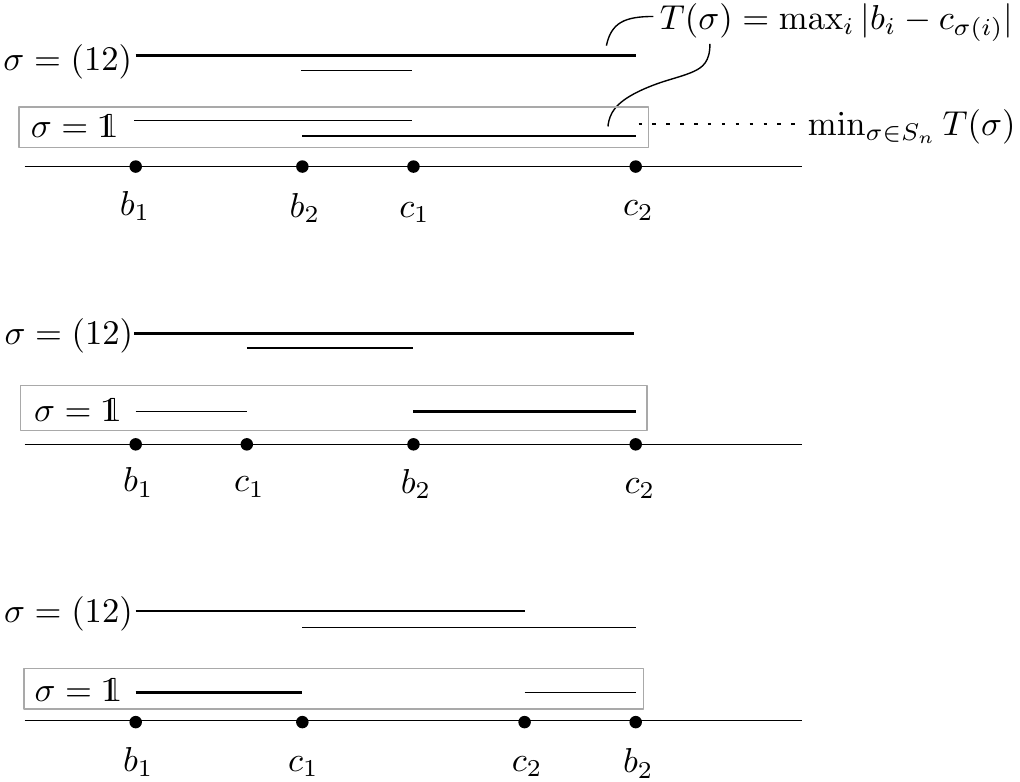}
		\caption{Three possible configurations of two intervals, $T(\sigma)$ is minimal for $\sigma = \mathds{1}$.}
		\label{fig: matching_lemma_for_2}
	\end{figure}
	

	Let us generalize the claim of \Cref{exr: matching_case_Nis2}. Let $\sigma \in S_N$ be a permutation and let $\sigma'$ be an elementary modification of $\sigma$, which switches $\sigma(i)$ with $\sigma(i+1)$. We claim that $T(\sigma') \leq T(\sigma)$. Let us show it. By definition,
	$$
		T(\sigma) = \max_j \big( |b_j - c_{\sigma(j)}| \big) =
		\max \Big( \max_{j\neq i, i+1} \big( |b_j - c_{\sigma(j)} \big),\ |b_i - c_{\sigma(i)}|,\ |b_{i+1} - c_{\sigma(i+1)}| \Big) \;,
	$$
	and similarly
	$$
	T(\sigma') = \max_j \big( |b_j - c_{\sigma'(j)}| \big) =
	\max \Big( \max_{j\neq i, i+1} \big( |b_j - c_{\sigma(j)} \big),\ |b_i - c_{\sigma(i+1)}|,\ |b_{i+1} - c_{\sigma(i)}| \Big) \;.
	$$
	Let us denote
		$A= \max_{j\neq i, i+1} \big( |b_j - c_{\sigma(j)} \big)$ and
		$B(\sigma) = \max \big( |b_i - c_{\sigma(i)}|,\ |b_{i+1} - c_{\sigma(i+1)}| \big)$
	and similarly
		$B(\sigma') = \max \big( |b_i - c_{\sigma(i+1)}|,\ |b_{i+1} - c_{\sigma(i)}| \big)$.
	Note that by \Cref{exr: matching_case_Nis2}, we have $B(\sigma') \leq B(\sigma)$.
	There are two possible cases:
	\begin{itemize}
		\item
			If $T(\sigma) = A$, then $B(\sigma) \leq A$ and since $B(\sigma') \leq B(\sigma)$ we get that $T(\sigma') = A$.
		\item
			If $T(\sigma) = B(\sigma)$, then $A \leq B(\sigma)$ and since $B(\sigma') \leq B(\sigma)$ we get that $T(\sigma') = \max \big( A, B(\sigma') \big) \leq B(\sigma) = T(\sigma)$.
	\end{itemize}
	
	Back to the general statement of the lemma, let $\sigma$ be an optimal permutation, i.e. a permutation with the minimal $T(\sigma)$. We saw that every elementary modification yields a new permutation $\sigma'$ with $T(\sigma')\leq T(\sigma)$.
	But since any $\sigma$ can be transformed into $\mathds{1}$ by a finite number of elementary modifications, we obtain that $T(\mathds{1}) \leq T(\sigma)$, hence by the optimality of $\sigma$, in fact $\mathds{1}$ gives the minimal $T(\sigma) = T(\mathds{1}) = \max_i |b_i - c_i|$.
\end{proof}

\subsection{Characteristic exponents}

	This section is based on Section 2.6.4 of \cite{entov_polterovich_calabi_2003}. The notion of Characteristic exponents was taken from the theory of dynamical systems, see e.g. \cite{sinai_dynamical_1989}.

Let $E$ be a finite dimensional vector space over $\F$ with $\dim E = L$.
\begin{defn}\label{def-charexp}
	A function $c: E \to \R \cup \{-\infty\}$ is called a \emph{characteristic exponent} if
	\begin{enumerate}
		\item
			$c(0) = -\infty$, $c(v)\in \R$ for all $v\neq 0$,
		\item
			$c(\lambda v) = c (v)$ for all $\lambda \in \F \setm \{0\}$,
		\item
			$c(v_1 + v_2) \leq \max \{ c(v_1), c(v_2) \}$ for all $v_1, v_2 \in E$.
	\end{enumerate}
\end{defn}

\begin{exr}
	Let $c:E \to \R \cup \{-\infty\}$ be a characteristic exponent. Check that for any $\alpha \in \R$, the set $\{ v : c(v) < \alpha \}$ is a subspace of $E$.
	Deduce that $c$ admits at most $\dim E$ distinct real values.
\end{exr}

Thus, every characteristic exponent corresponds to a \emph{flag} of vector spaces
$$
	\{0\} = E_0 \subsetneq E_1 \subsetneq E_2 \subsetneq \ldots \subsetneq E_k = E \;,
$$
where $\dim E_i = p_i$, and $0= p_0 <p_1 < p_2 <\ldots < p_k = L$,
such that there exist constants $\alpha_1 < \alpha_2 < \ldots < \alpha_k$, such that
$c\restr_{E_i\setm E_{i-1}} = \alpha_i$.

A multi-set that consists of each $\alpha_i$ taken with multiplicity $p_i - p_{i-1}$ will be called the \emph{spectrum of $c$}, denoted by $\spec (c)$.

The relation of this notion to our story is the following construction:
given a persistence module $(V, \pi)$, we can define a map $c: V_{\infty} \to \R$ by
	$c(v) = \inf \{ s:\ v\in {\im}(\pi_{s, \infty})\}$
(where, as usual, $V_\infty := V_t$ for $t \gg 0$).

\begin{exr}
	\begin{enumerate}
		\item
			The function $c$ defined above is a characteristic exponent.
		\item
			The spectrum of $c$ consists of the end-points of the infinite bars in $\calB (V)$ (taken with multiplicity).
	\end{enumerate}
\end{exr}

Finally let us consider the example of the  Morse persistence module $V=V(f)$ associated with a Morse function $f : X \to \R$ on a closed manifold $X$. In this case the terminal vector space $V_\infty$ is just $H_*(M)$. The induced characteristic exponent $c_f \colon H_*(M) \to \R$ is sometimes called a spectral invariant (see \cite{Vit92}, \cite{Sch00} and \cite{Oh05}).
The value $c_f(A)$ for $A \in H_*(M)$ is, intuitively,
the minimal critical value  such that the corresponding sub-level subset contains a (complete) representative of $A$.
The spectrum of $c_f$ consists of the so-called homologically essential critical values of $f$ (see \cite{Pol01}), which are special cases of min-max critical values (see e.g. \cite{Nicolaescu}.)



\section{Boundary depth and approximation}

\begin{defn} \label{defn: boundary_depth}
	Let $\calB$ be a barcode. The length of the longest finite bar in $\calB$ is called the \emph{boundary depth} of $\calB$ and is denoted by $\beta(\calB)$.
	If a barcode consists only of infinite bars, we set $\beta$ to be zero.
\end{defn}

%
%
%

\begin{thm}
	For a barcode $\calB$ write lengths of finite bars in the decreasing order:
	\begin{equation}\label{eq-listbeta}
	\beta_1 \geq \beta_2 \geq \dots\;
	\end{equation} We claim, following Usher and Zhang,
	that the function $\beta_k$ is Lipschitz on the space of barcodes with
	the Lipschitz constant being $2$. Our convention is that if $\calB$ has less than $k$ finite bars, $\beta_k(\calB)=0$.
\end{thm}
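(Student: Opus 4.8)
The plan is to show that for any two barcodes $\calB$ and $\calC$, the $k$-th largest finite bar length satisfies $|\beta_k(\calB) - \beta_k(\calC)| \leq 2\, d_{bot}(\calB,\calC)$. Since the bottleneck distance is an infimum over $\delta$-matchings, it suffices to fix a $\delta$-matching $\mu\colon\calB\to\calC$ and prove $\beta_k(\calB)\leq\beta_k(\calC) + 2\delta$; by symmetry of the definition of $\delta$-matching this also yields the reverse inequality, and taking the infimum over $\delta$ finishes the argument. So fix such a $\mu$.

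First I would set up notation: list the finite bars of $\calB$ with lengths $\beta_1(\calB)\geq\beta_2(\calB)\geq\cdots$ and those of $\calC$ with lengths $\beta_1(\calC)\geq\beta_2(\calC)\geq\cdots$ (padding with zeros). The key structural fact is \Cref{defn: delta_matching}(3): if $\mu(I)=J$ then $I\subset J^{-\delta}$ and $J\subset I^{-\delta}$, so the lengths satisfy $|\length(I)-\length(J)|\leq 2\delta$; in particular $\length(I)\leq\length(J)+2\delta$. The other ingredient is that every bar of $\calB$ of length $>2\delta$ lies in $\coim\mu$ (condition (1)). Now take the $k$ longest finite bars $I_1\supseteq\cdots$ of $\calB$ (if $\calB$ has fewer than $k$ finite bars, or if $\beta_k(\calB)\leq 2\delta$, the inequality $\beta_k(\calB)\leq 2\delta \leq \beta_k(\calC)+2\delta$ is immediate, so assume $\beta_k(\calB)=\length(I_k)>2\delta$). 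Then $I_1,\dots,I_k$ are all of length $>2\delta$, hence all matched, to distinct finite bars $J_1=\mu(I_1),\dots,J_k=\mu(I_k)$ of $\calC$ (they are finite because $\length(J_i)\leq\length(I_i)+2\delta<\infty$, and distinct because $\mu$ is a bijection on its coimage). Each $J_i$ has length $\geq\length(I_i)-2\delta\geq\length(I_k)-2\delta=\beta_k(\calB)-2\delta$. So $\calC$ contains at least $k$ finite bars each of length $\geq\beta_k(\calB)-2\delta$, which by definition of $\beta_k$ as the $k$-th largest gives $\beta_k(\calC)\geq\beta_k(\calB)-2\delta$, i.e.\ $\beta_k(\calB)\leq\beta_k(\calC)+2\delta$.

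The final step is to combine this with the Isometry Theorem (\Cref{thm: isometry_thm}) if one wants the statement phrased for persistence modules, but as stated the theorem is purely about barcodes, so it is enough to note that $|\beta_k(\calB)-\beta_k(\calC)|\leq 2\delta$ holds for every $\delta$ admitting a $\delta$-matching, and taking the infimum over such $\delta$ gives $|\beta_k(\calB)-\beta_k(\calC)|\leq 2\, d_{bot}(\calB,\calC)$, which is precisely $2$-Lipschitzness. The main point to be careful about — really the only subtle point — is the bookkeeping around short bars and short matchings: one must verify that the bars $I_1,\dots,I_k$ actually get matched (this is exactly why the hypothesis $\beta_k(\calB)>2\delta$ is used, and why the case $\beta_k(\calB)\leq 2\delta$ must be dispatched separately), and that their images are genuinely finite bars rather than rays. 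Neither requires any real work beyond invoking \Cref{defn: delta_matching}; there is no serious obstacle, the content is entirely in organizing the trivial-case/generic-case split correctly. The factor $2$ in the Lipschitz constant is sharp and comes directly from the two-sided $\delta$-thickening $I^{-\delta}=(a-\delta,b+\delta]$.
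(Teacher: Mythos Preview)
Your proof is correct and takes a slightly different, more direct route than the paper's. The paper argues as follows: after dispatching the case $\beta_k(\calB)\leq 2\delta$, it lists the lengths of the \emph{matched} bars in decreasing order as $b_1\geq\cdots\geq b_N$ (from $\calB$) and $c_1\geq\cdots\geq c_N$ (from $\calC$), then invokes the Matching Lemma (\Cref{lem: matching_lemma}) applied to these length sequences to conclude $|b_k-c_k|\leq 2\delta$; it then observes $b_k=\beta_k(\calB)$ (nothing longer was erased) and $c_k\leq\beta_k(\calC)$ (something longer might have been erased). Your argument bypasses the Matching Lemma entirely: you simply note that the $k$ longest finite bars of $\calB$ are all matched, their images are $k$ distinct finite bars of $\calC$, and each image has length at least $\beta_k(\calB)-2\delta$, so by the definition of $\beta_k(\calC)$ as the $k$-th largest you are done. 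Your approach is more elementary and self-contained; the paper's approach has the minor advantage of illustrating a second use of the Matching Lemma, but at the cost of an extra layer of indirection.
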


\begin{proof}
	Assume that two barcodes
	$\calB$ and $\calC$ are $\delta$-matched.
	It suffices to prove the inequality
	\begin{equation}\label{eq-betak}
	\beta_k(\calB) -\beta_k(\calC) \leq 2\delta\;.
	\end{equation}
	Fix a $\delta$-matching. If $\beta_k(\calB) \leq 2\delta$, inequality \eqref{eq-betak}
	holds trivially. Thus we assume
	\begin{equation}
	\label{eq-betadelta}
	\beta_k(\calB) > 2\delta\;.
	\end{equation}
	Any $\delta$-matching $\mu$ yields, in particular, the following:  after removing from
	both barcodes some bars of length $< 2\delta$, we match the rest so that in particular
	the length difference in each couple is less than $2\delta$. Denote the lengths
	of the matched intervals, in the decreasing order, as
	$$b_1 \geq b_2 \geq \dots \geq b_N \;,$$
	and
	$$c_1 \geq c_2 \geq \dots\geq c_N\;.$$
	By the Matching Lemma \ref{lem: matching_lemma}, thinking of matching the \emph{lengths} rather than the bars themselves, the optimal ``matching" is the monotone one. In particular:
	\begin{equation}\label{eq-bc}
	|b_k-c_k| < 2\delta\;,
	\end{equation}
	since this bound on the difference of lengths is true also for $\mu$, which might not be the optimal ``matching" terms of lengths.
	By \eqref{eq-betadelta}, no bar longer than the $k$-th one in list \eqref{eq-listbeta}
	is removed and hence $b_k = \beta_k(\calB)$. On the other hand, $c_k \leq \beta_k(\calC)$
	(since some bar longer than $c_k$ might have been erased). By \eqref{eq-bc},
	$$\beta_k(\calB)-\beta_k(\calC) \leq b_k - c_k \leq 2\delta\;,$$
	which yields \eqref{eq-betak}.
\end{proof}

\begin{rmk}
The notion of boundary depth was introduced by M. Usher in the context of filtered complexes (see \cite[section 3]{Ush13} for a detailed exposition).
Let us present this framework shortly.

%
%
%
%

\begin{defn}
	An \emph{$\R$-filtered complex} $(C, \de)$ over $\F$ consists of the following data:
	\begin{itemize}
		\item
			A finite dimensional $\F$-vector space $C$ with a linear map $\de : C \to C$, such that $\de ^2 = 0$
		\item
			For all $\lambda \in \R$, a subspace $C^\lambda \subseteq C$, such that
			\begin{enumerate}[1.]
				\item
					$C^\lambda \subseteq C^\mu$ for any $\lambda < \mu$ in $\R$,
				\item
					$\cap_{\lambda \in \R} C^\lambda = \{ 0 \}$, $\cup_{\lambda \in \R} C^\lambda = C$,
				\item
					For any $\lambda \in \R$, $\de C^\lambda \subseteq \cup_{\mu < \lambda} C^\mu $.
					
			\end{enumerate}
	\end{itemize}
\end{defn}

\noindent
Note that since $C$ is finite dimensional, there exist $\lambda_{-} < \lambda_{+}$ in $\R$, such that $C^{\lambda}= 0 $ for any $\lambda \leq \lambda_{-}$ and $C^\lambda = C$ for any $\lambda \geq \lambda_{+}$.


\begin{defn} \label{defn: boundary_depth_Usher}
	The \emph{boundary depth} of a filtered complex $(C, \de)$ is defined to be
	\begin{equation}
	b(C,\de) = \inf \{ \alpha \geq 0 \ |\  \forall \lambda \in \R, \ (\im \de) \cap C^\lambda \subseteq \de(C^{\lambda+\alpha}) \} \;.
	\end{equation}
	%
	In other words, $b (C, \de)$ is the smallest $\alpha \geq 0$ with the property that, whenever we have a boundary $x\in C$, we can find an element whose boundary is $x$ by 'looking up' the filtration no more than $\alpha$.
	Note that trivially $b (C, \de) \leq \lambda_{+} - \lambda_{-}$.
	We can connect this notion to our story by noting that $\{ H_{*} (C^\lambda) \}_\lambda$ is a persistence module.
\end{defn}

\begin{exr} \label{exr: alt_def_boundary_depth} Recalling our definition of boundary depth $\beta$ of a barcode (\Cref{defn: boundary_depth}), show that for a $\Z$-graded $\R$-filtered complex $(C, \de)$,
	$$\beta \Big( \calB \big( \{ H_{*} (C^\lambda) \}_\lambda \big) \Big) = b (C, \de)\;.$$
\end{exr}

\end{rmk}

\begin{exm}[\textbf{Approximating functions on $S^2$}] \label{exm: approx_function_heart_sphere}
Let us consider a Morse function $f: S^2 \to \R$. We want to know how well it can be approximated by a Morse function $g$ on the sphere, which has exactly two critical points, and such that the two functions have the same minimum and maximum. We look for a quantitative comparison.
Here $f$ can be thought of as a height function on the heart-shaped sphere, and $g$ is any Morse function on the sphere that has exactly two critical points (with the same maximum and minimum as $f$).
\Cref{fig: heart_sphere_morse_rsphere} illustrates this setting. (Also, cf. \Cref{sec: morse_approx_question}.)

\begin{figure}[!ht]
	\centering
	\includegraphics[scale=1]{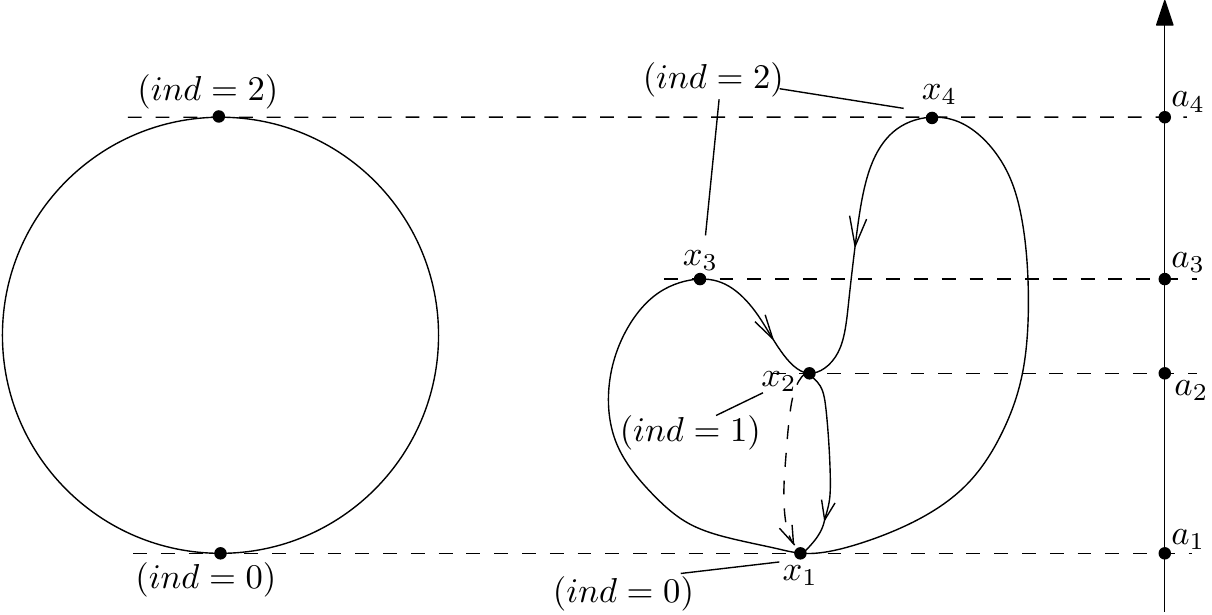}
	\caption{Heart-shaped sphere versus the round sphere - computing Morse homology.}
	\label{fig: heart_sphere_morse_rsphere}
\end{figure}

We consider the persistence modules of the Morse homology with respect to these functions.
In order to quantify how well $g$ can approximate $f$, we examine the corresponding barcodes.
We take the Morse homology with coefficients in $\Z_2$.

\noindent
Let $x_1 \in S^2$ be the minimum point, $x_2$ be a saddle point, $x_3$ be a local maximum, and $x_4$ be a global maximum of $f$.
The Morse indices of the critical points of the heart-shaped sphere are:
$$
	\ind (x_1) = 0,\ \ind (x_2) = 1,\ \ind(x_3)= \ind (x_4) = 2 \;.
$$
Also, we have (modulo 2): $\de x_1 = 0$, $\de x_2 = 2\cdot x_1 = 0$, and $\de x_3 = \de x_4 = x_2$.

Let us compute the Morse homology $H (t)$ of the sublevels $\{ f < t\}$:
\begin{itemize}
	\item
		\underline{For $t>a_4$}:
		$H_2 (t) = \Z_2 \langle x_3 + x_4 \rangle$ (as $x_3 - x_4 \in \ker \de$),
		$H_1 (t) = 0$ (as $x_2$ is a boundary point), and
		$H_0 (t) = \Z_2 \langle x_1 \rangle$.
	\item
		\underline{For $t\in (a_3, a_4)$}:
		$H_2 (t) = 0$ (as $\de x_3 = x_2$ is non-zero),
		$H_1 (t) = 0$ (as $\de x_2 = 0$ and $\de x_3 = x_2$),
		and $H_0 (t) = \Z_2 \langle x_1 \rangle$.
	\item
		\underline{For $t\in (a_2, a_3)$}:
		$H_2 (t) = 0$,
		$H_1 (t) = \Z_2 \langle x_2 \rangle$, and
		$H_0 (t) = \Z_2 \langle x_1 \rangle$.
	\item
		\underline{For $t\in (a_1, a_2)$}:
		$H_2 (t) = H_1 (t) = 0$, $H_0 (t) = \Z_2 \langle x_1 \rangle$.
	\item
		\underline{For $t < a_1$}:
		$H (t) = 0$.
\end{itemize}

\noindent
\Cref{fig: heart_sphere_barcode} presents the corresponding barcode, denoted by $\calB(f)$.
\begin{figure}[!ht]
	\centering
	\includegraphics[scale=1]{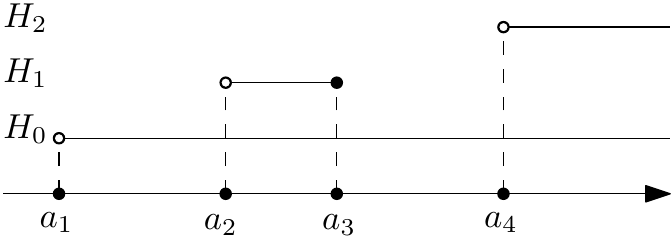}
	\caption{Barcode of the heart-shaped sphere.}
	\label{fig: heart_sphere_barcode}
\end{figure}

Let us remark that, as this example illustrates, the infinite bars correspond to the spectral invariants
$a_1 = c_f ([\text{point}])$ and $a_4 = c_f ([S^2])$ (the minimum and the maximum).
Also, the finite bar has length $a_3 - a_2$. This leads to a solution of our approximation question.


Indeed, let $g: S^2 \to \R$ be a Morse function of $S^2$ that has the same minimum and maximum as $f$. The corresponding barcode $\calB (g)$ has the same two infinite bars but no finite ones, as shown in \Cref{fig: sphere_height_fnc_barcode}. (Here $a_1 = \min g,\ a_4 = \max g$.)

\begin{figure}[!ht]
	\centering
	\includegraphics[scale=1]{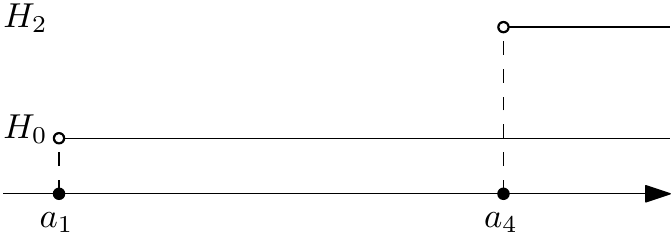}
	\caption{The barcode that corresponds to the round sphere.}
	\label{fig: sphere_height_fnc_barcode}
\end{figure}

By definition, the boundary depth of the heart-shaped sphere is $\beta (\calB (f)) = a_3 - a_2$, while $\beta (\calB (g)) = 0$.
(Note that these values can be obtained also from the alternative description of boundary depth given in \Cref{defn: boundary_depth_Usher} and \Cref{exr: alt_def_boundary_depth}.)

Going back to our approximation question at the end of \Cref{sec: morse_approx_question}, by the Isometry Theorem (\Cref{thm: isometry_thm}), \Cref{cor: cor_matching_lower_bound_on_botneck_dist} and (\ref{eq: interleaving_dist_smaller_than_functions_norm}), we get
\begin{equation}
	a_3 - a_2 \leq 2 d_{bot} \big( \calB(f), \calB(g) \big) = 2 d_{int} \big( V(f), V(g) \big) \leq 2 \| f-g \| \;,
\end{equation}

\noindent
hence $\| f-g \| \geq \frac{1}{2} (a_3 - a_2)$.
This enables us to quantify the obstruction to approximating $f: S^2 \to \R$ by a Morse function with exactly two critical points.

\end{exm}

\begin{exr}
	Find the barcode for the height function on the heart-shaped circle $S^1$.
\end{exr}

\section{The multiplicity function}
\label{sec: the_multiplicity_function_and_Z_k_persistence_modules}


Let $\calB$ be a barcode and $I\subset \R$ be a finite interval. Denote by $m(\calB, I)$ the number of bars in $\calB$ that contain $I$.
For $I = (a,b]$ and $c\leq \frac {b-a}{2}$, denote $I^c = (a+c, b-c]$.

\begin{figure}[!ht]
	\centering
	\includegraphics[scale=1]{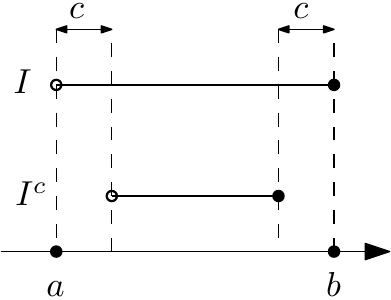}
	\caption{$I^c$: cutting-off $c$ on both sides of $I$.}
	\label{fig: I^c_exm}
\end{figure}

\begin{exr} \label{exr: small_d_bot_then_close_m}
	Assume that two barcodes $\calB$ and $\calC$ satisfy $d_{bot} (\calB, \calC) < 4c$. Assume also for an interval $I$ of length $>4c$ that $m(\calB, I) = m (\calB, I^{2c}) = m_0$.
	Then $m (\calC, I^c) = m_0$.
\end{exr}

\begin{defn}
Define the \emph{multiplicity function} to be
\begin{equation*}
	\mu_k (\calB) = \sup \{ c\ |\ \exists \text{ a finite interval $I$ of length } >4c \text{, s.t. }
								m(\calB, I) = m(\calB, I^{2c}) = k \}
\end{equation*}

In case there is no suitable $c$, we set $\mu_k (\calB) =0$.
\end{defn}

\noindent
In words, given $k \in \N$, the multiplicity function searches for the maximal ``window", an interval $I$ of length $>4c$ in $\R$, such that above it and above the shortened interval $I^{2c}$ there are exactly $k$ bars.
See \Cref{fig: multiplicity_func} for an example.
\begin{figure}[!ht]
	\centering
	\includegraphics[scale=1]{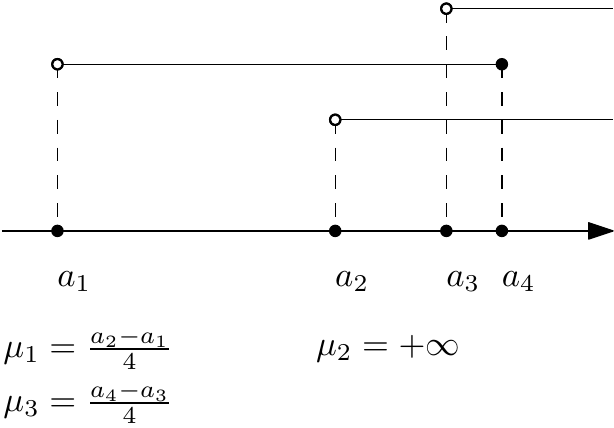}
	\caption{Example of computing the Multiplicity function.}
	\label{fig: multiplicity_func}
\end{figure}

\noindent
By \Cref{exr: small_d_bot_then_close_m} we can deduce that for any two barcodes $\calB$, $\calC$ and any $k\in \N$,
\begin{equation} \label{eq: upper_bound_difference_mu_d_bot}
	| \mu_k (\calB) - \mu_k (\calC) | \leq d_{bot} (\calB, \calC) \;.
\end{equation}

Here we give an application of (\ref{eq: upper_bound_difference_mu_d_bot}), namely, approximation by complex modules.

\begin{defn} \label{defn: complex_pm}
	We say that a persistence module $(V,\pi)$ over $\R$ \emph{admits a complex structure} $J$ if there is a morphism $J : V \to V$ that satisfies $J^2 = -\mathds{1}$.
\end{defn}

\noindent
We will call such a persistence module \emph{complex}.
In such a case, it follows that $\dim V_t$ is even for all $t\in \R$.

\begin{claim} \label{claim: complex_pm_even_mult_func}
	If a persistence module $(V,\pi)$ admits a complex structure, then $m(\calB,I)$ is even for every interval $I$, where $\calB$ is the barcode associated with $V$.
	In particular, it follows that for a complex persistence module $(V,\pi)$, we get that $\mu_k (\calB (V)) = 0$ for all odd $k\in \N$.
\end{claim}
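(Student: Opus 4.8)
The plan is to reduce the statement about the barcode to the Normal Form Theorem (\Cref{thm: normal_form_thm}) together with the classification of morphisms between interval modules. By the Normal Form Theorem, $V \cong \bigoplus_{I \in \calB(V)} \F(I)$, and a complex structure $J$ is a morphism $V \to V$ with $J^2 = -\mathds{1}$. The key first observation is that, for a fixed finite interval $I$, the number $m(\calB, I)$ equals the dimension of the vector space $E_I(V) := \bigcap_{b<s<d}\im \pi_{s,d} \cap \bigcap_{r>d}\ker\pi_{d,r} \subseteq V_d$ where $I = (b,d]$ --- this is exactly the space $E_I^-(V)$ restricted further to those bars starting \emph{after} $b$, or more conveniently one intersects also with $\bigcap_{s'<b}(\text{complement data})$; in any case there is a canonically defined subspace of $V_d$ whose dimension is $m(\calB,I)$, obtained by the same ``image $\cap$ kernel'' recipe used in the proof of \Cref{prop: inj_between_pm_monotonicity_of_B-}. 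Concretely, set
\[
	E_I := \bigcap_{b<s<d}\im \pi^V_{s,d}\ \cap\ \bigcap_{r>d}\ker\pi^V_{d,r} \subseteq V_d,
\]
and note that in the Normal Form decomposition $E_I$ picks out exactly the copies of $\F$ coming from bars $(a,d']$ with $a \le b$ and $d' = d$; intersecting in addition with a complementary condition at the left endpoint isolates the bars containing $I$, so $\dim E_I = m(\calB, I)$.

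The second step is to show that $E_I$ is $J$-invariant. This is immediate from functoriality of the image/kernel constructions: since $J : V \to V$ is a morphism, the commutative squares $J_d \circ \pi^V_{s,d} = \pi^V_{s,d}\circ J_s$ and $\pi^V_{d,r}\circ J_d = J_r \circ \pi^V_{d,r}$ give $J_d(\im\pi^V_{s,d}) \subseteq \im\pi^V_{s,d}$ and $J_d(\ker\pi^V_{d,r})\subseteq\ker\pi^V_{d,r}$, hence $J_d(E_I)\subseteq E_I$. (If one uses the refined subspace that also encodes the left-endpoint condition, the same functoriality argument applies verbatim, since that condition is again built from images of the $\pi^V_{s',d}$.)

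The third step concludes: $J_d$ restricts to an endomorphism of the finite-dimensional $\F$-vector space $E_I$ satisfying $(J_d|_{E_I})^2 = -\mathds{1}_{E_I}$. Such an endomorphism makes $E_I$ into a module over $\F[x]/(x^2+1)$, and in particular equips $E_I$ with the structure of a vector space over a field (or at least a free module over a ring) in which the underlying $\F$-dimension is forced to be even: indeed $E_I = \ker(J_d|_{E_I} - \text{(formal }i)) \oplus \cdots$ after passing to $\F[i]$, or more elementarily, pick any $0\ne v \in E_I$; then $v$ and $J_d v$ are $\F$-linearly independent (if $J_d v = \lambda v$ then $\lambda^2 v = J_d^2 v = -v$ forces $\lambda^2 = -1$ in $\F$, and even then one argues over the $2$-dimensional block), so $E_I$ decomposes into $J$-invariant $2$-dimensional blocks, giving $\dim_\F E_I$ even. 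Therefore $m(\calB, I)$ is even for every finite interval $I$. Finally, if $k$ is odd then no interval $I$ can have $m(\calB, I) = k$, so the defining set for $\mu_k(\calB(V))$ is empty and $\mu_k(\calB(V)) = 0$.

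The main obstacle I anticipate is purely bookkeeping: pinning down precisely which ``image $\cap$ kernel'' subspace of $V_d$ has dimension exactly $m(\calB, I)$ (as opposed to $\#\calB_I^-$ or $\#\calB_I^+$), i.e. isolating bars that \emph{contain} $I$ rather than merely share an endpoint with it. This requires combining a left-endpoint condition and a right-endpoint condition simultaneously, and checking that the combined subspace is still visibly $J$-invariant --- which it is, since every constituent is a subquotient built functorially from the $\pi^V_{s,t}$. Once that subspace is correctly identified, the $J$-invariance and the even-dimensionality are routine.
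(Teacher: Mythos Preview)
Your overall strategy---identify a finite-dimensional subspace of some $V_t$ whose $\F$-dimension equals $m(\calB,I)$, check it is $J$-invariant, and conclude even dimension from $J^2=-\mathds{1}$---is exactly the paper's strategy. The difference lies in which subspace to use, and here your proposal goes astray.

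Your space $E_I = \bigcap_{b<s<d}\im \pi_{s,d} \cap \bigcap_{r>d}\ker\pi_{d,r}$ (with $I=(b,d]$) does \emph{not} have dimension $m(\calB,I)$. The kernel condition forces any contributing bar to end \emph{exactly} at $d$, so $E_I$ counts bars $(c,d]$ with $c\le b$, i.e.\ $\#\calB_I^-$, not the number of bars containing $I$. Bars containing $I$ may end anywhere $\ge d$. Your proposed fix---adding a ``complementary condition at the left endpoint''---goes in the wrong direction: you need to \emph{drop} the kernel condition, not add another one.

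The paper's observation is that the correct subspace is simply $\im \pi_{\tilde a, b}\subseteq V_b$ for $I=(a,b]$ and $\tilde a\in(a,b)$ chosen close enough to $a$ that no bar has an endpoint in $(a,\tilde a]$. A bar $(c,e]$ contributes to $\im\pi_{\tilde a,b}$ iff $c<\tilde a$ and $e\ge b$, i.e.\ iff it contains $I$; hence $\dim\im\pi_{\tilde a,b}=m(\calB,I)$ on the nose. The $J$-invariance of this image is then the one-line computation $J_b\circ\pi_{\tilde a,b}=\pi_{\tilde a,b}\circ J_{\tilde a}$, and the even-dimensionality follows. So the ``bookkeeping obstacle'' you anticipate is a symptom of over-engineering: once you realize $m(\calB,I)$ is literally the rank of a single persistence map, no intersection of kernels or refined endpoint conditions are needed.
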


\begin{proof}
Let $I = (a,b]$ and take $a<\tilde{a}<b$ sufficiently close to $a$. (See also \Cref{fig: m(BI)_even_J}.)
Every bar containing $I$ contributes $+1$ to $\dim \im \pi_{\til {a} b}$, i.e.\ $m(\calB, I) = \dim \im \pi_{\tilde{a}, b}$.
But $\pi_{\til{a} b} J_{\til{a}} = J_{b} \pi_{\til{a} b}$, hence $J_b (\im \pi_{\tilde{a}b}) \subseteq \im \pi_{\tilde{a}b}$, i.e.\ $J':=J_b\restr_{\im \pi_{\til{a} b}}$ also satisfies $J'^2 = -\mathds{1}$. As before, the existence of such $J'$ implies that $\dim \im \pi_{\tilde{a}b}$ is even.

\begin{figure}[!ht]
	\centering
	\includegraphics[scale=1]{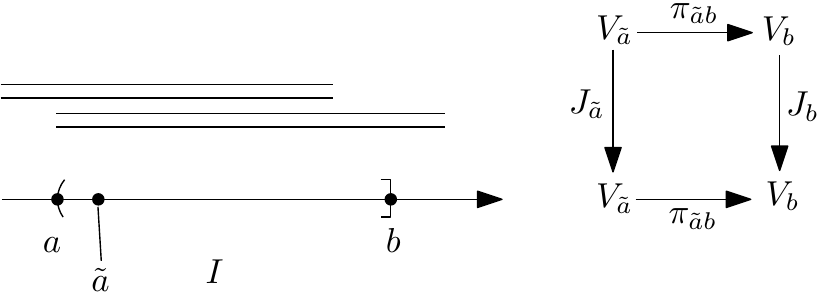}
	\caption{V admitting a complex structure $J$.}
	\label{fig: m(BI)_even_J}
\end{figure}
\end{proof}

\noindent
Denote by $\mu_{odd} (\calB) = \displaystyle{\max_{j \text{ odd }} {\mu_j (\calB)}}$.
See \Cref{fig: multiplicity_func}, where $\mu_{odd} = \max\{\mu_1, \mu_3\}$.

\begin{claim} \label{claim: d_int_to_complex_bounded_below}
	Let $(V, \pi)$ be a persistence module. Then for every persistence module $(W, \theta)$ that admits a complex structure,
	the interleaving distance between them is bounded from below:
	$$
	d_{int} \big( (V, \pi), (W, \theta) \big) \geq \mu_{odd} (\calB (V, \pi) ) \;.
	$$
\end{claim}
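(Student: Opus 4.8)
The plan is to combine the two already-established tools: the Isometry Theorem (\Cref{thm: isometry_thm}), which equates $d_{int}\big((V,\pi),(W,\theta)\big)$ with $d_{bot}\big(\calB(V),\calB(W)\big)$, and the Lipschitz estimate \eqref{eq: upper_bound_difference_mu_d_bot} for the multiplicity function. Write $\calB = \calB(V,\pi)$ and $\calC = \calB(W,\theta)$. For any odd $j$, \eqref{eq: upper_bound_difference_mu_d_bot} gives $|\mu_j(\calB) - \mu_j(\calC)| \leq d_{bot}(\calB,\calC)$; since $(W,\theta)$ admits a complex structure, \Cref{claim: complex_pm_even_mult_func} yields $\mu_j(\calC) = 0$ for every odd $j$, and hence $\mu_j(\calB) \leq d_{bot}(\calB,\calC)$.

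Next I would take the supremum over all odd $j$ on the left-hand side: this gives $\mu_{odd}(\calB) = \max_{j\ \mathrm{odd}} \mu_j(\calB) \leq d_{bot}(\calB,\calC)$. (Only finitely many $\mu_j(\calB)$ are nonzero since $\calB$ has finitely many bars, so the maximum is attained and there is no subtlety in passing to the sup.) Finally I would invoke the Isometry Theorem to rewrite $d_{bot}(\calB,\calC) = d_{int}\big((V,\pi),(W,\theta)\big)$, obtaining
$$
d_{int}\big((V,\pi),(W,\theta)\big) \;\geq\; \mu_{odd}\big(\calB(V,\pi)\big)\;,
$$
which is exactly the claim.

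There is no real obstacle here — the statement is a direct corollary of the three ingredients assembled above, and the only thing to be slightly careful about is that \eqref{eq: upper_bound_difference_mu_d_bot} is applied for each fixed odd index $j$ before taking the maximum (one cannot apply it directly to $\mu_{odd}$, since $\mu_{odd}$ is a maximum of the $\mu_j$ and Lipschitz-ness of a finite maximum of Lipschitz functions, while true, is cleanest to deduce termwise). If the intention is instead to give a self-contained argument not routing through the Isometry Theorem, the harder route would be to take a near-optimal $\delta$-interleaving, note it induces a $\delta$-matching of barcodes (\Cref{thm: thm_algebraic_stability}), and run the same termwise estimate using \Cref{exr: small_d_bot_then_close_m} directly; but using \Cref{thm: isometry_thm} as a black box is by far the shortest path.
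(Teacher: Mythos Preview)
Your proof is correct and follows exactly the same route as the paper's: combine the Isometry Theorem, the Lipschitz estimate \eqref{eq: upper_bound_difference_mu_d_bot}, and \Cref{claim: complex_pm_even_mult_func}. The only cosmetic difference is that the paper applies the Lipschitz bound directly to $\mu_{odd}$ in one line, whereas you (more carefully) apply it to each $\mu_j$ with $j$ odd and then take the maximum; both arrive at the same inequality.
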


\noindent
Thus, the interleaving distance between any persistence module $(V, \pi)$ and the collection of complex modules is bounded from below by $\mu_{odd} (\calB (V, \pi))$.

\begin{proof}

By the Isometry Theorem, (\ref{eq: upper_bound_difference_mu_d_bot}) and \Cref{claim: complex_pm_even_mult_func} , if $(W, \theta)$ is a complex persistence module, we have
\begin{equation}\label{d_int_from_complex_lower_bound}
	\begin{split}
		d_{int} \big( (V, \pi), (W, \theta) \big) &= d_{bot} \big( \calB (V, \pi), \calB (W, \theta) \big) \geq \\
				&\geq
		| \mu_{odd} (\calB (V, \pi)) - \underbrace{\mu_{odd} (\calB(W, \theta))}_{=0} |
		= \mu_{odd} (\calB (V, \pi) )\;.
	\end{split}
\end{equation}
\end{proof}

\noindent
In case $\mu_{odd} (\calB (V, \pi) ) > 0$, we get an constraint to approximating a given persistence module $(V, \pi)$ by a complex persistence module.

\section{Representations on persistence modules}
	\label{sec: persistence_modules_with_involution}
\subsection{Theoretical development}
Recall that a representation of a group $G$ is a pair $(V, \rho)$ where $V$ is a finite-dimensional vector space and $\rho$ is a homomorphism from $G$ to ${\rm GL}(V)$. Here we want to adopt this concept to persistence modules.

\begin{dfn} A \emph{persistence representation of a group $G$} is a pair $((V, \pi), \rho)$ where $(V, \pi)$ is a persistence module and $\rho$ a homomorphism from $G$ to the group of persistence automorphisms of $(V, \pi)$.
A {\it persistence subrepresentation} $((W, \pi), \rho)$ of $((V, \pi), \rho)$ is a persistence submodule $(W, \pi)$ of $(V, \pi)$ such that for any $t \in \R$, $W_t$ is invariant under $\rho(g)_t$ for any $g \in G$.
\end{dfn}

\begin{ex}  \label{defn:pmi} A \emph{persistence module with involution} (abbreviated as \emph{pmi}), denoted by $((V, \pi), A)$,
	is a persistence representation of group $G = \Z_2$, where $A$ is a homomorphism from $G$ to the group of persistence automorphism of $(V, \pi)$ such that for any $t \in \R$, $A_t^2 = \mathds{1}$.
\end{ex}

\begin{dfn} \label{G_mor} Let $((V, \pi), \rho^{V})$ and $((W, \theta), \rho^{W})$ be two persistence representations of a group $G$. A $G$-\emph{persistence morphism} $\f: ((V, \pi), \rho^{V}) \to ((W, \theta), \rho^{W})$ is an $\R$-family of $G$-equivariant persistence morphisms $f_t: V_t \to W_t$, $t \in \R$. \end{dfn}

Given a $G$-persistence morphism $\f: ((V, \pi), \rho^{V}) \to ((W, \theta), \rho^{W})$, one can consider
\begin{equation} \label{ker}
\ker \f = \{ v \in V_t \,| \, f_t(v) = 0 \}_{t \in \R} \,\,\,\,\mbox{and}\,\,\,\,{\im}\f = \{ f_t(v) \in W_t \,| \, v \in V_t \}_{t \in \R}
\end{equation}
\begin{exercise}
Prove $(\ker\f, \rho^{V})$ is a persistence subrepresentation of $((V, \pi), \rho^{V})$ and similarly $({\im}\f, \rho^{W})$ is a persistence subrepresentation of $((W, \theta), \rho^{W})$.
\end{exercise}

\begin{ex} \label{ex-power} Consider a persistence representation $((V, \pi), \rho)$ of $\Z_p$. Let $\xi$ denote a $p$-th root of unity. Consider $(L_{\xi})_t = \ker(\rho(1)_t - \xi \cdot \mathds{1}_{V_t})$ for every $t \in \R$. Then $((\{(L_{\xi})_t\}_{t \in \R}, \pi), \rho)$ is a persistence subrepresentation of $((V, \pi), \rho)$.
\end{ex}

Recall that a $\delta$-shift of a persistence module $(V, \pi)$, denoted by $(V[\delta], \pi[\delta])$, is defined as $V[\delta]_t = V_{t + \delta}$ and $\pi[\delta]_{s,t} = \pi_{s+\delta,t+\delta}$. Also for any persistence morphism $\mathfrak f: (V, \pi) \to (W, \theta)$, its $\delta$-shift $\f[\delta]: (V[\delta], \pi[\delta]) \to (W[\delta], \theta[\delta])$ is defined as $(\f[\delta])_t = f_{t + \delta}$. Observe that if $((V, \pi), \rho)$ is a persistence representation of $G$, then $((V[\delta], \pi[\delta]), \rho[\delta])$ is also a persistence representation of $G$.

\begin{exercise} Consider a persistence representation $((V, \pi), \rho)$ of $G$. Define $\chi_{\delta}: (V, \pi) \to (V[\delta], \pi[\delta])$ by $(\chi_{\delta})_t = \pi_{t, t + \delta}$. Prove $\chi_{\delta}$ is a $G$-persistence morphism. \end{exercise}

\begin{dfn} \label{G-inter} Let $((V, \pi), \rho^{V})$ and $((W, \theta), \rho^{W})$ be persistence representations of group $G$. We call $(V, \pi)$ and $(W, \theta)$ are $(\delta, G)$-\emph{interleaved} if there exist $G$-persistence morphisms $\f: (V, \pi) \to (W[\delta], \theta[\delta])$ and $\g: (W, \theta) \to (V[\delta], \pi[\delta])$ such that the following diagrams commute,
\[ \xymatrixcolsep{5pc} \xymatrix{
(V, \pi) \ar[r]^-{\f} \ar@/_1.5pc/[rr]_{\chi^{V}_{2\delta}} & (W[\delta], \theta[\delta]) \ar[r]^-{\g[\delta]} & (V[2\delta], \pi[2\delta])}
\]
and
\[ \xymatrixcolsep{5pc} \xymatrix{
(W, \theta) \ar[r]^-{\g} \ar@/_1.5pc/[rr]_{\chi^{W}_{2\delta}} & (V[\delta], \pi[\delta]) \ar[r]^-{\f[\delta]} & (W[2\delta], \theta[2\delta])}.
\]
Accordingly, we can define $G$\emph{-interleaving distance}
$$ d_{G-{int}}((V, \pi), (W, \theta)) = \inf \{ \delta>0 \,| \, (V, \pi) \text{ and } (W, \theta) \text{ are } (\delta, G)\text{-interleaved} \}. $$
\end{dfn}

The following proposition is obvious from Definition \ref{G-inter}.

\begin{prop} \label{prop-G-int}
Let $((V, \pi), \rho^{V})$ and $((W, \theta), \rho^{W})$ be persistence representations of group $G$. Then
\[ d_{G-{int}}((V, \pi), (W, \theta)) \geq d_{int}((V, \pi), (W, \theta)).\]
\end{prop}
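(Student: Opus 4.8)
The plan is to show that every $G$-interleaving is in particular an ordinary interleaving, so that the set over which we take the infimum defining $d_{int}$ contains the set over which we take the infimum defining $d_{G\text{-}int}$, whence the inequality on infima. First I would fix $\delta > 0$ and suppose that $(V,\pi)$ and $(W,\theta)$ are $(\delta,G)$-interleaved, with $G$-persistence morphisms $\f\colon (V,\pi)\to (W[\delta],\theta[\delta])$ and $\g\colon(W,\theta)\to(V[\delta],\pi[\delta])$ making the two triangles in Definition \ref{G-inter} commute. The key observation is simply that a $G$-persistence morphism is, by Definition \ref{G_mor}, an $\R$-family of linear maps $f_t\colon V_t\to W_t$ commuting with the persistence structure maps (the $G$-equivariance being an additional property we are free to forget); hence, after forgetting the $G$-action, $\f$ and $\g$ are morphisms of persistence modules in the sense of the basic definition of a morphism. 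Likewise the shift morphisms $\chi^V_{2\delta}$ and $\chi^W_{2\delta}$ are, upon forgetting the $G$-action, exactly the shift morphisms $\Phi^{2\delta}_V$ and $\Phi^{2\delta}_W$ of Example \ref{exm: pm_shift_def} (both are given levelwise by $\pi_{t,t+2\delta}$, resp. $\theta_{t,t+2\delta}$), and $\g[\delta]$, $\f[\delta]$ are the usual $\delta$-shifts of morphisms.

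Next I would note that the two commuting triangles in Definition \ref{G-inter}, read levelwise, are literally the two commuting triangles appearing in Definition \ref{defn: interleaved_pm_interleaving_morph} of a $\delta$-interleaving: $\g[\delta]\circ\f = \chi^V_{2\delta} = \Phi^{2\delta}_V$ and $\f[\delta]\circ\g = \chi^W_{2\delta} = \Phi^{2\delta}_W$. Therefore the pair $(\f,\g)$, viewed without the group action, is a pair of $\delta$-interleaving morphisms between $(V,\pi)$ and $(W,\theta)$, so these two persistence modules are $\delta$-interleaved in the ordinary sense. Consequently $\{\delta>0 : (V,\pi),(W,\theta)\ \text{are}\ (\delta,G)\text{-interleaved}\} \subseteq \{\delta>0 : (V,\pi),(W,\theta)\ \text{are}\ \delta\text{-interleaved}\}$, and taking infima of both sides reverses the inclusion into the desired inequality $d_{G\text{-}int}((V,\pi),(W,\theta)) \geq d_{int}((V,\pi),(W,\theta))$.

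There is essentially no obstacle here: the statement is, as the paper says, obvious, and the only thing worth spelling out is the bookkeeping that a $G$-persistence morphism forgets to an ordinary persistence morphism and that $\chi_{2\delta}$ forgets to $\Phi^{2\delta}$, so that the two defining diagrams of $(\delta,G)$-interleaving forget to the two defining diagrams of $\delta$-interleaving. If one wanted to be slightly more careful, the one line deserving a remark is the degenerate case where no such $\delta$ exists (the infimum of the empty set is $+\infty$), in which case the left-hand side is $+\infty$ and the inequality holds vacuously; otherwise the argument is the inclusion-of-admissible-sets argument above.
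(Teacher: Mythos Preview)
Your proof is correct and is precisely the ``obvious'' argument the paper alludes to: the paper gives no proof beyond the sentence ``The following proposition is obvious from Definition \ref{G-inter},'' and what you have written is exactly the unpacking of that sentence---forgetting the $G$-equivariance turns a $(\delta,G)$-interleaving into an ordinary $\delta$-interleaving, so the admissible set for $d_{int}$ contains that for $d_{G\text{-}int}$.
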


\begin{ex} \label{ex-4.4.9} Suppose two persistence representations of $G$, $((V, \pi), \rho^{V})$ and $((W, \theta), \rho^{W})$, are $(\delta, G)$-interleaved. Let us consider the persistence subrepresentation $((V', \pi), \rho^{V})$ of $((V, \pi),\rho^{V})$ and persistence subrepresentation $((W', \theta), \rho^{W})$ of $((W, \theta),\rho^{W})$. It is easy check $((V', \pi), \rho^{V})$ and $((W', \theta), \rho^{W})$ are also $(\delta, G)$-interleaved. Then one gets
\begin{align*}
d_{G-{int}}((V, \pi), (W, \theta)) & \geq d_{G-{int}}((V', \pi), (W', \theta)) \\
& \geq d_{int}((V', \pi),(W', \theta)) = d_{bot}((V', \pi), (W', \theta)).
\end{align*}
\end{ex}

In this section, we will not deal with the general representations of $G = \Z_p$, but only with $p=2$ and $p=4$. Note that if $\Z_4$ acts on a set, this action induces a $\Z_2$-action on the same set, by the correspondence $\Z_2 \to \Z_4$, $1 \mapsto 2$.
\noindent
We say that a pmi $((W, \theta), B)$ is a \emph{$\Z_4$-pmi} if its $\Z_2$-action $B$ comes from a $\Z_4$-action, i.e.\ if there exists a persistence morphism $C: (W, \theta) \to (W, \theta)$, such that $B=C^2$ and $C^4 = \mathds{1}$.

Let $((V, \pi), A)$ be a pmi. In Example \ref{ex-power} where $\xi = -1$, denote by $L^V$ the resulting persistence module constructed from $(-1)$-eigenspaces.

\begin{quest}
	How well can an arbitrary pmi be approximated by a $\Z_4$-pmi with respect to the $\Z_2$-interleaving distance?
\end{quest}

\begin{thm} \label{thm: d_int_pmi_Z4_pmi_lower_bound}
	Let $((V, \pi), A)$ be a pmi. The $\Z_2$-interleaving distance between $V$ and the collection of persistence modules with involution, whose $\Z_2$-action comes from a $\Z_4$-action, is bounded from below in terms of the multiplicity function: for any $\Z_4$-pmi $((W, \theta), B)$,
	$$
	d_{\Z_2-int} \big( V, W \big) \geq \mu_{odd} (L^V) \;.
	$$
\end{thm}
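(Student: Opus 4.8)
The plan is to reduce the statement to an application of \Cref{claim: d_int_to_complex_bounded_below} (the analogous lower bound for approximation by complex modules), using the eigenspace construction of \Cref{ex-power} together with the monotonicity of interleaving distances under passing to subrepresentations recorded in \Cref{ex-4.4.9}. The key observation is that a $\Z_4$-pmi has a $(-1)$-eigenspace submodule that carries a \emph{complex structure}: if $B = C^2$ and $C^4 = \mathds{1}$, then on the subspace where $B$ acts as $-\mathds{1}$, the operator $C$ satisfies $C^2 = -\mathds{1}$, so $C$ restricted there is exactly a complex structure in the sense of \Cref{defn: complex_pm}.

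First I would set $L^V$ to be the $(-1)$-eigenspace persistence submodule of $((V,\pi),A)$ as in \Cref{ex-power} with $\xi = -1$, and similarly let $L^W$ be the $(-1)$-eigenspace submodule of the $\Z_4$-pmi $((W,\theta),B)$ with respect to its involution $B$. By \Cref{ex-power} these are genuine persistence submodules (in fact persistence subrepresentations), and one checks that the $\Z_4$-action $C$ on $W$ preserves $L^W$: indeed $B C = C B$ (since $B = C^2$), so $C_t$ maps $\ker(B_t + \mathds{1})$ to itself. Hence $J := C|_{L^W}$ is a persistence morphism $L^W \to L^W$ with $J^2 = C^2|_{L^W} = B|_{L^W} = -\mathds{1}$, i.e.\ $L^W$ admits a complex structure.

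Next I would invoke the monotonicity chain of \Cref{ex-4.4.9}: a $\Z_2$-interleaving between $V$ and $W$ restricts to a $\Z_2$-interleaving between the $(-1)$-eigenspace subrepresentations $L^V$ and $L^W$ (the interleaving morphisms are $\Z_2$-equivariant, hence preserve $(-1)$-eigenspaces), so
$$
d_{\Z_2\text{-}int}(V,W) \;\geq\; d_{\Z_2\text{-}int}(L^V, L^W) \;\geq\; d_{int}(L^V,L^W)\;,
$$
where the second inequality is \Cref{prop-G-int}. Since $L^W$ is a complex persistence module, \Cref{claim: d_int_to_complex_bounded_below} gives $d_{int}(L^V, L^W) \geq \mu_{odd}(\calB(L^V))$, and combining the two displays yields $d_{\Z_2\text{-}int}(V,W) \geq \mu_{odd}(L^V)$, as claimed.

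The step I expect to require the most care is verifying that the $\Z_2$-interleaving morphisms between $V$ and $W$ actually restrict to morphisms between $L^V$ and $L^W$ with the same shift, i.e.\ that the interleaving diagrams of \Cref{G-inter} remain commutative after restriction — this is essentially the content already asserted in \Cref{ex-4.4.9}, so it is more a matter of citing it correctly than of new work, but one must be careful that the morphism $\f\colon V \to W[\delta]$ being $\Z_2$-equivariant indeed sends $\ker(A_t+\mathds{1})$ into $\ker(B_{t+\delta}+\mathds{1})$, which follows from $f_{t+\delta}\circ A_t = B_{t+\delta}\circ f_{t+\delta}$ applied to a $(-1)$-eigenvector. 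The only genuinely new ingredient beyond the cited results is the algebraic remark that a $\Z_4$-action squaring to the involution induces a complex structure on the $(-1)$-eigenspace, which is immediate.
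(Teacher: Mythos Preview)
Your proposal is correct and follows essentially the same approach as the paper: reduce to the $(-1)$-eigenspace submodules $L^V$, $L^W$ (using that $\Z_2$-interleavings restrict to eigenspace subrepresentations, cf.\ \Cref{ex-4.4.9}), observe that $C|_{L^W}$ gives $L^W$ a complex structure since $C^2 = B = -\mathds{1}$ there, and then apply \Cref{claim: d_int_to_complex_bounded_below}. The paper leaves the restriction step and the $C$-invariance of $L^W$ as exercises, which you have spelled out explicitly.
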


\begin{proof}
Our approach is as follows.

\begin{exr}
\begin{enumerate}[1.]
	\item
		Prove that if $(V, \pi)$ and $(W, \theta)$ are $(\delta, \Z_2)$-interleaved, then $L^V$ and $L^W$ are $\delta$-interleaved.
	\item
		Prove that $C (L^W) = L^W$, and deduce that $C^2 \restr_{L^W} = -\mathds{1}$.
\end{enumerate}
\end{exr}

\noindent
It follows that $L^W$ is a complex persistence module (see \Cref{defn: complex_pm}).
Hence, by \Cref{claim: d_int_to_complex_bounded_below},
\begin{equation}\label{d_int_from_Z4_lower_bound}
 d_{\Z_2-int} \big( (V,\pi), (W, \theta) \big) \geq
	d_{int} (L^V, L^W) \geq \mu_{odd} (L^V).
\end{equation}
\end{proof}
\subsection{Applications in geometry}

\begin{exm}
	Let $(X,\rho)$ be a finite metric space equipped with an isometry $A:X\to X$ which is an involution, i.e. $A^2 = \id$. Thus, $\Z_2$ acts on $X$ via $A$. Let $(Y,r)$ be another finite metric space endowed with a $\Z_4$-action, which induces a $\Z_2$-action on $Y$, we denote by $B$ this induced $\Z_2$-action. We wish to consider the Gromov-Hausdorff distance (defined below) between $(X,\rho,A)$ and $(Y,r,B)$.
	Let $C:X \rightrightarrows Y$ be a surjective correspondence, and $\dist (C)$ denote its distortion (see \Cref{defn: distortion_of_surj_correspondence}).
	A correspondence $C \subset X \times Y$ is said to be $\Z_2$-equivariant if
	$(x,y) \in C$ implies $(A(x), B(y)) \in C$. This definition is an analogue of the requirement of the following diagram to commute in case $C$ is a surjective function $f: X\to Y$:
	\begin{center}
		\begin{tikzpicture}[scale=1.5]
		\node (X) at (0,0) {$X$};
		\node (X') at (0,-1) {$X$};
		\node (Y) at (1,0) {$Y$};
		\node (Y') at (1,-1) {$Y$};
		\path[->,font=\scriptsize,>=angle 90]
		%
		(X) edge[->] node[left]{$A$} (X')
		(Y) edge[->] node[right]{$B$} (Y')
		%
		%
		%
		(X) edge[->] node[above]{$f$} (Y)
		(X') edge[->] node[below] {$f$} (Y');
		\end{tikzpicture}
	\end{center}

\end{exm}	
\noindent
	The \emph{Gromov-Hausdorff distance} between $X$ and $Y$ is defined to be the infimum $\inf_C \dist (C)$ over all surjective $\Z_2$-equivariant correspondences $C: X \rightrightarrows Y$. One would like to find some lower bound for this distance in the case described above of metric spaces with involution, where one of the involutions comes from a $\Z_4$-action.
	One can check that \Cref{thm: GH_dist_bounded_below_by_interleaving_distance} holds also when replacing the notions involved by $\Z_2$-equivariant ones. Hence considering the persistence modules associated to the Rips complexes of $X$ and $Y$, denote them by $V(X)$ and $V(Y)$, we would get
	$$
		d_{\Z_2 - GH} (X,Y) \geq \frac{1}{2} {d}_{\Z_2-int} (V(X), V(Y)) \;.
	$$
	See also \Cref{exms: lower_bound_distance_Z_2_Z_4} below.\\

We illustrate the given lower bound from \Cref{thm: d_int_pmi_Z4_pmi_lower_bound} using two examples:
\begin{exms}\label{exms: lower_bound_distance_Z_2_Z_4}
\begin{enumerate}[I.]

\item
	Let $X$ be the set of vertexes $\{ x_1, y_2, x_2, y_1 \}$ of a rectangle in the plane listed in cyclic order, of sides length $\ol{x_1 y_1} = 1$ and $\ol{x_1 y_2} = a \geq 1$ (with respect to the Euclidean metric), see \Cref{fig: rectangle_1a_Rips}.
	\begin{figure}[!ht]
		\centering
		\includegraphics[scale=1]{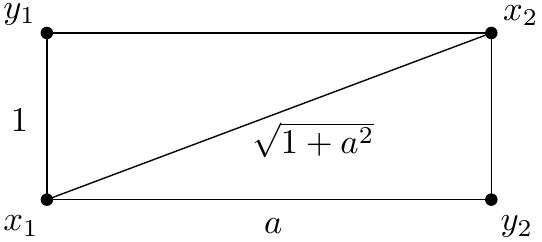}
		\caption{Example I. -- A rectangle with sides $1$ and $a$.}
		\label{fig: rectangle_1a_Rips}
	\end{figure}

	\begin{figure}[!ht]
		\centering
		\includegraphics[scale=1]{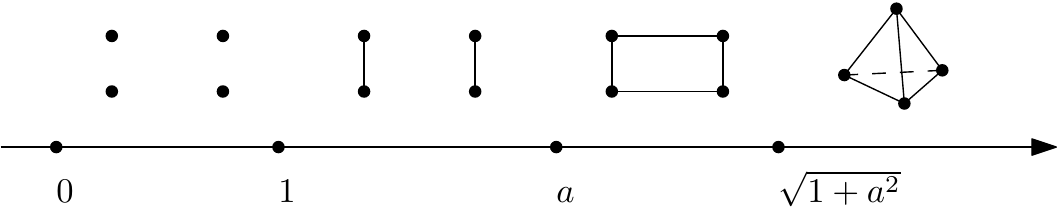}
		\caption{Example I. -- The Rips complex (drawn for the case $a > 1$).}
		\label{fig: rectangle_Rips_complex}
	\end{figure}
	
	Let us look at the $0$-homology of its Rips complex, $V = \{ H_0 (R_t(X), \R) \}_t$, which is a persistence module. See \Cref{fig: rectangle_Rips_complex} for an illustration of the Rips complex that corresponds to different $t\geq 0$, and \Cref{fig: rectangle_H0_Rips_barcode}
	the corresponding barcode.
	\begin{figure}[!ht]
		\centering
		\includegraphics[scale=1]{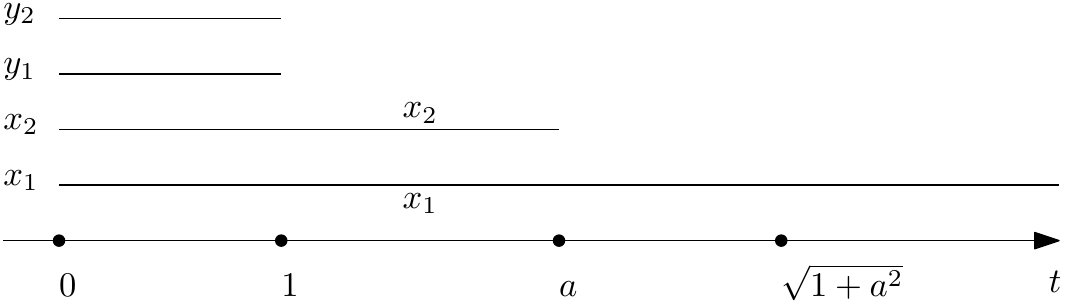}
		\caption{Example I. -- The barcode of $H_0(R_t(X))$ (for $a>1$).}
		\label{fig: rectangle_H0_Rips_barcode}
	\end{figure}
	
	Equip $V$ with an involution $A$, acting by exchanging vertexes that share a diagonal: $A(x_1) = x_2$ and $A(y_1) = y_2$.
	This defines a $\Z_2$-action on $V$.
	
	We want to estimate the $\Z_2$-interleaving distance between $V$ and the space of persistence modules with involution that comes from a $\Z_4$-action ($\Z_4$-pmi).
	
	As described in the previous section, consider the $(-1)$-eigenspace $L_t^V$ with respect to the action $A$:
	\begin{equation*}
		L^V_t =
		\begin{cases}
			\langle x_1 -x_2, y_1 - y_2 \rangle & \text{ if } t\in (0,1], \\
			\langle x_1 - x_2 \rangle & \text{ if } t\in (1,a], \\
			0 \ & \text{otherwise.}
		\end{cases}
	\end{equation*}
	
	\noindent
	The barcode of $L_t^V$ is illustrated in \Cref{fig: rectangle_LtV}.
	\begin{figure}[!ht]
		\centering
		\includegraphics[scale=1]{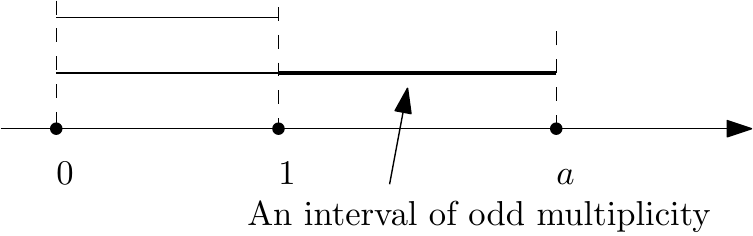}
		\caption{Example I. -- Barcode of the $(-1)$-eigenspace $L^V$.}
		\label{fig: rectangle_LtV}
	\end{figure}
	
	\noindent
	Thus, by \Cref{thm: d_int_pmi_Z4_pmi_lower_bound}, we get that
	$
		d_{\Z_2-int} \big( V, \Z_4 \text{-pmi} \big) \geq \mu_{odd} (L^V)= \frac{a-1}{4} \;.
	$
	
	In particular, for $a > 1$, the $\Z_2$-action taken in this example is not coming from a $Z_4$-action, as follows from our bound $d_{\Z_2-int} \big( V, \Z_4 \text{-pmi} \big) \geq \frac{a-1}{4} > 0$.
	On the other hand, for $a=1$, the action $A$ does come from a $\Z_4$-action (of rotating by $90^0$), and indeed we do not obtain any positive lower bound on ${d}_{\Z_2-int} (V, \Z_4 \text{-pmi}) = 0$.
	
\item \textbf{Morse-theoretical counterpart.}
	Consider a $\Z_4$-action on a smooth manifold $M$, with generator $\tau$, and denote its square by $\theta = \tau^2$.
	Let $F$ be a $\theta$-invariant Morse function on $M$.
	We want to minimize $\| F - \vphi^* G \|$,
	where $G:M \to \R$ is a Morse function invariant under the $\Z_4$ action of $\tau$ ($\tau^* G = G$), and $\vphi \in \Diff (M)$ is a diffeomorphism that satisfies $\vphi \theta = \theta \vphi$.

	Our approach is similar.
	Let us consider the $\Z_2$-persistence module $V = \left( \{H_0 (F < \alpha)\}_t, \theta_* \right)$ (taking the homology with respect to the level sets of $F$, recall \Cref{exm: morse_theory_pm}).
	Look again at the eigenspace $L^V$ that corresponds to $-1$.
	By \Cref{thm: d_int_pmi_Z4_pmi_lower_bound} and the lower bound given in (\ref{eq: interleaving_dist_smaller_than_functions_norm}),
		$$
		\| F - \vphi^* G \| \geq \mu_{odd} (L^V) \;.
		$$
	
	Let us take a concrete example. Let $M = S^2$ be a sphere and $F: S^2 \to \R$ be some Morse function (we consider the unit sphere around the origin in $\R^3$ with coordinates $(x,y,z)$).
	Suppose that $F$ has three critical values: the maximum, achieved at the north pole $(0,0,1)$, the minimum, achieved at two antipodal points on the equator, and a saddle point in the south pole. (See \Cref{fig: teeth_sphere_image} and \Cref{fig: teeth_sphere_barcode}.)
	Let $\tau$ be the rotation by $\frac{\pi}{2}$ around the $z$-axis.
	Then $\theta = \tau^2$ is the rotation by $\pi$ around the $z$-axis. Let us assume that $f$ is $\theta$-invariant.
	
	\begin{figure}[!ht]
		\centering
		\includegraphics[scale=1]{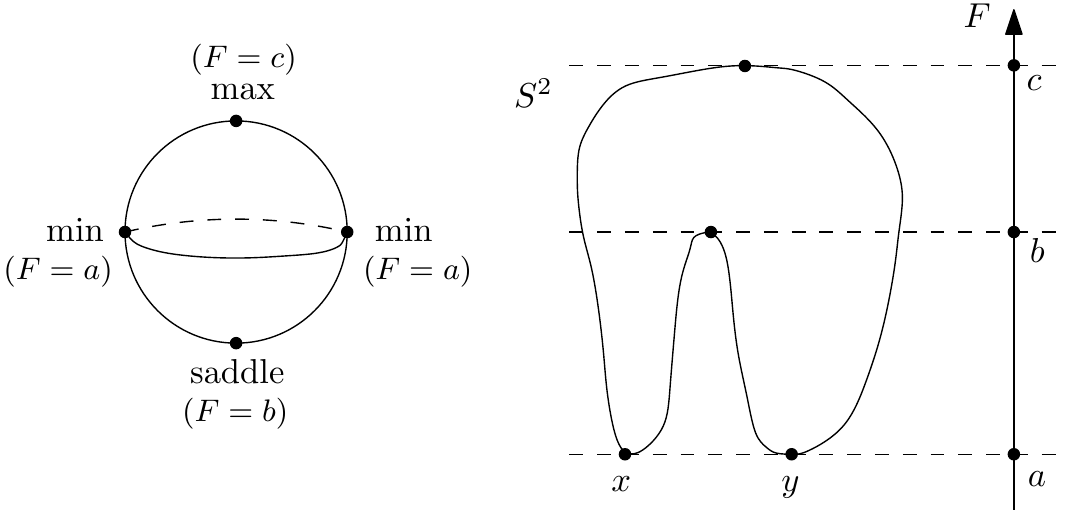}
		\caption{Example II. A Morse function on $S^2$ invariant under rotation by $\pi$.}
		\label{fig: teeth_sphere_image}
	\end{figure}
	\begin{figure}[!ht]
		\centering
		\includegraphics[scale=1]{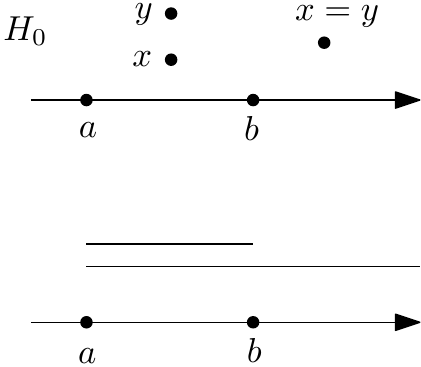}
		\caption{Example II. The associated barcode.}
		\label{fig: teeth_sphere_barcode}
	\end{figure}

	In this case, $L^V$ is $\spn (x-y)$ on $(a,b]$ and $0$ otherwise. So
	$\mu_{odd} = b-a$, and the above quantity $\|F - \vphi^* G \|$ is bounded here by $\frac{b-a}{4}$.
	
	Notice that similarly to the first example, when $b > a$ the action of $\theta$ does not come from a $\Z_4$-action, and we are able to distinguish it from the set of $\Z_4$-pmi.
	
\end{enumerate}
\end{exms}

\part{Applications to  metric geometry and function theory}\label{part-II}

\chapter{Applications of Rips complexes} \label{chap5-rips}

\section{\texorpdfstring{$\delta$}{delta} - hyperbolic spaces} \label{section: hyperbolic_spaces}
In this section, we follow the book \cite{Bridson_Haefliger_2011} by Bridson and Haefliger.

\begin{defn}
	Let $(Y,d)$ be a \emph{geodesic metric space} (i.e.\, any two points can be joined by a geodesic, which might not be unique).
	$Y$ is called \emph{$\delta$-hyperbolic} (with $\delta > 0$) if for any geodesic triangle, each of its sides lies in the $\delta$-neighborhood of the union of the other two sides (such a triangle is called \emph{$\delta$-slim}). See \Cref{fig: hyperbolic_triangle}.
	We say that $Y$ is \emph{hyperbolic} if it is $\delta$-hyperbolic for some $\delta > 0$.
\end{defn}

	\begin{figure}[!ht]
		\centering
		\includegraphics[scale=0.8]{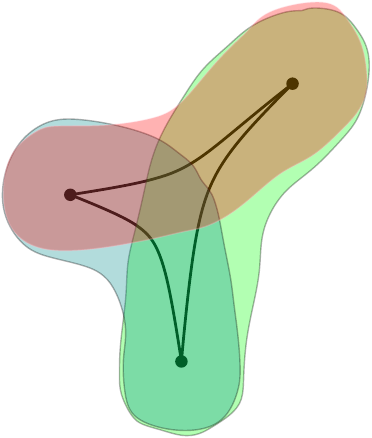}
		\caption{Hyperbolicity condition}
		\label{fig: hyperbolic_triangle}
	\end{figure}

\begin{exms}
	\begin{enumerate}
		\item
		Any space of bounded diameter is trivially hyperbolic.
		\item
		The Euclidean plane is \emph{not} hyperbolic, as for any $\delta>0$ a big enough equilateral triangle will not be $\delta$-slim.
		\item
		Consider the hyperbolic plane $\mathbb{H}$, with constant negative curvature $-1$. We claim that it is $\delta$-hyperbolic for some $\delta$.
		
		Take a triangle in $\mathbb{H}$. Suppose that it is not $r$-hyperbolic. Then there exists a point $p$ on one of its sides, such that a ball of radius $r$ around $p$, denoted by $B_r (p)$, does not intersect the other two sides. Hence half of the area of $B_r (p)$ is bounded from above by the area of the triangle (as half of this ball is contained in the triangle). Therefore, we get the estimate
		$
			2 \pi \sinh ^2 \frac{r}{2} \leq (\pi - \alpha - \beta - \gamma) \leq \pi \;,
		$
		where $\alpha, \beta, \gamma$ are the angles of the triangle, with maximal area on the right-hand-side attained on \emph{ideal} triangles - those that have all angles equal $0$.
		This leads to the following estimate: $r \leq 2\text{arsinh} \frac{1}{\sqrt{2}} = \ln (2+\sqrt{3}) \approx 1.31696$.
		
	\end{enumerate}
\end{exms}

Recall that for a metric space $(X,d)$, a subset $A \subset X$ is called \emph{$r$-dense} if for any point $x\in X$ there is a point $a\in A$ such that $d(x,a)<r$.

\begin{thm}[Following \cite{Bridson_Haefliger_2011}, chapter III.H] \label{thm: hypergeo_BH_Rt_contraction}
	Let $Y$ be a $\delta$-hyperbolic metric space and let $X \subseteq Y$ be a finite $r$-dense subset.
	Then for any $t> 4\delta + 6r$, every subcomplex of $R_t (X)$ contracts to a point in $R_t (X)$.
\end{thm}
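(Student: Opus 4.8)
The plan is to reduce the statement to a standard fact about hyperbolic spaces, namely that $\delta$-hyperbolic spaces satisfy a uniform local-to-global contractibility property for their Rips complexes, which is exactly the content of the cited chapter III.H of Bridson--Haefliger. Concretely, I would prove that under the hypothesis $t > 4\delta + 6r$, the complex $R_t(X)$ is \emph{uniformly contractible at scale comparable to $t$}; more precisely, that any finite subcomplex $K \subseteq R_t(X)$ is contained in a subcomplex of $R_t(X)$ that is contractible, and in fact the natural inclusion of $K$ into $R_t(X)$ is null-homotopic.

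\textbf{Step 1: A coning construction.} I would first fix a basepoint $x_0 \in X$ and show that for $t$ large relative to $\delta$ and $r$, every simplex of $R_t(X)$ can be ``coned off'' to $x_0$ inside $R_t(X)$. This is false pointwise, but becomes true after passing to a subdivided or ``thickened'' version: the key geometric input is that in a $\delta$-hyperbolic space, if one moves along quasigeodesics from the vertices of a simplex toward $x_0$, the corresponding chains of points stay within bounded distance of each other, the bound being controlled by $\delta$ and $r$ (the latter because $X$ is only $r$-dense, so geodesics of $Y$ must be approximated by $X$-points). The quantitative heart is the estimate that a point on a side of a geodesic triangle lies within $\delta$ of the union of the other two sides, iterated/combined with $r$-density to replace $Y$-geodesics by $X$-paths; tracking constants carefully yields the threshold $4\delta + 6r$.

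\textbf{Step 2: Building the contracting homotopy.} With the coning available, I would construct an explicit simplicial (or ``contiguity-type'') homotopy: choose for each $x \in X$ a sequence $x = p_0^x, p_1^x, \dots, p_{n(x)}^x = x_0$ of points of $X$ tracking a geodesic from $x$ to $x_0$, with consecutive points at distance $< r$ (or a fixed small multiple of $r$), so that each consecutive pair spans an edge of $R_t(X)$. Then for a simplex $\sigma = [x_{i_0}, \dots, x_{i_k}]$ and a ``time'' $m$, the set $\{p_m^{x_{i_0}}, \dots, p_m^{x_{i_k}}\} \cup \{x_{i_0}, \dots, x_{i_k}\}$ should be a simplex of $R_t(X)$ — this is where the hyperbolicity estimate of Step 1 is used: along parallel geodesics toward a common endpoint, the pairwise distances of the ``time-$m$'' points are bounded by $\operatorname{diam}(\sigma) + O(\delta + r) < t$. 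Stringing these together via the contiguity criterion (invoked in the proof of Theorem~\ref{thm: GH_dist_bounded_below_by_interleaving_distance}, \cite[Theorem 12.5]{Munkres_alg_top_84}) shows the inclusion $K \hookrightarrow R_t(X)$ is homotopic to the constant map at $x_0$.

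\textbf{Main obstacle.} The delicate part is the bookkeeping of constants: one must show that the ``width'' of a geodesic cone in a $\delta$-hyperbolic space, \emph{after} discretization through an $r$-dense net, never exceeds $t - \operatorname{diam}(\sigma)$, and that the required bound on $\operatorname{diam}(\sigma)$ together with the cone width never exceeds $t$. Handling the case of ``thin'' triangles versus the worst case (ideal-like triangles) and making sure the coefficient of $\delta$ is $4$ and of $r$ is $6$ — rather than some larger constant — requires applying the slim-triangles inequality a bounded number of times in the right order and adding up the $r$-approximation errors from replacing each $Y$-geodesic segment by an $X$-path. I expect that once the geometric lemma ``for $x, y \in X$ and $m \le \min(n(x), n(y))$, $d(p_m^x, p_m^y) < d(x,y) + 4\delta + 6r$ (or a similar inequality ensuring simplices of diameter $< t$)'' is established, the homotopy-theoretic conclusion follows formally, so essentially all the work is in proving that one inequality.
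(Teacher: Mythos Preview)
Your strategy differs from the paper's in a substantive way. The paper does \emph{not} slide all vertices toward $x_0$ simultaneously. Instead it repeatedly replaces a \emph{single} vertex --- the vertex $v$ at maximal distance from $x_0$ --- by an $X$-point $v'$ approximating the point on $[x_0,v]$ at distance $t/2$ from $v$. The key lemma is that for every neighbour $u$ of $v$ (i.e.\ $d(u,v)<t$) one also has $d(u,v')<t$; this is proved by one application of the slim-triangle condition to $\triangle(x_0,u,v)$, splitting into two cases according to which side the point $y \in [x_0,v]$ is $\delta$-close to, and crucially using the \emph{maximality} $d(x_0,u)\le d(x_0,v)$ in the first case. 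Each replacement gives a homotopy of $L$ inside a single simplex of $R_t(X)$ and strictly decreases $\max_{w\in L} d(x_0,w)$ by at least $2\delta+2r$, so finitely many steps reduce to the trivial case where $L$ sits in one simplex. The constant $4\delta+6r$ drops out of the two-case computation directly.

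Your simultaneous-flow approach has two concrete problems as written. First, the set $\{p_m^{x_{i_0}},\dots,p_m^{x_{i_k}}\}\cup\{x_{i_0},\dots,x_{i_k}\}$ in Step~2 cannot be a simplex for large $m$, since $d(p_m^{x},x)$ grows without bound; a contiguity argument needs \emph{adjacent} slices $\{p_m^{x_{i_j}}\}\cup\{p_{m+1}^{x_{i_j}}\}$ instead. Second, and more seriously, the inequality you isolate, $d(p_m^x,p_m^y)<d(x,y)+4\delta+6r$, is in the wrong direction for what you need: a simplex of diameter just below $t$ would map to one of diameter just below $t+4\delta+6r$, not below $t$. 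The fellow-traveller estimate in a $\delta$-hyperbolic space does give $d(p_m^x,p_m^y)\le d(x,y)+O(\delta)$, and discretization adds $O(r)$, but this additive error is unavoidable and cannot be absorbed when $\mathrm{diam}(\sigma)$ is close to $t$. The paper's one-vertex-at-a-time trick is precisely what gets around this: by moving only the farthest vertex and invoking maximality of $d(x_0,v)$ as an extra hypothesis in the triangle estimate, one obtains $d(u,v')<t$ outright rather than $d(u,v')<d(u,v)+\text{error}$.
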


Aside from getting information on when $R_t(X)$ is already contractible (given $\delta$ and $r$), one can adopt the following viewpoint.
Given a $\delta$-hyperbolic metric space $Y$, in case $\delta$ is unknown to us, \Cref{thm: hypergeo_BH_Rt_contraction} suggests a way to get a lower bound on $\delta$.
Namely, if we have an $r$-dense subset $X \subseteq Y$ for which $R_t (X)$ is not contractible, then $\delta \geq \frac{1}{4} (t - 6r)$.
Contractibility could be easier to check rather than finding the value of $\delta$ (or a lower bound) directly.

Let us comment on the connection to our story and give a few examples.
\begin{rmk}
		Let $(Y,d)$ be a locally compact uniquely geodesic $\delta$-hyperbolic manifold.
		Let $\hat{Y} \subset Y$ be a geodesically convex compact subset and let $X \subset \hat{Y}$ be a finite and $r$-dense in $\hat{Y}$.
		The proof \Cref{thm: hypergeo_BH_Rt_contraction} goes through in this case and we get that the boundary depth of the corresponding Rips complex satisfies
		$\beta \big( R (X) \big) \leq 6r + 4\delta$ (the longest finite bar is necessarily contained in $(0, 6r+4\delta]$).
		In particular, taking the density of $X$ to be $r = \delta$ we get $\beta \big( R(X) \big) \leq 10 \delta$, which provides a link between boundary depth and $\delta$-hyperbolic geometry.

\end{rmk}

We would like to introduce the notion of hyperbolic groups (see \cite{Bridson_Haefliger_2011} and \cite{Gromov1987hyperbolic}).
Let $\Gamma$ be a finitely generated group and let $S$ be some (finite) generating set of $\Gamma$, which we always assume to be symmetric, i.e. if an element belongs to $S$, so does its inverse.
The \emph{Cayley graph} $G = G(\Gamma, S) = (V,E)$ of $\Gamma$ (in the generality we will use) is given by the following data:
		\begin{itemize}
			\item
			The set of vertices is $V = \Gamma$.
			\item
			For each $x\in \Gamma$, $s\in S$, the vertices $x$ and $xs$ are joined by an edge, i.e.\ the set of edges is $E = \{ (x,xs):\ x\in \Gamma, s\in S \}$.
		\end{itemize}
		Given the pair $(\Gamma, S)$, we take the \emph{word metric} on the group $\Gamma$, defined as follows:
		the distance between two elements $g,h\in \Gamma$ is the least number of elements of $S$ required to write a word whose evaluation is $g^{-1} h$ (being a product in the order written).
		This metric corresponds to a metric $d$ on the Cayley graph of $\Gamma$: the distance between two vertices in $V$ is the length of the shortest path joining them in $G$.
		Let us illustrate this notion.
		\begin{exms}
			\begin{itemize}
				\item
				Take $\Gamma = F _k$ ($k\geq 2$) to be the free group on a set of $k$ elements. Let us take a generating set $S=\{s_1, s_1^{-1}, \ldots, s_k, s_k^{-1}\}$. Then the corresponding Cayley graph is a tree whose root is $\mathds{1}$, with one branch going out of the root per each element of $S$ and any other vertex also has degree $2k$.
				
				\item
				Another example is $\Gamma = \pi_1 (\Sigma_g)$, where $\Sigma_g$ is a surface of genus $g \geq 2$. Then we can take the set of generators to be a set of $g$ elements and their inverses. But this time the group is not free:
				$$
					\Gamma = \{ a_1, b_1, \ldots, a_g, b_g \ : \
					[a_1, b_1]\cdots [a_g, b_g] = \mathds{1} \} \;.
				$$
				For the very special case of a torus ($g=1$), the Cayley graph of $\Gamma$ is the $\Z^2$ grid.
				For both example, see e.g. \cite{hatcher_algebraic_top}, section 1.3.
			\end{itemize}
		\end{exms}
		
		Further, we can consider a ``topological realization" $Y$ of the Cayley graph $G$ of $\Gamma$, which for us means endowing the edges of the graph with a metric that extends $d$, requiring each edge to be of length $1$ (see e.g. \cite{drutu_kapovich_13}, section 1.3.4).
		
		
		Having the metric space $(Y,d)$, we can ask if it is $\delta$-hyperbolic for some $\delta>0$.
		If that is the case, we say that $\Gamma$ is a \emph{hyperbolic group}.
		
		\begin{exm}
			Let $\Gamma = F_k$ be the free group on $k$ elements, and let $S = \{ s_1, s_1^{-1}, \ldots,   s_k, s_k^{-1}\}$ be a generating set. The corresponding Cayley graph is a tree, and having no triangles, it is $\delta$-hyperbolic for any $\delta >0$, thus the free group is hyperbolic.
		\end{exm}
		
		
		Assume the group $\Gamma$ is torsion free, i.e. for any element $1 \neq g\in G$, for all $n\in \N$, $g^n \neq 1$. Fix some finite generating set $S$.
		Consider the topological realization $(Y,d)$ of the Cayley graph $G = G(\Gamma, S)$, assuming that it is $\delta$-hyperbolic for some $\delta > 0$, i.e. the group $\Gamma$ is hyperbolic.
		Set $X = \Gamma$ to be the collection of vertexes of the (combinatorial) graph $G$.
		Note that $X$ is $1$-dense in its geometric realization $(Y,d)$, so by the same arguments as in the proof of \Cref{thm: hypergeo_BH_Rt_contraction} below, for any $t > 6+4\delta$, the complex $R_t(X)$ is contractible.
	\begin{rmks}
		\begin{enumerate}
		\item
			By definition, $\Gamma$ acts transitively on $X$: for any $x,y\in X$ there exists $g\in \Gamma$, such that $gx = y$, where $g = yx^{-1}$.
			Also, $\Gamma$ acts freely on $X$: given $g,h\in \Gamma$, if $gx=hx$ for some $x$, then indeed $g=h$, just multiplying by $x^{-1}$.
		\item
			Moreover, since $\Gamma$ has no torsion, it also acts freely on simplices in $R_t (X)$. Indeed, otherwise there would exist a simplex $\sigma$ and $g\neq 1$ in $\Gamma$ with $g\sigma = \sigma$. But then after a finite amount of iterations, also some vertex would be fixed by $g^n \neq 1$ for some $n \in \N$.
		\item
			Thus, we can consider $K = R_t (X) / \Gamma$. For $t> 6+4\delta$, as mentioned above, the space $R_t (X)$ is contractible, hence simply connected, so $R_t (X)$ is the universal cover of $K$, with $\pi_1 (K) = \Gamma$ and $\pi_n (K) = 1$ for all $n\neq 1$. Such a space is called an Eilenberg-MacLane space $K (\Gamma, 1)$.
		\item
			Note that $K$ is a finite complex, since a ball of radius $t$ near $\mathds{1} \in X$ consists only of a finite number of points, $\Gamma$ being finitely generated (by transitivity, it is enough to examine $\mathds{1}$).
			Hence the $2$-skeleton on $K$ is finite, so $\Gamma$ is finitely presented (it can be defined via a finite number of relations between the generators, the number of $2$-simplexes being an upper bound for the number of relations).	
		\end{enumerate}
	\end{rmks}

Let us complete this section by proving the announced theorem.

\begin{proof}[Proof of \Cref{thm: hypergeo_BH_Rt_contraction}]
	Fix a base point $x_0 \in X$, some $t > 4\delta + 6r$, and a subcomplex $L \subseteq R_t (X)$. Consider the following two cases:
	\begin{itemize}
		\item
			Assume $d (x_0, v) < \frac{t}{2}$ for all $v\in L$.
			Then $d (v_1, v_2) < t$ for any pair $v_1, v_2 \in L$, so $L$ is contained in a full simplex in $R_t (X)$, hence is contractible.
		\item
			Suppose that there exists $v\in L$ such that $d (x_0, v) \geq \frac{t}{2}$, and fix $v$ to be a vertex for which $d(x_0, v)$ is maximal. Our idea is to gradually homotope $L$ inside $R_t (X)$ to arrive at the first case.
		
			Draw a geodesic $[x_0, v]$ between $x_0$ and $v$. Take $y\in [x_0, v]$ to be the point for which $d (y,v) = \frac{t}{2}$, and choose $v' \in X$ such that $d (v',y) \leq r$.
			Put $\rho = d (v,v')$. (See \Cref{fig: hyperbolic_retract_trick}.)
			
			\begin{figure}[!ht]
				\centering
				\includegraphics[scale=1]{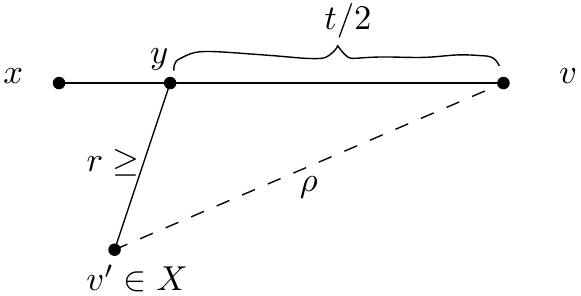}
				\caption{Taking an alternative point $v'$.}
				\label{fig: hyperbolic_retract_trick}
			\end{figure}
			
			Note that by the triangle inequality (for $\Delta yvv'$),
			$$
				\rho \leq \frac{t}{2} + r \text{, and } \rho \geq \frac{t}{2} - r > 2\delta + 3r-r = 2\delta + 2r \;,
			$$
			so we get in particular that
			\begin{equation} \label{ineq: bounds_on_rho}
				\rho > \frac{t}{2} - r > 2\delta + 2r, \text{ and } \rho < t \;.
			\end{equation}
			
			\begin{lem}\label{lem: hyper_geo_Rt_homotop}
				For any $u\in L$, if $d (u,v) < t$ then $d(u,v') < t$.
				(Under the assumptions of \Cref{thm: hypergeo_BH_Rt_contraction}.)
			\end{lem}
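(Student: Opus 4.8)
The plan is to reduce the desired bound $d(u,v')<t$ to an estimate of the distance from $u$ to the point $y\in[x_0,v]$, and then to extract that estimate from the thinness of an auxiliary geodesic triangle. Since $v'$ was chosen with $d(v',y)\le r$, the triangle inequality gives $d(u,v')\le d(u,y)+r$, so it suffices to show $d(u,y)<t-r$; in fact I will aim for the cleaner bound $d(u,y)<\tfrac{t}{2}+2\delta$, which already implies $d(u,v')<\tfrac{t}{2}+2\delta+r<t$ because $t>4\delta+6r$.

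First I would note that it is enough to treat $u$ as a vertex of $L$ (so $u\in X$), the statement being about distances in $Y$. Form the geodesic triangle on $x_0,v,u$ using the geodesic $[x_0,v]$ already fixed in the proof together with any geodesics $[v,u]$ and $[u,x_0]$; by $\delta$-hyperbolicity it is $\delta$-slim, so the point $y$, which lies on the side $[x_0,v]$, is within $\delta$ of some point $w\in[v,u]\cup[u,x_0]$. Now split into two cases. If $w\in[v,u]$: from $d(v,y)=\tfrac{t}{2}$ and $d(y,w)\le\delta$ I get $d(v,w)\ge\tfrac{t}{2}-\delta$, and since $w$ lies on a geodesic from $v$ to $u$ with $d(v,u)<t$ by hypothesis, the complementary piece satisfies $d(w,u)<\tfrac{t}{2}+\delta$, hence $d(u,y)\le d(u,w)+d(w,y)<\tfrac{t}{2}+2\delta$. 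If $w\in[u,x_0]$: using $d(x_0,y)=d(x_0,v)-d(y,v)=d(x_0,v)-\tfrac{t}{2}$ (as $y$ lies on the geodesic $[x_0,v]$) I get $d(x_0,w)\ge d(x_0,v)-\tfrac{t}{2}-\delta$, and then the maximality of $v$, i.e.\ $d(x_0,u)\le d(x_0,v)$ --- the one place this hypothesis is used --- gives $d(u,w)=d(x_0,u)-d(x_0,w)\le\tfrac{t}{2}+\delta$, so again $d(u,y)<\tfrac{t}{2}+2\delta$. Assembling the two cases yields the claim.

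I do not anticipate a genuine obstacle: this is a routine $\delta$-slim-triangle argument. The only points that need a little care are that $u$ should be taken to be a vertex (so that $u\in X$ and the choice of $v$ as a distance-maximising vertex of $L$ applies to it), and that in the first case $d(w,u)$ equals $d(v,u)-d(v,w)$ precisely because $w$ lies between $v$ and $u$ on a geodesic, which is also what makes the hypothesis $d(v,u)<t$ exactly the input needed there. Everything else is bookkeeping with the constants, and they close with room to spare since $t>4\delta+6r$ exceeds what the individual steps demand.
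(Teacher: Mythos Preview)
Your argument is correct and follows essentially the same route as the paper: form the geodesic triangle on $x_0,v,u$, use $\delta$-slimness to place $y$ within $\delta$ of a point $w$ on one of the other two sides, and split into two cases. The only cosmetic difference is that you bound $d(u,y)$ uniformly and add $r$ at the end, whereas the paper estimates $d(u,v')$ directly in each case and, in the case $w\in[v,u]$, routes the estimate through $\rho=d(v,v')$ and its lower bound $\rho>2(\delta+r)$ rather than using $d(v,y)=\tfrac{t}{2}$ as you do; your version is marginally cleaner but the idea is the same.
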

			
			Using the lemma (see a proof later), note that if
			$\sigma = [v, u_1, \ldots, u_k]$ is a simplex in $L \subset R_t (X)$, and
			$\sigma' = [v', u_1, \ldots, u_k]$ is a simplex in $R_t (X)$, then since $d (v,v') = \rho < t$,
			we get that
			$\Sigma = [v, v', u_1, \ldots, u_k ]$ is also a simplex in $R_t (X)$.
			
			\begin{figure}[!ht]
				\centering
				\includegraphics[scale=1]{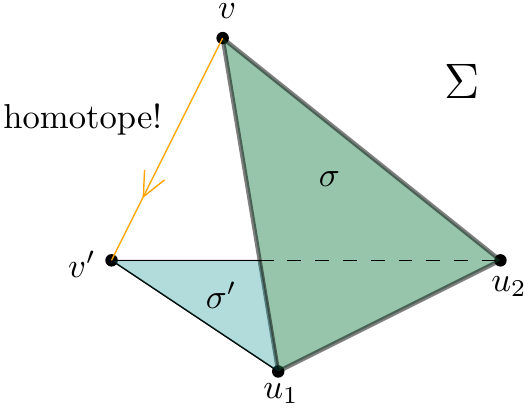}
				\caption{Homotope $\sigma$ to $\sigma '$.}
				\label{fig: hyperbolic_simplex_homotopy}
			\end{figure}
			
			Denote by $L' \subset R_t (X)$ the subcomplex that is obtained from $L$ by replacing the vertex $v$ with $v'$.
			We can homotope $L$ to $L'$ inside $\Sigma$ (bringing $v$ to $v'$, thus taking each $\sigma$ to $\sigma'$ as above), and keeping fixed all faces in $L$ that do not contain $v$.

			Note that by the triangle inequality, and the definition of $t$,
				$$
					d(x_0, v') \leq d(x_0, y) + d(y,v') \leq
					 d (x_0, v) - \frac{t}{2} + r < d(x_0, v) - (2\delta + 2r) < d (x_0,v) \;.
				$$
			Thus, in a finite number of steps, replacing $v$ that gives maximal $d(x_0,v)$ by $v'$ as described (which reduces $d(x_0, v)$ by at least $(2\delta + 2r)$), will lead us to the situation of Case 1.
	\end{itemize}
\end{proof}

\begin{proof}[Proof of \Cref{lem: hyper_geo_Rt_homotop}]
	Assume that $u \in L$ satisfies $d (u,v) < t$. We have to prove show that $d (u,v') < t$.
	
	Consider the geodesic triangle with vertices $x_0, u, v$ and recall our construction:
	$y\in [x_0, v]$ is such a point that $d (y,v) = \frac{t}{2}$ and $v'\in X$ is chosen to satisfy $d (y, v') \leq r$.
	By $\delta$-hyperbolicity, $y$ is included either in a $\delta$-neighborhood of $[x_0, u]$ or in that of $[v,u]$. That is, either there exists $w_1 \in [x_0, u]$ for which $d(y,w_1)<\delta$ or there exists $w_2 \in [v,u]$ for which $d (y,w_2) < \delta$. (See \Cref{fig: hyperbolic_lemma}.)

	\begin{figure}[!ht]
		\centering
		\includegraphics[scale=1]{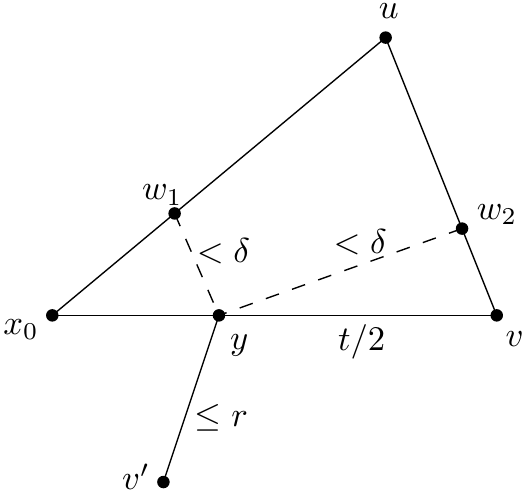}
		\caption{If $d (u, v) < t$, then $d(u, v') < t$.}
		\label{fig: hyperbolic_lemma}
	\end{figure}

	\begin{enumerate}[1.]
		\item
			Assume there exists $w_1 \in [x_0, u]$ satisfying $d(y, w_1)< \delta$.
			We have
			$d (u, v') \leq  d(u, w_1) + d(w_1,v')$. Let us estimate each summand separately.
			By maximality of $d(x_0,v)$ we get
			\begin{align*}
			d(u,w_1) = d (x_0, u) - d(x_0, w_1) &\leq d(x_0, v) - d(x_0, w_1) \\
			&\leq d(v,w_1) \leq d(v,y) + d(y,w_1) < \frac{t}{2} + \delta \;.
			\end{align*}
			And by definition of $w_1$ and $v'$,
			$$
				d(w_1,v') \leq d(w_1, y) + d(y, v') < r + \delta \;.
			$$
			So overall,
			$d(u,v') < \frac{t}{2} + \delta + r + \delta = \frac{t}{2} + \big( r+2\delta \big) < t$.
			
		\item
			Assume now we have a point $w_2 \in [v,u]$ for which $d (y,w_2) < \delta$.
			By the triangle inequality,
			$d (u,v') \leq d(u,w_2) + d(w_2,y) + d(y,v')$.
			We need to estimate $d(u,w_2)$. Note that
			$$
			\rho : = d(v,v') \leq d(v,w_2) + d(w_2,v') \leq d(v,w_2) + d(w_2,y) + d(y,v') <
				d(v,w_2) + \delta + r \;.
			$$
			Hence $d(v,w_2) > \rho - (\delta + r)$.
			So we have
			$d (u,w_2) = d(u,v) - d(v,w_2) < t - \rho + (\delta + r)$.
			Thus,
			$
				d(u,v') < t - \rho + 2(\delta + r) < t \;,
			$
			since $\rho > 2 (\delta + r)$ by (\ref{ineq: bounds_on_rho}).
	\end{enumerate}
\end{proof}


\section{\texorpdfstring{\v{C}ech}{Cech} complex, Rips complex and data analysis} \label{subsec: Cech_VS_Rips}
Let $M$ be a Riemannian manifold and let $X \subseteq M$ be a finite set of points ``approximating" $M$ (i.e. a sample of points from $M$). Having only $X$, can we reconstruct $M$? Or, rather, \emph{how well} can we reconstruct $M$?
%

Denote by $d$ the Riemannian distance on $M$ (and the distance induced on $X$). In applications, the metric space $(X,d)$ models a data cloud. One of the principles of the topological data analysis is
``don't trust large distances",
cf. \cite{Mahoney_Lim_Carlsson_08}.
Therefore, the objective is to reconstruct the topology of $M$ using ``local" geometry of the
set $X$. \\

For $t>0$, we write $B_t(x)$ for an open ball around $x$ of radius $t$ with respect to $d$.
One complex that would be of use here is the Rips complex associated to $(X,d)$.
Let us introduce also the \v{C}ech complex.

%


\begin{defn}
			The $\calU = \{U_i\}$ be a finite collection of subsets of a set $A$. We define the \emph{\v{C}ech complex} $\check{C} (\calU)$ associated to this collection as follows.
			The vertices are the sets $U_i$.
			An ordered collection $\sigma = [U_0, \ldots, U_k]$ is a $k$-simplex if $\cap_{j} U_j \neq \emptyset$.
			The boundary operators are defined in a standard manner, enabling one to consider the corresponding homology $H_* \left( \check{C} (\calU) \right)$.	
\end{defn}

\begin{rmk}
	A special case, that will be of particular interest to us, is the following setting.
	Let $(X,d)$ be as above and fix $t>0$. Consider the collection of open balls around each point $x_i \in X$ of radius $t/2$, denoted $U_i = B_{\frac{t}{2}} (x_i)$ (note the scaling!). We will examine the homology associated to the \v{C}ech complex of this collection $\calU_t = \{ U_i \}$, and denote it by $\check{C}_t (X)$ (for the notation to be similar to those of the Rips complex).
	Varying $t>0$, similarly to the case of Rips complex, we can consider the persistence module $H_* (\check{C}_t(X))$ with the persistence maps induced by the inclusion maps $i_{s,t}: \check{C}_s (X) \to \check{C}_t (X)$.
\end{rmk}

Let us point out that the persistence modules coming from the \v{C}ech and the Rips complexes corresponding to a finite metric space $(X,d)$ are $1$-interleaved after passing to a ``logarithmic scale".

\begin{lem} \label{lem: comparing_Rips_Cech_interleaving_logscale}
	Let $(X,d)$ be a finite metric space.
	Take $V_a = H_* \big( R_{2^a} (X) \big) $ and $W_a = H_* \big( \check{C}_{2^a} (X) \big) $ with morphisms induced from the Rips and the \v{C}ech complexes respectively. Then $V$ and $W$ are $1$-interleaved.
\end{lem}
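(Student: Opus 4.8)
The plan is to establish the standard chain of inclusions relating the Čech and Rips complexes at comparable scales, and then translate these into interleaving morphisms on homology after the logarithmic reparametrization. Recall that $\check{C}_t(X)$ is the nerve of the cover by balls $B_{t/2}(x_i)$, so a simplex $[x_0,\dots,x_k]$ appears in $\check{C}_t(X)$ precisely when $\bigcap_j B_{t/2}(x_j) \neq \emptyset$. First I would prove the two elementary containments of simplicial complexes:
\[
\check{C}_t(X) \subseteq R_t(X) \subseteq \check{C}_{2t}(X)\;.
\]
For the left inclusion: if $\bigcap_j B_{t/2}(x_j) \ni p$, then $d(x_i,x_j) \le d(x_i,p) + d(p,x_j) < t$ for all $i,j$, so $[x_0,\dots,x_k]$ is a simplex of $R_t(X)$. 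For the right inclusion: if $\diam[x_0,\dots,x_k] < t$, then $x_0 \in B_t(x_j)$ for every $j$, i.e.\ $x_0 \in \bigcap_j B_t(x_j) = \bigcap_j B_{(2t)/2}(x_j)$, so the simplex lies in $\check{C}_{2t}(X)$. All of these inclusions are compatible with the structure maps coming from enlarging the scale parameter, since they are all induced by identity-on-vertices simplicial maps.

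Next I would pass to homology and the logarithmic scale. Writing $V_a = H_*(R_{2^a}(X))$ and $W_a = H_*(\check{C}_{2^a}(X))$, the inclusion $\check{C}_{2^a}(X) \subseteq R_{2^a}(X)$ induces $G_a : W_a \to V_a$, and the inclusion $R_{2^a}(X) \subseteq \check{C}_{2^{a+1}}(X)$ induces $F_a : V_a \to W_{a+1} = (W[1])_a$. Thus $F : V \to W[1]$ and $G : W \to V[1]$ are morphisms of persistence modules (one checks commutativity with the persistence maps from functoriality of homology applied to the obvious commuting squares of simplicial inclusions). The two composites $G[1]\circ F : V \to V[2]$ and $F[1]\circ G : W \to W[2]$ are induced by the simplicial inclusions $R_{2^a}(X) \hookrightarrow R_{2^{a+2}}(X)$ and $\check{C}_{2^a}(X) \hookrightarrow \check{C}_{2^{a+2}}(X)$ respectively — because each is a composition of the elementary inclusions above, and a composition of inclusions of subcomplexes is again the inclusion — hence they equal the shift morphisms $\Phi^2_V$ and $\Phi^2_W$. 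This is exactly the data required by Definition \ref{defn: interleaved_pm_interleaving_morph} with $\delta = 1$, so $V$ and $W$ are $1$-interleaved.

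The only genuinely substantive point, and the one I would be most careful about, is verifying that all the relevant squares of simplicial maps commute on the nose (not just up to contiguity): since every map in sight is the identity on the vertex set $X$ and is an inclusion of one subcomplex of the full simplex on $X$ into another, commutativity is automatic, and there is no need to invoke the contiguity argument used in the proof of Theorem \ref{thm: GH_dist_bounded_below_by_interleaving_distance}. So the heart of the proof is really just the geometric inclusion $R_t(X) \subseteq \check{C}_{2t}(X)$ together with its companion, and the bookkeeping of the logarithmic shift; no estimate is lost, and the constant $1$ is exactly $\log_2 2$ coming from the factor of $2$ between the scales in the two nerve comparisons.
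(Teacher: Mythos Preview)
Your proof is correct and follows essentially the same approach as the paper: both establish the inclusions $\check{C}_t \subseteq R_t \subseteq \check{C}_{2t}$ via the elementary triangle-inequality arguments you give, and then pass to homology in the logarithmic scale to obtain the $1$-interleaving with maps induced by the identity on vertices, with you spelling out in somewhat more detail why the interleaving squares commute. There is one small slip---you define $G_a : W_a \to V_a$ but then write $G : W \to V[1]$---which is harmless: simply compose with the shift $V \to V[1]$ (equivalently, use the inclusion $\check{C}_{2^a} \subseteq R_{2^{a+1}}$ directly), and the rest of your verification goes through unchanged.
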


\begin{proof}
We compare the two complexes, which are both subcomplexes of the full simplex generated by the points of $X$.
	\begin{enumerate}[(1)] 
		\item
			If $[y_0, \ldots, y_k]$ is a simplex in $R_t (X)$, then $d (y_i, y_j) < t$, so $y_i \in B_t (y_j)$ for all $i,j$. In particular, $y_0$ is a common point for all $B_t(y_j)$, so $[y_0, \ldots, y_k]$ determines a $k$-simplex in $\check{C}_{2t}$.
			Thus, $R_t \subset C_{2t}$.
		\item
			If $[y_0, \ldots, y_k]$ is a simplex in $\check{C}_t (X)$, i.e.\ $\cap_{j} B_{\frac{t}{2}} (y_j) \neq \emptyset$, then in particular for each pair $y_i, y_j$ the balls $B_{\frac{t}{2}} (y_i)$ and $B_{\frac{t}{2}}(y_j) $ intersect, hence $d (y_i, y_j) < t$.
			So $[y_0, \ldots, y_k]$ is a simplex in $R_{t} (X) \subseteq R_{2t}(X)$.
			We get then that $C_t \subset R_{2t}$.

	\end{enumerate}
	Passing to ``logarithmic scale" and consider the persistence modules as in the statement, we see that $V$ and $W$ are $1$-interleaved, with interleaving maps induced by the identity.
\end{proof}

\begin{rmk}
Let us mention that the logarithmically rescaled ``persistence modules" $V$ and $W$ discussed here, do not meet fully our initial \Cref{defn: pm} of persistence modules. Namely, property (4), which makes persistence modules to vanish on the left, starting at some point is not satisfied. However $V$ and $W$ are proper persistence modules as defined in Section \ref{sec-prop-pm}.
\end{rmk}

\begin{exm}
	\footnote{We thank Shira Tanny for communicating to us this example. See also \cite{deSilva_Ghrist_coverage_sensornet_ph_2007}.}
	Take a regular hexagon with side-length $1$ (see \Cref{fig: hexagon}).
	Let us compute the barcodes corresponding to its Rips and \v{C}ech complexes.
	
	\begin{figure}[!ht]
		\centering
		\includegraphics[scale=1]{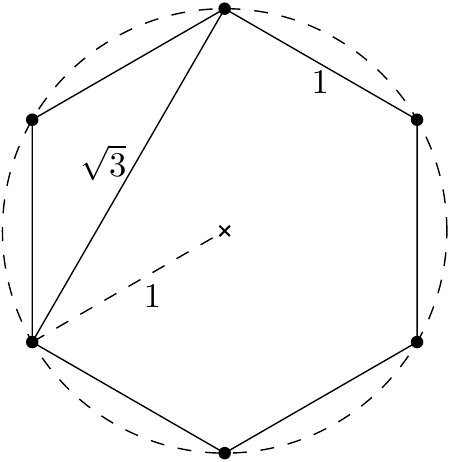}
		\caption{Computing the \v{C}ech complex $\check C_t$ for the hexagon.}
		\label{fig: hexagon}
	\end{figure}
	
	\noindent
	\textbf{The Rips complex.}
	We have the following homology groups as $t$ varies:
	\begin{itemize}
		\item
		For $0<t\leq 1$, we have six distinct points, so $H_0 = \R ^6$.
		\item
		For $1 < t \leq \sqrt 3$, we have an $S^1$, so $H_0 = \R$ and $H_1 = \R$.
		\item
		For $\sqrt 3 < t \leq 2$, we get a sphere $S^2$, which is obtained by glueing two discs along their boundary. These discs are created by the triangles shown in \Cref{fig: hexagon_sphere}.
		Hence $H_0 = \R$, $H_2 = \R$ and $H_1 = 0$.
		\begin{figure}[!ht]
			\centering
			\includegraphics[scale=0.7]{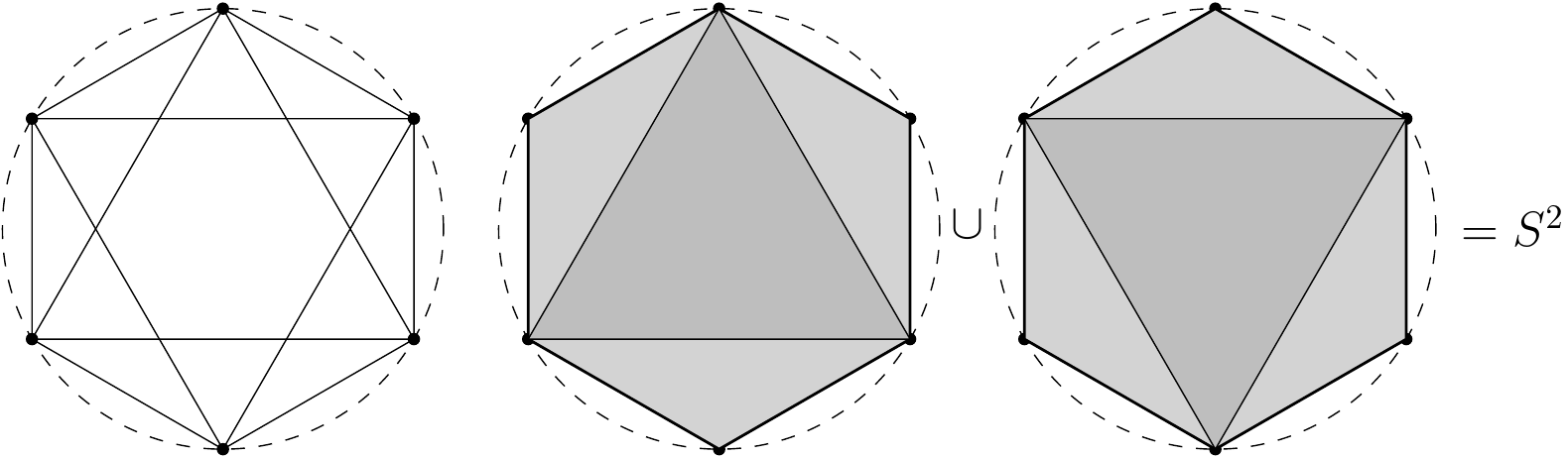}
			\caption{The Rips complex for $\sqrt 3 < t \leq 2$: we get have 8 triangles (2-simplices) which form two discs that are glued to make a $2$-sphere.} 	
			\label{fig: hexagon_sphere}
		\end{figure}

		\item
		For $t > 2$, we get a full $5$-simplex created by the vertices of the hexagon, so we only have $H_0 = \R$ left.
	\end{itemize}

	Overall we get the barcode shown in \Cref{fig: hexagon_barcode}.
	
	\begin{figure}[!ht]
		\centering
		\includegraphics[scale=1]{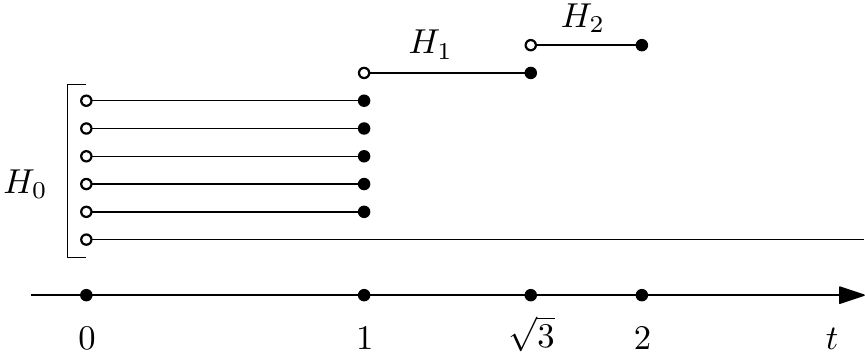}
		\caption{Barcode of the hexagon example: Rips.}
		\label{fig: hexagon_barcode}
	\end{figure}

	\noindent
	\textbf{The \v{C}ech complex.}
	This time, we have the following dependence on $t$:
	\begin{itemize}
		\item
			For $0 < t \leq 1$, we have $H_0 = \R ^6$.
		\item
			For $1 < t \leq \sqrt{3}$, we have an $S^1$, so $H_0 = \R$ and $H_1 = \R$.
			(See \Cref{fig: hexagon_cech_S1_1}.)
			\begin{figure}[!ht]
				\centering
				\includegraphics[scale=1]{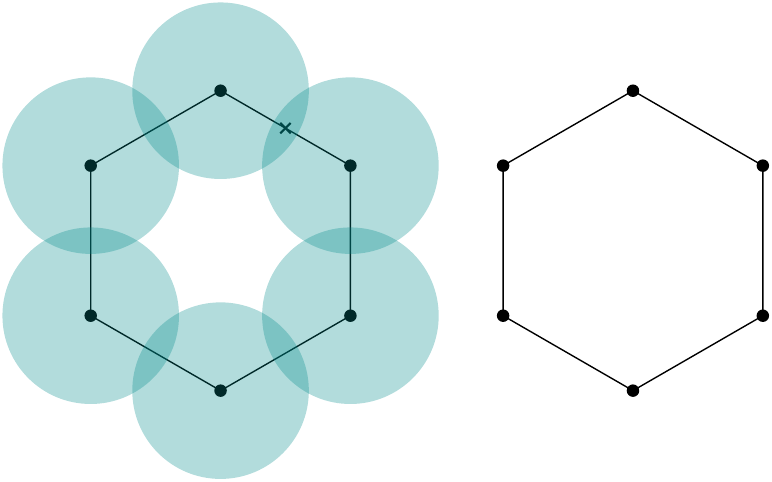}
				\caption{The \v{C}ech complex for $1<t \leq \sqrt{3}$.}
				\label{fig: hexagon_cech_S1_1}
			\end{figure}
		\item
			For $\sqrt{3} < t \leq 2$, we get the following (\Cref{fig: hexagon_cech_S1_2}) union of triangles which is homotopic to $S^1$, that is, we still have $H_0 = \R$ and $H_1 = \R$. (Note that the big equilateral ones are still not present)
			\begin{figure}[!ht]
				\centering
				\includegraphics[scale=1]{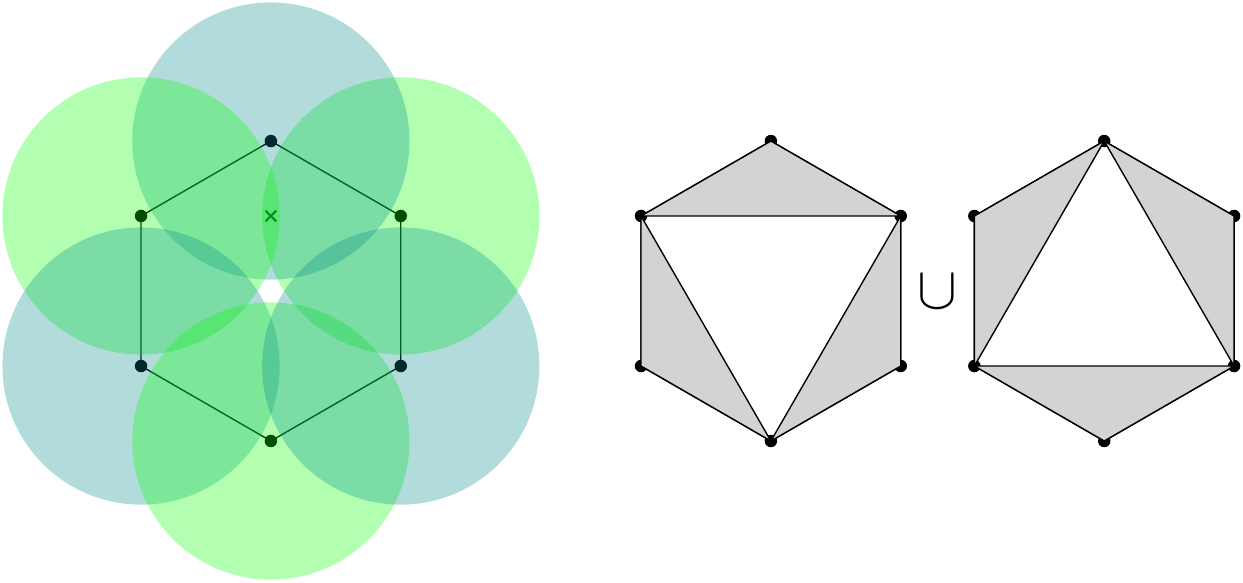}
				\caption{The \v{C}ech complex for $\sqrt{3} <t \leq 2$.}
				\label{fig: hexagon_cech_S1_2}
			\end{figure}
		\item
			For $t > 2$, we get the full $5$-simplex, so we only are left with $H_0 = \R$.
	\end{itemize}

\noindent
	See \Cref{fig: hexagon_barcode_cech} for the corresponding barcode.
	Comparing the two barcodes, we notice that the Rips complex captures a ``redundant" (in the sense of the topology of the hexagon) $2$-dimensional cell, while the \v{C}ech complex does not (cf. also \Cref{lem: Nerve_lemma}).
	Nonetheless, let us note that the \v{C}ech complex has the disadvantage of being harder to compute and handle (see \cite{oudot_persistence_book_15}, chapter 5), as we need to know (and store) the information about all possible simplices (i.e. intersections of any amount of balls around the sampled points). At the same time, for the Rips complex, as we mentioned in \Cref{exm: finite_metric_sp_Rips_pm}, the information required is only about $1$-simplices, i.e. about the distances between each pair of points from the sample set.

	\begin{figure}[!ht]
		\centering
		\includegraphics[scale=1]{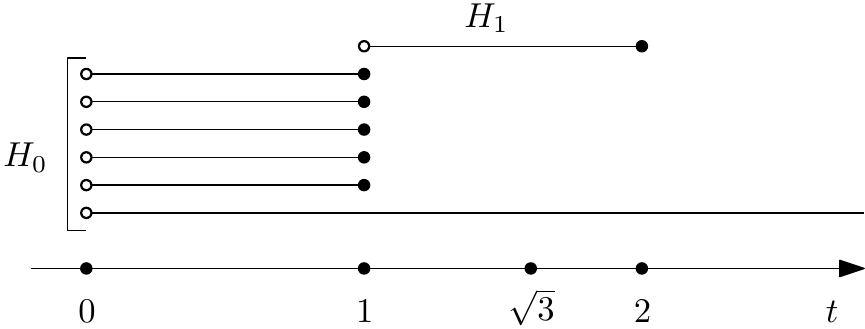}
		\caption{Barcode of the hexagon example: \v{C}ech.}
		\label{fig: hexagon_barcode_cech}
	\end{figure}

\end{exm}

	\begin{exr}
		Check that the two barcodes we found are $1$-interleaved after passing to the ``logarithmic scale" described in \Cref{lem: comparing_Rips_Cech_interleaving_logscale}.
	\end{exr}

\section{Manifold Learning} 
As presented in the beginning of the previous section, we want to study a Riemannian manifold $M$ extracting information about it from a finite sample of points $X = \{x_1, \ldots, x_N \} \subset M$. We present here certain approaches using the \v{C}ech and the Rips complexes of a cover of $M$ by balls around the sampled points (with respect to the distance $d$ on $M$).

A \emph{good cover} $\calU = \{U_i\}$ of a topological space is an open cover for which any intersection of finitely many elements of $\calU$ is either empty or contractible.
We will use the following result on \v{C}ech homology:

\begin{lem}[The Nerve Lemma, see e.g. \cite{hatcher_algebraic_top}] \label{lem: Nerve_lemma}
%
	Let $\calU = \{ U_i \}$ be a good cover of a manifold $M$.
	Then the homology of the corresponding \v{C}ech complex of $\calU$ equals to that of the manifold:
	$
		H_*(\check{C} (\calU)) = H_* (M) \;.
	$
\end{lem}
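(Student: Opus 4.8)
The plan is to compare both sides of the claimed equality to the homology of a single Čech--singular double complex. Since in our setting the good cover $\calU=\{U_0,\dots,U_n\}$ is finite (balls around the finitely many sampled points), there are no convergence issues, and every intersection $U_I:=\bigcap_{i\in I}U_i$ with $I\subseteq\{0,\dots,n\}$ is either empty or contractible. First I would form the first-quadrant double complex $E^{p,q}=\bigoplus_{|I|=p+1}S_q(U_I)$, where $S_*(-;\F)$ is singular chains, the vertical differential is the singular boundary $\partial$, and the horizontal differential is the Čech differential $\delta$ (the alternating sum of the maps induced by the inclusions $U_I\hookrightarrow U_{I\setminus\{i\}}$); let $T_*$ denote its total complex.

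Next I would run the spectral sequence obtained by filtering $T_*$ by the Čech degree $p$. For fixed $p$ the column has homology $\bigoplus_{|I|=p+1}H_*(U_I)$, and goodness of the cover forces this to vanish in positive degree and to equal $\bigoplus_{U_I\neq\emptyset}\F$ in degree $0$. Hence the $E_1$-page is concentrated in the row $q=0$, where it is exactly the simplicial chain complex of the nerve $\check{C}(\calU)$ with $d_1$ the simplicial boundary operator; so $E_2^{p,0}=H_p(\check{C}(\calU))$ and all other entries vanish. The spectral sequence degenerates at $E_2$, giving $H_*(T_*)\cong H_*(\check{C}(\calU))$.

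Running the other spectral sequence, filtering by the singular degree $q$, I would use that for each fixed $q$ the augmented row $0\to S_q^{\calU}(M)\to\bigoplus_i S_q(U_i)\to\bigoplus_{i<j}S_q(U_{ij})\to\cdots$ is exact, where $S_*^{\calU}(M)$ is the subcomplex of $\calU$-small chains; a contracting homotopy for this row is manufactured from a partition of unity subordinate to $\calU$. This identifies $H_*(T_*)$ with $H_*(S_*^{\calU}(M))$, and the classical small-chains theorem (iterated barycentric subdivision) identifies the latter with $H_*(M)$. Comparing the two computations of $H_*(T_*)$ then yields $H_*(\check{C}(\calU))\cong H_*(M)$, as desired.

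The step I expect to be the main obstacle is the exactness of the rows in the second filtration — i.e.\ checking that the Čech--singular complex genuinely resolves the $\calU$-small chains — since this is where the partition of unity must be used to produce an explicit chain contraction and is the only place the covering hypotheses really bite. An alternative route, yielding the sharper conclusion $M\simeq|\check{C}(\calU)|$, would be to use a subordinate partition of unity $\{\rho_i\}$ to define $f\colon M\to|\check{C}(\calU)|$ directly by $x\mapsto\sum_i\rho_i(x)\,v_i$ (well defined since the indices $i$ with $\rho_i(x)\neq0$ all contain $x$ and hence span a simplex of the nerve), and then to construct a homotopy inverse by inducting over the skeleta of $|\check{C}(\calU)|$, extending a partially-defined section over each new simplex using contractibility of the relevant $U_I$; there the difficulty is organizing the inductive gluing coherently over all faces.
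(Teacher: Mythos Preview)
Your proof is correct, but note that the paper does not actually prove this lemma: it simply states it with a citation to Hatcher and uses it as a black box in the subsequent theorem on recovering $H_*(M)$ from the Rips persistence module. So there is no ``paper's own proof'' to compare against.

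That said, your double-complex/spectral-sequence argument is the standard one (essentially the treatment in Bott--Tu adapted from de~Rham to singular chains), and each step you describe is sound: the first filtration collapses to the nerve complex by goodness of the cover, and the second filtration collapses to $\calU$-small chains via the partition-of-unity contracting homotopy, with the small-chains theorem finishing the job. You have correctly identified the row exactness as the only nontrivial ingredient. The alternative route you sketch at the end --- building an explicit map $M\to|\check{C}(\calU)|$ via a partition of unity and a homotopy inverse by induction on skeleta --- is closer in spirit to the argument in the cited reference (Hatcher, Corollary~4G.3), and has the advantage of giving a homotopy equivalence rather than just a homology isomorphism.
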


Let $X \subset M$ be a finite set of points.
As before, consider the Rips complex $R_{t}(X)$ with vertex set $X$ and simplices $\sigma$ formed by a subsets of $X$ that have diameter smaller than $t$. For $X$ dense enough (or $t$ big enough), the collection $\calU _t = \calU_t (X)= \{ B_{2^{t/2}} (x) \}_{x\in X}$ is a cover of $M$. In such a case, we consider also the \v{C}ech complex associated to this cover, denoting it by $\check{C}_{t} (X)$.
We know that the persistence modules $V_a = \check{C}_{2^a}$  and $W_a = R_{2^a}$ are $1$-interleaved.

\begin{thm} \label{thm: from_Rips_to_homology_of_manifold}
	Let $M$ be a Riemannian manifold and $X\subset M$ a finite sample of points.
	Suppose that there exists $\epsi_- < \epsi_+$ with $\epsi_+-\epsi_- > 4$, such that for any $t\in (\epsi_-,\epsi_+]$, the collection $\calU _t$ is a good cover of $M$.
	Then for each $k\geq 0$ the $k$-th homology of $M$ can be recovered from the corresponding Rips persistence module $(W, \pi^W)$ associated to $X$, i.e. we can reconstruct the homology of $M$ using persistence homology:
	$$
		\im \left( \pi^W_{\epsi_- +1, \epsi_+ -1} \right) \simeq H_k (M) \ \  \forall k\geq 0 \;.
	$$
\end{thm}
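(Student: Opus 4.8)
The plan is to chain together two ingredients: the interleaving between the Rips and \v{C}ech persistence modules from Lemma \ref{lem: comparing_Rips_Cech_interleaving_logscale}, and the Nerve Lemma \ref{lem: Nerve_lemma} applied at every scale in the ``good cover window'' $(\epsi_-,\epsi_+]$. First I would set up notation: write $V_a = H_k(\check C_{2^a}(X))$ and $W_a = H_k(R_{2^a}(X))$ (fixing the degree $k$ throughout), and recall from Lemma \ref{lem: comparing_Rips_Cech_interleaving_logscale} that the inclusions of complexes give, on the logarithmic scale, canonical maps $R_{2^a} \hookrightarrow \check C_{2^{a+1}} \hookrightarrow R_{2^{a+2}}$, all induced by inclusions of subcomplexes of the full simplex on $X$, and that these compose to the persistence structure maps. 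Thus for any $a < b$ with $b - a \geq 2$ we get a factorization of $\pi^W_{a,b}\colon W_a \to W_b$ through $V_{a+1}$, and likewise of the \v{C}ech structure map $V_{a} \to V_{b}$ through $W_{a+1}$ when $b-a\ge 2$.

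Next I would use the hypothesis: for every $t\in(\epsi_-,\epsi_+]$ the cover $\calU_t$ is good, so Lemma \ref{lem: Nerve_lemma} gives $V_t = H_k(\check C_t(X)) \cong H_k(M)$ — and crucially this isomorphism is \emph{natural} with respect to the inclusion maps $\check C_s(X)\hookrightarrow\check C_t(X)$ for $\epsi_- < s \le t \le \epsi_+$, because the Nerve Lemma isomorphism is induced by a homotopy equivalence $|\check C(\calU)| \simeq M$ that can be taken compatible with refinement (this is the standard functorial form of the nerve theorem; alternatively one invokes that the map $\check C_s \to \check C_t$ is a homotopy equivalence since both are homotopy equivalent to $M$ and the square commutes up to homotopy). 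Consequently every \v{C}ech structure map $V_s \to V_t$ with $\epsi_- < s \le t \le \epsi_+$ is an isomorphism, and in particular $\im(V_{s}\to V_{t}) \cong H_k(M)$.

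Finally I would transport this back to the Rips module. Choose $s = \epsi_- + 1$ and $t = \epsi_+ - 1$; by hypothesis $t - s = \epsi_+ - \epsi_- - 2 > 2$, so there is room to interleave. Consider the chain of inclusion-induced maps
\[
W_{\epsi_- + 1} \longrightarrow V_{\epsi_- + 2} \longrightarrow W_{\epsi_- + 3} \longrightarrow \cdots \longrightarrow V_{\epsi_+ - 2} \longrightarrow W_{\epsi_+ - 1},
\]
whose total composite is $\pi^W_{\epsi_- + 1,\ \epsi_+ - 1}$. All the indices $\epsi_- + 2, \dots, \epsi_+ - 2$ of the \v{C}ech terms lie in $(\epsi_-,\epsi_+]$, and the composite $V_{\epsi_- + 2} \to V_{\epsi_+ - 2}$ (obtained by deleting the Rips terms, using the factorization in the other direction) is the \v{C}ech structure map, hence an isomorphism onto a copy of $H_k(M)$. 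A short diagram chase then pins down the image: $\im(\pi^W_{\epsi_- + 1,\epsi_+ - 1})$ contains the image of $V_{\epsi_- + 2}\to W_{\epsi_- + 3}\to\cdots\to W_{\epsi_+-1}$, which surjects onto (indeed maps isomorphically to, after one more interleaving step into $V$) the image of the \v{C}ech map, i.e. $H_k(M)$; conversely it is contained in the image of $V_{\epsi_- + 2}\to \cdots$, which has rank $\dim H_k(M)$. Matching ranks gives $\im(\pi^W_{\epsi_- + 1,\epsi_+ - 1}) \cong H_k(M)$.

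The main obstacle I anticipate is the \emph{naturality} of the Nerve Lemma isomorphism — i.e. justifying that $V_s \to V_t$ is an isomorphism for $s,t$ in the good window, not merely that source and target are abstractly isomorphic to $H_k(M)$. One has to be careful that the homotopy equivalence $|\check C_s| \simeq M$ can be chosen to commute (up to homotopy) with the inclusion $|\check C_s|\hookrightarrow|\check C_t|$; the cleanest route is to use the version of the nerve theorem that produces a map $|\check C(\calU)|\to M$ (a partition-of-unity map) natural in the cover, and observe that refining the cover from $\calU_s$ to $\calU_t$ makes the obvious triangle commute up to homotopy. Once that functoriality is in hand, the rest is the bookkeeping of interleaving composites and a rank count, which is routine.
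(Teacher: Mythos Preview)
Your overall strategy is sound and genuinely different from the paper's. The paper does not do a direct diagram chase: instead it truncates both $V$ and $W$ to the window $J=(\epsi_-,\epsi_+]$, invokes the Isometry Theorem to get a $1$-matching between $\calB(\ol V)$ and $\calB(\ol W)$, observes that $\calB(\ol V)$ consists of exactly $\dim H_k(M)$ copies of $J$, and then counts bars of $\ol W$ containing $J^1=(\epsi_-+1,\epsi_+-1]$ via the matching. Your approach avoids the Isometry Theorem entirely and works directly with the inclusion-induced interleaving maps; this is more elementary and arguably more transparent. You are also right that \emph{both} arguments need the \v{C}ech structure maps on $J$ to be isomorphisms (equivalently, naturality of the Nerve Lemma): the paper's claim that ``shorter bars are not possible since $\dim V_t$ is constant on $J$'' is only correct once one knows the structure maps are isomorphisms, so your flagging this as the main obstacle is on point.

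That said, your final rank step has a genuine gap. You assert that $\im(\pi^W_{\epsi_-+1,\epsi_+-1})$ \emph{contains} $\im(V_{\epsi_-+2}\to W_{\epsi_+-1})$, but since $\pi^W_{\epsi_-+1,\epsi_+-1}$ factors as $W_{\epsi_-+1}\to V_{\epsi_-+2}\to W_{\epsi_+-1}$ the containment goes the other way; equality would require $W_{\epsi_-+1}\to V_{\epsi_-+2}$ to be surjective, and with the symmetric $1$-interleaving you quote ($R_{2^a}\hookrightarrow\check C_{2^{a+1}}\hookrightarrow R_{2^{a+2}}$) the only way to see that is via $V_{\epsi_-}\to W_{\epsi_-+1}\to V_{\epsi_-+2}$, and $\epsi_-$ lies outside the good window. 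So as written you get only the upper bound $\dim\im(\pi^W_{\epsi_-+1,\epsi_+-1})\le\dim H_k(M)$. The cleanest fix is to use the sharper inclusion $\check C_t\subset R_t$ (this is exactly what the proof of Lemma~\ref{lem: comparing_Rips_Cech_interleaving_logscale} establishes before weakening to $\check C_t\subset R_{2t}$): in log scale this gives a map $V_a\to W_a$ with no shift, and then
\[
V_{\epsi_-+1}\longrightarrow W_{\epsi_-+1}\xrightarrow{\ \pi^W_{\epsi_-+1,\epsi_+-1}\ } W_{\epsi_+-1}\longrightarrow V_{\epsi_+}
\]
is the \v{C}ech structure map on $(\epsi_-,\epsi_+]$, hence an isomorphism, giving the lower bound immediately.
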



\begin{exms}
	\begin{enumerate}
		\item
			We can take $\epsi_+ = \log_2 (\text{convexity radius of }M)$, as then $\forall t \leq \epsi_+$, we have a good cover (in case $t$ is big enough so that it is a cover to begin with).
			Recall that the \emph{convexity radius} is the maximum over all $r>0$ for which at every point $x\in M$, the ball $B_r(x)$ of radius $r$ around $x$ is strictly convex. Here being a \emph{strictly convex} subset means that for any two points belonging to it, there exists a unique minimal geodesic joining them, that is contained in the subset.
		\item
			Let us give an example in which the conditions of \Cref{thm: from_Rips_to_homology_of_manifold} are satisfied. Take $\epsi_+ = \log_2 (\text{convexity radius of }M)$ and pick $\epsi_-$ so that $\epsi_+ - \epsi_- > 4$ (this $\epsi_-$ could be negative, as we work in multiplicative scale, taking balls of radius $2^{t/2}$).
			Take now a finite sample set $X \subset M$ to be a maximal collection of points such that $d(x, y) > \epsi_-$ for all $x, y\in X$ (i.e. a collection to which it is impossible to add more points preserving this condition).
			Then $\cup_{x\in X} B_t(x)$ is a  good cover  for every $t\in (\epsi_-, \epsi_+]$, and hence $H_* (M)$ can be recovered as described in \Cref{thm: from_Rips_to_homology_of_manifold}.
	\end{enumerate}
\end{exms}

In the proof of \Cref{thm: from_Rips_to_homology_of_manifold} we will use the following construction. Let $(V,\pi)$ be a persistence module and let $I \subset \R$ be an interval of the form $(a,b]$, where $b \leq \infty$. Consider a \emph{truncated} persistence module $(\ol V, \ol \pi)$, i.e. take $\ol V_t$ to be $V_t$ for $t\in I$ and zero otherwise, and truncate $\pi$ accordingly. See e.g. \cite{Chazal_DeSila_Glisee_Oudot} for the idea of such truncation, and \cite[ Theorem 3.3]{polterovichL_inferring_17} for a similar argument.

\begin{exr} \label{exr: truncated_interleaving}
	Let $(V,\pi)$ and $(W,\sigma)$ be two persistence modules which are $\delta$-interleaved, and fix some interval $I= (a,b]$ with $b\leq \infty$. Show that the truncated persistence modules with respect to $I$, $\ol V$ and $\ol W$, are again $\delta$-interleaved.
\end{exr}

\begin{proof}[Proof of \Cref{thm: from_Rips_to_homology_of_manifold}]
	Denote $J = (\epsi_-, \epsi_+]$. Fix an integer $k\geq 0$, we will write $V$ and $W$ meaning only homology of degree $k$.
	Since $\calU _t$ is a good cover for any $t\in (\epsi_-,\epsi_+]$, by the Nerve Lemma, we have
	$V_t = H_k (\check{C}(U_t)) = H_k (M)$ (so $\dim V_t$ is constant on $J$). Hence the number of intervals in $\calB (V)$ containing $J$ is exactly $\dim H_k (M)$ (for each such $t$, $\dim V_t = \dim H_k (M)$, but intervals could be longer than $J$).
	Consider ``truncated" persistence modules $\ol V$ and $\ol W$ with respect to $J$.
	By previous comparison between the Rips and the \v{C}ech complexes and \Cref{exr: truncated_interleaving}, we know that $\ol V$ and $\ol W$ are $1$-interleaved. Hence by the Isometry theorem (\Cref{thm: isometry_thm}), their barcodes satisfy
	$d_{bot} (\calB (\ol V), \calB (\ol W)) \leq 1$, i.e. there exists a $1$-matching $\mu : \calB (\ol V) \to \calB (\ol W)$ (see \Cref{defn: delta_matching}).
	
	Note first that $\calB (\ol{V})$ contains exactly $\dim (H_k (M))$ copies of $J$ and no other bars (shorter bars are not possible, since $\dim V_t$ and hence $\dim \ol V_t$ are constant on $J$). Each such copy of $J$ is of length greater than $4$, so it is matched by $\mu$ to a bar from $\calB (\ol W)$ which contains $J^1 = (\epsi_-+1, \epsi_+-1]$.
	On the other hand, each bar $\calB (\ol W)$ that contains $J^1$ is still of length greater than $2$, so it is matched by $\mu$ to a bar from $\calB (\ol V)$ that contains $J^2 = (\epsi_-+2, \epsi_+-2]$. Such a bar can only be of the form $J$ (these are the only bars in $\calB (\ol V)$), thus overall the number of intervals in $\calB (\ol W)$ containing $J^1$ is exactly $\dim (H_k (M))$.
	In other words, $\dim \im \left( \pi^W_{\epsi_-+1, \epsi_+-1} \right) = \dim H_k (M)$.
\end{proof}

\begin{rmks}
	A few comments are in order.
	\begin{itemize}
		\item In practice, long bars in the barcode of the Rips complex carry more reliable
information about homology of $M$ than short bars which can be interpreted as a ``topological" noise
(see \cite{ghrist_barcodes_2008}). Therefore, larger is the difference $(\epsi_-, \epsi_+]$,
more trustworthy is the calculation of $H_*(M)$ proposed in Theorem \ref{thm: from_Rips_to_homology_of_manifold}.
			
		\item
			In \cite[Propositions 3.1.]{Niyogi_Smale_Weinberger_08} P.~Niyogi, S.~Smale and S.~Weinberger consider the case where $X$ is an $\frac{\epsi}{2}$-dense collection of points sampled from a submanifold $M \subset \R^n$. Take the union of Euclidean balls $U = \{ \cup_{x_i \in X} B_t (x_i) \}$ centered at the points of $X$. It turns out that 
when $t$ varies in a certain interval depending on the geometry of $M$, the set $U$ 
deformation retracts to $M$, and in particular their homologies are equal. Furthermore, if $X$ consists
of a sufficiently large amount of independent identically distributed points sampled with respect to the uniform probability measure on $M$, the homology of $U$ equals to the homology of $M$.
 
		\item
			Let us also mention a paper \cite{Latschev2001} by Latschev, in which he obtains the following result, answering a question raised in \cite{Hausmann95}:
			For a closed Riemannian manifold $M$, there exists $\epsi_0 > 0$ small enough, so that for any $0 < \epsi \leq \epsi_0$, there is $\delta_\epsi > 0$, for which if $Y$ is a metric space that has Gromov-Hausdorff distance less than $\delta_\epsi$ to $M$, then its Rips complex $R_\epsi (Y)$ is homotopy equivalent to $M$. (Here $Y$ could be an infinite set.)
			Note that in particular it follows that, if $Y\subseteq M$ is finite and $\delta_\epsi$-dense in $M$, then $R_\epsi (Y)$ and $M$ have the same homotopy type.
	\end{itemize}
\end{rmks}

\chapter{Topological function theory}\label{chap5a-functheory}

\section{Prologue}
Topological function theory studies features of smooth functions on a manifold that are invariant under
the action of the diffeomorphism group. The simplest invariant of this kind is the uniform norm, as
opposed to, say, $L_p$-norms or $C^k$-norms, which depend on additional choices, a volume form or a metric,
respectively. The theory of persistence modules provides more sophisticated invariants coming
from the homology of the sublevel sets of a function. We have encountered some of them earlier in this book,
including spectral invariants and the boundary depth. In the present chapter we focus, roughly speaking, on the ``size" of the barcode which can be considered as a useful measure of oscillation of a function.
We provide bounds on this size in terms of norms of a function and its derivatives, and at the end
discuss some links to approximation theory.

\medskip\noindent{\bf Convention:} Throughout this chapter, we write $||\cdot||_0$ for the uniform norm; the lower index $0$ is meant to emphasize its distinction from the $L_2$-norm $||\cdot||_2$ which will be also widely used below.

For a Morse function $f$, write $\nu(f)$ for the number of finite bars in the barcode
of $f$. Recall that there are $\zeta(M)$ of infinite rays, where $\zeta$ stands for the total
Betti number of $M$. Here and below the bars are counted with the multiplicities.

Denote by $\nu(f,c)$ the number of finite bars of length $>c$, and
define an invariant
\medskip\noindent \begin{equation}\label{eq-ell}
\ell(f):= \text{length}\left(\calB(f) \cap [\min f,\max f]\right)
\end{equation} which measures the total length of all finite bars of $f$ and of the segments of the infinite
rays in the interval $[\min f,\max f]$. In this chapter we discuss these invariants, following the works \cite{CSEHM, Polterovich-Sodin,Polterovich2-Stojisavljevic}.

Let us start with a couple of observations. Obviously,
the function $\nu(f,c)$ is decreasing in $c$ and
\begin{equation}\label{eq-nu-ell}
c\nu(f,c) \leq \ell(f)\;.
\end{equation}
The functional $\ell$ is, generally speaking,
discontinuous under perturbations in the uniform norm: one can create an arbitrarily large
number of short bars by a small perturbation. However, for every two Morse functions $f$ and $h$ we have
\begin{equation}\label{eq-ell-ell}
\ell(f)-\ell(h) \leq (2\nu(f) + \zeta(M))||f-h||_0\;,
\end{equation}
and
\begin{equation}\label{eq-nu-perturb}
\nu(f, c) \geq \nu(h,c+ 2||f-h||_0)\;.
\end{equation}
Inequalities \eqref{eq-ell-ell} and  \eqref{eq-nu-perturb} immediately follow from the fact that the barcodes of $f$ and $g$ admit a $\delta$-matching with $\delta=||f-h||_0$.

A number of results presented in this chapter have counterparts in the calculus of functions of one variable.
In the case of a Morse function $f$ on the circle $S^1 =\R/(2\pi)\Z$, the notions coming from the barcode,  such as the number or the total length of finite bars, have a transparent meaning (cf. \cite[p. 137]{CSEHM}).
 All critical points of $f$ are either local minima or local maxima and they are located on $\mathbb{S}^1$ in an alternating fashion. More precisely, if there are $N$ local minima $x_1,\ldots, x_N$,  there are also $N$ local maxima $y_1,\ldots, y_N$,  and we may label them so that they are cyclically ordered as follows:
$$x_1, y_1, x_2, y_2, \ldots, x_N,  y_N, x_1.$$
The barcode of $f$ contains $N-1$ finite bars in degree $0$ whose left endpoints are local minima and whose right endpoints are local maxima, as well as  two infinite bars in degrees $1$ and $0$ starting at the global maximum
and the global minimum, respectively. From here it follows that
$$\ell(f)=\sum_{i=1}^{N}(f(y_i)-f(x_i)).$$
On the other hand,  the total variation of $f$ equals
$2 \sum_{i=1}^{N}(f(y_i)-f(x_i)) = 2\ell(f)$.
Therefore,
\begin{equation}
\label{eq-1Dcase}
\ell(f) = \frac{1}{2} \int_{0}^{2\pi}|f'(t)|dt\;.
\end{equation}
In particular, we conclude from \eqref{eq-nu-ell} and \eqref{eq-1Dcase} that
\begin{equation}
\label{eq-nu-uniform}
\nu(f,c) \leq \pi ||f'||_0/c\;.
\end{equation}
As we shall see in the next section, this inequality manifests a very general phenomenon.

Sometimes, it is useful to estimate $\ell(f)$ via $L_2$-norms of $f$ and its derivatives:
$$
\ell(f) \leq \sqrt{\frac{\pi}{2}}\bigg( \int_{0}^{2\pi}(f'(t))^2 dt \bigg)^{\frac{1}{2}}=\sqrt{\frac{\pi}{2}} \bigg| \int_{0}^{2\pi}f''(t)f(t)dt \bigg|^{\frac{1}{2}}\leq\sqrt{\frac{\pi}{2}}  \| f \|_2^{\frac{1}{2}} \| f'' \|_2^{\frac{1}{2}}\;.$$
This yields
\begin{equation}
\label{eq-ell-1D-norms}
\ell(f) \leq \sqrt{\frac{\pi}{8}} ( \| f \|_2 + \| f'' \|_2) \;.
\end{equation}
In Section \ref{sec-length-barcode} we discuss a two-dimensional generalization of this
inequality.

\medskip\noindent{\bf An apology:} Multiplicative numerical constants appearing in this chapter
are not sharp. Apparently, the problem of finding sharp constants is difficult even in the one-dimensional
case.

\medskip

Another piece of motivation comes from a beautiful observation by Shmuel Weinberger \cite{Weinberger-alternance}
relating barcodes of functions of one variable with Chebyshev's famous
 alternance (a.k.a. equioscillation) theorem. One of the versions of this theorem deals
with approximation of continuous functions on the circle.
Denote by $\calT_n$ the set of trigonometric polynomials on $S^1= \R/(2\pi \Z)$ of degree $\leq n$.

\begin{thm} [Chebyshev's theorem, \cite{Stepanets}]
\label{thm-chevyshev} A trigonometric polynomial $p \in \calT_{n-1}$ on
$S^1$ provides the best uniform approximation in $\calT_{n-1}$ to a continuous function $f$ if and only if there exist $2n$ points $0 \leq x_1 < \dots < x_{2n} < 2\pi$
so that the differences $f(x_i)-p(x_i)$ reach the maximal value $||f-p||_0$ with alternating signs.
\end{thm}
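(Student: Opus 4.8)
The plan is to prove both directions via a classical argument, and to make visible the link with barcodes that Weinberger observed. I will write $E = \|f-p\|_0$ for the error of a candidate $p \in \calT_{n-1}$, and recall the basic fact that the dimension of $\calT_{n-1}$ (as a real vector space of functions on $S^1$) is $2n-1$, and that a nonzero element of $\calT_{n-1}$ has at most $2(n-1)$ zeros on $S^1$, counted with multiplicity.

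First I would prove the \emph{sufficiency} direction: if $f-p$ equioscillates at $2n$ points $x_1 < \cdots < x_{2n}$ with alternating signs and $|f(x_i)-p(x_i)| = E$, then $p$ is a best approximation. Suppose not, so there is $q \in \calT_{n-1}$ with $\|f-q\|_0 < E$. Consider $r = q - p \in \calT_{n-1}$. At each $x_i$ we have $r(x_i) = (f(x_i)-p(x_i)) - (f(x_i)-q(x_i))$; since the first term has absolute value exactly $E$ and the second has absolute value strictly less than $E$, $r(x_i)$ has the same sign as $f(x_i)-p(x_i)$, hence $r$ alternates in sign along $x_1, \ldots, x_{2n}$. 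Therefore $r$ has at least $2n-1$ sign changes on the circle; but a sign change forces a zero, and on $S^1$ an alternating sequence of $2n$ values actually produces $2n$ zeros of $r$ in the cyclic intervals (one must be slightly careful here to count the ``wrap-around'' interval correctly — this is exactly where the circle differs from the interval, and it is the reason $2n$ rather than $2n+1$ points suffice). Since a nonzero element of $\calT_{n-1}$ has at most $2n-2$ zeros, $r \equiv 0$, so $q = p$, a contradiction.

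Next, the \emph{necessity} direction: if $p$ is a best approximation, then $f-p$ must equioscillate at $2n$ points. I would argue by contrapositive. Let $E = \|f-p\|_0$ and suppose the maximal alternating chain of extremal points of $g := f-p$ (points where $|g| = E$, with consecutive ones of opposite sign) has length $m \leq 2n-1$. Partition $S^1$ into arcs so that within each arc $g$ attains the value $+E$ only or $-E$ only; one can do this using at most $m$ arcs, and hence one can build a trigonometric polynomial $h \in \calT_{n-1}$ of degree $\leq \lfloor m/2 \rfloor \leq n-1$ that is strictly positive where $g = E$ and strictly negative where $g = -E$ (take $h$ to be a suitable product/combination of functions of the form $1 - \cos(t - \theta_j)$ adapted to the arc endpoints, renormalized). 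Then for small $\epsilon > 0$, $\|g - \epsilon h\|_0 < E$, i.e. $p + \epsilon h$ beats $p$, contradicting optimality. So $m \geq 2n$, which gives the desired $2n$ alternation points.

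The step I expect to be the main obstacle is the construction of the ``sign-correcting'' polynomial $h \in \calT_{n-1}$ in the necessity direction: one must ensure both that its degree does not exceed $n-1$ and that it has the prescribed sign pattern on the finitely many arcs, and on the circle the parity bookkeeping (an alternating pattern of length $m$ on a cycle behaves differently for $m$ even vs.\ odd) needs care. A clean way to handle this is to pass to the doubled interval $[0, 4\pi)$ or to use the substitution reducing trigonometric polynomials of degree $n-1$ to ordinary polynomials of degree $2n-2$ on an interval after the change of variable $z = e^{it}$, where the classical Chebyshev argument is standard (see \cite{Stepanets}); I would then translate back. I would also remark, following Weinberger \cite{Weinberger-alternance}, that the $2n$ alternation points are precisely the endpoints of the $2n-1$ finite bars together with appropriate endpoints of infinite rays in the degree-$0$ and degree-$1$ barcode of $g = f-p$ on $S^1$ (using the one-dimensional description of barcodes recalled in \eqref{eq-1Dcase} and the surrounding discussion), so that the minimal number of alternations is governed by the total Betti number contribution $2n = 2 \cdot (\dim \calT_{n-1}+1)/2$, making the appearance of $2n$ conceptually transparent.
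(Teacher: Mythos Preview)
Your classical argument is correct, and it covers both directions of the equivalence. The paper, however, does not attempt a full proof of Chebyshev's theorem at all: it simply cites \cite{Stepanets} for the statement and then sketches only the direction ``alternance $\Rightarrow$ best approximation'', under the additional hypothesis that $f-p$ is Morse, using the persistence machinery developed in the chapter.

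The key methodological difference is that where you count \emph{zeros} of $r=q-p$ to derive a contradiction, the paper counts \emph{critical points} via the barcode. Concretely, it sets $h=f-p$, $c=\|h\|_0$, observes that the alternance hypothesis forces the barcode of $h$ to contain exactly $n-1$ finite bars of length $2c$ (plus two infinite rays), so $\nu(h,t)=n-1$ for all $t<2c$; then it invokes a general lemma (Proposition~\ref{pro-alawein}) saying that if a Morse function $q$ has fewer than $2\nu(h,c)+\zeta(M)$ critical points, then $\|h-q\|_0 \geq c/2$. Since any nonconstant $q\in\calT_{n-1}$ has at most $2n-2$ critical points, the inequality $2n-2 < 2(n-1)+2$ triggers the lemma and gives $\|f-(p+q)\|_0 \geq c$ for every $q$. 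Your zero-counting argument is more elementary, needs no Morse assumption, and proves the converse as well; the paper's argument is there precisely to showcase Proposition~\ref{pro-alawein} and the barcode viewpoint, in line with Weinberger's observation that you mention at the end.
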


\medskip
\noindent
Existence of such a collection of extremal points of $f-p$ is called {\it alternance}. For instance,
the polynomial $p=0 \in \calT_{n-1}$ provides the best approximation to $f(x)= \cos (nx)$.
The alternance is given by points $x_k=\pi k/n$, $k = 0, \dots, 2n-1$.

Let us sketch a barcode-assisted proof of the fact that the alternance property yields the best approximation
under an extra assumption that $f-p$ is Morse.
We start with a general result.
\medskip

\begin{prop}\label{pro-alawein}
Let $h,q$ be two Morse functions on a smooth closed manifold $M$ such that
for some $c>0$, $q$ has strictly less than $2\nu(h,c)+\zeta(M)$ critical points. Then
$||h-q||_0 \geq c/2$.
\end{prop}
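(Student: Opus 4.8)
The strategy is to argue by contradiction via a matching argument, using the invariance of the infinite-bar count and the monotonicity of $\nu(\cdot,c)$ under perturbation already recorded in \eqref{eq-nu-perturb}. Suppose $||h-q||_0 < c/2$, and set $\delta = ||h-q||_0 < c/2$. By the stability of barcodes (the Isometry Theorem \ref{thm: isometry_thm} together with the estimate $d_{int}(V(h),V(q)) \leq ||h-q||_0$ from \eqref{eq: interleaving_dist_smaller_than_functions_norm}), the barcodes $\calB(h)$ and $\calB(q)$ admit a $\delta$-matching. Applying \eqref{eq-nu-perturb} with the roles of $f$ and $h$ played by $q$ and $h$ respectively, we get
\begin{equation*}
\nu(q, c - 2\delta) \geq \nu(h, c)\;.
\end{equation*}
Since $2\delta < c$, the quantity $c - 2\delta > 0$, so $\nu(q, c-2\delta)$ counts genuine finite bars of $q$ of positive length; in particular $q$ has at least $\nu(h,c)$ finite bars (counted with multiplicity).

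Next I would translate the count of bars in $\calB(q)$ into a lower bound on the number of critical points of $q$. Each finite bar in the barcode of a Morse function is created at one critical value and destroyed at another, and each infinite ray is created at a critical value; the total Betti number $\zeta(M)$ counts the infinite rays. A clean way to see the bookkeeping: for a Morse function $q$, the number of critical points equals the total number of bars in $\calB(q)$ (over all degrees) counted with multiplicity, which is at least $2\nu(q, c-2\delta) + \zeta(M)$ — the factor $2$ because a finite bar consumes \emph{two} critical values (one birth, one death), while the $\zeta(M)$ infinite rays each consume one. Hence
\begin{equation*}
\#\Crit(q) \;\geq\; 2\nu(q, c-2\delta) + \zeta(M) \;\geq\; 2\nu(h,c) + \zeta(M)\;,
\end{equation*}
contradicting the hypothesis that $q$ has strictly fewer than $2\nu(h,c)+\zeta(M)$ critical points. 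This contradiction forces $||h-q||_0 \geq c/2$.

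The main obstacle, and the step requiring the most care, is the combinatorial identity relating $\#\Crit(q)$ to the barcode data — specifically justifying that each finite bar accounts for two distinct critical points of $q$ and that these accountings, summed over all homological degrees, do not overcount. This needs the standard Morse-theoretic fact that passing a nondegenerate critical value of index $k$ either creates a degree-$k$ bar or kills a degree-$(k-1)$ bar, so that the map {critical points} $\to$ {birth/death events of bars} is a bijection; I would cite Barannikov's normal form \cite{barannikov1994framed} (or equivalently the Morse-theoretic construction of the persistence module in \Cref{exm: morse_theory_pm}) for this. One subtlety to flag: if a single critical value is shared by several critical points the bookkeeping is by multiplicity, which is why the statement counts critical points rather than critical values; since $q$ is Morse this is harmless but should be mentioned. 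Everything else — the interleaving bound, the passage to a $\delta$-matching, and the monotonicity \eqref{eq-nu-perturb} — is already available in the text and plugs in directly.
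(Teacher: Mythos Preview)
Your proof is correct and follows essentially the same route as the paper's: argue by contradiction, use \eqref{eq-nu-perturb} to get $\nu(q,c-2\delta)\geq\nu(h,c)$, and then invoke the count $\#\Crit(q)\geq 2\cdot(\text{finite bars})+\zeta(M)$ to reach a contradiction. The paper's version is terser (it uses an auxiliary $\epsilon$ with $\|h-q\|_0<(c-\epsilon)/2$ and phrases the critical-point count as an upper bound on finite bars rather than a lower bound on $\#\Crit(q)$), but the logic is identical; your additional remarks on Barannikov's normal form simply make explicit what the paper takes for granted.
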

\begin{proof} Assume on the contrary that
$||h-q||_0 < (c-\epsilon)/2$, for some $\epsilon >0$.
Denote by $N$ the number of critical points of $q$. Exactly $\zeta(M)$ of them
contribute to infinite rays of the barcode. Thus the number of finite bars
in the barcode of $q$ cannot exceed $(N-\zeta(M))/2$. Therefore, by the assumption
of the proposition, $\nu(q,\epsilon) < \nu(h,c)$.
At the same time,  by \eqref{eq-nu-perturb},
$$\nu(q, \epsilon ) \geq \nu(h, \epsilon + 2||h-q||_0) \geq \nu(h,c),$$
and we get a contradiction.
\end{proof}

\medskip
\noindent
{\bf Proof of ``alternance $\;\Rightarrow\;$ best approximation for Morse $f-p$":}
Put $h=f-p$, $c= ||h||_0$. By the alternance property, the barcode of
$h$ consists of $n-1$ finite bars of length $2c$ and two infinite rays.
Thus $\nu(h,t) = n-1$ for every $t < 2c$. On the other hand, every non-constant
trigonometric polynomial $q$ of degree $\leq n-1$ has at most $2n-2$ critical points.
This count shows that the assumption of Proposition \ref{pro-alawein} reads
$$2n-2 < 2(n-1)+2\;,$$
and hence by this proposition
$||h - q||_0 \geq c$.
But $h-q = f-(p+q)$, and hence $||f-r||_0 \geq c$ for every trigonometric polynomial
$r \in \calT_{n-1}$. Since $||f-p||_0=c$, we conclude that $p$ is the polynomial of the best
approximation of degree $\leq n-1$.
\qed

\section{Invariants of upper triangular matrices}\label{sec-Jordan} 
We start with a problem of linear algebra. 
Let $C$ be a finite dimensional vector space equipped with a nilpotent operator $d: C \to C$
with $d^2=0$. Let $e_1,...,e_N$ be a basis in $C$ such that $d$ in this basis
is given by an upper-triangular matrix. A triangular change of basis is the one of the form
$$f_i = \sum_{j \leq i} a_{ij}e_j, \;\; a_{ii} \neq 0\;$$ Put $\Omega_N:= \{1,\dots, N\}$.
A basis $f_i$ is called {\it the Jordan basis} for $d$ if there exists a subset $I \subset \Omega_N$ 
and an injective map $\phi: I \to \Omega_N  \setminus I  $ such that $\phi(i) < i$ for all $i$ and 
\begin{equation}\label{eq-baran}
df_i= 0 \;\;\text{if}\;\; i \notin I,\;\;\text{and}\;\;df_i = f_{\phi(i)}\;\;\text{if}\;\; i \in I\;.
\end{equation} 

\medskip\noindent\begin{thm} \label{thm-baran} There exists a triangular change of basis $\{e_i\}$ taking
it to a Jordan basis. 
\end{thm}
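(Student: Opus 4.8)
The statement is Barannikov's normal form for a chain complex with a preferred filtered basis, and the natural approach is induction on $N=\dim C$, processing the basis vectors $e_1,\dots,e_N$ in order and performing only triangular operations. The plan is to argue by induction on $N$: having put $e_1,\dots,e_{N-1}$ (and the restriction of $d$ to their span) into Jordan form by a triangular change of basis, I examine $e_N$ and $de_N$ and correct $e_N$ by subtracting an appropriate triangular combination of $e_1,\dots,e_{N-1}$. Concretely, write $de_N = \sum_{j<N} c_j f_j$ in the Jordan basis already constructed for $C' = \spn(e_1,\dots,e_{N-1})$, and distinguish the two relevant cases: either $de_N$ lies in $d(C')$, i.e.\ $de_N = d\big(\sum_{i\in I} c_{\phi(i)} f_i\big)$, in which case I replace $e_N$ by $f_N := e_N - \sum_{i\in I} c_{\phi(i)} f_i$ so that $df_N = 0$ and $N\notin I$; or $de_N\notin d(C')$, in which case after subtracting off the part of $de_N$ lying in $\im(d|_{C'})$ I may assume $de_N = f_k$ for the index $k = \phi^{-1}$-free position realizing the ``highest surviving cycle'' $f_k$ in the support of $de_N$, then set $f_N := e_N$ (minus the correction just described), put $N\in I$ and extend $\phi$ by $\phi(N) := k$. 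One then checks $\phi(N)=k<N$, that $k\notin I$ (so $\phi$ stays valued in $\Omega_N\setminus I$), and that $\phi$ remains injective because $f_k$ was not previously in the image of $\phi$.

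The key structural point making the case division clean is the following: among the cycles $f_j$ ($j\notin I$) appearing in $de_N$, one should pick $k$ to be the \emph{largest} such index that is not already hit by $\phi$; if every cycle index in the support of $de_N$ is already in $\im\phi$, then $de_N$ is genuinely a boundary from $C'$ and we are in the first case. This is exactly the place where $d^2=0$ is used: $d(de_N)=0$ forces $de_N$ to be a cycle of $C'$, so it is an $\F$-linear combination of the $f_j$ with $j\notin I$, which is what lets me speak of its ``cycle support'' in the first place. The triangularity of all the substitutions is automatic because at each step I only ever subtract combinations of $f_1,\dots,f_{N-1}$ (equivalently, of $e_1,\dots,e_{N-1}$, since the inductive change of basis was itself triangular) from $e_N$, and I never touch $e_1,\dots,e_{N-1}$; composing two triangular changes of basis gives a triangular change of basis.

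I would organize the write-up so that the inductive hypothesis carries the full data $(I,\phi)$ for the truncated complex, and the inductive step produces $(I',\phi')$ with $I'\cap\Omega_{N-1}=I$ and $\phi'|_I=\phi$, adding at most the single new element $N$. A minor bookkeeping point worth stating explicitly: when $N\in I'$ and $\phi'(N)=k$, it must be verified that no earlier index was mapped to $k$ and that $k\notin I'$; both follow since in the Jordan basis of $C'$ the element $f_k$ with $k\notin I$, $k\notin\im\phi$ is a ``free'' generating cycle, and such free cycles are precisely the candidates for being a new $\phi$-value.

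\textbf{Main obstacle.} The genuine subtlety — and the step I expect to require the most care — is verifying that after the correction of $e_N$ the \emph{entire} family $(f_1,\dots,f_N,I,\phi)$ still satisfies \eqref{eq-baran} simultaneously, in particular that modifying $e_N$ does not retroactively spoil the relations $df_i=f_{\phi(i)}$ for $i<N$ (it does not, since those $f_i$ are unchanged) and that the new $\phi$ is injective with image in $\Omega_N\setminus I'$. Phrased differently, the hard part is choosing the index $k$ canonically enough that injectivity of $\phi$ is preserved; the ``largest free cycle in the support'' rule is what I expect to make this work, and pinning down that this rule is well-defined and has the stated properties is where the real content of the argument lies (this is essentially the argument of Theorem 3.6 in Jacobson's \emph{Basic Algebra II}, adapted to the filtered/ordered-basis setting).
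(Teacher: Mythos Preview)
Your approach is essentially the same as the paper's: induction on the index, use $d^2=0$ to see that $de_N$ is a combination of the cycles $f_j$ ($j\notin I$), strip off the part lying in $\im(d|_{C'})$ by a triangular correction of $e_N$, and then assign $\phi(N)$ to be the maximal ``free'' cycle index in what remains. The reference to Jacobson is apt; the paper itself notes the proof follows that source.

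There is, however, a genuine gap in your second case. After you subtract from $e_N$ the preimages of the already-hit cycles (those $f_j$ with $j\in\im\phi$), what remains is
\[
df_N \;=\; \sum_{k\in K} q_k f_k,
\]
where $K=(\Omega_{N-1}\setminus I)\setminus\im\phi$ is the set of free cycle indices. This is in general a nontrivial linear combination, not a single $f_k$. No further correction of $e_N$ alone can reduce it to one term, because the $f_k$ with $k\in K$ are precisely the cycles that are \emph{not} boundaries in $C'$. So your claim ``I may assume $de_N=f_k$'' does not follow from modifying $e_N$ only, and your later assertion that ``those $f_i$ are unchanged'' is exactly where the argument breaks.

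The fix, which is what the paper does, is to also modify one of the \emph{earlier} basis vectors: take $n=\max\{k\in K: q_k\neq 0\}$ and replace $f_n$ by the combination $\sum_{k\in K} q_k f_k$. This is still a triangular change (the leading term is $q_n f_n$ with $q_n\neq 0$, and all other terms have smaller index), and it does not disturb the relations $df_i=f_{\phi(i)}$ for $i<N$ because $n\notin\im\phi$, so no such relation involves $f_n$ on the right-hand side. With this extra step, you then have $df_N=f_n$, set $\phi(N)=n$, and the induction goes through. Once you incorporate this modification of $f_n$, your proof is complete and coincides with the paper's.
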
 

\medskip\noindent An equivalent formulation is that {\it for every nilpotent $N \times N$ upper-triangular matrix $d$ 
over a field with $d^2=0$ there exists a permutation matrix $p$ and an invertible $N \times N$ upper-triangular matrix $v$ such that
$vdv^{-1} =pjp^{-1}$, where $j$ is the Jordan canonical form of $d$.} While the formulation (and the proof!) could
have been given in the XIX-th century, the first published proof, to the best of our knowledge was given 
by Barannikov \cite[Lemma 2]{barannikov1994framed} in 1994. Other proofs (where the authors were unaware of Barannikov's work) are due to Thijsse \cite[Theorem 1.5]{thijsse1997upper} in 1997 and Melnikov \cite{melnikov2000b} in 2000. We present a proof due to Barannikov which provides an explicit algorithm for finding the desired
triangular change. 

\medskip\noindent
\begin{proof}  We construct the change of the basis recursively starting with $f_1:=e_1$. Since
$d$ is upper triangular and nilpotent, $de_1=0$. Assume that we constructed, by a triangular change of the first $i-1$ vectors of the basis, new vectors $f_1,...,f_{i-1}$, a set  $I \subset \Omega_{i-1}$          and a map  $\phi: I \to \Omega_{i-1} \setminus I $ which satisfy \eqref{eq-baran}.

Write 
$$de_i = \sum_{j \in I} p_j f_j + \sum_{m \in \Omega_{i-1} \setminus I} q_mf_m\;.$$ 
Taking $d$ again and using \eqref{eq-baran} we get that
$\sum_{j \in I} p_j f_{\phi(j)}=0$, which by linear independence yields that all $p_j$'s vanish.
Decompose 
$$\Omega_{i-1} \setminus I = J \sqcup K\;,\;\text{where}\;\; J :=  {\im}\phi\;.   $$
We have
$$de_i = \sum_{j \in J} q_jdf_{\phi^{-1}(j)} + \sum_{k \in K} q_kf_k\;.$$
Set 
$$f_i = e_i - \sum_{j \in J} q_jf_{\phi^{-1}(j)}\;.$$
If  $q_k = 0$ for all $k \in K$, we have $df_i=0$. The set $I$ and the map $\phi$
remain without changes. Otherwise there exists maximal $n \in K$ with $q_n \neq 0$.
We replace $f_n$ by $f_n= \sum_{k \in K} q_kf_k$, so that $df_i = f_n$,
add $i$ to $I$ and put $\phi(i)=n$. This completes the description of the recursion step. 
\end{proof} 

\medskip

We wish to apply the above result to the following situation which appears in several
meaningful applications. Consider a complex $(C_*,d)$ where $C= \bigoplus_{k=0}^L C_k$.
Suppose that we are given a non-ordered basis $E_i$ in $C_i$, and a function 
$u: E\to \R$, where $E:= \bigcup_i E_i$. Write $m_i$ for the cardinality of $E_i$.  
Assume that the differential $d$ decreases the filtration: $u(de) \leq u(e)$ for every $e \in E_i$.
The reader is familiar with such a situation in the context of Morse homology where
$E_i$ is the set of critical points of index $i$ on a closed manifold, and $u(e)$ is the critical value
of the function at $e$. We call a complex with the above structure as {\it a filtered complex with a preferred
basis}. 

Extend the filtration to the whole $C$ by setting
$$u\left(\sum_{e \in E} a_ee\right) = \max_{a_e \neq 0} u(e)\;.$$
Consider the family of subspaces $C^t \subset C$ consisting of $x \in C$ with $u(x) < t$.
Since $d$ preserves $C^t$,  we have a family of homologies $H_*(C^t,d)$ together with morphisms
induced by inclusions $C^s \subset C^t$ for $s < t$. This yields a persistence module whose barcode
we denote by $\calB$.   

Order now the elements of the basis $E$ as follows. For $x,y \in E_i$ with the same $i$
declare $x \prec y$ if $u(x) < u(y)$. In case $u(x)=u(y)$, order them arbitrarily.
For $x \in E_i$ and $y \in E_j$ with $i \neq j$ put $x \prec y$ whenever $i <j$. We denote the ordered
collection of vectors by $e^k_i$  emphasising the degree $k$ of each vector. The order is lexicographic 
with respect to $(k,i)$. Theorem \ref{thm-baran} guarantees the existence of a triangular change yielding a Jordan basis. It is straightforward (and is left as an exercise
to the reader) to perform such a change within each $C_i$ separately, thus keeping vectors of the basis homogeneous in terms of the degree. We denote the vectors of the Jordan basis by $\{f^k_i\}$.
The graded version of condition \eqref{eq-baran} looks as follows: for every $k=0,\dots, L$, 
there exists a subset $I_k \subset \Omega_{m_k}$
and an injective map $\phi_k: I_k \to \Omega_{m_{k-1}}$ such that 
\begin{equation}\label{eq-baran-graded}
df^k_i= 0 \;\;\text{if}\;\; i \notin I_k,\;\;\text{and}\;\;df^k_i = f^{k-1}_{\phi_k(i)}\;\;\text{if}\;\; i \in I_k\;.
\end{equation} 

We say that $i \in I_{k}$ is {\it essential} if $a^{k-1}_i:= u(f^{k-1}_{\phi_k(i)}) < b^k_i:= u(f^k_i)$.
In this case we denote by $F^{k-1}_i$ the interval $(a^{k-1}_i, b^k_i]$. Denote by $G^k_j$ the ray
$(c^k_j,+\infty)$ where $j \in \Omega_{m_k} \setminus I_k$ and $c^k_j = u(f^k_j)$. Denote by $\calC$ the barcode
consisting of intervals $F^{k-1}_i$ and rays $G^k_j$ taken with multiplicities. The next result is the highlight
of our discussion. 

\medskip\noindent\begin{thm}\label{thm-barcodes-coincide} The barcodes $\calB$ and $\calC$ coincide. 
\end{thm}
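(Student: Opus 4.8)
Here is the plan. The whole statement follows once one observes that the graded Jordan basis of Theorem~\ref{thm-baran} not only triangularizes $d$ but is also \emph{adapted to the filtration}, so that it splits the filtered complex $(C,d,u)$ into one- and two-dimensional filtered subcomplexes whose barcodes are visibly the intervals and rays making up $\calC$.

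\medskip\noindent\emph{Step 1: the Jordan basis diagonalizes the filtration.} Because the preferred order $\prec$ sorts the basis vectors of each fixed degree $k$ by increasing value of $u$, and the change to the Jordan basis is triangular with respect to $\prec$ (and, by the exercise preceding the theorem, may be taken homogeneous), each $f^k_i$ is a linear combination of the original vectors $e^k_j$ with $j\le i$ — hence with $u(e^k_j)\le u(e^k_i)$ — in which $e^k_i$ occurs with nonzero coefficient. Therefore $u(f^k_i)=u(e^k_i)$, and more generally $u\!\left(\sum_i c_i f^k_i\right)=\max_{c_i\neq 0}u(f^k_i)$. Consequently, for every $t$,
\[ C^t=\{x\in C\,:\,u(x)<t\}=\operatorname{span}\{\,f^k_i\ :\ u(f^k_i)<t\,\}\;, \]
i.e.\ each sublevel subspace $C^t$ is spanned by a sub-collection of the Jordan basis.

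\medskip\noindent\emph{Step 2: the block decomposition.} Partition the Jordan basis into blocks: for each $k$ and each $i\in I_k$, the pair $\{f^k_i,\ f^{k-1}_{\phi_k(i)}\}$; and for each $k$ and each $j\in\Omega_{m_k}$ with $j\notin I_k$ and $j\notin\operatorname{im}\phi_{k+1}$, the singleton $\{f^k_j\}$. By \eqref{eq-baran-graded} (and $d^2=0$, which forces $df^{k-1}_{\phi_k(i)}=0$ and $\operatorname{im}\phi_k\cap I_{k-1}=\emptyset$) these blocks are pairwise disjoint, exhaust the basis, are each $d$-invariant, and $d$ preserves every $C^t$. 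Hence $C=\bigoplus_\beta P_\beta$ as complexes and, by Step~1, $C^t=\bigoplus_\beta (P_\beta\cap C^t)$ for every $t$; passing to homology, $H_*(C^t,d)=\bigoplus_\beta H_*(P_\beta\cap C^t,d)$ compatibly with all inclusion-induced maps. Thus the persistence module of $(C,d,u)$ is the direct sum of the persistence modules of the blocks $P_\beta$.

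\medskip\noindent\emph{Step 3: barcodes of the blocks, and conclusion.} A singleton block $\{f^k_j\}$ with $df^k_j=0$ and $u(f^k_j)=c^k_j$ has $P_\beta\cap C^t$ equal to $0$ for $t\le c^k_j$ and one-dimensional for $t>c^k_j$, with identity maps; its homology persistence module is $\F(c^k_j,+\infty)$, whose barcode is the ray $G^k_j$. A pair block $\{f^k_i,\,g\}$ with $g=f^{k-1}_{\phi_k(i)}$, $dg=0$, $df^k_i=g$, $a:=u(g)\le b:=u(f^k_i)$, has $P_\beta\cap C^t$ equal to $0$ for $t\le a$, to $\F g$ (with $g$ a cycle, not a boundary) for $a<t\le b$, and to the acyclic $\operatorname{span}\{g,f^k_i\}$ for $t>b$; the map from the range $a<t\le b$ into the range $t>b$ kills $[g]$. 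Hence its homology persistence module is $\F(a,b]$ (Example~\ref{exm: pm_intervals_pw_const}) when $a<b$, i.e.\ when $i$ is essential, with barcode the interval $F^{k-1}_i=(a^{k-1}_i,b^k_i]$; and it is identically zero when $a=b$, contributing nothing. Each block trivially satisfies the axioms of Definition~\ref{defn: pm} since $u$ takes finitely many values and $C^t$ is cut out by the strict inequality $u(\cdot)<t$. Summing over all blocks and using that the barcode of a direct sum of persistence modules is the union of their barcodes (Theorem~\ref{thm: normal_form_thm}), we obtain $\calB=\{F^{k-1}_i : i\in I_k \text{ essential}\}\sqcup\{G^k_j\}=\calC$.

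\medskip\noindent The only genuinely delicate point is Step~1 — that the Jordan basis is simultaneously adapted to $d$ and to $u$, so that the algebraic splitting of $C$ is also a splitting of every sublevel subspace $C^t$, and hence of the persistence module. Given that, Steps~2 and~3 are a routine inspection of one- and two-term complexes together with an application of the Normal Form Theorem.
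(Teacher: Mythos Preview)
Your proof is correct and follows essentially the same approach as the paper's. Both arguments hinge on the same key observation --- your Step~1, which is the paper's opening sentence --- that the triangular change respects the ordering by $u$, so that $C^t$ is spanned by those Jordan basis vectors with $u$-value below $t$. From there the paper computes $H_k(C^t)$ pointwise and reads off the interval summands directly, whereas you first split the filtered complex into one- and two-dimensional $d$-invariant blocks and then invoke the Normal Form Theorem on the resulting direct sum; these are two phrasings of the same calculation.
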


\medskip\noindent While Theorem \ref{thm-barcodes-coincide} uses the language of barcodes
and persistence modules which did not exist in 1994,  Barannikov informed us that he was aware of this
result. 

\begin{proof} Fix a degree $k$, and take any $t \in \R$.  Since the basis $f^k_i$ is obtained from $e^k_i$ by a degree-homogeneous triangular change, the subcomplex $C^t_k$ is generated
by vectors $f^k_i$ with $u(f^k_i) < t$. 
The homology of $H_k(C^t, d)$ can be readily calculated since we know the matrix of $d$ in this
basis.  First,
if for some $j \notin I_k$ we have $c^k_j < t$, the vector $f^k_j$ contributes a generator
to $H_k(C^t,d)$. Second, take $i \in {\im}(\phi_{k+1})$. Look at the cycle $f^k_i$.
It contributes a generator to $H_k(C^t, d)$ if an only if $\phi_{k+1}^{-1}(i) \in I_{k+1}$
is essential and $t \in F^k_i$. Indeed, if $t \leq a^k_i$,
this cycle does not lie in $C^t$, and if $t > b^{k+1}_i$, it is killed by $f^{k+1}_{\phi_k^{-1}(i)}$.
Look now at all $f^k_i$ and all $f^k_j$ selected in this way for the given $t$. Since they are linearly independent,
the interval modules generated by these elements are direct summands in the persistence module
$H_k(C^t,d)$, and hence the degree $k$ part of the barcode $\calB$ is formed by the rays $G^k_j$ and 
the intervals $F^k_i$. We complete the proof by varying $k$ from $0$ to $L$.
\end{proof} 

\medskip
The following corollary will be used later on in this Chapter. 

\medskip\noindent\begin{cor}\label{cor-number-bars} Given a filtered complex with a preferred basis,
the number of finite bars in the barcode $\calB$ of the homology persistence module does not exceed half of the dimension of the complex. 
\end{cor}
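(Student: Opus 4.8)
The plan is to read off the claim directly from Theorem~\ref{thm-barcodes-coincide}, which identifies the barcode $\calB$ of the homology persistence module of a filtered complex with a preferred basis with the combinatorially defined barcode $\calC$. So it suffices to bound the number of finite bars in $\calC$. By construction, each finite bar of $\calC$ is an interval $F^{k-1}_i = (a^{k-1}_i, b^k_i]$ indexed by an \emph{essential} element $i \in I_k$, for some degree $k$. Thus the total number of finite bars (with multiplicities) is at most $\sum_{k=0}^{L} \# I_k$.

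First I would observe that the maps $\phi_k : I_k \to \Omega_{m_{k-1}}$ are injective and that, crucially, their images are pairwise disjoint from the domains: more precisely $\phi_k(I_k) \subseteq \Omega_{m_{k-1}} \setminus I_{k-1}$, which follows from the graded Jordan relations \eqref{eq-baran-graded} exactly as in the ungraded case (if $i \in \im \phi_k$ then $f^{k-1}_i$ is a boundary, hence applying $d$ to the element mapping onto it and using $d^2=0$ forces $f^{k-1}_i$ to be a cycle killed within the algorithm, so it cannot itself lie in $I_{k-1}$ — this is the content of the erratum remark about $\phi_k : I_k \to \Omega_{m_{k-1}}\setminus I_{k-1}$). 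Consequently, for each fixed degree $k$, the sets $I_k$ and $\phi_{k+1}(I_{k+1})$ are disjoint subsets of $\Omega_{m_k}$, so
\[
\# I_k + \# I_{k+1} = \# I_k + \#\phi_{k+1}(I_{k+1}) \leq m_k\;.
\]

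Now I would sum a weighted version of this. Each index $i \in I_k$ is ``charged'' once as a member of $I_k$ (sitting in $\Omega_{m_k}$) and the index $\phi_k(i)$ it points to is charged once inside $\Omega_{m_{k-1}}$; by disjointness of $I_{k-1}$ and $\phi_k(I_k)$ inside $\Omega_{m_{k-1}}$, each basis vector $f^j_\ell$ is charged at most once in total. Hence $2\sum_{k} \# I_k \leq \sum_{k} m_k = \dim C$, i.e. $\sum_k \# I_k \leq \tfrac{1}{2}\dim C$. Since the number of finite bars in $\calB = \calC$ is at most $\sum_k \# I_k$ (only essential $i \in I_k$ contribute a finite bar, and each contributes exactly one), we conclude that the number of finite bars does not exceed half the dimension of the complex.

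The only genuinely non-routine point is the disjointness $\phi_k(I_k) \cap I_{k-1} = \emptyset$, which is the structural feature of the Barannikov algorithm; everything else is bookkeeping. I would include a short paragraph justifying it directly from the recursion in the proof of Theorem~\ref{thm-baran}: at the step where $i$ is added to $I_k$ with $\phi_k(i) = n$, the index $n$ was the maximal element of $K \subseteq \Omega_{m_{k-1}} \setminus I_{k-1}$ with nonzero coefficient, so $n \notin I_{k-1}$ at that moment, and no index is ever removed from or added to $I_{k-1}$ afterwards in a way that would place $n$ into it — indeed once $n$ becomes a value of $\phi_k$ it is precisely barred from later being an index whose differential is nonzero. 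This is exactly where I expect to have to be careful about the order of the recursion, but it is a finite, transparent check.
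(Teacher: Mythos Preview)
Your proof is correct and follows the same route as the paper: invoke Theorem~\ref{thm-barcodes-coincide} to identify $\calB$ with $\calC$, then count finite bars in $\calC$. The paper's own proof is a single sentence declaring the count for $\calC$ ``obvious''; you have simply unpacked that obvious step via the pairing $i \leftrightarrow \phi_k(i)$ and the disjointness $\phi_k(I_k)\cap I_{k-1}=\emptyset$ (which, as you note, follows immediately from $d^2=0$ applied to \eqref{eq-baran-graded}). Your displayed inequality $\#I_k+\#I_{k+1}\le m_k$ is in fact already enough on its own---summing over $k$ and using $I_0=I_{L+1}=\emptyset$ gives $2\sum_k\#I_k\le\dim C$ directly---so the subsequent charging paragraph is a second proof of the same bound rather than a continuation; you could drop one or the other.
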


\medskip\noindent
As this is obvious for the barcode of the complex $\calC$, the statement follows from Theorem \ref{thm-barcodes-coincide}.

\section{Simplex counting method}
Our goal here is to extend inequality \eqref{eq-nu-uniform} to arbitrary manifolds.
In this section we follow \cite{CSEHM} and discussions with Lev Buhovsky.

\subsection{A combinatorial lemma}
Let $\Sigma$ be a finite simplicial complex with the vertex set $K$.
By definition, a simplex $\sigma$ is a non-empty subset of $K$. The dimension
of a simplex $\sigma$ is its cardinality minus one. The complex $\Sigma$ is a collection
of simplices satisfying the following assumption: if $\sigma \in \Sigma$, then
every subset of $\sigma$ belongs to $\Sigma$ as well. Write $|\Sigma|$ for the total
number of simplices in $\Sigma$.

A {\it filtration} on $\Sigma$ is a function $u: K \to \R$. We extend it to
all simplices $\sigma$ in $\Sigma$ by setting $u(\sigma)= \max_{x \in \sigma}u(x)$.
Denote by $C_k$ the vector space over $\calF$ spanned by all $k$-dimensional simplices in $\Sigma$. 
The boundary operator $d$, sending a simplex to its oriented boundary, extends to a differential 
$d:C_i \to C_{i-1}$. Thus we get a chain complex $(C_*,d)$ with a preferred basis
consisting of all simplices in $\Sigma$ and the filtration $u$. Denote by $V(\Sigma,u)$
the corresponding homology persistence module, see  Section \ref{sec-Jordan}. Applying
Corollary \ref{cor-number-bars}, we immediately get the following combinatorial statement, which is
the heart of the simplex counting method discussed in this section.

\medskip\noindent
\begin{thm} \label{thm-simcount} The barcode of $V(\Sigma, u)$ has at most $|\Sigma|/2$ finite bars.
\end{thm}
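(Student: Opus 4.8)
The statement is an almost immediate consequence of the machinery set up in Section \ref{sec-Jordan}, so the plan is mostly to observe that the hypotheses of Corollary \ref{cor-number-bars} are met and then to read off the conclusion. First I would make explicit that $(C_*, d)$, as constructed just above the theorem, is a \emph{filtered complex with a preferred basis} in the precise sense of Section \ref{sec-Jordan}: the preferred basis of $C_k$ is the set of $k$-dimensional simplices of $\Sigma$ (an unordered basis, as required), the function $u$ restricted to these simplices plays the role of the filtration function on the basis, and one checks that $d$ decreases the filtration, i.e. $u(d\sigma) \leq u(\sigma)$ for every simplex $\sigma$. This last point is the only thing requiring a (tiny) argument: the boundary $d\sigma$ is a sum of facets $\tau \subset \sigma$, and since $\tau \subseteq \sigma$ as vertex sets we have $u(\tau) = \max_{x \in \tau} u(x) \leq \max_{x \in \sigma} u(x) = u(\sigma)$; extending $u$ to chains by the max over basis elements with nonzero coefficient then gives $u(d\sigma) \leq u(\sigma)$, exactly condition required in Section \ref{sec-Jordan}.

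Next I would invoke the construction of the homology persistence module: the subspaces $C^t = \{x \in C : u(x) < t\}$ are preserved by $d$, so the homologies $H_*(C^t, d)$ with the inclusion-induced morphisms form a persistence module, which by definition is $V(\Sigma, u)$, with barcode $\calB := \calB(V(\Sigma,u))$. Now Corollary \ref{cor-number-bars} applies verbatim: for any filtered complex with a preferred basis, the number of finite bars in the barcode of its homology persistence module is at most $\tfrac12 \dim C$. (Recall this corollary is itself a consequence of Theorem \ref{thm-barcodes-coincide}, which identifies $\calB$ with the combinatorial barcode $\calC$ read off from a Jordan basis; in $\calC$ each finite bar $F^{k-1}_i$ consumes the two distinct basis vectors $f^k_i$ and $f^{k-1}_{\phi_k(i)}$, so the count of finite bars is at most half the total number of basis vectors.)

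Finally I would identify $\dim C$ with $|\Sigma|$: by construction $C = \bigoplus_k C_k$ and $C_k$ has as basis the $k$-dimensional simplices of $\Sigma$, so $\dim C = \sum_k \#\{k\text{-simplices}\} = |\Sigma|$, the total number of simplices. Combining with Corollary \ref{cor-number-bars} gives that $\calB$ has at most $|\Sigma|/2$ finite bars, which is the claim. I do not anticipate any real obstacle here — the content has all been done in Section \ref{sec-Jordan}; the only place to be slightly careful is the bookkeeping that the simplicial boundary operator genuinely fits the ``preferred basis'' formalism (in particular that $u$ on simplices is the max over vertices, matching the extension of the filtration to $C$ used there), and that ``$|\Sigma|$'' counts simplices in all dimensions, which is exactly $\dim C$.
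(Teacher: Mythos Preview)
Your proposal is correct and follows exactly the paper's approach: the paper simply states that the theorem follows immediately from Corollary \ref{cor-number-bars}, and you have spelled out the (routine) verification that the simplicial chain complex with the filtration $u$ is a filtered complex with preferred basis of dimension $|\Sigma|$. The extra bookkeeping you include (checking $u(d\sigma)\le u(\sigma)$ and identifying $\dim C = |\Sigma|$) is appropriate and fills in what the paper leaves implicit.
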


\subsection{Bars and oscillation}
Given a finite simplicial complex $\Sigma$, identify each $n$-dimensional simplex of $\Sigma$ with the
standard simplex $\{z_i \geq 0, \sum z_i =1\}$ in $\R^{n+1}$. In this way $\Sigma$ becomes a topological
space.

By a triangulation of a smooth closed manifold $M$ we mean a pair $T=(\Sigma,h)$ consisting of a
finite simplicial complex $\Sigma$ and a homeomorphism $h: \Sigma \to M$ between $\Sigma$ (considered as a topological space) and $M$. Now we are ready to introduce the central notion of this section.

\medskip\noindent
\begin{dfn}\label{def-oscillation} The {\it oscillation} $\text{Osc}(f,T)$ of a continuous function $f$
with respect to the triangulation $T$ is given by
$$\text{Osc}(f,T) = \max_\sigma \max_{x,y \in h(\sigma)} |f(x)-f(y)|\;,$$
where the first maximum is taken over all simplices in $\Sigma$.
\end{dfn}

\medskip

Any function $f: M \to \R$ induces a filtration $u$ on the vertex set $K$ of $\Sigma$ by
$u(v):= f(h(v))$, and hence gives rise to a persistence module $V(\Sigma,u)$. Let us
compare this module with the Morse persistence module $V(f):= H(\{f < t\})$.

\medskip\noindent
\begin{thm}\label{thm-osc-int} The modules $V(\Sigma,u)$ and $V(f)$ are $\delta$-interleaved
with $\delta= \text{Osc}(f,T)$.
\end{thm}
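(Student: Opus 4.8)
The goal is to produce two morphisms of persistence modules $A \colon V(\Sigma,u) \to V(f)[\delta]$ and $B \colon V(f) \to V(\Sigma,u)[\delta]$ with $\delta = \Osc(f,T)$ whose composites (in both orders) are the respective $2\delta$-shift morphisms. The idea is to compare the two filtered topological spaces underlying the two modules: on the one hand the sublevel sets $M_t := \{f < t\}$, and on the other hand the sublevel \emph{subcomplexes} $\Sigma_t \subseteq \Sigma$ spanned by those simplices $\sigma$ with $u(\sigma) < t$ (equivalently, all of whose vertices $v$ satisfy $f(h(v)) < t$), with $V(\Sigma, u)_t = H_*(\Sigma_t)$. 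The key geometric observation, which is exactly where $\Osc(f,T)$ enters, is that $\Osc(f,T) = \delta$ gives a two-sided comparison between $h(\Sigma_t)$ and $M_t$.

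\textbf{The two inclusions.} First I would show $h(\Sigma_t) \subseteq M_{t}$ is false in general but $h(\Sigma_t) \subseteq M_{t+\delta}$ does not quite hold either; the correct statements are: (i) if $\sigma \in \Sigma_t$, i.e.\ every vertex of $\sigma$ has $f\circ h$-value $< t$, then for every $x \in h(\sigma)$ we have $f(x) < f(h(v)) + \delta < t + \delta$ for any vertex $v$ of $\sigma$ (using $|f(x) - f(h(v))| \le \Osc(f,T) = \delta$); hence $h(\Sigma_t) \subseteq M_{t+\delta}$. (ii) Conversely, if $x \in M_t$, pick the (closed) simplex $\sigma$ of the triangulation with $x \in h(\sigma)$; then for each vertex $v$ of $\sigma$, $f(h(v)) \le f(x) + \delta < t + \delta$, so $\sigma \in \Sigma_{t+\delta}$, i.e.\ $x \in h(\Sigma_{t+\delta})$; hence $M_t \subseteq h(\Sigma_{t+\delta})$. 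Combining (i) and (ii) one gets a chain of inclusions of topological spaces
\[
h(\Sigma_t) \subseteq M_{t+\delta} \subseteq h(\Sigma_{t+2\delta}) \subseteq M_{t+3\delta} \subseteq \cdots,
\]
and applying the homeomorphism $h^{-1}$ on the $\Sigma$-terms, all of these inclusions are compatible with the filtration inclusions $\Sigma_s \hookrightarrow \Sigma_r$ and $M_s \hookrightarrow M_r$.

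\textbf{Passing to homology and checking interleaving.} Taking singular homology with coefficients in $\F$ is a functor, so the inclusions above induce, for every $t$, linear maps
\[
A_t \colon H_*(\Sigma_t) \longrightarrow H_*(M_{t+\delta}) = V(f)[\delta]_t, \qquad
B_t \colon H_*(M_t) \longrightarrow H_*(\Sigma_{t+\delta}) = V(\Sigma,u)[\delta]_t,
\]
where I implicitly identify $H_*(\Sigma_s)$ with $H_*(h(\Sigma_s))$ via $h$. Functoriality of homology applied to commuting triangles of inclusions shows: each of $A = \{A_t\}$ and $B = \{B_t\}$ is a morphism of persistence modules (the relevant squares commute because they come from commuting squares of inclusions of spaces), $B[\delta]\circ A$ is induced by the composite inclusion $\Sigma_t \hookrightarrow \Sigma_{t+2\delta}$, which is exactly the $2\delta$-shift morphism $\Phi^{2\delta}_{V(\Sigma,u)}$, and symmetrically $A[\delta]\circ B = \Phi^{2\delta}_{V(f)}$. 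This is precisely the data required by Definition \ref{defn: interleaved_pm_interleaving_morph}, so $V(\Sigma,u)$ and $V(f)$ are $\delta$-interleaved with $\delta = \Osc(f,T)$.

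\textbf{Main obstacle.} The genuinely delicate point is not the homological bookkeeping — that is routine functoriality — but rather making the geometric comparison (ii) completely precise: given $x \in M_t$, one must use the closed carrier simplex of $x$, and one has to be slightly careful about whether the sublevel subcomplex $\Sigma_t$ is defined via open or closed simplices and whether the homotopy type of $h(\Sigma_t)$ (a union of closed simplices, hence a CW pair) is the same as that of $M_t = \{f<t\}$ (an open set). In fact one does not need them to be homotopy equivalent — only the interleaving inclusions above are needed — but one should verify that $\{f<t\}$ deformation retracts onto, or at least is weakly equivalent to, a subcomplex so that the maps $B_t$ land where claimed; alternatively, and more cleanly, replace $\{f<t\}$ throughout by $\{f \le t - \epsilon\}$-type neighborhoods or note that for the purposes of the barcode one may use closed sublevel sets. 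I would handle this by working with the closed realization $|\Sigma_t|$ and the genuine set-theoretic inclusions displayed above, which require no homotopy-theoretic input at all, only continuity of $f$ and $h$ and the definition of $\Osc(f,T)$; this keeps the argument elementary and avoids any subtlety about open versus closed sublevel sets.
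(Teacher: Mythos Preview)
Your proof is correct and follows essentially the same approach as the paper: establish the two set-theoretic inclusions $h(\Sigma_t) \subseteq M_{t+\delta}$ and $M_t \subseteq h(\Sigma_{t+\delta})$ directly from the definition of $\Osc(f,T)$, then pass to homology using the canonical identification of simplicial and singular homology of $\Sigma_t$. The paper's argument for the second inclusion is phrased slightly differently (it considers the union of all $h(\sigma)$ meeting $M^t$ rather than the carrier simplex of a single point), but the content is identical; your extended discussion of open versus closed sublevel sets is not needed, since, as you yourself conclude, only the raw inclusions are used.
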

\begin{proof} Put $M^t= \{f < t\}$. Observe that
\begin{equation}
\label{eq-int-simpl-1}
h(\Sigma^t) \subset M^{t+\delta}\;.
\end{equation}
On the other hand consider the union $U$ of all images $h(\sigma)$, where $\sigma$ is a simplex in $\Sigma$,
which have non-empty intersection with $M^t$. Note that $u(x) \leq t+\delta$ for every vertex $x$ of $h^{-1}(U)$.
Thus
\begin{equation}
\label{eq-int-simpl-2}
M^t \subset h(\Sigma^{t+\delta})\;.
\end{equation}
It remains to mention that the simplicial homology of
$\Sigma^t$ is canonically isomorphic to the singular homology of $\Sigma^t$ considered as a topological space,
and hence to the one of $h(\Sigma^t)$. Thus inclusions \eqref{eq-int-simpl-1} and \eqref{eq-int-simpl-2}
provide the desired interleaving.
\end{proof}

\medskip\noindent

\begin{thm}\label{bars-Lipschitz} Let $T=(\Sigma,h)$ be a triangulation of a closed manifold $M$,
and let $f: M \to \R$ be a Morse function on $M$. Then
\begin{equation}
\label{eq-main-=ineq}
\nu(f, 2\Osc(f,T)) \leq |\Sigma|/2\;.
\end{equation}
\end{thm}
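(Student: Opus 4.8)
The plan is to derive \eqref{eq-main-=ineq} by combining two facts already in hand: the interleaving of \Cref{thm-osc-int} between the triangulation module $V(\Sigma,u)$ and the Morse module $V(f)$, and the combinatorial bound of \Cref{thm-simcount} (equivalently \Cref{cor-number-bars}) asserting that $\calB(V(\Sigma,u))$ has at most $|\Sigma|/2$ finite bars. The bridge is the Isometry Theorem, which turns the interleaving into a matching of barcodes, together with the elementary observation already used to justify \eqref{eq-nu-perturb}: a $\delta$-matching can shrink the length of any bar by at most $2\delta$, so it cannot destroy too many long bars.

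In detail, set $\delta = \Osc(f,T)$. By \Cref{thm-osc-int} the modules $V(\Sigma,u)$ and $V(f)$ are $\delta$-interleaved, hence $d_{int}(V(\Sigma,u),V(f)) \leq \delta$, and by the Isometry Theorem (\Cref{thm: isometry_thm}) $d_{bot}\big(\calB(V(\Sigma,u)),\calB(f)\big) \leq \delta$. Since both barcodes are finite, the infimum defining $d_{bot}$ is attained, so there is a $\delta$-matching $\mu:\calB(V(\Sigma,u)) \to \calB(f)$ (alternatively, take a $\delta'$-matching for each $\delta' > \delta$ and let $\delta' \downarrow \delta$, using that $\nu(f,\,\cdot\,)$ is left-continuous). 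Now I would argue, exactly as for \eqref{eq-nu-perturb}, that every finite bar $J \in \calB(f)$ of length $>2\delta$ lies in $\im\mu$ and is matched to a \emph{finite} bar of $\calB(V(\Sigma,u))$: it cannot be matched to an infinite ray $I$, for then the condition $I \subset J^{-\delta}$ from \Cref{defn: delta_matching} would fail. Since $\mu$ is injective on its coimage, this gives $\nu(f,2\delta) \leq \#\{\text{finite bars of }\calB(V(\Sigma,u))\}$, and the right-hand side is $\leq |\Sigma|/2$ by \Cref{thm-simcount}. As $\delta = \Osc(f,T)$, this is precisely \eqref{eq-main-=ineq}.

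I do not anticipate a real obstacle: the substance is entirely contained in \Cref{thm-osc-int} and \Cref{thm-simcount}, and what remains is the routine ``stability of the number of long bars'' argument that recurs throughout this chapter. The only points needing a line of care are the verification that finite bars are matched to finite bars (done above), and the degenerate case $\Osc(f,T)=0$, where $V(\Sigma,u) \cong V(f)$, so by \Cref{exr: d_bot_is_non_degenerate_hence_d_int_also} one has $\calB(f) = \calB(V(\Sigma,u))$ and the inequality follows at once from \Cref{thm-simcount}.
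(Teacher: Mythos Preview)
Your proposal is correct and follows essentially the same route as the paper: interleave via \Cref{thm-osc-int}, pass to a $\delta$-matching by the Isometry Theorem, observe that every finite bar of $\calB(f)$ of length $>2\delta$ must be matched (and hence, as you note, matched to a finite bar), and then invoke \Cref{thm-simcount}. Your version is slightly more explicit than the paper's---you spell out why finite bars cannot be matched to infinite rays and you address the attainment of the infimum---but the argument is the same; note also that the Algebraic Stability Theorem directly produces a $\delta$-matching from a $\delta$-interleaving, so the attainment issue does not actually arise.
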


\begin{proof} By Theorem \ref{thm-osc-int} and the isometry theorem, the barcodes of $V(f)$ and $V(\Sigma,u)$
are $\delta$-matched with $\delta= \text{\Osc}(f,T)$. It follows that every finite bar of length exceeding $2\delta$ in
$V(f)$ is necessarily matched with a bar in $V(\Sigma,u)$. But by Theorem \ref{thm-simcount},
there are at most $|\Sigma|/2$ such bars.
\end{proof}

\medskip\noindent
Theorem \ref{bars-Lipschitz}  naturally brings us to the following topological invariant $S(f,c) \in \N$, $c >0$
of a continuous function $f$ on $M$. Consider all possible triangulations $T=(\Sigma, h)$ of $M$ with
$\Osc(f,T)< c$. By definition, $S(f,c)$ is the minimal possible number of simplices in a complex $\Sigma$ corresponding
to such a triangulation. With this language, Theorem \ref{bars-Lipschitz} can be restated as
\begin{equation}\label{eq-S-beta}
\nu(f,2c) \leq S(f,c)/2
\end{equation}
for every Morse function $f$ and $c > 0$.

\medskip\noindent
Assume now that a closed $d$-dimensional manifold $M$ is equipped with a Riemannian metric.
If $f$ is $C^1$-smooth, the invariant $S(f,c)$ can be easily estimated from above by the $C^1$-norm
$||\nabla f||_0= \max_x |\nabla f|$ with respect to  the metric.
Indeed,  for every $r>0$ small enough $M$ admits a triangulation
into $\leq  k \cdot r^{-d}$ simplices of the diameter $\leq r$, where $k$ depends only on the metric.
The oscillation of $f$ on each such simplex does not exceed
$r||\nabla f||_0$. The next result is an immediate consequence of  \eqref{eq-S-beta}.

\medskip\noindent\begin{cor} For every Morse function $f$ on $M$,
\label{cor-lip}
\begin{equation}\label{eq-L-beta}
\nu(f,c) \leq k' \cdot \frac{||\nabla f||_0^d}{c^d} \;,
\end{equation}
where $k'$ depends only on the metric.
\end{cor}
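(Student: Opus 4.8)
The plan is to combine inequality \eqref{eq-S-beta} with a standard fact from Riemannian geometry: a closed $d$-dimensional manifold $M$ equipped with a fixed metric admits, for every sufficiently small $r>0$, a triangulation whose simplices all have diameter $\leq r$ and whose total number of simplices is at most $k\cdot r^{-d}$, where the constant $k$ depends only on the metric (and the dimension), not on $r$. Such triangulations can be obtained, e.g., by taking a sufficiently fine net of points, building the associated Delaunay-type or geodesic barycentric subdivision, and using compactness of $M$ to bound the local combinatorial complexity uniformly; I would cite this as a known fact rather than reprove it.

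Granting this, the argument is short. Fix a Morse function $f$ on $M$ and a constant $c>0$. Choose $r = c/\bigl(2\,\|\nabla f\|_0\bigr)$ (and note that if $\|\nabla f\|_0 = 0$ then $f$ is constant and the inequality is trivial, so we may assume $\|\nabla f\|_0 > 0$; also, if $r$ is not small enough to apply the triangulation fact we simply absorb this into the constant $k'$ by enlarging it, since for $r$ bounded below the number of simplices is bounded). On any simplex $\sigma$ of such a triangulation $T=(\Sigma,h)$, the mean value inequality gives $\max_{x,y\in h(\sigma)} |f(x)-f(y)| \leq r\,\|\nabla f\|_0 < c$, so $\Osc(f,T) < c$. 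By definition of the invariant $S(f,c)$ from the previous section, we conclude $S(f,c) \leq |\Sigma| \leq k\cdot r^{-d} = k\cdot \bigl(2\|\nabla f\|_0/c\bigr)^{d} = 2^{d}k\cdot \|\nabla f\|_0^{d}/c^{d}$.

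Finally, I would feed this into \eqref{eq-S-beta}, which states $\nu(f,2c) \leq S(f,c)/2$. Replacing $c$ by $c/2$ throughout, one gets
\[
\nu(f,c) \leq \tfrac12 S(f,c/2) \leq \tfrac12 \cdot 2^{d}k \cdot \frac{\|\nabla f\|_0^{d}}{(c/2)^{d}} = 2^{2d-1}k \cdot \frac{\|\nabla f\|_0^{d}}{c^{d}},
\]
so the claimed bound holds with $k' = 2^{2d-1}k$, a constant depending only on the metric (and $d$). This completes the proof.

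The only real subtlety — and the place I'd be most careful — is the triangulation input: one must make sure the combinatorial size bound $|\Sigma|\leq k r^{-d}$ holds with a constant uniform in $r$ as $r\to 0$, and that it remains valid (after possibly enlarging $k'$) even when the prescribed $r$ fails to be ``small enough''. Everything else is the elementary mean value estimate and bookkeeping of constants, which, as the paper's ``apology'' notes, are not claimed to be sharp anyway.
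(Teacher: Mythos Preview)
Your proof is correct and follows essentially the same approach as the paper: invoke the Riemannian triangulation fact (simplices of diameter $\leq r$, total count $\leq k\,r^{-d}$), bound the oscillation on each simplex by $r\|\nabla f\|_0$ via the mean value inequality, and feed this into \eqref{eq-S-beta}. The paper states this argument in the paragraph immediately preceding the corollary without tracking the explicit constant $k'=2^{2d-1}k$, but the ideas are identical.
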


\medskip\noindent
This is the desired extension of  inequality \eqref{eq-nu-uniform} for functions of one variable.

\medskip
\noindent
\begin{exm}\label{exm-sin-Lip}{\rm Consider the $d$-dimensional torus $\R^d/\Z^d$
with the function
$$f(x) =2c \cdot \sum_{i=1}^d \sin (2\pi n x_i),\;\; c>0\;.$$
Then  $\nu(f,c) \approx n^d$ and $||\nabla f||_0 \approx cn$, so inequality \eqref{eq-L-beta}
is sharp up to multiplicative constants.
}
\end{exm}

\section{The length of the barcode}\label{sec-length-barcode}

Inequality \eqref{eq-ell-1D-norms} relating the length of the barcode of a Morse function $f$ and
the $L_2$-norms of $f$ and its second derivative extends to surfaces. This is done in the paper \cite{Polterovich2-Stojisavljevic}, by using differential-geometric methods developed in \cite{Polterovich-Sodin}.
In this section we present this generalization  for functions
on a flat two-dimensional torus,
where the proofs are slightly more direct and transparent. Let us mention that a generalization of these results
to dimensions $\geq 3$ is currently out of reach.

\subsection{The fundamental inequality}
Consider the torus $\T^2 =\R^2/(2\pi \cdot \Z^2)$ equipped with the Euclidean metric
$dx_1^2+dx_2^2$. Denote by
$\Delta f =\frac{\partial ^2 f}{\partial x_1^2} + \frac{\partial ^2 f}{\partial x_2^2}$
the Laplace-Beltrami operator and by $d\mu$ the Lebesgue measure $dx_1dx_2$.

\medskip
\noindent
\begin{thm}\label{thm- length-bars-torus}
For every Morse function $f : \T^2 \to \R$
\begin{equation} \label{eq-length-bars-torus}
\ell(f) \leq 3(||f||_2+||\Delta f||_2)\;.
\end{equation}
\end{thm}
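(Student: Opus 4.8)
The plan is to bound the total barcode length $\ell(f)$ by a sum over finite bars and infinite-ray segments, and to control each of these pieces via $L_2$-norms of $f$ and $\Delta f$. First I would recall from \eqref{eq-ell} that $\ell(f) = \text{length}(\calB(f) \cap [\min f, \max f])$, so the two infinite rays contribute $(\max f - \min f)$ twice (once in degree $0$, once in degree $2$), and the finite bars contribute their lengths $\sum_j \beta_j$. The range $\max f - \min f$ is easily controlled: writing $f = \bar f + (f - \bar f)$ with $\bar f$ the mean, the oscillation $\max f - \min f$ is bounded by $C\,\text{osc}(f)$, and by Sobolev embedding on $\T^2$ (the borderline $H^2 \hookrightarrow C^0$ case in two dimensions) one has $\text{osc}(f) \lesssim \|f - \bar f\|_2 + \|\Delta f\|_2$. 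So the main content is the estimate on $\sum_j \beta_j$, the total length of finite bars.

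For the finite bars I would follow the differential-geometric approach of Polterovich--Sodin \cite{Polterovich-Sodin} adapted to the flat torus as in \cite{Polterovich2-Stojisavljevic}. The key idea is a ``coarea''-type or Crofton-type argument: each finite bar $(a,b]$ in degree $k$ records a birth at a critical value $a$ and a death at $b$, and between these values the relevant homology class is supported on a sublevel set whose topology changes. One bounds the length $b-a$ of such a bar by an integral of $|\nabla f|$ (or a suitable power thereof) over a region, or over a level set, associated to that bar; summing over bars, the Bezout/counting-type control on the number of bars (from Corollary \ref{cor-number-bars} applied to a fine triangulation, giving $\nu(f,c) \lesssim \|\nabla f\|_0^2/c^2$ in dimension two, cf. Corollary \ref{cor-lip}) combines with a Cauchy--Schwarz step to trade $\|\nabla f\|_0$-type quantities against $\|\Delta f\|_2$. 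Concretely, one expects an inequality of the shape $\sum_j \beta_j \lesssim \big(\int_{\T^2} |\nabla f|^2 \, d\mu\big)^{1/2} \cdot (\#\text{bars})^{1/2}$ or a more refined local version, and then $\int |\nabla f|^2 = -\int f \Delta f \leq \|f\|_2 \|\Delta f\|_2$ by integration by parts, while the number of bars is controlled by a smoothing argument and Corollary \ref{cor-number-bars}.

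The hard part will be making the geometric bound on individual bar lengths precise and summable: naively each bar could be long, and one needs the Morse-theoretic structure (that distinct bars correspond to ``disjoint'' topological events, hence to regions or level-set pieces that do not overlap too much) to prevent overcounting when one sums the local $|\nabla f|$-integrals. This is where the argument of \cite{Polterovich-Sodin, Polterovich2-Stojisavljevic} does real work, using the interplay between the persistence module $V(f)$, the Barannikov normal form (Theorem \ref{thm-barcodes-coincide}), and an $L_2$-Bernstein-type inequality to pass from $\|\nabla f\|_0$ to $\|\Delta f\|_2$ via an intermediate approximation of $f$ by a function with controlled Hessian. I would also need to absorb the non-sharp multiplicative constant (here $3$) at the end by tracking the constants in the Sobolev embedding, the triangulation count, and the Cauchy--Schwarz step; following the ``apology'' in the chapter, I would not try to optimize it, only to verify $\ell(f) \leq 3(\|f\|_2 + \|\Delta f\|_2)$ holds after these estimates are chained together.
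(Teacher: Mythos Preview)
Your proposal has a genuine gap: the mechanism you sketch for bounding $\sum_j \beta_j$ does not work, and the key ideas of the actual proof are absent. The paper does \emph{not} split $\ell(f)$ into ``infinite-ray part $+$ finite-bar part'' and then chase a Cauchy--Schwarz inequality of the form $\sum_j \beta_j \lesssim \|\nabla f\|_2 \cdot (\#\text{bars})^{1/2}$. Instead it writes $\ell(f) = \int_{\min f}^{\max f} \zeta(\{f\le t\})\,dt$ and uses an elementary topological fact about surfaces embedded in $\T^2$ (Proposition~\ref{prop-top}: $\zeta(A) \le |\partial A| + 2$) to get $\ell(f) \le 3I$, where $I = \int u(t)\,dt$ is the \emph{Banach indicatrix} and $u(t)$ is the number of connected components of the level set $f^{-1}(t)$. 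The whole problem is thus reduced to bounding $I$ by $\|f\|_2 + \|\Delta f\|_2$, and this is done by a geometric trick: lift each level curve $\gamma$ together with its unit normal $n = \nabla f/|\nabla f|$ to the unit tangent bundle $U\T^2 = \T^3$ with the Sasaki metric. The lift of any simple closed curve on $\T^2$ is non-contractible in $U\T^2$, hence has Sasaki length $\ge 2\pi$; since Lemma~\ref{lem-geom} gives $|\dot{\tilde\gamma}|_\rho^2 = 1 + (H_f\dot\gamma,\dot\gamma)^2/|\nabla f|^2$, integrating over all levels via the coarea formula yields $I \le (2\pi)^{-1}\int_{\T^2}(|\nabla f|^2 + \|H_f\|_{op}^2)^{1/2}\,d\mu$, and then $\int\|H_f\|_{op}^2 \le \|\Delta f\|_2^2$ (trace identity plus Stokes) and $\int|\nabla f|^2 \le \|f\|_2\|\Delta f\|_2$ finish the job.

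Your route cannot be repaired along the lines you indicate. The simplex-counting bound $\nu(f,c) \lesssim \|\nabla f\|_0^2/c^2$ from Corollary~\ref{cor-lip} involves the \emph{sup norm} of $\nabla f$, and there is no ``$L_2$-Bernstein-type inequality'' converting $\|\nabla f\|_0$ to $\|\Delta f\|_2$ for a general Morse function; indeed the paper explicitly notes (Example~\ref{ex-harmonics}) that $\|\nabla f_\lambda\|_0$ can be arbitrarily large compared to $\|\Delta f_\lambda\|_2$ for a sequence of Laplace eigenfunctions. Nor does the argument use the Barannikov normal form or any approximation by functions with controlled Hessian. The Sobolev step you propose for $\max f - \min f$ is fine (and incidentally there are four infinite rays on $\T^2$, not two, though only three contribute nontrivially in the range), but that piece is subsumed in the paper's argument anyway since $\max f - \min f \le I$. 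The substance lies entirely in the Banach indicatrix and the normal-lift estimate, neither of which appears in your outline.
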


\medskip
\noindent The proof is given in Sections \ref{subsec-BI} and \ref{subsec-NL} below.
As a consequence, applying \eqref{eq-nu-ell}, we get that
\begin{equation}\label{eq-nu-lap-torus}
\nu(f,c) \leq \frac{3(||f||_2+||\Delta f||_2)}{c}\;.
\end{equation}

\begin{exm}\label{exm-trig-1}{\rm Denote by $\calT_\lambda$ the space of trigonometric polynomials on $\T^2$
spanned by  $\sin (n_1 x_1 + n_2 x_2)$ and $\cos (n_1 x_1 + n_2 x_2)$
with $(n_1^2+n_2^2) \leq \lambda$. We leave it as an exercise to check that for every polynomial
$p \in \calT_\lambda$ one has
\begin{equation}\label{eq-p-lam}
||\Delta p||_2 \leq \lambda ||p||_2\;,
\end{equation}
and hence inequality
\eqref{eq-nu-lap-torus} reads as
\begin{equation}\label{eq-nu-lap-p}
\nu(p,c) \leq   \frac{3||p||_2(1+\lambda)}{c}\;.
\end{equation}
Take, for instance, $p(x)= \sin (nx_1) + \sin (nx_2)$. One readily checks that $p$ has
$n^2$ points of maximum (resp., minimum) with critical values $2$ (resp., $-2$), and
$2n^2$ saddles with critical value $0$. The barcode of $p$ consists of infinite rays
$(2,+\infty)$, $(-2,+\infty)$ and, with multiplicity $2$, $(0,+\infty)$, as well
as of the bars $(-2,0]$ and $(0,2]$ of multiplicity $n^2-1$ each.
It follows that
$$\ell(p) = 4n^2+4$$
and $\nu(p,c) = 2n^2-2$ for $c\in (0,2)$ and $0$ for $c >2$.
On the other hand $\lambda = n^2$ and $||p||_2 = 2\pi$,
so, taking $c < 2$ arbitrary close to $2$, we get that the right hand side of inequalities
\eqref{eq-length-bars-torus} and  \eqref{eq-nu-lap-p} in this case is $6\pi(n^2+1)$ and $3\pi(n^2+1)$,
respectively.  It follows that both the left and the right hand side of both inequalities
in this example are of the order $\sim n^2$.
}
\end{exm}

Recall that the Sobolev $W^{2,q}$-norm of a function $f$ is
the sum of the $L_q$-norms of $f$ and its first and second derivatives.
In dimension two, the expression $||f||_2+||\Delta f||_2$  in the right hand side of \eqref{eq-nu-lap-torus}
is equivalent to the Sobolev $W^{2,2}$ norm.   It is
instructive to compare it with the $C^1$-norm $||\nabla f||_0$ in inequality \eqref{eq-L-beta}.
These two norms are known to be incomparable in dimension $2$. Recall that according to the Sobolev inequality,
in dimension $2$ we have $||f||_{C^1} \leq \text{const} \cdot ||f||_{W^{2,q}}$ with $q >2$.
We see that our case $q=2$ lies just beyond the borderline of applicability of the Sobolev inequality.

\medskip
\noindent\begin{exm}\label{ex-harmonics}{\rm It is known
that there exists a sequence of eigenfunctions $f_\lambda$ of the Laplace-Beltrami operator such that
$\Delta f_\lambda+\lambda f_\lambda=0$, $ \lambda \to \infty$, $||f_\lambda||_2=1$,
 and $m_\lambda:= ||f_\lambda||_0 \to \infty$ (See Exercise \ref{exer-mult} below).
At the same time, the zeroes of $f_\lambda$ are necessarily $\approx \lambda^{-1/2}$-dense in the torus
(see e.g. \cite{Mangoubi} and references therein). Therefore, there exist a pair of points on the torus
at the distance at most $\approx \lambda^{-1/2}$ from one another such that the values of $f_\lambda$ at these
points differ by $m_\lambda$. This implies that  $||\nabla f_\lambda||_0/\lambda^{1/2}$ is unbounded as $\lambda \to \infty$.
We conclude that for $c$ varying in a bounded region and $\lambda$ large enough, inequality
$$\nu(f_\lambda, c) \leq \text{const}\cdot \lambda/c$$
which follows from \eqref{eq-nu-lap-torus}
is strictly better than
$$\nu(f_\lambda, c) \leq \text{const} \cdot \frac{||\nabla f_\lambda||_0^2}{c^2}$$
provided by \eqref{eq-L-beta}.
}
\end{exm}

\medskip\noindent\begin{exr}\label{exer-mult}{\rm Prove existence of an unbounded $L_2$-normalized
sequence of the Laplace eigenfunctions on the flat torus by combining the following facts.
First, eigenvalues of the Laplacian on the standard flat torus are the integers
represented as the sum of two squares. Write $r(n)$ for the number of different
ways in which an integer $n$ can be represented as the sum of two squares. It is a classical fact of
number theory that $r(n)$ is an unbounded function \cite{numbers}.
In other words, the multiplicity $m$ of
an eigenvalues of the Laplacian on $\T^2$ may be arbitrary large. It is not hard to show
(see \cite[Chapter 4.4]{Chavel}) that the corresponding space of eigenfunctions
contains a function $f$ with $||f||_2=1$ and $||f||_0 \gtrsim \sqrt{m}$.
}
\end{exr}

\subsection{The Banach indicatrix}\label{subsec-BI} We start the proof of Theorem \ref{thm- length-bars-torus} with
the following topological consideration. For a surface with boundary $A$,
we write $\zeta(A)$ for its total Betti number and $|\partial A|$ for the number of boundary components.

\medskip\noindent
\begin{prop}\label{prop-top} For every two-dimensional submanifold $A \subset \T^2$
with non-empty boundary
\begin{equation} \label{eq-top}
\zeta(A) \leq |\partial A|+2\;.
\end{equation}
\end{prop}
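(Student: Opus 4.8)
The plan is to prove the inequality $\zeta(A) \leq |\partial A| + 2$ by combining the classification of compact surfaces with boundary with the simple fact that $A$, being an open subsurface of the torus $\T^2$, is necessarily \emph{orientable} and \emph{cannot contain a closed subsurface}, hence has genus $0$ if we also control its boundary behavior. More precisely, a connected compact orientable surface with non-empty boundary is determined up to homeomorphism by its genus $g \geq 0$ and its number of boundary components $b \geq 1$; for such a surface $\zeta = 1 + 2g + (b-1) = 2g + b$ (one class in degree $0$, $2g + b - 1$ independent loops in degree $1$, nothing in degree $2$ since the boundary is non-empty). So for a connected piece I would need $2g + b \leq b + 2$, i.e. $g \leq 1$. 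The point is that a connected orientable surface of genus $\geq 2$ does not embed in $\T^2$, and in fact even genus $1$ with boundary is excluded — but genus $1$ would give $2g + b = b + 2$, which is exactly the borderline allowed by the stated inequality, so I need not rule it out. What I do need is $g \leq 1$ for every connected component.

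First I would reduce to the connected case: if $A = \bigsqcup_{i=1}^k A_i$ with each $A_i$ connected, then $\zeta(A) = \sum_i \zeta(A_i)$ and $|\partial A| = \sum_i |\partial A_i|$ (each $A_i$ has non-empty boundary because $\T^2$ is closed and $A$ has non-empty boundary — a closed component would have to be all of $\T^2$, contradicting $\partial A \neq \emptyset$, or more carefully, a component with empty boundary is a closed surface embedded as an open-and-closed subset, impossible unless it is a whole connected component of $\T^2$, i.e. $\T^2$ itself). Thus it suffices to prove $\zeta(A_i) \leq |\partial A_i| + 2$ for each $i$, i.e. to prove the proposition for connected $A$. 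Then I would invoke that $A$ is orientable (a subsurface of the orientable $\T^2$) and apply the classification: $\zeta(A) = 2g + b$ where $g$ is the genus and $b = |\partial A| \geq 1$. It remains to show $g \leq 1$.

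The key step — and the main obstacle — is the genus bound $g \leq 1$ for a connected surface with boundary embedded in $\T^2$. Here I would argue via Euler characteristic or via capping off. One clean route: double $A$ along its boundary to get a closed orientable surface $\hat A$ of genus $2g + b - 1$; alternatively, glue a disk to each boundary circle to get a closed orientable surface $\bar A$ of genus $g$. Since $A$ is an open subset of $\T^2$ (or a subsurface whose interior is open in $\T^2$), capping gives an embedding-like relation, but the cleanest argument avoids embedding $\bar A$ and instead uses Euler characteristics directly: $\chi(A) = 2 - 2g - b$, and $A$ sits inside $\T^2$ with $\chi(\T^2) = 0$. Writing $\T^2 = A \cup (\T^2 \setminus \mathrm{int}\,A)$ glued along $\partial A$ (a disjoint union of $b$ circles, each of Euler characteristic $0$), Mayer–Vietoris / inclusion–exclusion gives $0 = \chi(\T^2) = \chi(A) + \chi(\T^2 \setminus \mathrm{int}\,A) - \chi(\partial A) = \chi(A) + \chi(\overline{\T^2 \setminus A})$. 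Hence $\chi(A) = -\chi(\overline{\T^2 \setminus A})$. Now $\overline{\T^2 \setminus A}$ is also a compact surface with boundary (the same boundary circles), so its Euler characteristic is $\leq 1$ on each component and in particular $\chi(\overline{\T^2\setminus A}) \leq b$ is too weak; instead I use that a compact surface with non-empty boundary has $\chi \leq 1$ per component, but more usefully $\chi(\overline{\T^2\setminus A}) = 2c - 2g' - b'$ where $c$ is its number of components — this is getting complicated, so the truly clean finish is: for a connected compact surface $B$ with non-empty boundary, $\chi(B) \leq 1$, with equality iff $B$ is a disk. Applying this componentwise to $\overline{\T^2 \setminus A}$, each component contributes $\chi \leq 1$, and since $A$ is connected with $b \geq 1$ boundary circles, $\overline{\T^2 \setminus A}$ has at least one component; but I actually want a \emph{lower} bound on $\chi(\overline{\T^2\setminus A})$ to get an \emph{upper} bound on $-\chi(A) = \chi(\overline{\T^2\setminus A})$, i.e. I want $\chi(\overline{\T^2 \setminus A})$ bounded below by $-b$ or so. Since $\overline{\T^2\setminus A}$ is a surface with $b$ total boundary circles and, say, $c$ components with genera $g_1,\dots,g_c$ and $b_1,\dots,b_c$ boundary circles ($\sum b_j = b$), its Euler characteristic is $\sum_j (2 - 2g_j - b_j) \geq \sum_j(2 - 2g_j - b_j)$; to bound this below I again only know $g_j \geq 0$, giving $\chi \geq 2c - 0 - b = 2c - b \geq 2 - b$. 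Therefore $\chi(A) = -\chi(\overline{\T^2\setminus A}) \leq b - 2$, i.e. $2 - 2g - b \leq b - 2$, which gives $g \geq 2 - b$ — a lower bound, not the upper bound I wanted. So I would instead bound $\chi(\overline{\T^2\setminus A})$ \emph{above}: $\chi(\overline{\T^2\setminus A}) \leq c \leq$ (number of components), and since $\partial(\overline{\T^2\setminus A})$ has $b$ circles and each component has $\geq 1$, $c \leq b$; thus $\chi(\overline{\T^2\setminus A}) \leq b$, giving $\chi(A) \geq -b$, i.e. $2 - 2g - b \geq -b$, i.e. $g \leq 1$. That is exactly what I need. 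Finally, $\zeta(A) = 2g + b \leq 2 + b = |\partial A| + 2$, completing the proof. I would streamline the write-up by stating the two facts used as lemmas (classification of compact orientable surfaces with boundary and the componentwise bound $\chi(B) \leq \#\{\text{components}\}$ for surfaces with boundary having $\leq$ one boundary circle per component forced), and by noting that the only subtle point is ensuring $\overline{\T^2 \setminus A}$ is again a nice surface with boundary, which follows from $A$ being a $2$-dimensional submanifold with boundary.

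Thus the structure is: (1) reduce to connected $A$; (2) use orientability of $\T^2$ to write $\zeta(A) = 2g + b$; (3) apply additivity of Euler characteristic over the decomposition $\T^2 = A \cup \overline{\T^2 \setminus A}$ glued along circles to get $\chi(A) + \chi(\overline{\T^2\setminus A}) = 0$; (4) bound $\chi(\overline{\T^2 \setminus A}) \leq b$ since it is a surface with $b$ boundary circles and at most one per... actually at least one per component so at most $b$ components each of $\chi \leq 1$; (5) conclude $g \leq 1$ and hence the inequality. The main obstacle is carrying out step (4) carefully — making sure the "complement" surface is genuinely a compact surface with boundary and getting the correct direction of the Euler-characteristic inequality — but this is elementary once one is careful about which bound one needs.
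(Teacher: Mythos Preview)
Your reduction to the connected case is invalid: proving $\zeta(A_i) \leq |\partial A_i| + 2$ for each connected component $A_i$ and summing yields only $\zeta(A) \leq |\partial A| + 2k$, where $k$ is the number of components. The ``$+2$'' is not additive.

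The paper's proof observes that each component is either a sphere with disks removed (genus $0$, so $\zeta(A_i) = |\partial A_i|$) or a torus with disks removed (genus $1$, so $\zeta(A_i) = |\partial A_i| + 2$), and crucially that \emph{at most one component can be of the latter type}; summing then gives the desired bound. Your Euler-characteristic computation for the per-component bound $g_i \leq 1$ is correct and can in fact be pushed one step further to supply this missing piece: if some component $A_1$ has genus $1$, your argument gives $\chi(\overline{\T^2 \setminus A_1}) = b_1$, and since this complement has $b_1$ boundary circles (hence at most $b_1$ components, each with $\chi \leq 1$), equality forces every component of $\overline{\T^2 \setminus A_1}$ to be a disk. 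Every other $A_i$ then lies inside a disk and therefore has genus $0$. With this addendum your approach is complete; as written, however, it only establishes the weaker bound $\zeta(A) \leq |\partial A| + 2k$.
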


\begin{proof}
Let $A \subset \T^2$ be a two-dimensional submanifold with non-empty boundary.
Each connected component $A_k$ of $A$ is
\begin{itemize}
\item[{i)}] either  diffeomorphic to the  sphere with a number of discs removed,
\item[{ii)}] or  diffeomorphic to the torus with a number of discs removed,
\end{itemize}
and there is at most one component of type (ii). Note that $\zeta(A_k) = |\partial A_k|$ for every
component of type (i) and $\zeta(A_k) = |\partial A_k|+2$ for a component of type (ii).
This yields \eqref{eq-top}.
\end{proof}

\medskip

For a Morse function $f: \T^2 \to \R$ denote by $u(t)$ the number of connected components
of the level set $f^{-1}(t)$. Define {\it the Banach indicatrix}
$$I:= \int_{\min f}^{\max f} u(t) dt\;.$$
The Banach indicatrix or similar quantities were considered as a measure of oscillation of a function
in \cite{Kronrod, Yomdin, Polterovich-Sodin}. Since for regular $t$ we have $u(t) = |\partial M^t|$, where $M^t= \{f \leq t\}$,
Proposition \ref{prop-top} yields
\begin{equation} \label{eq-length-bars-1}
\ell(f) = \text{length}\left(\calB(f) \cap [\min f,\max f]\right) = \int_{\min f}^{\max f} \zeta(M^t)dt \leq 3I\;.
\end{equation}

\subsection{Normal lifts of the level lines}\label{subsec-NL}
Identify the unit tangent bundle
$$U\T^2 := \{(x,\xi) \in T\T^2\;:\; \xi \in T_x\T^2, |\xi|=1\}$$
with the 3-torus $\T^2 \times S^1$, and equip it with the {\it Sasaki metric}
$\rho$ given by $dx_1^2 + dx_2^2 + d\phi^2$, where $(x_1,x_2)$ are the Euclidean coordinates
on $\T^2$ and $\phi$ is the polar angle of the tangent vector $\xi$. Denote by $\gamma$ a connected
component of a regular level of $f$. Choose the length
parameter $s$ along $\gamma$ and consider the normal lift $\tilde{\gamma}(s) =(\gamma(s), n(s))$ to
$U\T^2$ together with the field of positive normals $n= \nabla f/|\nabla f|$.

\medskip
We start with the following calculation.  Denote by $H_f = \partial^2 f/\partial x^2$ the Hessian of $f$ and write
for $||H_f||_{op}$ for its operator norm. In what follows, the dot stands for the derivative
with respect to the natural parameter $s$, and $\nabla$ for the covariant derivative with respect to the  Euclidean Levi-Civita connection.

\medskip
\noindent
\begin{lem} \label{lem-geom} The length of the tangent vector to
$\tilde{\gamma}$ with respect to the Sasaki metric is given by
$$|\dot{\tilde{\gamma}}|^2_{\rho} =
1+\frac{(H_f\dot{\gamma},\dot{\gamma})^2}{|\nabla f|^2}\;.$$
\end{lem}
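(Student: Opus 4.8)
The plan is to set up explicit coordinates and differentiate. Parametrize $\gamma$ by arclength $s$, so $\dot\gamma(s) \in T_{\gamma(s)}\T^2$ is a unit vector tangent to the level line, and the normal lift is $\tilde\gamma(s) = (\gamma(s), n(s))$ with $n = \nabla f/|\nabla f|$ evaluated along $\gamma$. The Sasaki metric on $U\T^2 \cong \T^2\times S^1$ is $dx_1^2+dx_2^2+d\phi^2$ where $\phi$ is the polar angle of the unit tangent vector, so to compute $|\dot{\tilde\gamma}|_\rho^2$ I need two things: the horizontal part, which is just $|\dot\gamma|^2 = 1$ since $\gamma$ is arclength-parametrized and the base metric is Euclidean; and the vertical part $\dot\phi^2$, where $\phi(s)$ is the angle of $n(s)$. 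Since the flat torus has trivial connection, the Sasaki metric splits as the sum of these, giving $|\dot{\tilde\gamma}|_\rho^2 = 1 + \dot\phi^2$, so the entire content is to show $\dot\phi^2 = (H_f\dot\gamma,\dot\gamma)^2/|\nabla f|^2$.

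The key computation is then $\dot\phi = |\dot n|$ (the rate of rotation of the unit vector $n$ equals the magnitude of its covariant/ordinary derivative since $n$ has constant length $1$ and $|n|=1$ means $\langle \dot n, n\rangle = 0$, so $\dot n$ is a multiple of $\dot\gamma$ and $\dot\phi = \pm\langle \dot n, \dot\gamma\rangle$). First I would write $n = \nabla f/|\nabla f|$ and differentiate along $\gamma$: $\nabla_{\dot\gamma} n = \nabla_{\dot\gamma}(\nabla f)/|\nabla f| - (\nabla f)\,\dot\gamma(|\nabla f|)/|\nabla f|^2$. The crucial observation is that along a level line of $f$ we have $\langle \nabla f, \dot\gamma\rangle = 0$, so the second term is orthogonal to $\dot\gamma$... wait, rather: pairing $\nabla_{\dot\gamma} n$ with $\dot\gamma$ kills the second term since $\langle \nabla f, \dot\gamma\rangle = 0$ along $\gamma$. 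Now $\nabla_{\dot\gamma}(\nabla f) = H_f\,\dot\gamma$ (the Hessian, with $\nabla$ the Euclidean Levi-Civita connection), so $\langle \nabla_{\dot\gamma} n, \dot\gamma\rangle = \langle H_f\dot\gamma, \dot\gamma\rangle / |\nabla f|$. Therefore $\dot\phi^2 = \langle \nabla_{\dot\gamma} n,\dot\gamma\rangle^2 = (H_f\dot\gamma,\dot\gamma)^2/|\nabla f|^2$, which is exactly the claimed vertical contribution. Combining, $|\dot{\tilde\gamma}|_\rho^2 = 1 + (H_f\dot\gamma,\dot\gamma)^2/|\nabla f|^2$.

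The main obstacle — really the only subtle point — is justifying cleanly that the Sasaki-metric length of $\dot{\tilde\gamma}$ decomposes as horizontal-squared plus vertical-squared with the vertical part equal to $\dot\phi^2 = |\nabla_{\dot\gamma} n|^2$. On the flat torus this is transparent because the Levi-Civita connection is trivial, so the horizontal and vertical distributions are literally the $\T^2$ and $S^1$ factors and the Sasaki metric is the product metric $dx_1^2+dx_2^2+d\phi^2$ as stated; the lift $\tilde\gamma$ projects to $\gamma$ (horizontal velocity $\dot\gamma$, length $1$) and its $S^1$-component is the angle function $\phi(s)$ of $n(s)$ (vertical velocity $\dot\phi$). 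One should also note that $|\dot n| = |\dot\phi|$ since $n$ traces a curve on the unit circle parametrized so that $|\dot n|$ is exactly the angular speed, and that $\dot n \perp n$ forces $\dot n = \langle\dot n,\dot\gamma\rangle\dot\gamma$ so $|\dot n| = |\langle\dot n,\dot\gamma\rangle|$. All the remaining steps are the routine differentiation of $n = \nabla f/|\nabla f|$ and the identity $\nabla_{\dot\gamma}(\nabla f) = H_f\dot\gamma$, which I would not grind through in detail.
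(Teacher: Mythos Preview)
Your proof is correct and follows essentially the same route as the paper: both arguments use the splitting $|\dot{\tilde\gamma}|^2_\rho = |\dot\gamma|^2 + |\nabla_{\dot\gamma} n|^2$, differentiate $n=\nabla f/|\nabla f|$ by the product rule, observe that the term proportional to $\nabla f$ dies when paired with $\dot\gamma$ (since $\gamma$ is a level line), and use $\dot n \perp n$ in dimension two to conclude $\nabla_{\dot\gamma} n = \frac{(H_f\dot\gamma,\dot\gamma)}{|\nabla f|}\,\dot\gamma$. The only cosmetic difference is that you route the vertical contribution through the polar angle $\phi$, while the paper works directly with $|\nabla_{\dot\gamma} n|^2$; on the flat torus these are the same thing.
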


\begin{proof} Denote $w = \dot{\gamma}$.
Differentiating the identity $(n(s),n(s))=1$ we get that
$(\nabla_w n,n)=0$ and hence $\nabla_w n= (\nabla_w n,w)w$.
Furthermore,
$$\nabla_w n =\nabla_w \frac{\nabla f}{|\nabla f|} =
\frac{1}{|\nabla f|} \nabla_w\nabla f + \Big{(} \nabla (|\nabla
f|^{-1}),w\Big{)}\nabla f\;,$$ and  $H_fw = \nabla_w\nabla
f$. Therefore,
$$\nabla_w n = \frac{(H_f w,w)}{|\nabla f|} w\;.$$
Using now that $|\dot{\widetilde\gamma}|^2_{\rho}= |w|^2 +
|\nabla_w n|^2$, we get the statement of the lemma.
\end{proof}

\medskip

Now comes a crucial observation: the normal lift of any simple closed curve on $\T^2$ is
{\it non-contractible} in $U\T^2$, and hence its $\rho$-length is $\geq 2\pi$.
Therefore, writing $L_t$ for the $\rho$-length of the normal lift of $f^{-1}(t)$,
we have
\begin{equation}
\label{eq-normal-lift}
L_t \geq 2\pi u(t)\;,
\end{equation}
where $u(t)$ stands as above for the number of connected components
of $f^{-1}(t)$.

Applying \eqref{eq-normal-lift} and Lemma \ref{lem-geom} we get that
\begin{align*}
I=\int_{\min f}^{\max f} u(t) dt & \leq (2\pi)^{-1}\int_{\min f}^{\max f} L_t dt \\
& \leq J:= (2\pi)^{-1}\int_{-\infty}^{+\infty} dt\;
\int_{f^{-1}(t)} (1+||H_f||_{op}^2/|\nabla f|^2)^{1/2}\;
ds\;.
\end{align*}
By the co-area formula and Cauchy-Schwarz,
$$J = (2\pi)^{-1} \int_{\T^2} (|\nabla f|^2 + ||H_f||_{op}^2)^{1/2} d\mu \leq  K:= \left( \int_{\T^2} |\nabla f|^2 + ||H_f||_{op}^2 d\mu\right)^{1/2}
 \;,$$
where $d\mu$ stands for the Euclidean area $dx_1 dx_2$.

Observe now that
$$\tr(H_f^2) = (\tr H_f)^2 - 2\det H_f\;,$$
and
$$\det H_f = d(\partial f/\partial x_1) \wedge d(\partial f/\partial x_2)\;.$$
Recalling that $\tr H_f = \Delta f$ and using Stokes formula, we get
$$\int_{\T^2} \tr(H_f^2) d\mu = ||\Delta f||_2^2\;.$$
Since $||H_f||_{op}^2 \leq \tr(H_f^2)$,
it follows that
\begin{equation}\label{eq-BL-1}
\int_{\T^2} ||H_f||_{op}^2 d\mu  \leq ||\Delta f||_2^2  \;.
\end{equation}
Additionally,
$$\left| \int_{\T^2} |\nabla f|^2 d\mu\right|  = \left|\int_{\T^2} f\Delta fd\mu\right|  \leq ||f||_2 ||\Delta f||_2\;.$$
It follows that
$$I \leq K \leq (||f||_2 ||\Delta f||_2 + ||\Delta f||_2^2)^{1/2} \leq ||f||_2 +||\Delta f||_2\;.$$
Combining this with \eqref{eq-length-bars-1}, we get inequality \eqref{eq-length-bars-torus},
and hence complete the proof of Theorem \ref{thm- length-bars-torus}.

\section{Approximation by trigonometric polynomials}
Consider the space of trigonometric polynomials $\calT_\lambda$ on the torus $\T^2$
introduced in Example \ref{exm-trig-1}. For a Morse function $f$ on $\T^2$
address the following question: what is the optimal uniform approximation of $f$
by a trigonometric polynomial from $\calT_\lambda$, maybe after a change of variables by
an area-preserving diffeomorphism of $\T^2$. In other words, writing $\calD $ for the group of area-preserving diffeomorphisms of $\T^2$, we introduce the quantity
$$\delta_\lambda(f) := \inf_{\phi \in  \calD,\; p \in \calT_\lambda} ||f \circ \phi-p||_0\;.$$
The next result formalizes an intuitively clear principle that in order to achieve a good uniform approximation of a highly oscillating function by a polynomial from $\calT_\lambda$, the frequency $\lambda$ should be chosen quite high.

\medskip
\noindent
\begin{thm}\label{thm-approx} For every Morse function $f$ on $\T^2$ and $c > 0$
\begin{equation}\label{eq-approx}
\lambda+1 \geq  \frac{c- 2\delta_\lambda(f)}{3(||f||_2 + 2\pi\delta_\lambda(f))} \cdot \nu(f,c)\;.
\end{equation}
\end{thm}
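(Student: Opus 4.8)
The plan is to reduce everything to a comparison between two Morse functions and then invoke the length-of-barcode estimate from Theorem \ref{thm- length-bars-torus}. Write $\delta = \delta_\lambda(f)$, and fix $\epsilon>0$. By definition of $\delta_\lambda(f)$ we may choose $\phi\in\calD$ and $p\in\calT_\lambda$ with $\|f\circ\phi-p\|_0 < \delta+\epsilon$. Since $\phi$ is a diffeomorphism, $f\circ\phi$ is again Morse, the Morse persistence modules $V(f)$ and $V(f\circ\phi)$ are isomorphic (hence have the same barcode), and in particular $\nu(f\circ\phi,c)=\nu(f,c)$ for every $c$. So it suffices to bound $\nu(f\circ\phi,c)$ by the right-hand side with $h:=f\circ\phi$.

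Next I would compare $h$ with $p$. We may perturb $p$ slightly to a Morse function $q\in\calT_\lambda$ (or argue directly with $p$ by a limiting argument) with $\|h-q\|_0<\delta+\epsilon$ still. By the stability inequality \eqref{eq-nu-perturb} applied to $h$ and $q$,
\begin{equation*}
\nu(h,c) \leq \nu\bigl(q, c - 2\|h-q\|_0\bigr) \leq \nu\bigl(q, c - 2(\delta+\epsilon)\bigr)\;.
\end{equation*}
Now I bound $\nu(q,c')$ for $c' = c-2(\delta+\epsilon)$ using \eqref{eq-nu-ell}, namely $c'\,\nu(q,c') \leq \ell(q)$, and then bound $\ell(q)$ via Theorem \ref{thm- length-bars-torus}: $\ell(q)\leq 3(\|q\|_2+\|\Delta q\|_2)$. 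Using \eqref{eq-p-lam}, $\|\Delta q\|_2\leq\lambda\|q\|_2$, so $\ell(q)\leq 3(1+\lambda)\|q\|_2$. Finally I control $\|q\|_2$ by $\|h\|_2$ and $\delta$: since $\|h-q\|_0<\delta+\epsilon$ and the torus has area $4\pi^2$, the $L_2$-triangle inequality gives $\|q\|_2 \leq \|h\|_2 + 2\pi\|h-q\|_0 < \|h\|_2 + 2\pi(\delta+\epsilon)$, and $\|h\|_2=\|f\circ\phi\|_2=\|f\|_2$ because $\phi$ is area-preserving.

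Combining the inequalities: if $c - 2(\delta+\epsilon)>0$, then
\begin{equation*}
\bigl(c-2(\delta+\epsilon)\bigr)\,\nu(f,c) \leq \bigl(c-2(\delta+\epsilon)\bigr)\,\nu(q,c') \leq \ell(q) \leq 3(1+\lambda)\bigl(\|f\|_2 + 2\pi(\delta+\epsilon)\bigr)\;,
\end{equation*}
which rearranges to
\begin{equation*}
\lambda+1 \geq \frac{c-2(\delta+\epsilon)}{3(\|f\|_2+2\pi(\delta+\epsilon))}\,\nu(f,c)\;.
\end{equation*}
Letting $\epsilon\to 0$ yields \eqref{eq-approx}. (If $c-2\delta\leq 0$ the claimed inequality is trivial since its right-hand side is $\leq 0$ while $\lambda+1\geq 1$.) The main technical point to be careful about is the Morse perturbation step: one must ensure that replacing $p$ by a nearby Morse $q$ in $\calT_\lambda$ can be done while keeping $\|h-q\|_0$ as close to $\delta$ as desired and keeping $q\in\calT_\lambda$ — this is where a short density/limiting argument is needed, since $\nu$ and $\ell$ are only defined for Morse functions. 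Everything else is a routine chain of the already-established inequalities \eqref{eq-nu-perturb}, \eqref{eq-nu-ell}, \eqref{eq-p-lam} and Theorem \ref{thm- length-bars-torus}.
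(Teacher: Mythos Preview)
Your proof is correct and follows essentially the same route as the paper's: compare $f\circ\phi$ with a trigonometric polynomial in $\calT_\lambda$, use \eqref{eq-nu-perturb} to pass from $\nu(f,c)$ to $\nu$ of the polynomial, then invoke the bound \eqref{eq-nu-lap-p} (which is exactly your chain \eqref{eq-nu-ell} $\to$ Theorem~\ref{thm- length-bars-torus} $\to$ \eqref{eq-p-lam}) together with $\|f\circ\phi\|_2=\|f\|_2$. You are in fact more careful than the paper about the infimum (your $\epsilon$-argument and the trivial case $c\le 2\delta$) and about the Morse perturbation of $p$, both of which the paper glosses over.
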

\begin{proof}
Take a Morse trigonometric polynomial $p \in \calT_\lambda$ with
\begin{equation}
\label{eq-fp-vsp}
||f \circ \phi -p||_0 = \delta\;,
\end{equation}
for some area-preserving diffeomorphism $\phi \in \calD$ of the torus. By \eqref{eq-nu-perturb},
$$\nu(f, c) \leq \nu(p,c-2\delta)\;.$$
Furthermore, since  $||f \circ \phi||_2 = ||f||_2$ (here we use that $\phi$ preserves area),
\eqref{eq-fp-vsp} yields
$$||p||_2 \leq ||f||_2 + 2\pi\delta\;.$$ Therefore, by
\eqref{eq-nu-lap-p}
$$ \nu(f,c) \leq   \frac{3( ||f||_2 + 2\pi\delta)(1+\lambda)}{c-2\delta}\;.$$
This yields \eqref{eq-approx}\;.
\end{proof}

\medskip\noindent
\begin{exm}\label{exm-approx-test}{\rm Let us test this result for the function $f(x) = \sin (nx_1) + \sin (nx_2)$
considered in Example \ref{exm-trig-1} (where this function was called $p$).
Here we address the question what is the minimal possible $\lambda$
such that $\delta_\lambda(f) \leq \delta$. Take $c <2$ and arbitrarily close to $2$. By using calculations from Example \ref{exm-trig-1}, we see that $\nu(f,c) = 2n^2-2$ and $||f||_2 =2\pi$. Substituting $\delta_\lambda(f)=\delta$
into \eqref{eq-approx} one readily shows that if $\delta < 1$,
$\lambda+1 \geq k(\delta)(n^2-1)$, where $k(\delta)$ is a numerical constant depending on $\delta$.
We also see that inequality \eqref{eq-approx} does not yield any non-trivial constraint on $\lambda$ if we take $\delta=1$.
}
\end{exm}

\medskip

We conclude this chapter with an application of approximation theory to barcodes
observed in \cite{Polterovich2-Stojisavljevic}. In \cite{Yudin}
Yudin proved a lower bound for the $C^0$-distance $\text{dist}_{C^0}(f, \calT_\lambda)$ from $f$ to $\calT_\lambda$ in terms of the modulus of continuity of $f$.
Recall that the latter depends on a choice of the scale $r>0$ and is defined as
$$\omega_1(f,r)=\sup_{|t|\leq r} \max_x |f(x+t)-f(x)|\;.$$
Yudin's theorem states that
\begin{equation}
\label{eq-Yudin-1}
\text{dist}_{C^0}(f, \calT_\lambda) \leq 4 \omega_1 \bigg(f,\frac{C_0}{\sqrt{\lambda}} \bigg)\;,
\end{equation}
where the constant $C_0$ is given by $C_0=\sqrt{\Lambda_1(D^2(\frac{1}{2}))}$, where $\Lambda_1(D^2(\frac{1}{2}))$ is the first Dirichlet eigenvalue of $\Delta$ inside the 2-dimensional disk $D^2(\frac{1}{2})$ of radius $\frac{1}{2}$.
Moreover, the trigonometric polynomial $p \in \calT_\lambda$ with
$$||f-p||_0 \leq 4 \omega_1 \bigg(f,\frac{C_0}{\sqrt{\lambda}} \bigg)$$
can be chosen with
\begin{equation}
\label{eq-Yudin-2}
||p||_2 \leq ||f||_2\;,
\end{equation}
see \cite{Polterovich2-Stojisavljevic}.

Recall that in the range of $f$, i.e., in the interval $[\min f,\max f]$, the barcode
consists of $\nu(f)$ finite bars and $3$ infinite bars corresponding to the $0$- and the $1$-dimensional
homology of $\T^2$. Introduce also the average length of  bars in the range of $f$,
$$\ell_{av}(f):= \frac{\ell(f)}{\nu(f)+3}\;.$$

Methods of approximation theory yield that, interestingly enough,
the average bar length of a Morse function $f$ on a flat torus can controlled by the  $L_2$-norm of $f$ and the  modulus of continuity of $f$ on the scale $1/\sqrt{\nu(f)}$:

\begin{thm}
\label{modulcont}
There exist constants $C_0, C_1,C_2>0$ such that for any Morse function $f$ on $\T^2$
with $\nu(f) >0$
\begin{equation}
\label{averagebar}
\ell_{av}(f) \le C_1\|f\|_2 + C_2\omega_1\left(f,\frac{C_0}{\sqrt{\nu(f)}}\right).
\end{equation}
\end{thm}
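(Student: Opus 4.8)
The plan is to combine the length bound from Theorem \ref{thm- length-bars-torus} with Yudin's approximation theorem \eqref{eq-Yudin-1}--\eqref{eq-Yudin-2}, using the triangle inequality for the length functional \eqref{eq-ell-ell} to transfer information from $f$ to a well-chosen trigonometric polynomial approximating it. The key point is that $\ell$ is controlled by $W^{2,2}$-data, and a trigonometric polynomial $p\in\calT_\lambda$ satisfies the Bernstein-type inequality \eqref{eq-p-lam}, so $\|p\|_2 + \|\Delta p\|_2 \leq (1+\lambda)\|p\|_2$, which is finite and controlled once $\|p\|_2$ is controlled. The choice $\lambda \approx \nu(f)$ is what produces the scale $1/\sqrt{\nu(f)}$ inside the modulus of continuity.

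\medskip\noindent\textbf{Step 1: choose the frequency.} Set $\lambda = \lceil \nu(f)\rceil$ (or the least integer $\geq \nu(f)$), and apply Yudin's theorem: there exists $p\in\calT_\lambda$ with $\|f-p\|_0 \leq 4\omega_1(f, C_0/\sqrt{\lambda})$ and, by \eqref{eq-Yudin-2}, $\|p\|_2 \leq \|f\|_2$. Since $\lambda \geq \nu(f)$ we have $C_0/\sqrt\lambda \leq C_0/\sqrt{\nu(f)}$, and because $r\mapsto\omega_1(f,r)$ is monotone, $\omega_1(f,C_0/\sqrt\lambda) \leq \omega_1(f,C_0/\sqrt{\nu(f)})$. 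Write $\delta := \|f-p\|_0 \leq 4\omega_1(f,C_0/\sqrt{\nu(f)})$. A minor technical nuisance: $p$ need not be Morse, so Theorem \ref{thm- length-bars-torus} does not literally apply to $p$; I would handle this by perturbing $p$ slightly to a Morse $\tilde p$ with $\|p-\tilde p\|_0$ and $\|\Delta p - \Delta\tilde p\|_2$ arbitrarily small, or alternatively by noting that $\ell$ and $\nu$ and \eqref{eq-ell-ell} extend to $C^2$-functions with isolated critical values by approximation; the inequality being proven is not strict, so this costs nothing in the limit.

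\medskip\noindent\textbf{Step 2: bound $\ell(p)$.} By Theorem \ref{thm- length-bars-torus} applied to (a Morse perturbation of) $p$, together with \eqref{eq-p-lam},
\begin{equation*}
\ell(p) \leq 3(\|p\|_2 + \|\Delta p\|_2) \leq 3(1+\lambda)\|p\|_2 \leq 3(1+\lambda)\|f\|_2 \leq 6\lambda\|f\|_2,
\end{equation*}
using $\lambda\geq\nu(f)>0$ so $1+\lambda\leq 2\lambda$.

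\medskip\noindent\textbf{Step 3: transfer back to $f$.} Apply \eqref{eq-ell-ell} with the roles $f,h := f,p$: $\ell(f) \leq \ell(p) + (2\nu(p)+\zeta(\T^2))\|f-p\|_0 = \ell(p) + (2\nu(p)+4)\delta$. Now $p\in\calT_\lambda$ is (a perturbation of) a trigonometric polynomial with spectrum in the disk of radius $\sqrt\lambda$; by the classical Bezout/real-algebraic count (as used in Example \ref{exm-trig-1} and in the proof of the alternance principle via Proposition \ref{pro-alawein}), the number of critical points of such a $p$, and hence $\nu(p)$, is $O(\lambda)$ — say $\nu(p) \leq c_3\lambda$ for an absolute constant $c_3$. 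Then
\begin{equation*}
\ell(f) \leq 6\lambda\|f\|_2 + (2c_3\lambda+4)\delta \leq c_4\lambda\big(\|f\|_2 + \delta\big)
\end{equation*}
for an absolute constant $c_4$. Since $\nu(f) \leq \lambda \leq \nu(f)+1 \leq 2\nu(f)$ (as $\nu(f)\geq 1$), we have $\lambda \leq 2\nu(f) \leq 2(\nu(f)+3)$, hence
\begin{equation*}
\ell_{av}(f) = \frac{\ell(f)}{\nu(f)+3} \leq \frac{c_4\lambda(\|f\|_2+\delta)}{\nu(f)+3} \leq 2c_4(\|f\|_2 + \delta) \leq 2c_4\|f\|_2 + 8c_4\,\omega_1\!\left(f,\frac{C_0}{\sqrt{\nu(f)}}\right).
\end{equation*}
Setting $C_1 = 2c_4$ and $C_2 = 8c_4$ (and keeping Yudin's $C_0$) gives \eqref{averagebar}.

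\medskip\noindent\textbf{Main obstacle.} The substantive point is the uniform bound $\nu(p) = O(\lambda)$ on the number of finite bars of a trigonometric polynomial of frequency $\leq\sqrt\lambda$ on $\T^2$: one must count critical points (or rather finite bars, which by Corollary \ref{cor-number-bars} is at most half the relevant complex dimension in a Morse-theoretic model, or directly by a Bezout-type estimate on the system $\partial_1 p = \partial_2 p = 0$) and see it grows linearly in $\lambda$, matching the heuristic that $\calT_\lambda$ has dimension $\sim\lambda$. The other place requiring care is the Morse-regularization of $p$ noted in Step 1, but that is routine. Everything else is bookkeeping with the already-established inequalities \eqref{eq-ell-ell}, \eqref{eq-p-lam}, Theorem \ref{thm- length-bars-torus}, and Yudin's theorem.
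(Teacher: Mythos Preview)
Your overall strategy matches the paper's proof exactly, but you have misread inequality \eqref{eq-ell-ell} in Step 3, and this misreading is what creates your ``Main obstacle''. The inequality states
\[
\ell(f) - \ell(h) \le (2\nu(f) + \zeta(M))\|f-h\|_0,
\]
so with $h = p$ you obtain $\ell(f) \le \ell(p) + (2\nu(f)+4)\delta$, with $\nu(f)$ on the right, not $\nu(p)$. (The version with $\nu(p)$ is in fact false: take $p$ with no finite bars and $f = p + \text{tiny wiggles}$ producing arbitrarily many bars of length $<2\delta$; then $\ell(f)-\ell(p)$ can be as large as $2\delta\nu(f)$, which is unbounded in terms of $\nu(p)$.)

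Once you use the correct form, everything collapses: you have
\[
\ell(f) \le 3(1+\lambda)\|f\|_2 + (2\nu(f)+4)\delta,
\]
and after setting $\lambda = \nu(f)$ and dividing by $\nu(f)+3$, both coefficients are bounded by absolute constants. This is precisely the paper's argument. In particular, the Bezout-type estimate $\nu(p) = O(\lambda)$ that you flag as the substantive obstacle is never needed.
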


\begin{proof}
Fix $\lambda >0$ which will be chosen later.
By \eqref{eq-Yudin-1} and \eqref{eq-Yudin-2},
there exists a Morse trigonometric polynomial $p \in \calT_\lambda$ with $||f -p||_0 \leq \delta$
and $||p||_2 \leq ||f||_2$, where $\delta = 4\omega_1(f, C_0/\sqrt{\lambda})$.
Applying \eqref{eq-ell-ell} we get that
$$
\ell(f) \leq \ell(p) + (2\nu(f) + 4)\delta\;.
$$
By \eqref{eq-length-bars-torus} and \eqref{eq-p-lam}
$$\ell(p) \leq \sqrt{\frac{2}{\pi}} \cdot (1+\lambda)||p||_2\;.$$
Combining all these inequalities together, we get that
$$
\ell(f) \leq \sqrt{\frac{2}{\pi}} \cdot (1+\lambda)||f||_2 + (2\nu(f) + 4)\delta\;.
$$
Dividing both sides of this inequality by $\nu(f)+3$ and setting $\lambda = \nu(f)$,
we get \eqref{averagebar} for suitable $C_1,C_2 >0$.
\end{proof}

\part{Persistent homology in symplectic geometry} \label{part: applications_symp}

\chapter{A concise introduction to symplectic geometry} \label{chp-intro-sg}

\section{Hamiltonian dynamics} \label{HD}
The origins of symplectic geometry go back to classical mechanics. Consider the motion
of a mass $m$ particle in the linear space $\R^n$ equipped with the coordinate $q = (q_1, ..., q_n)$
in the field of a potential force.  According to Newton's second law, the equation of motion is given by
\begin{equation} \label{newton}
m \cdot \ddot{q}(t) = -\partial V/\partial q\;,
\end{equation}
where $V(q,t)$ is a time-dependent potential function.
This equation very rarely admits an explicit analytic solution. Symplectic geometry provides a mathematical language
which enables one to develop a qualitative theory of dynamical systems of classical mechanics. Such a theory starts
with the following ingenious construction. Introduce a new {\it momentum} variable $p(t) = m \cdot \dot q(t)$. For a function $H(q,p) = \frac{1}{2m} |p|^2 + V(q)$, usually called total energy function,
\[ \frac{\partial H}{\partial p_i} = \frac{p_i}{m} \,\,\,\,\mbox{and}\,\,\,\, \frac{\partial H}{\partial q_i} = \frac{\partial V}{\partial q_i}. \]
Therefore the second order differential equation (\ref{newton}) can be transferred into a system of first order differential equations
\begin{equation} \label{ham-equ}
{\dot q_i (t) = {\displaystyle\frac{\partial H}{\partial p_i}}, \,\,\dot p_i(t) = {\displaystyle{- \frac{\partial H}{\partial q_i}}}}.
\end{equation}
This system is called {\it a Hamiltonian system}.   The coordinates $(q, p)$ form a vector space $\R^{2n}$, which is called {\it phase space}. By the Hamiltonian equations, in the phase space $\R^{2n}$, the orbit $(q(t), p(t))$ of a moving particle is the integral trajectory of the vector field,
\begin{equation} \label{s-hv}
X_H(q(t), p(t)) = \sum_{i=1}^n \left( \frac{\partial H}{\partial p_i} \frac{\partial}{\partial q_i} - \frac{\partial H}{\partial q_i} \frac{\partial}{\partial p_i} \right),
\end{equation}
called the {\it Hamiltonian vector field}. Consider a (standard) 2-form $\omega_{std} = \sum_{i=1}^n dp_i \wedge dq_i$. Observe the relation $\iota_{X_H} \omega_{std} = -dH$. Therefore, we obtain the following remarkable geometric property of the flow of $X_H$, denoted as $\phi_H^t$ and called the {\it Hamiltonian flow}.

\begin{theorem} \label{LT2} The Hamiltonian flow $\phi_H^t$ preserves $\omega_{std}$. \end{theorem}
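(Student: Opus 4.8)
The plan is to show $\frac{d}{dt}(\phi_H^t)^*\omega_{std} = 0$ via Cartan's magic formula, which reduces the statement to the closedness of $\omega_{std}$ together with the defining relation $\iota_{X_H}\omega_{std} = -dH$ established just above.

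First I would recall the standard identity for the Lie derivative of a form along the flow of a vector field: for any time-dependent (here autonomous) vector field $X$ with flow $\phi^t$ and any differential form $\alpha$,
\[
\frac{d}{dt}(\phi^t)^*\alpha = (\phi^t)^*\mathcal{L}_X \alpha\;.
\]
Applying this with $X = X_H$ and $\alpha = \omega_{std}$, it suffices to prove $\mathcal{L}_{X_H}\omega_{std} = 0$. Then I would invoke Cartan's formula $\mathcal{L}_{X_H}\omega_{std} = d(\iota_{X_H}\omega_{std}) + \iota_{X_H}(d\omega_{std})$. The second term vanishes because $\omega_{std} = \sum_i dp_i \wedge dq_i$ is exact (equal to $d(\sum_i p_i\,dq_i)$), hence closed, so $d\omega_{std} = 0$. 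For the first term, I would use the relation $\iota_{X_H}\omega_{std} = -dH$ noted in the text, so $d(\iota_{X_H}\omega_{std}) = -d(dH) = 0$. Combining, $\mathcal{L}_{X_H}\omega_{std} = 0$, and therefore $(\phi^t_H)^*\omega_{std}$ is independent of $t$; since $\phi^0_H = \mathrm{id}$, we get $(\phi^t_H)^*\omega_{std} = \omega_{std}$ for all $t$.

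I would also include a brief verification of the two input facts in this concrete setting, to keep the argument self-contained: a direct computation of $\iota_{X_H}\omega_{std}$ from \eqref{s-hv}, namely $\iota_{X_H}\omega_{std} = \sum_i \big(\tfrac{\partial H}{\partial p_i}\,\iota_{\partial/\partial q_i} - \tfrac{\partial H}{\partial q_i}\,\iota_{\partial/\partial p_i}\big)\sum_j dp_j \wedge dq_j = -\sum_i\big(\tfrac{\partial H}{\partial q_i}dq_i + \tfrac{\partial H}{\partial p_i}dp_i\big) = -dH$, and the observation $d\omega_{std}=0$. I do not foresee a serious obstacle here: the only mild subtlety is making sure the flow $\phi_H^t$ is well-defined (this may require $H$ to be complete, or one works locally in time on the domain of definition), but for the purposes of this introductory chapter I would simply state the identity on the maximal domain where the flow exists. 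The ``hard part,'' such as it is, is purely bookkeeping: getting the signs in the contraction $\iota_{X_H}\omega_{std}$ to match the conventions $\omega_{std} = \sum dp_i\wedge dq_i$ used in the text.
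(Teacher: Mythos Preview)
Your proposal is correct and follows exactly the paper's approach: the paper's proof is precisely Cartan's formula $\mathcal{L}_{X_H}\omega_{std} = d\iota_{X_H}\omega_{std} + \iota_{X_H}d\omega_{std} = d(-dH) + 0 = 0$. Your write-up is simply a more detailed version of the same argument, with the flow identity and the contraction computation spelled out.
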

\begin{proof} From Cartan's formula
\begin{align*}
\mathcal L_{X_H} \omega_{std}  = d \iota_{X_H} \omega_{std} + \iota_{X_H} d\omega_{std} = d (-dH) + 0  = 0.
\end{align*}
Thus we get the conclusion.
\end{proof}

Notice that two properties of $\omega_{std}$ are necessary here - one is that $\omega_{std}$ is closed, i.e. $d\omega_{std} =0$ and the other is that $\omega_{std}$ is non-degenerate, i.e. the top wedge power $\omega_{std}^n$ is a volume form. This two-form $\omega_{std}$ is called the standard {\it symplectic form} on $\R^{2n}$. Generalizing this local model, we can also consider symplectic forms, i.e. closed non-degenerate differential $2$-forms on manifolds. This geometric structure is studied within symplectic geometry.

As an immediate consequence of Theorem \ref{LT2} we deduce that Hamiltonian flows are {\it conservative}.
\begin{theorem} \label{LT} (Liouville Theorem) The Hamiltonian flow $\phi_H^t$ preserves
the standard volume form ${\rm Vol}  = dp_1 \wedge dq_1 \wedge ... \wedge dp_n \wedge dq_n$ on the
phase space.
\end{theorem}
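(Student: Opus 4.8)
The plan is to deduce Liouville's Theorem as an immediate corollary of Theorem \ref{LT2}, which states that the Hamiltonian flow $\phi_H^t$ preserves the standard symplectic form $\omega_{std}$. The key observation is that the standard volume form on the phase space $\R^{2n}$ is, up to a sign, the $n$-th wedge power of $\omega_{std}$. First I would compute $\omega_{std}^n$ explicitly: since $\omega_{std} = \sum_{i=1}^n dp_i \wedge dq_i$, expanding the wedge power and using that $dp_i \wedge dq_i$ are $2$-forms (hence commute with each other under $\wedge$) while terms with a repeated index vanish, one gets $\omega_{std}^n = n!\, dp_1 \wedge dq_1 \wedge \cdots \wedge dp_n \wedge dq_n = n!\, {\rm Vol}$.

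Next I would invoke naturality of the pullback with respect to the wedge product: for any diffeomorphism $\psi$ one has $\psi^*(\alpha \wedge \beta) = \psi^*\alpha \wedge \psi^*\beta$, and hence $\psi^*(\omega_{std}^n) = (\psi^*\omega_{std})^n$. Applying this with $\psi = \phi_H^t$ and using Theorem \ref{LT2}, which gives $(\phi_H^t)^*\omega_{std} = \omega_{std}$, we obtain
\[
(\phi_H^t)^* {\rm Vol} = \frac{1}{n!}(\phi_H^t)^*(\omega_{std}^n) = \frac{1}{n!}\left((\phi_H^t)^*\omega_{std}\right)^n = \frac{1}{n!}\,\omega_{std}^n = {\rm Vol}\;,
\]
which is exactly the assertion that $\phi_H^t$ preserves ${\rm Vol}$.

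I do not anticipate a genuine obstacle here: the proof is a one-line consequence of Theorem \ref{LT2} together with the elementary identity $\omega_{std}^n = n!\,{\rm Vol}$. The only point requiring a modicum of care is the combinatorial bookkeeping in the expansion of $\omega_{std}^n$ — making sure the cross terms with repeated indices vanish (because a $1$-form wedged with itself is zero) and correctly counting the $n!$ orderings of the distinct factors $dp_i \wedge dq_i$. This is routine and I would not grind through it in full; it suffices to state the identity and note it follows by expanding the multinomial and discarding degenerate terms. Alternatively, one could give an even more direct argument via Cartan's formula as in Theorem \ref{LT2}: $\mathcal L_{X_H}{\rm Vol} = \frac{1}{n!}\mathcal L_{X_H}(\omega_{std}^n) = \frac{1}{n!}\sum_{k} \omega_{std}^{k-1}\wedge(\mathcal L_{X_H}\omega_{std})\wedge \omega_{std}^{n-k} = 0$ since $\mathcal L_{X_H}\omega_{std} = 0$, but the pullback argument above is cleaner and reuses Theorem \ref{LT2} as a black box.
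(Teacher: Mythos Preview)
Your proposal is correct and matches the paper's approach exactly: the paper simply notes that ${\rm Vol} = \omega_{std}^n/n!$ and deduces the result from Theorem~\ref{LT2}. Your write-up is more detailed than the paper's one-line proof, but the idea is identical.
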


\medskip
\noindent
Indeed,  ${\rm Vol} = \omega_{std}^n/n!$.

\section{Symplectic structures on manifolds} \label{sec-2}

\begin{dfn} Let $M^{2n}$ be an even-dimensional manifold. A {\it symplectic structure} on $M^{2n}$ is a non-degenerate and closed two-form $\omega$, i.e. $\omega^n$ is a volume form of $M^{2n}$ and $d\omega =0$. The pair $(M^{2n}, \omega)$ is called a symplectic manifold.
\end{dfn}

\begin{ex} \label{ex-sym-mfd} Here are some examples of symplectic manifolds.
\begin{itemize}
\item[(0)] Any area form provides a symplectic structure on an oriented surface.
\item[(1)] As we have seen in the previous section, $(\R^{2n}, \omega_{std})$ is a symplectic manifold. In fact, Darboux Theorem (see Section 3.2 in \cite{MS98}) says any symplectic manifold $(M, \omega)$ is locally modeled by $(\R^{2n}, \omega_{std})$. Explicitly, for any $x \in M$, there exists a neighborhood $U$ of $x$ and map $\phi: U \to \R^{2n}$ such that $\phi^*\omega_{std} = \omega$.
\item[(2)] There is a canonical symplectic structure $\omega$ on the cotangent bundle $T^*M$ of any manifold $M$. Namely, consider the following $1$-form $\lambda$ (called the {\it Liouville form}). For any point $(q,p) \in T^*M$ and $v \in T_{(q,p)}T^*M$, set $\lambda_{(q,p)}(v) = p (\pi_* v)$, where $\pi: T^*M \to M$ is the canonical projection. Then define $\omega = d \lambda$. One can readily check that in $(q,p)$-coordinates $\lambda = pdq$ and $\omega=dp \wedge dq$.
\item[(3)] On the complex projective space $\C P^n$ (any $n \geq 1$), there is a famous {\it Fubini-Study symplectic structure}. First, for $\C^n$ with coordinates $z = (z_1, ..., z_n)$, consider the $2$-form $\omega_{FS} = \frac{\sqrt{-1}}{2} \partial \overline{\partial} \ln (|z|^2 +1)$. Take local charts of $\C P^n$ where each chart $U_i = \{[z_0, ..., z_n] \in \C P^n \,| \, z_i \neq 0\}$. Because $U_i$ can be identified with $\C^n$, Fubini-Study structure on $\C P^n$ is given by gluing $\omega_{FS}$ over each $U_i$.
 \item[(4)] Every complex submanifold of $\C P^n$ is symplectic with respect to the induced Fubini-Study form.
 Non-degeneracy of the restriction of $\omega_{FS}$ to a complex submanifold follows from the fact that bilinear form
 $\omega_{FS}(\xi,J\eta)$ is a Riemannian metric on $\C P^n$ .  Here $J$ stands for the complex structure on $\C P^n$.
   \item[(5)] If $(M_1, \omega_1)$ and $(M_2, \omega_2)$ are symplectic manifolds, then $(M_1 \times M_2, \pi_1^*\omega_1 \oplus (- \pi_2^* \omega_2))$ is also a symplectic manifold, where $\pi_i: M_1 \times M_2 \to M_i$ is a projection.
\end{itemize}
\end{ex}

\begin{dfn} \label{dfn-sympo} A symplectomorphism $\phi: (M_1, \omega_1) \to (M_2, \omega_2)$ is a diffeomorphism such that $\phi^*\omega_2 = \omega_1$. Given a symplectic manifold $(M, \omega)$, denote the group of symplectomorphisms on $M$ as $\Symp(M, \omega)$. \end{dfn}

\begin{remark} Every symplectomorphism $\phi: (M_1, \omega_1) \to (M_2, \omega_2)$ between $2n$-dimensional
symplectic manifolds is volume preserving
with respect to the canonical volume forms $\omega_i^n/n!$ on $M_i$, $i=1,2$.
In general, however, volume preserving diffeomorphisms exhibit more flexible behaviour than symplectomorphisms.
This is highlighted, for instance, by {\it Gromov's non-squeezing theorem}  \cite{Gro85}. We discuss this result
in Section \ref{sec-6}, and prove its version in Section \ref{sec-app} below.   \end{remark}

\section{Group of Hamiltonian diffeomorphisms}
The discussion in Section \ref{HD} can be generalized to the following definition.

\begin{dfn} Let $(M, \omega)$ be a symplectic manifold. Given a compactly supported smooth function $H: M \times [0,1] \to \R$, define the {\it Hamiltonian vector field} $X_H$ as the solution of the equation $\iota_{X_H} \omega = -dH$. The flow $\phi_H^t$ of $X_H$ is called a {\it Hamiltonian flow}. The time-1 map of this flow, $\phi = \phi^1_H$, is called a {\it Hamiltonian diffeomorphism}. Collection of all the Hamiltonian diffeomorphisms on $(M, \omega)$ is denoted as $\Ham(M, \omega)$. \end{dfn}

\begin{ex} Take $S^2 = \{(x,y,z) \in \R^3 \,| \, x^2 + y^2 + z^2 = 1\}$ and let $H: (S^2, \omega_{area}) \to \R$ be $H(x,y,z) = z$. It's easy to check that the Hamiltonian flow generated by $H$ is just rotation along $z$-axis. \end{ex}

\begin{remark} Note that the Hamiltonian vector field $X_H$ does not change when one adds to a Hamiltonian function $H_t$ a time-dependent constant. In order to get rid of this ambiguity, we {\it normalize} $H_t$ as follows.
When $M$ is open, we suppose that $H_t$ is compactly supported and  there exists a compact subset of $M$ containing the support of $H_t$ simultaneously for all $t$. When $M$ is closed, we assume that $H_t$ has zero mean with respect to the canonical volume form, i.e. $\int_M H_t \omega^n/n! = 0$ for all $t$.  In this way, each compactly supported Hamiltonian flow is generated by the unique normalized function.\end{remark}

Let $(M, \omega)$ be a closed symplectic manifold. By the same argument as in the proof of Theorem \ref{LT}, any Hamiltonian diffeomorphism $\phi$ is an element in $\Symp(M, \omega)$. In fact, $\phi \in \Symp_0(M, \omega)$, the identity component of $\Symp(M, \omega)$. The following simple example shows that in general $\Symp(M, \omega)$ is strictly larger than $\Ham(M, \omega)$.

\begin{ex} Consider $\mathbb T^2 = \R^2/\Z^2$ with symplectic structure induced from $\R^2$ and its coordinate is $(q,p)$ mod $1$. Diffeomorphism $\psi^t (q, p) = (q + t, p)$ lies in $\psi^t \in \Symp_0(\mathbb T^2, \omega_{std})$ for every $t$. However, one can prove that $\psi^t \notin \Ham(\mathbb T^2, \omega_{std})$ when $t \notin \Z$, see Section 14.1 in \cite{Pol01}. \end{ex}

The following proposition shows that $\Ham(M, \omega)$ has a group structure under compositions.

\begin{prop} (Proposition 1.4.D in \cite{Pol01}) \label{comp} Let $\phi, \psi \in \Ham(M, \omega)$ be Hamiltonian diffeomorphisms generated by normalized time-dependent Hamiltonian functions $F_t$ and $G_t$, and
let $\phi^t$ be the Hamiltonian flow of $F_t$. Then
\begin{itemize}
\item[(1)] $\phi \circ \psi$ is a Hamiltonian diffeomorphism generated by $F_t(x) + G_t((\phi^t)^{-1}(x))$;
\item[(2)] $\phi^{-1}$ is a Hamiltonian diffeomorphism generated by $-F_t((\phi^t)^{-1}(x))$.
\end{itemize}
\end{prop}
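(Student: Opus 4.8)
The plan is to verify Proposition \ref{comp} by a direct computation, differentiating the relevant flows and applying the defining relation $\iota_{X_H}\omega = -dH$. The whole argument rests on one basic fact: a time-dependent Hamiltonian $H_t$ generates the vector field $X_{H_t}$ determined by $\iota_{X_{H_t}}\omega = -dH_t$, and conversely the generating Hamiltonian of a given Hamiltonian isotopy $\{\psi^t\}$ is recovered from $\frac{d}{dt}\psi^t = X_t\circ\psi^t$ via $\iota_{X_t}\omega = -dK_t$.

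For item (1), I would let $\phi^t$ and $\psi^t$ be the Hamiltonian flows of $F_t$ and $G_t$ respectively, and consider the composed isotopy $\rho^t := \phi^t\circ\psi^t$, which satisfies $\rho^0 = \mathds{1}$ and $\rho^1 = \phi\circ\psi$. Differentiating in $t$ by the product rule,
\[
\frac{d}{dt}\rho^t(x) = \dot\phi^t(\psi^t(x)) + (d\phi^t)_{\psi^t(x)}\bigl(\dot\psi^t(x)\bigr) = X_{F_t}(\rho^t(x)) + (d\phi^t)\bigl(X_{G_t}(\psi^t(x))\bigr).
\]
The second term is $(\phi^t)_* X_{G_t}$ evaluated at $\rho^t(x)$, and since $\phi^t$ is a symplectomorphism (Theorem \ref{LT2}), pushing forward the relation $\iota_{X_{G_t}}\omega = -dG_t$ gives $\iota_{(\phi^t)_*X_{G_t}}\omega = -d(G_t\circ(\phi^t)^{-1})$. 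Hence $\rho^t$ is the flow of the vector field dual to $-d\bigl(F_t + G_t\circ(\phi^t)^{-1}\bigr)$, i.e. it is generated by $F_t(x) + G_t((\phi^t)^{-1}(x))$, as claimed. For item (2), I would differentiate the identity $\phi^t\circ(\phi^t)^{-1} = \mathds{1}$ to get $\frac{d}{dt}(\phi^t)^{-1}(x) = -(d(\phi^t)^{-1})\bigl(X_{F_t}(x)\bigr) = -\bigl((\phi^t)^{-1}\bigr)_* X_{F_t}$ at the point $(\phi^t)^{-1}(x)$, and then apply the same pushforward-of-the-defining-relation computation to identify the generating Hamiltonian as $-F_t((\phi^t)^{-1}(x))$.

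Finally I would check that the two asserted generating functions are correctly normalized, so that they are genuinely \emph{the} Hamiltonians in the sense fixed in the preceding remark. In the open (compactly supported) case this is immediate since $(\phi^t)^{-1}$ has compact support, so precomposition preserves compact support. In the closed case one uses that $\phi^t$ preserves the volume form $\omega^n/n!$, so $\int_M G_t\bigl((\phi^t)^{-1}(x)\bigr)\,\omega^n/n! = \int_M G_t\,\omega^n/n! = 0$, and likewise for $-F_t\circ(\phi^t)^{-1}$; addition of two zero-mean functions is again zero-mean. I do not expect a serious obstacle here; the only point requiring a little care is keeping straight which point each vector field is evaluated at after the chain rule, and correctly invoking that each $\phi^t$ (not merely $\phi^1$) is a symplectomorphism so that the pushforward of a Hamiltonian vector field is again Hamiltonian with the precomposed Hamiltonian. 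That last fact is exactly what makes the formula in (1) come out with $G_t\circ(\phi^t)^{-1}$ rather than something more complicated, so I would state and prove it as a one-line lemma before assembling the two parts.
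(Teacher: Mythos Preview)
The paper does not actually prove this proposition: the very next item is an exercise asking the reader to prove it. So there is no paper argument to compare against; your outline is exactly the standard direct computation one expects, and your treatment of item (1) and of the normalization issue is correct.

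There is, however, a slip in the final identification for item (2). You correctly arrive at
\[
\frac{d}{dt}(\phi^t)^{-1}(x) \;=\; -\bigl((\phi^t)^{-1}\bigr)_* X_{F_t}\Big|_{(\phi^t)^{-1}(x)}.
\]
Now apply your own lemma $\psi_* X_H = X_{H\circ\psi^{-1}}$ with $\psi=(\phi^t)^{-1}$: the right-hand side becomes $-X_{F_t\circ\phi^t}$, so the isotopy $t\mapsto(\phi^t)^{-1}$ is generated by $-F_t\circ\phi^t$, not by $-F_t\circ(\phi^t)^{-1}$. (The statement printed in the text carries the same slip.) A one-line self-consistency check: with $G_t=-F_t\circ\phi^t$, item (1) gives the generator of $\phi^t\circ(\phi^t)^{-1}=\mathds{1}$ as
\[
F_t + G_t\circ(\phi^t)^{-1} \;=\; F_t - F_t \;=\; 0,
\]
whereas plugging in $G_t=-F_t\circ(\phi^t)^{-1}$ would give $F_t - F_t\circ(\phi^t)^{-2}$, which is not identically zero. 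Your normalization argument applies verbatim to the corrected formula $-F_t\circ\phi^t$.
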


\begin{exercise}
\begin{itemize}
\item[{(i)}] Prove Proposition \ref{comp}.
\item[{(ii)}] Suppose $\phi \in \Ham(M, \omega)$ is generated by function $H_t$. Prove that for any $\theta \in \Symp(M, \omega)$, $\theta^{-1} \circ \phi \circ \theta$ is a Hamiltonian diffeomorphism generated by $H_t \circ \theta$. Deduce that $\Ham(M, \omega)$ is a normal subgroup of $\Symp(M, \omega)$.
    \end{itemize}
     \end{exercise}

The following  fundamental properties on $\Ham(M, \omega)$ were obtained by A.~Banyaga \cite{Ban78}.

\begin{theorem} \label{path} Let $\{\gamma_t\}_{t \in [0,1]}$ be a smooth path in $\Ham(M, \omega)$. Then there exists a (time-dependent) function $F: M \times [0,1] \to \R$ such that for any $x \in M$ and $t \in [0,1]$,
\[ \frac{d}{dt} (\gamma_t(x)) = X_{F_t}(\gamma_t(x)). \]
\end{theorem}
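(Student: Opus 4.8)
The statement asserts that any smooth path of Hamiltonian diffeomorphisms starting at the identity is itself a Hamiltonian isotopy, i.e.\ is generated by a (time-dependent) Hamiltonian. The natural approach is to produce the generating function directly from the time-derivative of the path. First I would define, for each $t$, the vector field $Y_t$ on $M$ by setting $Y_t(\gamma_t(x)) = \frac{d}{dt}(\gamma_t(x))$; since $\gamma_t$ is a diffeomorphism for each $t$ and the path is smooth, $Y_t$ is a well-defined smooth (time-dependent, compactly supported when $M$ is open) vector field, and $\{\gamma_t\}$ is precisely its flow. The whole content of the theorem is then the claim that $Y_t$ is a \emph{Hamiltonian} vector field, i.e.\ that the $1$-form $\iota_{Y_t}\omega$ is exact; once this is known, one takes $F_t$ to be a primitive (normalized as in the earlier remark, e.g.\ by prescribing zero mean on a closed $M$, which pins it down uniquely) and the conclusion follows.

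The key step is therefore to show $\iota_{Y_t}\omega$ is exact. I would first observe that it is \emph{closed}: since each $\gamma_t$ is a symplectomorphism, $\gamma_t^*\omega = \omega$ for all $t$, and differentiating this identity in $t$ gives $0 = \frac{d}{dt}\gamma_t^*\omega = \gamma_t^*(\mathcal L_{Y_t}\omega)$, hence $\mathcal L_{Y_t}\omega = 0$; by Cartan's formula $\mathcal L_{Y_t}\omega = d\iota_{Y_t}\omega + \iota_{Y_t}d\omega = d\iota_{Y_t}\omega$ (using $d\omega = 0$), so $\iota_{Y_t}\omega$ is closed. This already shows $\gamma_t$ is a \emph{symplectic} isotopy; upgrading ``closed'' to ``exact'' is the genuine obstacle and in general requires a hypothesis. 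The point is that the flux homomorphism of the path $\{\gamma_t\}$ must vanish: the flux is (a de Rham class built from) $\int_0^1 [\iota_{Y_t}\omega]\,dt \in H^1(M;\R)$, and it is a classical fact (Banyaga) that a symplectic isotopy is a Hamiltonian isotopy if and only if its flux vanishes. So the crux is to argue that, because \emph{each} $\gamma_t$ lies in $\Ham(M,\omega)$ — not merely in the identity component of $\mathrm{Symp}$ — the flux of the path is forced to be zero.

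To close this gap I would use the homotopy-invariance of flux: the flux of a symplectic isotopy depends only on the homotopy class of the path rel endpoints. Given that $\gamma_0 = \mathrm{id}$ and $\gamma_1 \in \Ham(M,\omega)$, and more strongly that $\gamma_t \in \Ham(M,\omega)$ for \emph{every} $t$, I would cover $[0,1]$ by finitely many subintervals $[t_{k},t_{k+1}]$ small enough that on each one the path stays in a neighbourhood of $\gamma_{t_k}$ in which flux is controlled, and on each piece exhibit a Hamiltonian isotopy from $\gamma_{t_k}$ to $\gamma_{t_{k+1}}$ (using that both endpoints are Hamiltonian and Proposition \ref{comp}, together with the fact that $\Ham$ is path-connected by Hamiltonian paths); concatenating and using additivity of flux under concatenation of paths shows the total flux is a sum of fluxes of Hamiltonian paths, each of which vanishes. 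Alternatively, and more cleanly, one invokes directly Banyaga's theorem that the kernel of the flux homomorphism on the universal cover of $\mathrm{Symp}_0(M,\omega)$ is exactly $\widetilde{\Ham}(M,\omega)$, which immediately gives that the flux of our path is zero. Either way, once flux vanishes, $\int_0^1\iota_{Y_t}\omega\,dt$ is exact; a standard averaging/primitive-choice argument (choosing primitives $F_t$ smoothly in $t$, which is possible since the closed forms $\iota_{Y_t}\omega$ depend smoothly on $t$ and all have the same, vanishing, cohomology class) then produces the desired smooth family $F_t$ with $\iota_{X_{F_t}}\omega = \iota_{Y_t}\omega$, hence $X_{F_t} = Y_t$ and $\frac{d}{dt}\gamma_t(x) = X_{F_t}(\gamma_t(x))$, as claimed. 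I expect the bookkeeping around smooth dependence of primitives on $t$ and the invocation of the flux machinery to be the only delicate points; the rest is Cartan's formula and the definition of the flow.
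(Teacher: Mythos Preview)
The paper does not actually prove Theorem~\ref{path}; it merely states it as one of the ``fundamental properties on $\Ham(M,\omega)$\ldots obtained by A.~Banyaga \cite{Ban78}'' and moves on. So there is no in-paper argument to compare against. Your sketch via the flux homomorphism is essentially Banyaga's classical approach and is the right idea.

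One point deserves tightening. You correctly reduce to showing that $\iota_{Y_t}\omega$ is \emph{exact} for each $t$, but your flux argument, as written, only shows that the \emph{integral} $\int_0^1[\iota_{Y_t}\omega]\,dt$ vanishes in $H^1(M;\R)$; you then assert without justification that ``the closed forms $\iota_{Y_t}\omega$\ldots all have the same, vanishing, cohomology class.'' That is precisely what needs to be proved, and the integral vanishing does not give it. The fix is straightforward and uses exactly the hypothesis you emphasized, namely that $\gamma_s\in\Ham(M,\omega)$ for \emph{every} $s$: run your flux-vanishing argument on the truncated path $\{\gamma_\tau\}_{\tau\in[0,s]}$ for each $s\in[0,1]$, obtaining $\int_0^s[\iota_{Y_\tau}\omega]\,d\tau=0$ for all $s$, and then differentiate in $s$ to conclude $[\iota_{Y_s}\omega]=0$ pointwise. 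With that adjustment your outline is complete.
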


\begin{theorem} \label{simple} Let $(M, \omega)$ be a closed symplectic manifold. Then the group $\Ham(M, \omega)$ is a simple group, i.e., its only normal subgroups are the trivial group and the group itself. \end{theorem}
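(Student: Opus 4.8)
The statement is the simplicity of $\Ham(M,\omega)$ for $M$ closed, which is a deep theorem of Banyaga; the proof I would present follows the classical fragmentation-plus-commutator scheme, so let me lay it out. The plan is to reduce the problem to a purely local statement via a fragmentation lemma, and then to kill a single nontrivial element of any normal subgroup using conjugation and commutator tricks. First I would fix a nontrivial normal subgroup $\mathcal N \trianglelefteq \Ham(M,\omega)$ and pick $\phi \in \mathcal N$, $\phi \neq \mathds 1$. Since $\phi$ is not the identity, there is an open ball $B \subset M$ (symplectically embedded, by Darboux as in Example \ref{ex-sym-mfd}(1)) with $\phi(B) \cap B = \emptyset$. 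The first real step is the \emph{displacement trick}: if $\psi \in \Ham(M,\omega)$ is supported in $B$, then the commutator $[\phi,\psi] = \phi\psi\phi^{-1}\psi^{-1}$ lies in $\mathcal N$ (because $\mathcal N$ is normal, $\phi\psi\phi^{-1} \in \mathcal N$), and at the same time $\phi\psi\phi^{-1}$ is supported in $\phi(B)$, which is disjoint from the support of $\psi^{-1}$; hence these two commute, and a short computation identifies $[\phi,\psi]$ with a conjugate of $\psi$ times $\psi^{-1}$ — in any case one extracts from this that $\mathcal N$ contains a nontrivial element supported in an arbitrarily small ball, and in fact contains $\psi^{-1}(\phi\psi\phi^{-1})$ for \emph{every} $\psi$ supported in $B$.

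Second, I would invoke the \textbf{fragmentation lemma}: every $\theta \in \Ham(M,\omega)$ can be written as a finite product $\theta = \theta_1 \cdots \theta_k$ where each $\theta_i$ is a Hamiltonian diffeomorphism supported in a ball from a fixed finite open cover of $M$ by Darboux balls. This is proved by a standard isotopy argument — take a Hamiltonian path from $\mathds 1$ to $\theta$ (which exists by Theorem \ref{path}), subdivide the time interval finely enough that each sub-path moves points by less than the Lebesgue number of the cover, use a partition of unity on the generating Hamiltonian to split each small step into pieces supported in single cover sets, and compose. Combined with the first step, to show $\mathcal N = \Ham(M,\omega)$ it now suffices to show that $\mathcal N$ contains every Hamiltonian diffeomorphism supported in a single small ball.

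Third comes the \textbf{local perfectness / absorption} step, which I expect to be the main obstacle. Given a target $\eta \in \Ham(M,\omega)$ supported in a small ball $B'$, I want to realize $\eta$ (or show it is a product of conjugates of the element produced in step one). Since all balls of a given small capacity are conjugate in $\Ham(M,\omega)$, and since step one shows $\mathcal N$ contains a nontrivial element $\sigma$ supported in such a ball, it is enough to prove that the subgroup $\Ham_c(B')$ of Hamiltonian diffeomorphisms supported in a ball is \emph{perfect} — equal to its own commutator subgroup — and moreover that it is \emph{uniformly} perfect, or at least that the normal closure in $\Ham(M,\omega)$ of any single nontrivial $\sigma \in \Ham_c(B')$ is all of $\Ham_c(B')$. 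The standard way to do this is again a displacement argument inside a slightly larger ball: any $\eta$ supported in $B'$ can be conjugated to be supported in a region displaceable inside $B''\supset B'$, and then the "infinite swindle"/Mather-type telescoping shows $\eta$ is a commutator of elements supported in $B''$; alternatively one uses that $\Ham_c(\R^{2n})$ is perfect (Banyaga) via explicit commutator identities for compactly supported Hamiltonian flows. Feeding this into normality of $\mathcal N$ gives $\eta \in \mathcal N$. Putting the three steps together: $\mathcal N$ contains all diffeomorphisms supported in small balls, hence by fragmentation $\mathcal N = \Ham(M,\omega)$, so the only normal subgroups are $\{\mathds 1\}$ and the whole group. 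I would remark that the delicate points are the quantitative control in fragmentation (Lebesgue number versus step size) and the uniform perfectness of $\Ham_c$ of a ball; these are exactly where Banyaga's original argument does the real work, and for a lecture-note treatment I would likely state the fragmentation and local-perfectness lemmas precisely and either prove them or cite \cite{Ban78} and \cite{Pol01}.
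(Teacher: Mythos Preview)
The paper does not prove Theorem \ref{simple}; it is stated without proof and attributed to Banyaga \cite{Ban78} (see the sentence preceding Theorems \ref{path} and \ref{simple}). So there is no paper proof to compare against.

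Your outline is essentially the correct skeleton of Banyaga's original argument and would be appropriate for a lecture-notes treatment. Two remarks. First, the displacement step you describe somewhat loosely is sharpened by the double-commutator identity $[f,g] = [f,[g,\theta]]$ whenever $\theta$ displaces the common support of $f$ and $g$; the paper in fact records exactly this identity in the proof of Corollary \ref{cor-disp}, so you can quote it from there rather than re-derive it. This identity is what cleanly shows that $\mathcal N$ contains \emph{every} commutator $[f,g]$ of elements supported in $B$, not merely some nontrivial element. Second, your organization (fragmentation, then local perfectness of $\Ham_c(B)$) is equivalent to but slightly different from the usual phrasing, which is: (i) $\mathcal N$ contains all commutators of locally supported elements by the displacement trick, hence (ii) by fragmentation $\mathcal N \supseteq [\Ham,\Ham]$, and (iii) $\Ham$ is perfect. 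Either route works; yours just pushes the perfectness statement down to the local model. Your own caveats about where the real work lies (fragmentation with controlled supports, and perfectness of $\Ham_c(\R^{2n})$) are accurate, and citing \cite{Ban78} or Chapter 14 of \cite{Pol01} for those lemmas is the right call.
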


\section{Hofer's bi-invariant geometry} \label{sec-Hofer}
Let $(M,\omega)$ be a closed symplectic manifold. It is useful to think of $\Ham(M, \omega)$ as a Lie subgroup of the group of diffeomorphisms of $M$. The Lie algebra of $\Ham(M, \omega)$ consists of vector fields $X$ on $M$ such that $X(x) = \frac{d}{dt} |_{t=0} (\gamma_t(x))$ where $\{\gamma_t\}_{t \in [0,1]}$ is a smooth path in $\Ham(M, \omega)$ with $\gamma_0=\mathds{1}_M$, the identity map on $M$. Thanks to Theorem \ref{path}, $X(x) = X_{F(0,x)}$ where $F(t,x)$ is the unique normalized function generating the path $\{\gamma_t\}_{t \in [0,1]}$. Therefore, the Lie algebra of $\Ham(M, \omega)$ is identified with function space $\mathfrak g : = C^{\infty}(M)/\R$. For a quantitative study, we will choose a $L_{\infty}$-norm on $\mathfrak g$: $||F|| = \max_M F - \min_M F$. Let us emphasize that this norm is invariant under the adjoint action of $\Ham(M, \omega)$ on $\mathfrak g$ given by the standard action of diffeomorphisms on functions. It gives rise to a Finsler structure on $\Ham(M, \omega)$. Thus we can define the length of a path in $\Ham(M, \omega)$. Assume the path $\{\gamma_t\}_{t \in [0,1]}$ in $\Ham(M, \omega)$ is generated by a function $F_t$. Define the {\it Hofer length} as
\begin{equation} \label{infty}
{\rm length}(\{\gamma_t\}_{t \in [0,1]}) = \int_0^1||F_t||dt.
\end{equation}
Then the distance between two Hamiltonian diffeomorphisms is defined as follows.
\begin{dfn}\label{dfn-hofer} {\it Hofer's metric} on $\Ham(M, \omega)$ is defined as
\[ \d(\phi, \psi) : = \inf\{{\rm length}(\{\gamma_t\}_{t \in [0,1]}) \,| \, \mbox{$\gamma_t$ connects $\phi$ and $\psi$}\} \]
for any $\phi, \psi \in \Ham(M, \omega)$. Accordingly, {\it the Hofer norm} on $\Ham(M,\omega)$ is defined as $||\phi||_{\rm Hofer} = \d(\phi, \mathds{1}_M)$.
\end{dfn}

\begin{exercise} \label{hofer-dis} Prove $\d$ satisfies the following properties;
\begin{itemize}
\item[(1)] for any $\phi, \psi \in \Ham(M, \omega)$, $\d(\phi, \psi) \geq 0$;
\item[(2)] for any $\phi, \psi \in \Ham(M, \omega)$, $\d(\phi, \psi) = \d(\psi, \phi)$;
\item[(3)] for any $\phi, \psi, \theta \in \Ham(M, \omega)$, $\d(\phi, \theta) \leq \d(\phi, \psi) + \d(\psi, \theta)$;
\item[(4)] for any $\phi, \psi, \theta \in \Ham(M, \omega)$, $\d(\theta \circ \phi, \theta \circ \psi) = \d(\phi \circ \theta, \psi \circ \theta) = \d(\phi, \psi)$, that is, $\d(\cdot, \cdot)$ is bi-invariant under the action of $\Ham(M, \omega)$.
\end{itemize}
\end{exercise}


Recall that, for a space $X$, a function $d: X \times X \to \R$ satisfies the properties as (1) - (3) above is called a {\it pseudo-metric}. A metric $d$ is a pseudo-metric which satisfies a non-degeneracy condition, that is, $d(x,y) >0$ for any $x \neq y$ in $X$.

\medskip
\noindent
\begin{thm}\label{thm-nondeg} (\cite{Hof90,polterovich1993symplectic,lalonde1995geometry})  Hofer's metric $\d$ is non-degenerate.
\end{thm}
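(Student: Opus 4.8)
The plan is to prove non-degeneracy of the Hofer metric, i.e. $\d(\phi,\mathds{1}) > 0$ for every Hamiltonian diffeomorphism $\phi \neq \mathds{1}$, by producing a \emph{displacement energy} lower bound for a suitable set. The classical route (following Hofer, and Polterovich and Lalonde--McDuff) is this. First I would introduce the displacement energy of a subset $A \subset M$:
\[ e(A) = \inf\{ \|\psi\|_{\rm Hofer} \;:\; \psi \in \Ham(M,\omega),\; \psi(A) \cap A = \emptyset \}\;, \]
with the convention $e(A) = +\infty$ if no such $\psi$ exists. The core analytic input is the \emph{Energy--Capacity inequality}: for any nonempty open set $U$ with compact closure one has $e(U) > 0$; more precisely $e(U) \geq \tfrac12 c(U)$ for a suitable symplectic capacity $c$, e.g. the Gromov width or the displacement energy capacity itself. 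This is exactly the sort of statement that the persistence-module / spectral-invariant machinery developed later in the book (Chapters \ref{chp7_hamiltonian_persistence_modules} and \ref{chp8_symplectic_persistence_modules}) is designed to establish, so within the logical economy of the text I would invoke it, or alternatively invoke the non-squeezing theorem (stated in Section \ref{sec-6}, proved in Section \ref{sec-app}) from which a quantitative lower bound on the width of an embedded ball follows.

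Given the Energy--Capacity inequality, the argument is short. Suppose $\phi \neq \mathds{1}$. Then there is a point $x \in M$ with $\phi(x) \neq x$, hence a small Darboux ball $B = B^{2n}(r)$ (using Example \ref{ex-sym-mfd}(1)) embedded around $x$ with $\phi(B) \cap B = \emptyset$, after shrinking $r$. Now I claim $\d(\phi,\mathds{1}) \geq \tfrac12 e(B)$. Indeed, take any path $\{\gamma_t\}$ from $\mathds{1}$ to $\phi$ generated by $F_t$; the time-$1$ map $\phi = \gamma_1$ displaces $B$, so by definition of displacement energy $\|\phi\|_{\rm Hofer} \geq e(B)$ — but one must be a little careful, since a priori $\phi$ being displaceable bounds $e(B)$ by $\d(\phi,\mathds{1})$ directly by definition of $e$. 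So in fact $\d(\phi,\mathds{1}) \geq e(B) > 0$ immediately once $e(B) > 0$ is known, and the factor $\tfrac12$ is only needed in the intermediate capacity estimate $e(B) \geq \tfrac12 c(B) > 0$. Combined with the elementary properties from Exercise \ref{hofer-dis}, non-degeneracy of $\d$ on all of $\Ham(M,\omega)$ follows by bi-invariance: $\d(\phi,\psi) = \d(\psi^{-1}\phi, \mathds{1}) > 0$ whenever $\phi \neq \psi$.

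The genuinely hard part is the Energy--Capacity inequality $e(U) > 0$ itself; everything else is bookkeeping. There are several standard proofs: Hofer's original one via a variational principle (min-max) for the symplectic action functional on loop space and a careful analysis of how much the action can be moved by a Hamiltonian isotopy; the Lalonde--McDuff proof via $J$-holomorphic curves and the ``energy--capacity'' estimate through Gromov's non-squeezing; or, closest to the spirit of this book, the spectral-invariant proof: the difference of two spectral invariants $c_f(A) - c_f(B)$ or the boundary depth of the Floer persistence module of $\phi$ gives a conjugation-invariant, Hofer-Lipschitz functional that is strictly positive when $\phi$ displaces an open set. I would cite \cite{Hof90,polterovich1993symplectic,lalonde1995geometry} for the inequality, note that a self-contained proof of the relevant special case (embedded ball in $\R^{2n}$, or the non-squeezing-based bound) appears in Section \ref{sec-app}, and assemble the three-line deduction above. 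The only subtlety to flag explicitly in the writeup is normalization: one works with normalized Hamiltonians (zero mean in the closed case), but this does not affect $X_{F_t}$ nor the Hofer length, so the displacement-energy definition is unambiguous.
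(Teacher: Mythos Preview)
Your approach is correct but differs from the paper's in a notable way. You argue directly: any $\phi \neq \mathds{1}$ displaces a small Darboux ball $B$, hence $\d(\phi,\mathds{1}) \geq e(B)$, and the hard analytic input is the Energy--Capacity inequality $e(B) > 0$. The paper, by contrast, exploits the algebraic structure of $\Ham(M,\omega)$ to sidestep proving positivity of displacement energy in general. It invokes the dichotomy of Exercise \ref{alternative} (a consequence of Banyaga's simplicity theorem, Theorem \ref{simple}): either $\d$ is non-degenerate or it vanishes identically. Thus it suffices to exhibit a \emph{single} $\phi$ with $\d(\phi,\mathds{1}) > 0$. The paper then takes $\phi = \phi_H^1$ for a $C^\infty$-small Morse function $H$ with zero mean, computes that its Floer barcode $\mathcal{B}(\phi)$ contains the infinite bar $[\max_M H,\infty)$ while $\mathcal{B}(\mathds{1}_M)$ has only $[0,\infty)$, and applies the Dynamical Stability Theorem (Theorem \ref{thm-dst}) to get $\d(\phi,\mathds{1}) \geq d_{bot}(\mathcal{B}(\phi),\mathcal{B}(\mathds{1}_M)) \geq \max_M H > 0$.

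What each buys: your route is the classical one and makes the geometric content (displacement energy) explicit, but it requires the full Energy--Capacity inequality as a black box. The paper's route trades that analytic input for the Dynamical Stability Theorem plus the simplicity of $\Ham$; this is more in keeping with the persistence-module theme of the text, and the DST is proved in the paper (granting Theorem \ref{SZ92-result}). Note also that the paper only carries this out for closed $(M,\omega)$ with $\pi_2(M)=0$, as stated just after Theorem \ref{thm-nondeg}; your sketch is not so restricted, but the Floer-theoretic ingredients you allude to would impose similar hypotheses. Finally, the paper does mention your direction of implication (positivity of displacement energy $\Rightarrow$ non-degeneracy) right after Corollary \ref{cor-disp}, but it actually deduces Corollary \ref{cor-disp} \emph{from} non-degeneracy via a commutator trick, not the other way around.
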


\noindent
We outline a proof of this result for closed symplectic manifolds $M$ with $\pi_2(M)=0$ in Section \ref{sec-FHF} below.

\begin{remark} One can define a bi-invariant metric on $\Ham(M,\omega)$ by using $L_p$-norm with $p<\infty$
instead of $L_\infty$ norm. Surprisingly $p=\infty$ is the only choice such that the corresponding $\d(\cdot, \cdot)$ is non-degenerate, see \cite{eliashberg1993bi} and Theorem 2.3.A in \cite{Pol01}. A general result due to L.~Buhovsky and Y.~Ostrover \cite{BO11} states that any bi-invariant Finsler metric on $\Ham(M, \omega)$ with non-degenerate distance is necessarily equivalent to Hofer's metric.
\end{remark}

\begin{exercise} \label{alternative} Show the following dichotomy: given a closed symplectic manifold $(M, \omega)$, any bi-invariant metric on $\Ham(M, \omega)$  is either non-degenerate or vanishes identically. (Hint: use Theorem \ref{simple}.) This exercise brings together algebra and geometry of $\Ham(M, \omega)$.\end{exercise}

\medskip

H.~Hofer \cite{Hof90} used his metric in order to define an interesting invariant of subsets in symplectic
manifolds called {\it the displacement energy}. Call a subset $A \subset M$ displaceable if there exists a
Hamiltonian diffeomorphism $\phi \in \Ham(M,\omega)$ such that $\phi(A) \cap A=\emptyset$. Roughly speaking,
displaceable subsets define a natural small scale on symplectic manifold. Hofer's metric enables one
to quantify the notion of displaceability.

\begin{dfn}\label{dfn-disp} (Displacement energy) For a displaceable subset $A \subset M$, define
\[ e(A) = \inf\{\d(\phi, \mathds{1}_M) \,| \, \phi \in \Ham(M, \omega), \, \phi(A) \cap A = \emptyset\}. \]
\end{dfn}

By (4) in Exercise \ref{hofer-dis}, $e(A) = e(\psi(A))$ for any $\psi \in \Ham(M, \omega)$.

\begin{ex} Let $(M, \omega)$ be any symplectic manifold and subset $A = \{{\rm pt}\}$. We claim that $e(A) =0$.
Indeed, by using Darboux Theorem (see (1) in Example \ref{ex-sym-mfd}), introduce local coordinates $(q_1, ..., q_n,p_1,..., p_n)$ near $A$. The function $H(q_1, ..., q_n, p_1, ..., p_n) = \ep \cdot p_1$ for some $\ep>0$ generates a Hamiltonian diffeomorphism $\phi (q_1, ..., q_n, p_1, ..., p_n) = (q_1 + \ep, q_2, ..., q_n, p_1, ..., p_n)$. Of course it displaces $A$, so by definition $e(A) \leq \ep$. Let $\ep$ go to zero and we get $e(A) = 0$. An appropriate cut-off of $H$ makes this argument rigorous. \end{ex}

In contrast to this, we have the following corollary of Theorem \ref{thm-nondeg} on non-degeneracy of Hofer's
metric.

\begin{cor} (\cite{eliashberg1993bi}) \label{cor-disp}  The displacement energy of any non-empty open displaceable subset is strictly
positive.
\end{cor}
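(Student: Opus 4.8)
\textbf{Proof plan for Corollary \ref{cor-disp}.} The statement to establish is that any non-empty open displaceable subset $A \subset M$ has strictly positive displacement energy $e(A) > 0$. Since $e(A) = e(\psi(A))$ for $\psi \in \Ham(M,\omega)$ and since $A$ is open and non-empty, it suffices to bound from below the displacement energy of a small Darboux ball; indeed $A$ contains some symplectically embedded ball $B$ of radius $r > 0$, and $e(A) \geq e(B)$ because any Hamiltonian diffeomorphism displacing $A$ also displaces $B$. So the plan reduces to proving $e(B) > 0$ for a standard symplectic ball, i.e. to proving a lower bound on the displacement energy of a ball in terms of its capacity.

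The approach I would take is the classical energy-capacity inequality argument due to Hofer. First I would recall (or invoke, since it is the content of the cited works \cite{Hof90,eliashberg1993bi}) that the displacement energy satisfies a monotonicity property under symplectic embeddings and is invariant under rescaling, so everything is pushed to the local model $(\R^{2n},\omega_{std})$. Then the core step is the \emph{energy-capacity inequality}: for a displaceable subset $A$ one has $c(A) \leq e(A)$ for a suitable symplectic capacity $c$ (for instance the displacement energy itself is a capacity, or one can use the Gromov width / Hofer-Zehnder capacity). For the ball $B^{2n}(r)$ this gives $e(B^{2n}(r)) \geq \pi r^2 > 0$. The heart of the matter — and the step I expect to be the main obstacle — is establishing this energy-capacity inequality, which genuinely requires hard symplectic input: either Hofer's variational/minimax construction on the loop space with the Hofer-Zehnder capacity, or pseudoholomorphic curve techniques. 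In the spirit of the present book, I would not reprove this from scratch but would cite it, or alternatively derive the positivity from the non-degeneracy of Hofer's metric (Theorem \ref{thm-nondeg}) combined with a localization argument.

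Concretely, here is the chain of steps in order. (1) Reduce to a Darboux ball $B \subset A$ using openness of $A$ and the monotonicity $e(A) \geq e(B)$. (2) Reduce to the standard ball $B^{2n}(r) \subset \R^{2n}$ by the Darboux chart and the fact that $e$ is intrinsic, hence preserved by symplectomorphisms onto their image. (3) Invoke the energy-capacity inequality $e(B^{2n}(r)) \geq c_{HZ}(B^{2n}(r)) = \pi r^2$, which is the substantive analytic ingredient. (4) Conclude $e(A) \geq \pi r^2 > 0$.

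The genuine difficulty is entirely concentrated in step (3): it is not a formal consequence of the bi-invariant geometry set up so far, but rather relies on the full strength of Hofer-Zehnder's capacity theory (or Gromov's non-squeezing and the theory of pseudoholomorphic curves sketched in Section \ref{sec-app}). A self-contained treatment would require developing either filtered Floer homology spectral invariants — whose machinery is built later in the book — or the Hofer-Zehnder minimax principle, so in a concise exposition the honest move is to state the energy-capacity inequality as a cited black box (\cite{Hof90,eliashberg1993bi}) and let the corollary follow immediately. If one instead wanted to stay within what has been proved, the cleanest route is: the non-degeneracy theorem \ref{thm-nondeg} provides some $\phi$ with $d_{\mathrm{Hofer}}(\phi,\mathds{1}_M) > 0$ whenever $\phi \neq \mathds{1}_M$, and a careful localization (supporting Hamiltonians in a displacing chart) converts a hypothetical sequence of displacements of $B$ with energy tending to zero into a contradiction with non-degeneracy on a suitable open symplectic manifold — but making this rigorous still essentially reconstructs the energy-capacity inequality, which is why the citation is the pragmatic choice.
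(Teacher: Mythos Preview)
Your approach via the energy--capacity inequality is correct but takes a genuinely different route from the paper. You reduce to a Darboux ball and invoke $e(B^{2n}(r)) \geq \pi r^2$ as an external black box requiring Hofer--Zehnder theory or pseudoholomorphic curves. The paper instead derives the corollary \emph{directly from the non-degeneracy of Hofer's metric} (Theorem~\ref{thm-nondeg}) by a short commutator argument, using no hard symplectic input beyond that theorem. Specifically: pick two non-commuting Hamiltonian diffeomorphisms $f,g$ supported in $A$; if $\theta$ displaces $A$ then $\theta g^{-1}\theta^{-1}$ is supported in $\theta(A)$ and commutes with $f$, from which one checks the identity $[f,g] = [f,[g,\theta]]$. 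Applying the bi-invariance inequality $\|[\phi,\psi]\|_{\rm Hofer} \leq 2\|\psi\|_{\rm Hofer}$ (Exercise~\ref{exr-disp-1}) twice gives $\|[f,g]\|_{\rm Hofer} \leq 4\|\theta\|_{\rm Hofer}$, hence $e(A) \geq \tfrac{1}{4}\|[f,g]\|_{\rm Hofer} > 0$ by non-degeneracy.

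You actually mention this alternative route in your last paragraph but dismiss it, saying that ``making this rigorous still essentially reconstructs the energy--capacity inequality.'' That is not so: the commutator trick is a two-line algebraic argument once non-degeneracy is granted, and it is the whole point of presenting the corollary here. Your approach buys a quantitative bound ($\pi r^2$) at the cost of heavy external machinery; the paper's approach buys a clean logical reduction (positivity of displacement energy $\Leftrightarrow$ non-degeneracy of $d_{\rm Hofer}$, with both directions elementary) at the cost of no explicit constant.
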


\begin{exr}\label{exr-disp-1} Write $[\phi,\psi]$ for the commutator $\phi\psi\phi^{-1}\psi^{-1}$
of Hamiltonian diffeomorphisms $\phi$ and $\psi$. Show, by using bi-invariance of Hofer's norm, that
$||[\phi,\psi]||_{\rm Hofer} \leq 2||\psi||_{\rm Hofer}$.
\end{exr}

\medskip\noindent
{\bf Proof of Corollary \ref{cor-disp}:}
Take any two non-commuting Hamiltonian diffeomorphisms $f,g$ supported in $A$.
Assume that a Hamiltonian diffeomorphism
$\theta$ displaces an open subset $A \subset M$.
Note that $\theta g^{-1} \theta^{-1}$ is supported in $\theta(A)$ and hence it commutes
with $f$. We leave it is an exercise to check that $[f,g] = [f,[g,\theta]]$. Applying Exercise \ref{exr-disp-1}
twice, we get that
$$4||\theta||_{\rm Hofer} \geq ||[f,g]||_{\rm Hofer}\;.$$
Since this is true for every $\theta$ displacing $A$, we get that
$$e(A) \geq \frac{1}{4}||[f,g]||_{\rm Hofer} > 0\;,$$
where the last inequality follows from non-degeneracy of Hofer's metric.
\qed

\medskip

In fact, another way around, positivity of displacement energy on open subsets
immediately yields non-degeneracy of Hofer's metric. Indeed, if $\phi \neq \mathds{1}_M$, then it must displace some non-empty open subset $A \subset M$, and therefore $\d(\phi, \mathds{1}_M) \geq e(A) >0$.

Let us mention also that definitions of Hofer's metric and of the displacement energy extend in a
straightforward way to open symplectic manifolds. Non-degeneracy of Hofer's metric and positivity
of displacement energy on open displaceable subsets hold in this case as well.

\section{A short tour in coarse geometry} \label{sec-cg}

Coarse geometry is the study of metric spaces from a ``large scale'' point of view. For example, given a metric space $(X, d_X)$, one can construct another metric space called its {\it asymptotic cone} (see Section 2 in \cite{Gro93}) representing the space ``viewed from infinity''. For a group equipped with bi-invariant metric, the asymptotic cone possesses
a natural group structure \cite{calegari2011stable}.  Furthermore, given two metric spaces $(Y, d_Y)$ and $(X, d_X)$, instead of looking for isometric embeddings from $(Y, d_Y)$ to $(X, d_X)$, one can consider {\it quasi-isometric embeddings} (see Definition \ref{dfn-quasi-iso}) to ignore details at small scales. In this section, we focus on $(X, d_X) = (\Ham(M, \omega), \d)$ and $(Y, d_Y) = (\R^n, d_{\infty})$, $n \in \N \cup \{\infty\}$, where $d_{\infty}(v, w) = \max_{i} |v_i - w_i|$ for $v, w \in \R^{n}$. With this language the questions that we are interested can be formulated as follows.

\begin{question} \label{que-2} What are properties of the asymptotic cone of $(\Ham(M, \omega), \d)$?
\end{question}

\begin{question} \label{que-1} Does there exist a quasi-isometric embedding from the metric space $(\R^n, d_{\infty})$ to the $(\Ham(M, \omega), \d)$ for some $n \in \N \cup \{\infty\}$?
\end{question}

The asymptotic cone of $(\Ham(M, \omega), \d)$ was investigated in \cite{egg-team}. By using Floer theory and a chaotic model called {\it egg-beater map} that was constructed in \cite{PS16}, \cite{egg-team} proves that if $M = \Sigma_g$ a symplectic surface with genus $g \geq 4$, then there exists a monomorphism from $\mathbb F_2$, the free group with two generators, into the asymptotic cone of $(\Ham(M, \omega), \d)$. This is the first time that a non-abelian embedding involving Hamiltonian diffeomorphism groups has been discovered, and interested readers are referred to \cite{egg-team} for more details. For a related appearance of egg-beater maps, see the end of Section \ref{sec-full-powers} below.

In what follows, we will address Question \ref{que-1}. Let us start from the following definition.

\begin{dfn} \label{dfn-quasi-iso} Consider two metric spaces $(Y, d_Y)$ and $(X, d_X)$. A map $f: Y \to X$ is called a quasi-isometric embedding if there exist some constants $C  \geq C'>0$ and $A \geq 0$ such that for any $y, y' \in Y$,
\[ C'\cdot d_Y(y,y')  - A \leq d_X(f(y), f(y')) \leq C\cdot d_Y(y,y') + A. \]
\end{dfn}

\begin{ex} There exists a quasi-embedding from $(\R^n, d_{\infty})$ to $(\Z^n, d_{\infty})$ simply sending every $n$-tuple of real numbers $(a_1, ..., a_n)$ to $(\floor{a_1}, ..., \floor{a_n})$. In terms of Definition \ref{dfn-quasi-iso}, $C = C' = 1$ and $A = 1$. On the other hand, obviously the map from $(\Z^n,d)$ to $(\R^n,d)$ sending every $n$-tuple of integers to itself is a quasi-isometric embedding. Therefore, $(\Z^n, d_{\infty})$ and $(\R^n, d_{\infty})$ ``look the same'' on the large scale. \end{ex}

The following result from M.~Usher \cite{Ush13} gives a positive answer to Question \ref{que-1} for certain symplectic manifolds.

\begin{theorem} \label{usher} (Theorem 1.1 in \cite{Ush13}) Let $(M, \omega)$ be a symplectic manifold admitting a nonconstant autonomous function such that all the contractible orbits of its Hamiltonian flow are constant. Then there exists a quasi-isometric embedding from $(\R^{\infty}, d_{\infty})$ to $(\Ham(M, \omega), \d)$. \end{theorem}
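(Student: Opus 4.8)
The strategy is to build, from a single nonconstant autonomous Hamiltonian $H$ with only constant contractible periodic orbits, an infinite family of ``independent directions'' in $(\Ham(M,\omega),\d)$ along which one controls the Hofer norm from both sides. First I would fix such an $H$ and, by adding a constant and rescaling, arrange that $H$ has a regular sublevel set $U_0=\{H<c_0\}$ which is displaceable (this uses that $H$ is nonconstant, so small sublevel sets near a regular minimum are contained in Darboux balls; for the general interval construction one localizes). The key point is to pass to autonomous Hamiltonians supported in a disjoint collection of sets. Concretely, choose a sequence of disjoint open sets $U_1,U_2,\dots$ obtained from disjoint slabs $\{a_k\le H<b_k\}$ (or more robustly from disjointly supported ``bump'' functions built out of $H$), each of them displaceable, and let $K_k$ be an autonomous Hamiltonian supported in $U_k$ whose flow has only constant contractible orbits --- a property inherited from the hypothesis on $H$ because the orbits of $K_k$ are reparametrized orbits of $H$. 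For a vector $v=(v_1,\dots,v_n)\in\R^n$ set $\phi_v=\phi^1_{v_1 K_1}\circ\cdots\circ\phi^1_{v_n K_n}$; since the $K_k$ have disjoint supports these commute and $\phi_v$ depends on $v$ group-homomorphically in each slot.

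The upper bound $\d(\phi_v,\phi_w)\le C\,d_\infty(v,w)+A$ is the soft half: by bi-invariance $\d(\phi_v,\phi_w)=\|\phi_{v-w}\|_{\rm Hofer}$, and concatenating the natural paths generated by $(v_k-w_k)K_k$ gives $\|\phi_{v-w}\|_{\rm Hofer}\le\sum_k|v_k-w_k|\,\Osc(K_k)$. Choosing all the $K_k$ with $\Osc(K_k)\le 1$ (rescale) gives $\|\phi_{v-w}\|_{\rm Hofer}\le\sum_k|v_k-w_k|$, and with a bit more care --- spreading the $U_k$ so that only the ``active'' coordinates contribute and using that displaceable pieces have bounded displacement energy --- one gets a bound of the form $C\,d_\infty(v,w)$, i.e. linear in the sup-norm rather than the $\ell^1$-norm. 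The genuinely hard half is the lower bound $\d(\phi_v,\phi_w)\ge C'\,d_\infty(v,w)-A$. Here one needs a Hofer-norm lower bound for an autonomous Hamiltonian diffeomorphism supported in a displaceable set whose flow has only constant contractible orbits. This is exactly the regime where filtered Floer homology / spectral invariants (Chapter \ref{chp7_hamiltonian_persistence_modules}) give sharp estimates: for such an $H$ one has $\|\phi^1_H\|_{\rm Hofer}\ge \Osc(H)$ up to the displacement energy of the support, because the ``all contractible orbits constant'' condition forces the spectral spread (or boundary depth) of $\phi^1_H$ to be $\max H-\min H$ on the support, while Hofer's norm dominates the spectral spread. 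Applied to $\phi_{v-w}$, whose generating autonomous Hamiltonian is $\sum_k(v_k-w_k)K_k$ with disjoint supports, the spectral spread is $\max_k|v_k-w_k|\,\Osc(K_k)=d_\infty(v,w)$ (after the normalization $\Osc(K_k)=1$), giving $\|\phi_{v-w}\|_{\rm Hofer}\ge d_\infty(v,w)-A$ with $A$ controlled by the displacement energies.

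Assembling these two inequalities yields Definition \ref{dfn-quasi-iso} with, say, $C'=1$, some explicit $C$, and $A$ equal to (a constant times) a uniform bound on the displacement energies of the $U_k$ --- which can be taken uniform because the $U_k$ are translates/rescalings of one displaceable model set. Letting $n\to\infty$ and noting that all constants stay bounded independently of $n$, the map $v\mapsto\phi_v$ extends to a quasi-isometric embedding of $(\R^\infty,d_\infty)$ into $(\Ham(M,\omega),\d)$. The main obstacle, and the place where the hypothesis is used in an essential, non-formal way, is the lower bound: one must invoke the construction of spectral invariants on filtered Floer homology and the computation that, under the ``contractible orbits are constant'' assumption, the relevant spectral spread of an autonomous Hamiltonian diffeomorphism equals the oscillation of its generator modulo the displacement energy of its support --- everything else (disjoint supports, bi-invariance, concatenation of paths, passing to the limit) is bookkeeping.
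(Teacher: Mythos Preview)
The paper does not prove this theorem; it is merely stated as a citation of Usher's result (Theorem~1.1 in \cite{Ush13}) and followed immediately by an example, with no proof given. So there is no ``paper's own proof'' against which to compare your proposal.

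That said, your outline is in the right spirit but has some imprecisions worth flagging. First, the upper bound: since the $K_k$ have pairwise disjoint supports, the single autonomous Hamiltonian $F=\sum_k(v_k-w_k)K_k$ generates $\phi_{v-w}$ directly, and after normalizing $0\le K_k\le 1$ one has $\Osc(F)\le 2\|v-w\|_\infty$. The $\ell^\infty$ upper bound is thus immediate; your detour through an $\ell^1$ estimate and subsequent ``spreading'' is unnecessary and a bit muddled. Second, and more seriously, the lower bound you sketch goes through displaceability of the supports $U_k$ and displacement energies, neither of which is in the hypothesis nor how Usher actually argues. Usher's mechanism is \emph{boundary depth}: the assumption that all contractible orbits of the flow of $H$ (hence of each $K_k=f_k\circ H$) are constant forces the filtered Floer complex of $\phi_{v-w}$ to be essentially Morse-theoretic for the autonomous generator $F$, and the boundary depth is then bounded below by $\max_k|v_k-w_k|\,\Osc(K_k)$ directly, with no displacement-energy correction term. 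Introducing displaceability both adds an unmotivated hypothesis and makes the constant $A$ depend on an infinite family of displacement energies whose uniform boundedness you would then have to justify separately.
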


\begin{ex} Symplectic surface $(\Sigma_{g\geq 1}, \omega_{area})$ is an easy example satisfying the assumption in Theorem \ref{usher}. Fix a non-contractible loop $\gamma$ in $\Sigma_{g \geq 1}$. Its sufficiently small neighborhood $U$ has a local coordinate $(s, \theta)$ where $s \in (-\ep, \ep)$ and $\theta \in S^1$. Take a smooth function $H(s,\theta) = f(s)$ for some compactly support function $f:(-\ep, \ep) \to \R$ and $H(s,\theta) = 0$ outside $U$. Then its Hamiltonian orbits are either constant or closed curves wrapped around $\gamma$. In particular, they are non-contractible. See page 2-3 in \cite{Ush13} for more complicated examples. Finally we emphasize that Theorem \ref{usher} does {\it not} apply to $(S^2, \omega_{area})$.
\end{ex}

Furthermore, under the hypothesis of Theorem \ref{usher}, the diameter of metric space $(\Ham(M, \omega), \d)$ is infinite. It is a famous conjecture in symplectic geometry that for {\it any} symplectic manifold $(M, \omega)$ the Hofer diameter is infinity. At the moment, it is confirmed for a wide class of symplectic manifolds including, for instance, all symplectic manifolds $(M, \omega)$ with $\pi_2(M) = 0$ and complex projective spaces, see \cite{Sch00}, \cite{Ost03} and \cite{entov_polterovich_calabi_2003}.

\section{Zoo of symplectic embeddings} \label{sec-6}

Consider a ball $B^{2n}(r) = \{(x_1, y_1,..., x_n, y_n) \in \R^{2n} \,| \, \pi \sum_{i=1}^n (x_i^2 + y_i^2) < r\}$ and a cylinder $Z^{2n}(R) = \{(x_1, y_1,..., x_n, y_n) \in \R^{2n} \,| \, \pi(x_1^2 + y_1^2) < R\}$. Celebrated Gromov's non-squeezing theorem is stated as follows.

\begin{theorem} \label{non-squ} Suppose that there exists a symplectic embedding $$\phi: (B^{2n}(r), \omega_{std}) \hookrightarrow (Z^{2n}(R), \omega_{std}).$$ Then $r \leq R$. \end{theorem}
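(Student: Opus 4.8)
The plan is to deduce Gromov's non-squeezing theorem from the symplectic persistence module machinery developed for starshaped domains in the forthcoming chapter, following the strategy outlined in the preface. First I would recall that a symplectic embedding $\phi: B^{2n}(r) \hookrightarrow Z^{2n}(R)$ can be upgraded, after a compactly supported Hamiltonian isotopy and a suitable choice of bounded model, to a symplectic embedding of $B^{2n}(r)$ into the polydisc-type domain $B^2(R) \times \R^{2n-2}$, or better, into $B^2(R) \times T^{2n-2}$ after quotienting the last factors by a large lattice so that we land inside a Liouville manifold to which the symplectic persistence module functor $\mathcal{SH}$ of Chapter \ref{chp8_symplectic_persistence_modules} applies. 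The key point is that the symplectic homology persistence module (equivalently, its barcode and in particular the distinguished spectral invariant / boundary-depth-type quantity extracted from it) is monotone with respect to symplectic embeddings of starshaped domains, and for the ball and cylinder these invariants are computable and encode exactly the relevant capacity.

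The key steps, in order, would be: (1) reduce the squeezing problem to embeddings of starshaped domains in a fixed Liouville manifold, so that the persistence-module invariant $c(\cdot)$ is defined and has the monotonicity property $U \hookrightarrow V \Rightarrow c(U) \le c(V)$ coming from the induced morphism on filtered Floer/symplectic homology and the Isometry Theorem (\Cref{thm: isometry_thm}); (2) compute $c(B^{2n}(r))$, showing it equals $r$ (this is the ``full'' spectral value coming from the shortest closed Reeb orbit on the boundary sphere, which carries action $r$); (3) compute $c$ for the relevant truncation of the cylinder $Z^{2n}(R)$ — or rather show that $c$ of any starshaped domain contained in $Z^{2n}(R)$ is bounded above by $R$, again because the only Reeb dynamics that contributes in the relevant degree comes from the $B^2(R)$ factor and has action $\le R$; (4) combine: if $\phi: B^{2n}(r) \hookrightarrow Z^{2n}(R)$ exists, monotonicity gives $r = c(B^{2n}(r)) \le c(\text{image}) \le R$, hence $r \le R$.

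The hard part will be step (1)–(3): making the comparison with the non-compact cylinder rigorous. The cylinder is not starshaped and not of finite type, so one must either work with an exhaustion and pass to a limit, controlling the filtered homology uniformly, or replace $Z^{2n}(R)$ by $B^2(R) \times M'$ for a closed symplectically aspherical $M'$ (e.g. a high-genus surface or a torus of large area) containing the image of the ``squeezed'' part of $B^{2n}(r)$, and then invoke a product/Künneth formula for the persistence modules together with the computation of the barcode of the $B^2(R)$ factor. The subtlety is ensuring that the embedding of the ball, which a priori only lands in the open cylinder, can be captured by a finite piece on which the persistence-module invariants behave monotonically; this is where one uses compactness of $\overline{\phi(B^{2n}(r'))}$ for $r' < r$ and lets $r' \to r$ at the end. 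Modulo these analytic points (which are exactly what Chapter \ref{chp8_symplectic_persistence_modules} sets up), the argument is a short formal consequence of monotonicity plus two model computations.
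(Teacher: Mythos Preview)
Your proposal aims at the full statement of Theorem~\ref{non-squ}, but the paper explicitly does \emph{not} prove it in that generality: immediately after stating the theorem it says that only ``a weaker version'' will be outlined in Section~\ref{sec-app}. That weaker version replaces the infinite cylinder $Z^{2n}(R)$ by an ellipsoid $E(R,R_\dagger,\dots,R_\dagger)$ with $R_\dagger\ge R$, and replaces ``symplectic embedding'' by ``Liouville morphism'', i.e.\ a compactly supported exact symplectomorphism of the ambient $(\R^{2n},\omega_{std},X_{rad})$ taking $\overline{B^{2n}(r)}$ into the ellipsoid. So your steps (1) and (3), which are precisely the hard part you flag, are not in fact what Chapter~\ref{chp8_symplectic_persistence_modules} sets up: there is no compactification to $B^2(R)\times T^{2n-2}$, no K\"unneth formula, no exhaustion argument for the cylinder, and no monotonicity under arbitrary symplectic embeddings. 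The functoriality developed there (Theorem~\ref{thm-fc-sh}) is only for Liouville morphisms between star-shaped domains of a fixed Liouville manifold.

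The paper's actual argument is both narrower and more concrete than your outline. Assuming a Liouville morphism $\phi$ from $B^{2n}(1)$ into $E=E(R,R_\dagger,\dots,R_\dagger)$, one enlarges to a big ball $B^{2n}(R_\bullet)$ containing the support of $\phi$, obtaining $\phi(B_1)\subset E\subset B_2=\phi(B_2)$. The functorial package of Theorem~\ref{thm-fc-sh} then produces a commutative diagram of filtered symplectic homologies in which the composite $\SH_*^{(a,\infty)}(B_2)\to\SH_*^{(a,\infty)}(B_1)$ factors through $\SH_*^{(a,\infty)}(E)$ and, via the rescaling isomorphism, equals the structure map $\theta_{a/R_\bullet,a}$ of $\mathbb{SH}_*(B_1)$. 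In degree $0$, the explicit computations (\ref{sh-ell-0}) and (\ref{sh-ball}) give $\mathbb{SH}_0(E)=\Z_2(-\infty,\ln R)$ and $\mathbb{SH}_0(B_1)=\Z_2(-\infty,0)$; since $\theta_{a/R_\bullet,a}\neq 0$ for $a<R_\bullet$, the induced map $\Z_2(-\infty,\ln R)\to\Z_2(-\infty,0)$ is nonzero, and Exercise~\ref{exr: morphism_between_Q(segment)s} forces $\ln R\ge 0$, i.e.\ $R\ge 1$. So rather than extracting a single numerical capacity $c(\cdot)$ and invoking abstract monotonicity, the paper reads the conclusion off from the structure of morphisms between interval modules. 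Your proposed reduction of the genuine cylinder case to this framework would be a substantial extra piece of work, not something ``modulo analytic points'' already handled.
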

Observe even if $r >R$ there always exists a volume preserving diffeomorphism ``squeezing'' $B^{2n}(r)$ into $Z^{2n}(R)$. Therefore, Theorem \ref{non-squ} shows a certain rigidity phenomenon in symplectic geometry. The original proof of Theorem \ref{non-squ} in \cite{Gro85} is based on the theory of pseudo-holomorphic curves. This theory is regarded as one of the most important tools in symplectic geometry. See \cite{MS12} for a detailed exposition on pseudo-holomorphic curves. We will outline a proof of a weaker version of this theorem in Section \ref{sec-app} below.

\begin{exercise} Define cylinder $Y^{2n}(R) = \{(x_1, y_1,..., x_n, y_n) \in \R^{2n} \,| \, \pi(x_1^2 + x_2^2) < R\}$. Show that for any $R<r$ there exists a symplectic embedding from $(B^{2n}(r), \omega_{std})$ to $(Y^{2n}(R), \omega_{std})$, and hence $Z(R)$ and $Y(R)$ are not symplectomorphic. \end{exercise}

Gromov's non-squeezing theorem is closely related with a class of symplectic invariants called {\it symplectic capacities}. Denote by $\mathcal M(2n)$ the class of all symplectic manifolds possibly with boundaries and of dimension $2n$. In what follows in this section, we use the symbol $(M, \omega_M) \hookrightarrow (N, \omega_N)$ to denote the existence of a symplectic embedding $\phi: (M, \omega_M) \to (N, \omega_N)$.

\begin{dfn} \label{dfn-symp-cap} A symplectic capacity is a map $c: \mathcal M(2n) \to [0, \infty]$
(note that the value $\infty$ is allowed) satisfying the following axioms.
\begin{itemize}
\item[(1)] ({\it monotonicity}) If $(M, \omega_M) \hookrightarrow (N, \omega_N)$, then $c(M, \omega_M) \leq c(N, \omega_N)$.
\item[(2)] ({\it conformality}) For any $\lambda >0$, $c(M, \lambda \cdot \omega) = \lambda \cdot c(M, \omega)$.
\item[(3)] ({\it normalization}) $c(B^{2n}(1), \omega_{std}) = c(Z^{2n}(1), \omega_{std}) = 1$.
\end{itemize}
\end{dfn}

Note that the existence of a symplectic capacity is equivalent to Theorem \ref{non-squ}. Indeed, if there exists a symplectic capacity $c$, then (1)-(3) in Definition \ref{dfn-symp-cap} together tell us that
\[ r = c(B^{2n}(r), \omega_{std}) \leq c(Z^{2n}(R), \omega_{std}) = R, \]
which led to a proof of Theorem \ref{non-squ}. Conversely, one can consider {\it Gromov radius} which is defined as follows,
\[ c_G(M, \omega) := \sup\{r >0 \,| \, \mbox{$\exists$ a symplectic embedding} \,\, (B^{2n}(r), \omega_{std}) \hookrightarrow (M, \omega)\}.\]
It is readily to check that $c_G$ satisfies axioms (1)-(3) in Definition \ref{dfn-symp-cap} above. We leave the details as an exercise to readers.

Sometimes, one considers capacities defined on smaller collections of symplectic manifolds, for instance,
on all open subsets of $\R^{2n}$. For an open subset $U$, put $c_H(U) = \sup e(V)$, where the supremum  is
taken over all {\it bounded} domains $V \subset U$, and $e$ is the displacement energy introduced
in Definition \ref{dfn-disp}. H.~Hofer \cite{Hof90} showed that $c_H$ satisfies
axioms (1)-(3) in Definition \ref{dfn-symp-cap} above, which led to yet another proof of Theorem \ref{non-squ}.

We refer to \cite{CHLS07} for a nice summary of different capacities and their relations.
\medskip

Symplectic embedding problems are usually divided into two classes. The first one is the obstructions to the existence of symplectic embeddings, which often come from certain symplectic capacities. The other one is the constructions of symplectic embeddings, see Schlenk's book \cite{Sch05}. Both problems can be difficult in general. Let us give some examples. Consider an ellipsoid
\[ E(a_1, ..., a_n) = \left\{(x_1, y_1, ..., x_n, y_n) \in \R^{2n} \,\bigg| \, \pi \sum_{i=1}^n \frac{x_i^2 + y_i^2}{a_i} < 1\right\}\]
and a polydisk
\[ P(a_1, ..., a_n) = \left\{(x_1, y_1, ..., x_n, y_n) \in \R^{2n} \,\bigg| \, \pi \cdot\frac{x_i^2 + y_i^2}{a_i} < 1, \,\forall i=1, ..., n\right\}.\]
Let us think of both $E(a_1, ..., a_n)$ and $P(a_1, ..., a_n)$ as elements in $\mathcal M(2n)$ with symplectic structure $\omega_{std}$ induced from $(\R^{2n}, \omega_{std})$.

\begin{ex} \label{emb}
\medskip
\begin{itemize}
\item[(1)] (McDuff \cite{McD11}) $E(a_1,a_2) \hookrightarrow E(b_1,b_2)$ if and only if $N(a_1,a_2) \leq N(b_1,b_2)$ where $N(m,n)$ is the sequence of all nonnegative integer linear combination of $m,n$, written in an increasing order with repetitions. For instance, $N(1,2) = (0,1,2,2,3,3...)$.
\medskip
\item[(2)] (Hutchings \cite{Hut11}) When $1 \leq a \leq 2$, $P(a,1) \hookrightarrow P(b,b)$ if and only if $a \leq b$.
\medskip
\item[(3)] (Hind and Lisi \cite{HL15}) $P(1,2) \hookrightarrow B^4(a)$ if and only if $a \geq 3$.
\end{itemize}
\end{ex}

In Section \ref{sec-app}, combining persistent homology theory with machinery from Floer theory, we are able to associate a barcode to each domain (under some condition of non-degeneracy). The upshot is that some obstructions to the existence of symplectic embeddings can be easily read from these data, which provides a new method to study symplectic embedding problems.

\chapter{Hamiltonian persistence modules} \label{chp7_hamiltonian_persistence_modules}

\section{Conley-Zehnder index}  \label{sec-CZ}

Denote by ${\rm Sp}(2n)$ the group of $2n \times 2n$ symplectic matrices with entries in $\R$, that is, $M \in {\rm Sp}(2n)$ satisfies
\[ M^T \Omega M = \Omega, \,\,\,\,\mbox{where}\,\,\,\,\Omega =  \begin{pmatrix} 0 & -\mathds{1}_n \\  \mathds{1}_n & 0 \end{pmatrix}, \]
and $\mathds{1}_n$ is the $n\times n$ identity matrix. Conley-Zehnder index assigns an integer to a path of symplectic matrices $\Phi: [0,1] \to {\rm Sp}(2n)$ where $\Phi(0) = \mathds{1}$ and $\Phi(1)$ does not have $1$ in its eigenvalues. It is denoted by $\mu_{CZ}(\Phi)$, and it is an important ingredient in the definition of Floer theory. Roughly speaking, the Conley-Zehnder index of the path $\Phi$ is an intersection number between $\Phi$ and the ``cycle'' $\Sigma \subset \text{Sp}(2n)$ consisting of all matrices $A$ possessing $1$ as their eigenvalue. The fact that $\Sigma$
can be considered as a cycle goes back to Arnold's seminal paper \cite{Arn67} on the Maslov index. V.I.~Arnold
showed that $\Sigma$ is a stratified manifold whose top stratum has codimension one in ${\rm Sp}(2n)$ and other strata have codimension $\geq 3$. Furthermore, $\Sigma$ admits a natural co-orientation, and hence can be considered as a cycle representing an element
in cohomology $H^{1}(\text{Sp}(2n),\Z)$ called the Maslov class. An extra difficulty in defining the intersection number
$\Phi \circ \Sigma$ is due to the fact that $\Phi$ starts at $\Sigma$ and may intersect the lower strata. The next definition is from \cite{RS93} and it takes care of these nuances.

In preparation for this definition, for any smooth path of symplectic matrices $\Phi = \{\Phi(t)\}_{t \in [0,1]}$, one considers
\[ S(t) : = \Omega \,\dot{\Phi}(t) \Phi(t)^{-1}. \]
It is easy to check that $\{S(t)\}_{t \in [0,1]}$ is a smooth path of symmetric matrices.

\begin{dfn} \label{dfn-CZ} For a smooth path of symplectic matrices $\Phi: [0,1] \to {\rm Sp}(2n)$, a number $t \in [0,1]$ is called {\it a crossing} if $\det(\Phi(t) - \mathds{1}) = 0$. For any crossing $t \in [0,1]$, the restriction of $S(t)$ to $\ker(\Phi(t) - \mathds{1})$ defines a quadratic form $\Gamma(\Phi, t)$, called {\it crossing form}. We call a crossing $t \in [0,1]$ {\it regular} if $\Gamma(\Phi,t)$ is non-degenerate. Suppose $\Phi$ has only regular crossings, then the Conley-Zehnder index of $\Phi$ is defined by
\[ \mu_{CZ}(\Phi) : = \frac{1}{2} {\rm sign}(\Gamma(\Phi, 0)) +
\sum_{{\tiny \begin{array}{cc} t \in (0,1) \\ \mbox{crossing} \end{array}}}
{\rm sign}(\Gamma(\Phi, t)), \]
where ``{\rm sign}'' denotes the signature of a quadratic form which is equal to the number of positive squares minus the number of negative squares in the canonical form of this quadratic form. \end{dfn}

Observe that $\Phi(0)= \mathds{1}$ implies that $\ker(\Phi(0) - \mathds{1}) = \R^{2n}$, and then $t=0$ is always a crossing. The crossing form at $t=0$ is simply $S(0)$. Moreover, under the assumption that $t=0$ is a regular crossing, $S(0)$ is a non-degenerate quadratic form. Then, for this quadratic form, the number of its positive squares plus the number of its negative squares is equal to $2n$, in particular, an even number. Then ${\rm sign}(\Gamma(\Phi, 0))$ is also even, which implies $\mu_{CZ}(\Phi)$ is always an integer.

Definition \ref{dfn-CZ} assigns the Conley-Zehnder index to a path which has only regular crossings. Meanwhile, it is a standard fact that $\mu_{CZ}(\Phi) = \mu_{CZ}(\Psi)$ if two paths $\Phi$ and $\Psi$ are homotopic with fixed endpoints. Then for any smooth path $\Psi: [0,1] \to {\rm Sp}(2n)$ with the conditions that $\Psi(0) = \mathds{1}$ and $\det(\Psi(1)- \mathds{1}) \neq 0$, define $\mu_{CZ}(\Psi)$ to be $\mu_{CZ}(\Phi)$ where $\Phi$ is any path which is homotopic to $\Psi$ with endpoints fixed and in addition has only regular crossings.

The following example computes the Conley-Zehnder index of a path of symplectic matrices generated by a quadratic Hamiltonian.

\begin{ex} (Harmonic oscillation) \label{basic-CZ} On $\C (\simeq \R^2)$ with the coordinate $z = q + i p$, consider the  Hamiltonian function $H(z) = \pi \alpha |z|^2$ (or $\pi \alpha (q^2 + p^2)$) for some $\alpha \in \R \backslash \Z$. Its Hamiltonian vector field is
\[ X_H(q,p) = \begin{pmatrix} 0 & 2 \pi \alpha \\ - 2 \pi \alpha & 0 \end{pmatrix} \begin{pmatrix} q \\ p \end{pmatrix}, \]
and its flow is the rotation $\phi_H^t(z) = e^{(-2 \pi \alpha t)i} z$. The linearization of this flow defines a smooth path of symplectic matrices $\Phi: [0,1] \to {\rm Sp}(2)$ given by
\[ \Phi(t) = \begin{pmatrix} \cos(2 \pi \alpha t) & \sin(2 \pi \alpha t) \\ - \sin(2 \pi \alpha t) & \cos(2 \pi \alpha t) \end{pmatrix}. \]
Since $\alpha \notin \Z$, $\Phi(1)$ does not have $1$ among its eigenvalues. Observe that $t \in (0,1)$ is a crossing if and only if $t = \frac{k}{ \alpha}$ for some $k \in \Z \backslash \{0\}$. More precisely, when $\alpha<0$, $k \in \{\ceil*{\alpha}, ..., -1\}$, and when $\alpha>0$, $k \in \{1, ..., \floor*{\alpha}\}$. At each crossing $t= \frac{k}{\alpha}$, $\ker(\Phi(t) - \mathds{1}) = \C$, and the associated crossing form is
\begin{align} \label{reg-cro}
\Gamma\left(\Phi, t\right)  =  \begin{pmatrix} 0 & -1 \\ 1 & 0 \end{pmatrix}  \begin{pmatrix} 0 & 2 \pi \alpha \\ -2 \pi \alpha & 0 \end{pmatrix} =  \begin{pmatrix} 2 \pi \alpha & 0 \\0 & 2 \pi \alpha \end{pmatrix}.
\end{align}
So each crossing is regular and ${\rm sign}(\Gamma(\Phi, k/\alpha)) = \pm2$, and $+$ or $-$ depends on the sign of $\alpha$. Moreover, computation in (\ref{reg-cro}) also holds for $k=0$, that is, $t=0$. Hence, by Definition \ref{dfn-CZ},
\begin{equation} \label{ex-CZ}
\mu_{CZ}(\Phi) = \left\{\begin{array}{rcl} (-2)|\ceil*{\alpha}| - 1 & \mbox{if} & \alpha<0 \\ 2\floor*{\alpha} +1 & \mbox{if} & \alpha>0 \end{array} \right. .
\end{equation}
\end{ex}

In what follows we will deal with the following index which is normalized from $\mu_{CZ}$,
\begin{equation} \label{normalize-index}
{\rm Ind}(\Phi) := n - \mu_{CZ}(\Phi).
\end{equation}
\begin{exercise} Suppose a path $\Phi$ is generated by a sufficiently small quadratic Hamiltonian $H$ on $\C^n(\simeq \R^{2n})$, then ${\rm Ind}(\Phi)$ is equal to the Morse index of $H$, that is, the number of negative squares of $H$. (cf. Example \ref{basic-CZ}) \end{exercise}

\begin{exercise} Assume that $\Phi: [0,1] \to \text{Sp}(2n)$ is a smooth {\it loop}, i.e. $\Phi(0) = \Phi(1) =\mathds{1}$ and
$\dot{\Phi}(0)=\dot{\Phi}(1)$. Define the {\it Maslov index}
$$\mu(\Phi) = \sum_{t \in [0,1)} \text{sign}(\Gamma(\Phi,t))\;,$$
where the sum is taking over {\it all} crossings including the point $\Phi(0)$. Prove that
the concatenation $\Psi \sharp \Phi$ of any path
$\Psi$ and a loop $\Phi$ has the Conley-Zehnder index
$${\rm Ind}(\Psi \sharp \Phi) = {\rm Ind}(\Psi) - \mu (\Phi)\;.$$
\end{exercise}

These exercises are very useful for calculation of the Conley-Zehnder index in practice.


\section{Filtered Hamiltonian Floer theory} \label{sec-FHF}
Hamiltonian Floer theory was introduced in Floer's proof of the famous Arnold conjecture on the minimal number of fixed points of a Hamiltonian diffeomorphism on a symplectic manifold, see \cite{Flo89}. It can be regarded as a generalization of the classical Morse theory.

Let us recall its construction on symplectic manifolds $(M, \omega)$ with $\pi_2(M)=0$. For simplicity, we will only consider homology with coefficients in $\Z_2$. Consider the space $\mathcal LM$ of all smooth contractible loops $x: S^1 \to M$. For any $x \in \mathcal LM$, one can take a disc $D \subset M$ spanning $x$ and consider an area functional $\mathcal A(x) = - \int_D \omega$. Due to condition $\pi_2(M) = 0$, $\mathcal A$ is a well-defined function on $\mathcal LM$. Following the basic idea from Morse theory, one would investigate the critical points of $\mathcal A$.  It turns out the critical points of $\mathcal A$ are just constant loops. To overcome this degeneracy, we will perturb $\mathcal A$ in the following way. Fix a time-dependent Hamiltonian $H: \R/\Z \times M \to \R$, and define the {\it symplectic action functional} $\mathcal A_H: \mathcal LM \to \R$ by
\begin{equation} \label{act-fcn}
\mathcal A_H (x) = - \int_D \omega + \int_0^1 H(x)dt,
\end{equation}
where $D$ is any disc spanning $x$. This perturbation will be the major object of our interest in the sense that we will study Morse theory for $\mathcal A_H$ on $\mathcal LM$.

First of all, the tangent space $T_x\mathcal LM$ at $x \in \mathcal LM$ can be identified with the space of tangent vector fields $\xi(t) \in T_{x(t)} M$.

\begin{exercise}\label{ex-diff-A} Prove that
\begin{equation} \label{diff-A}
d\mathcal A_H(\xi) = \int_0^1 dH(\xi) - \omega(\xi, \dot{x}(t))dt.
\end{equation}
\end{exercise}
By the relation $dH = - \omega(X_H, \cdot)$ where $X_H$ is the Hamiltonian vector field of $H$, (\ref{diff-A}) can be rewritten as
\[ d\mathcal A_H(\xi) = \int_0^1 \omega(\xi, X_H - \dot{x}(t)) dt.\]
Then one gets the following famous proposition.
\begin{prop} (Least action principle) \label{prop-lap} An element $x \in \mathcal LM$ is a critical point of $\mathcal A_H$ if and only if $x$ is a contractible 1-periodic orbit of the Hamiltonian flow of $H$. \end{prop}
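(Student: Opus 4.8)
The plan is to recognize that the statement is an immediate consequence of the first-variation formula \eqref{diff-A} that precedes it. The key identity we are handed is
\[
d\mathcal A_H(\xi) = \int_0^1 \omega\big(\xi(t),\, X_H(x(t)) - \dot x(t)\big)\,dt
\]
for every tangent vector field $\xi$ along $x$, which was derived from \eqref{diff-A} using $dH = -\omega(X_H,\cdot)$. So the heart of the argument is just a standard ``fundamental lemma of the calculus of variations'' applied to this integral, using the non-degeneracy of $\omega$.

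First I would prove the easy direction: if $x$ is a contractible $1$-periodic orbit of the Hamiltonian flow of $H$, then $\dot x(t) = X_H(x(t))$ for all $t$, so the integrand above vanishes identically, hence $d\mathcal A_H(\xi) = 0$ for every $\xi$; thus $x$ is a critical point of $\mathcal A_H$. (One should note here that $x$ being a closed orbit of the Hamiltonian flow is automatically contractible under the hypothesis $\pi_2(M)=0$ is not needed for this, but $x \in \mathcal LM$ requires contractibility, which is why we speak of contractible periodic orbits throughout.)

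Next, the converse: suppose $x \in \mathcal LM$ is a critical point, so $\int_0^1 \omega(\xi(t), X_H(x(t)) - \dot x(t))\,dt = 0$ for every smooth tangent vector field $\xi$ along $x$. Set $\eta(t) := X_H(x(t)) - \dot x(t) \in T_{x(t)}M$. I would argue pointwise: if $\eta(t_0) \neq 0$ for some $t_0$, then by non-degeneracy of $\omega$ there is a vector $w \in T_{x(t_0)}M$ with $\omega(w, \eta(t_0)) > 0$; extending $w$ to a smooth vector field along $x$ supported in a small interval around $t_0$ (using a bump function in $t$ and parallel transport or a local trivialization of $TM$ along $x$ to move $w$ to nearby fibers), we get $\xi$ with $\int_0^1 \omega(\xi,\eta)\,dt > 0$, contradicting criticality. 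Hence $\eta \equiv 0$, i.e.\ $\dot x(t) = X_H(x(t))$, so $x$ is a $1$-periodic orbit of the Hamiltonian flow, and it is contractible since $x \in \mathcal LM$.

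The only mildly technical point — the main (modest) obstacle — is the construction of the cutoff test field $\xi$ along the loop $x$ in the converse direction: one must be slightly careful that $\xi$ is a genuine smooth section of $x^*TM$ over $S^1$, which is handled by working in a tubular-neighborhood trivialization of $TM$ along the (compact) image of $x$ and multiplying by a smooth bump function in the $S^1$-coordinate. Everything else is a direct substitution into \eqref{diff-A}. I would keep the write-up to a few lines, citing Exercise~\ref{ex-diff-A} for \eqref{diff-A} and the relation $\iota_{X_H}\omega = -dH$ from the definition of the Hamiltonian vector field.
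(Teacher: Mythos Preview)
Your proposal is correct and is exactly the approach the paper intends: the paper does not give a separate proof at all, but simply derives the identity $d\mathcal A_H(\xi)=\int_0^1\omega(\xi,X_H-\dot x)\,dt$ from \eqref{diff-A} and then states the proposition as an immediate consequence. Your write-up just spells out the standard ``fundamental lemma'' step (non-degeneracy of $\omega$ plus a bump-function test field) that the paper leaves implicit.
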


Denote $P:= \{ \mbox{critical points of $\mathcal A_H$} \}$. Notice that these are the objects which appeared in the Arnold conjecture because fixed points of a Hamiltonian diffeomorphism correspond to 1-periodic orbits of its Hamiltonian flow. To invoke Morse theory, one also needs a metric on $\mathcal LM$. Recall that an {\it almost complex structure $J$} on a manifold $M$ is a smooth field of automorphisms $J_p: T_p M \to T_p M$ such that $J_p^2 = -\mathds{1}$ for any $p \in M$. For a symplectic manifold $(M, \omega)$, an almost complex structure $J$ is called {\it $\omega$-compatible} if $\omega(\cdot,J\cdot)$ defines a Riemannian metric on $M$. Denote by $\mathcal J(M, \omega)$ the collection of all $\omega$-compatible almost complex structure. A standard fact, due to M.~Gromov \cite{MS98}, is that $\mathcal J(M, \omega)$ is non-empty and contractible. Now choose a loop $J(t)$ of $\omega$-compatible almost complex structure on $(M, \omega)$. For any $x \in \mathcal LM$ and vector fields $\xi, \eta \in T_x \mathcal LM$, define a metric on $\mathcal LM$ by
\begin{equation} \label{dfn-metric}
\left< \xi(t), \eta(t) \right> : = \int_0^1 \omega(\xi(t), J(t) \eta(t)) dt.
\end{equation}

A closed orbit $x \in P$ is called {\it non-degenerate} if the differential
$\phi_* : T_{x(0)}M \to T_{x(0)}M $  of the time-one map $\phi=\phi_H^1$ of the Hamiltonian flow of $H$ at the fixed point $x(0)$ does not contain $1$ in its eigenvalues. Geometrically, this means that the graph of $\phi$ is transversal to the diagonal at $(x,x)$. We say that $H$ and $\phi$ are non-degenerate if this property is satisfied for all orbits from $P$.  Note also that the non-degeneracy of $x \in P$ in terms of linearization of the Hamiltonian flow is in fact equivalent to the non-degeneracy of $x \in P$ as a critical point of symplectic action functional $\mathcal A_H$.

Let $x \in P$ be a closed orbit of a non-degenerate Hamiltonian diffeomorphism $\phi=\phi_H^1$.
Choose any spanning disc $w: D^2 \to M$ with $w|_{S^1}=x$, where we identify $S^1 = \partial D^2$.
Since $w^*TM$ is a symplectic vector bundle over a contractible base space, there exists a trivialization $w^*TM \simeq D^2 \times (\R^{2n}, \omega_0)$. Under this trivialization, the linearization of flow $\phi_H^t$ at $x(0)$ gives rise to a smooth path $\Phi: [0,1] \to {\rm Sp}(2n)$ such that $\Phi(0) = \mathds{1}$ and $\Phi(1)$ does not contain $1$ in its eigenvalues. Definition \ref{dfn-CZ} assigns the Conley-Zehnder index to the orbit $x$. Under our normalization (\ref{normalize-index}), we denote the index of a 1-periodic Hamiltonian orbit $x \in P$ by ${\rm Ind}(x) := {\rm Ind}(\Phi)$. It can be shown that ${\rm Ind}(x)$ is independent of the choice of trivializations. Moreover, under our assumption $\pi_2(M) = 0$, it is also independent of the spanning disc of $x$.

Next, for any $x,y \in P$, with respect to the metric defined in (\ref{dfn-metric}), one can consider the space of gradient trajectories of $\mathcal A_H$ from $x$ to $y$, denoted by $\widetilde{\mathcal M}(x,y)$. Notice that any such gradient trajectory is actually a cylinder $u(s,t): \R \times \R/\Z \to M$ satisfying the equation
\begin{equation}\label{CR-eqn}
\frac{\partial u}{\partial s} + J_t(u) \frac{\partial u}{\partial t} - \nabla H_t(u) = 0
\end{equation}
with asymptotic conditions $\lim_{s \to \infty} u(s,t) = y(t)$ and $\lim_{s \to -\infty} u(s,t) = x(t)$. This is a perturbed version of Cauchy-Riemann equation, more precisely, $u$ is a {\it pseudo-holomorphic curve}, see \cite{MS12}. It is a great insight by M.~Gromov in his famous paper \cite{Gro85} that methods from algebraic geometry can be generalized if one replaces complex structures with $\omega$-compatible almost complex structures as in (\ref{CR-eqn}). Then the classical theory of holomorphic curves extends to this non-integrable situation, which remarkably revolutionized symplectic geometry in the past few decades.

Observe that there exists an $\R$-action on $\widetilde{\mathcal M}(x,y)$ simply by $T \cdot u(s,t) = u(s+T,t)$ for any $T \in \R$. Then one can consider the moduli space $\mathcal M(x,y) : = \widetilde{\mathcal M}(x,y)/\R$. A crucial and highly non-trivial fact is that generically $\mathcal M(x,y)$ is a compact finite dimensional manifold of dimension ${\rm Ind}(x) - {\rm Ind}(y)-1$. In particular, if ${\rm Ind}(x) - {\rm Ind}(y) = 1$, then $\mathcal M(x,y)$ is a collection of finite many points.  Put $n(x,y) = \# \mathcal M(x,y) \,\mbox{mod $\Z_2$}$.

Finally, we assemble all the ingredients above to formulate the following version of Morse theory, which we call {\it Hamiltonian Floer theory}. Fix a degree $k \in \Z$, and denote
\[ \CF_k(M, H) = {\rm Span}_{\Z_2} \left<x \in P \,| \, {\rm Ind}(x) = k \right>. \]
Consider a $\Z_2$-linear map $\partial_k: \CF_k(M, H) \to \CF_{k-1}(M, H)$ defined by
\begin{equation} \label{floer-diff}
\partial_k x = \sum_{y \in P, \,\,{\rm Ind}(y) = k-1} n(x,y) y.
\end{equation}
It turns out that $\partial$ is a differential, i.e., $\partial^2 =0$. Moreover, any generator $y$ which appears on the right-hand side of (\ref{floer-diff}) has symplectic action $\mathcal A_H(y) < \mathcal A_H(x)$. Denote the Hamiltonian Floer homology by $\HF_k(H) = \frac{\ker(\partial_k)}{{\im}(\partial_{k+1})}$ for any $k \in \Z$.

Similarly to the classical Morse theory, it is easy to add an extra ingredient, filtration, into this new homology theory. For any $\lambda \in \R$ and degree $k \in \Z$, denote
\[ \CF^{\lambda}_k(M, H) = {\rm Span}_{\Z_2} \left< x \in P \, | \, {\rm Ind}(x) = k, \,\,\mbox{and}\,\, \mathcal A_H(x) < \lambda \right>. \]
Since $\partial_k$ strictly decreases the symplectic action, the differential $\partial_k: \CF^{\lambda}_k(M, H) \to \CF_{k-1}^{\lambda}(M,H)$ is a well-defined $\Z_2$-linear map. Denote the {\it filtered} Hamiltonian Floer homology by
\[ \HF_k^{\lambda}(H) : = \frac{\ker(\partial_k: \CF^{\lambda}_k(M, H) \to \CF_{k-1}^{\lambda}(M,H))}{{\im}(\partial_{k+1}: \CF^{\lambda}_{k+1}(M, H) \to \CF_{k}^{\lambda}(M,H))}. \]
For any $\lambda \leq \eta$, there is a well-defined map $\iota_{\lambda, \eta}: \HF_k^{\lambda}(H) \to \HF_k^{\eta}(H)$ induced by the inclusion $\CF_{k}^{\lambda}(M,H) \to \CF_{k}^{\eta}(M,H)$. It is easy to see that for any $\lambda \leq \eta \leq \theta$, $\iota_{\lambda, \theta} = \iota_{\eta, \theta} \circ \iota_{\lambda, \eta}$.

Recall that a Hamiltonian $H: M \times S^1 \to \R$ is called normalized if
$$\int_M H(\cdot,t)\omega^n = 0 \;\;\forall t \in S^1\;.$$
A  remarkable fact due to M.~Schwarz \cite{Sch00} is that for normalized Hamiltonians,  filtered Hamiltonian Floer homology $\HF_k^{\lambda}(H)$ only depends on $\phi = \phi^1_H$, the time-1 map of Hamiltonian flow $\phi^t_H$ generated by $H$. We shall denote this homology by $\HF_k^{\lambda}(\phi)$.

This discussion leads to the following definition.

\begin{dfn} \label{dfn-HFM} Given a symplectic manifold $(M, \omega)$ with $\pi_2(M)=0$, a Hamiltonian diffeomorphism $\phi = \phi_H^1$ generated by some Hamiltonian function $H: \R/\Z \times M \to \R$ and a degree $\ast \in \Z$, the collection of data $\{\{\HF_*^{\lambda}(\phi)\}_{\lambda \in \R}; \{\iota_{\lambda, \eta}\}_{\lambda \leq \eta} \}$ is called a {\it Hamiltonian persistence module in degree $\ast$}, denoted by $\mathbb {HF}_*(\phi)$. The barcode of $\mathbb{HF}_*(\phi)$ is denoted by $\mathcal B_*(\phi)$, and $\mathcal B(\phi) = \cup_{\ast \in \Z} \mathcal B_*(\phi)$. \end{dfn}

\begin{ex} \label{ex-special} (i) Let $(M, \omega)$ be a compact symplectic manifold with $\pi_2(M) =0$ and $H$ be a $C^{\infty}$-small autonomous Morse function with the zero mean. In this case, 1-periodic Hamiltonian orbits are constant loops and they are in bijection with critical points of $H$. Moreover, one can show the Hamiltonian Floer complex reduces  to the standard Morse complex. Then barcode $\mathcal B(\phi)$ where $\phi = \phi_H^1$ is simply the barcode of the corresponding filtered Morse homology. Since $M$ is a compact, any such Morse function $H$ has a global maximum $A = \max_{M} H$ and a global minimum $B = \min_{M} H$. In particular, $\mathcal B(\phi)$ contains two infinite length bars $[A, \infty)$ and $[B, \infty)$.

(ii) A more special case than (1) above is $H \equiv 0$ which generates Hamiltonian diffeomorphism $\phi_H^1 = \mathds{1}_M$, the identity map on $M$. Since $H$ is degenerate, we cannot apply the theory developed above
directly and regard it as the limit of arbitrarily small Morse functions $H_i$. We define its barcode $\mathcal B(\mathds{1}_M)$ as the limit of $\mathcal B(\phi_{H_i}^1)$ in the bottleneck
distance. Then it is easy to see that $\mathcal B(\mathds{1}_M)$ contains only bar $[0, \infty)$ with multiplicity
$\sum_{i} b_i(M)$, the total Betti number of $M$.
\end{ex}

Recall Hofer's metric $\d$ defined in Definition \ref{dfn-hofer}. The following theorem brings algebra and dynamics together.

\begin{theorem}\label{DST}(Dynamical Stability Theorem \cite{PS16}) \label{thm-dst} Let $(M, \omega)$ be a symplectic manifold with $\pi_2(M) =0$. For any pair of non-degenerate Hamiltonian diffeomorphisms $\phi, \psi \in \Ham(M, \omega)$, $d_{bot}(\mathcal B(\phi), \mathcal B(\psi)) \leq d_{\rm Hofer}(\phi, \psi)$. \end{theorem}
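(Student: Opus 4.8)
The plan is to deduce the Dynamical Stability Theorem from the Isometry Theorem (Theorem \ref{thm: isometry_thm}) by showing the purely algebraic fact that the Hamiltonian persistence modules $\mathbb{HF}_*(\phi)$ and $\mathbb{HF}_*(\psi)$ are $\delta$-interleaved whenever $d_{\mathrm{Hofer}}(\phi,\psi) < \delta$. Once interleaving is established in every degree, the Isometry Theorem gives $d_{bot}(\mathcal B_*(\phi),\mathcal B_*(\psi)) \le d_{int}(\mathbb{HF}_*(\phi),\mathbb{HF}_*(\psi)) \le \delta$, and taking the union over degrees $*\in\Z$ together with the infimum over $\delta > d_{\mathrm{Hofer}}(\phi,\psi)$ yields $d_{bot}(\mathcal B(\phi),\mathcal B(\psi)) \le d_{\mathrm{Hofer}}(\phi,\psi)$.

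The core step is constructing the interleaving morphisms. First I would recall the continuation maps in filtered Floer theory: given two non-degenerate normalized Hamiltonians $F$ and $G$ generating $\phi$ and $\psi$, a monotone homotopy from $F$ to $G$ induces chain maps on Floer complexes, and the standard energy estimate shows that a generator of action $< \lambda$ is sent to a combination of generators of action $< \lambda + \int_0^1 \max_M(G_t - F_t)\,dt$ (and symmetrically with $F,G$ swapped). Passing to homology, this gives maps $\Psi_{FG}: \HF_*^\lambda(\phi) \to \HF_*^{\lambda+c_+}(\psi)$ where $c_+ = \int_0^1 \max_M(G_t-F_t)\,dt$, i.e.\ a morphism of persistence modules $\mathbb{HF}_*(\phi) \to \mathbb{HF}_*(\psi)[c_+]$, and likewise $\mathbb{HF}_*(\psi) \to \mathbb{HF}_*(\phi)[c_-]$ with $c_- = \int_0^1 \max_M(F_t-G_t)\,dt$. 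The key compatibility properties — that $\Psi$ commutes with the filtration inclusion maps $\iota_{\lambda,\eta}$, that composing the two continuation maps recovers the shift morphism $\Phi^{c_++c_-}$ up to the usual chain-homotopy argument, and that these maps depend only on $\phi,\psi$ (not on the chosen Hamiltonians or homotopy) after Schwarz's normalization — are the content that makes the diagrams in Definition \ref{defn: interleaved_pm_interleaving_morph} commute. Setting $\delta = \max(c_+, c_-) \le \int_0^1 \|F_t - G_t\|\,dt$ gives a $\delta$-interleaving, and by Exercise \ref{exr: basic_properties_of_interleaved_pm}(2) we may enlarge $\delta$ to $\int_0^1\|F_t-G_t\|\,dt$ if needed to symmetrize.

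Next I would pass from a single pair of Hamiltonians to the Hofer distance. If $\{\gamma_t\}$ is any path in $\Ham(M,\omega)$ from $\phi$ to $\psi$ generated by $H_t$, then $\phi = \phi_K^1$ for a Hamiltonian $K$ obtained by concatenating generating Hamiltonians, and a standard reparametrization/concatenation argument (using Proposition \ref{comp}) shows that $\phi$ and $\psi$ can be joined so that the relevant Hamiltonian difference has $L^{(1,\infty)}$-norm equal to $\mathrm{length}(\{\gamma_t\})$. Concretely, one shows $d_{int}(\mathbb{HF}_*(\phi),\mathbb{HF}_*(\psi)) \le \mathrm{length}(\{\gamma_t\})$ for every such path, then takes the infimum over paths to get $d_{int}(\mathbb{HF}_*(\phi),\mathbb{HF}_*(\psi)) \le d_{\mathrm{Hofer}}(\phi,\psi)$. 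There is also the minor technical point that $\phi$ and $\psi$ are assumed non-degenerate but an approximating path may pass through degenerate diffeomorphisms; this is handled by the fact that continuation maps and energy estimates only require non-degeneracy at the endpoints, while the homotopy interpolates through arbitrary (possibly degenerate) Hamiltonians.

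The main obstacle I expect is not the persistence-module bookkeeping but the Floer-theoretic input: establishing that the continuation maps are well-defined on filtered homology, satisfy the sharp energy estimate with constant $\int_0^1\max_M(G_t-F_t)\,dt$, are functorial under concatenation of homotopies, and — crucially — are independent of all auxiliary choices once Hamiltonians are normalized (this last point rests on Schwarz's theorem \cite{Sch00} that $\HF_*^\lambda$ depends only on the time-one map). All of this is standard in the literature (\cite{PS16}, \cite{Sch00}) but is the substantive part; the reduction to the Isometry Theorem afterwards is formal. I would therefore structure the write-up as: (i) recall filtered continuation maps and the energy estimate; (ii) verify the interleaving diagrams; (iii) optimize over homotopies/paths to bring in $d_{\mathrm{Hofer}}$; (iv) invoke Theorem \ref{thm: isometry_thm} degree by degree and conclude.
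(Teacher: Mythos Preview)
Your approach is essentially the paper's own: use continuation maps in filtered Floer theory (the paper packages this as Theorem \ref{SZ92-result}) to produce an $E_{H,G}$-interleaving between $\mathbb{HF}_*(\phi)$ and $\mathbb{HF}_*(\psi)$, then invoke the Isometry Theorem degree by degree and take the infimum over generating Hamiltonians. Your treatment is slightly more granular---you track the asymmetric shifts $c_+,c_-$ separately before symmetrizing, and you are more explicit about the passage from a single pair $(F,G)$ to the Hofer infimum---but the architecture is identical.
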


\medskip\noindent
An immediate consequence of Theorem \ref{DST} is the non-degeneracy of Hofer's metric $\d$
for symplectic manifolds with $\pi_2=0$, see Theorem \ref{thm-nondeg}.

\begin{cor} Let $(M, \omega)$ be a compact symplectic manifold with $\pi_2(M) =0$. If a Hamiltonian diffeomorphism $\phi  \in \Ham(M, \omega)$ is not identity $\mathds{1}_M$, then $\d(\phi, \mathds{1}_M) >0$. \end{cor}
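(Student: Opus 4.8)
The plan is to combine the Dynamical Stability Theorem (Theorem~\ref{DST}) with the non-degeneracy of the bottleneck distance (Exercise~\ref{exr: d_bot_is_non_degenerate_hence_d_int_also}, which extends verbatim to proper barcodes), after first reducing the statement to the positivity of displacement energy. The point is that for $\phi\neq\mathds{1}_M$ one can always exhibit a nonempty open set that $\phi$ displaces, and then $d_{\rm Hofer}(\phi,\mathds{1}_M)\ge e(U)$, so the whole matter comes down to showing $e(U)>0$.

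First I would carry out the local reduction. If $\phi\neq\mathds{1}_M$ there is a point $p\in M$ with $\phi(p)\neq p$, and by continuity of $\phi$ one may choose a small Darboux ball $U\ni p$ with $\phi(\overline U)\cap\overline U=\emptyset$. Since $\phi$ belongs to the set of Hamiltonian diffeomorphisms displacing $U$, Definition~\ref{dfn-disp} gives $d_{\rm Hofer}(\phi,\mathds{1}_M)\ge e(U)$; note this already dispenses with the degenerate case, as $U$ is available for any $\phi\neq\mathds{1}_M$. It therefore suffices to prove $e(U)>0$, and for that it is enough to produce one autonomous Hamiltonian $G$ supported in $U$, with $\phi_G$ non-degenerate, whose barcode contains a finite bar of positive length $a$, together with the inequality $a\le 2e(U)$. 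To get such a $G$ and compute its barcode explicitly, I would take $G$ to be a radial ``bump'' inside the Darboux chart: its Hamiltonian flow is then by simultaneous rotations in the coordinate planes, exactly as in Example~\ref{basic-CZ}, so the contractible $1$-periodic orbits, their Conley–Zehnder indices, and their action values are all explicit; reading off $\mathcal B(\phi_G)$ one sees a finite bar of length $a>0$, where $a$ can be any value small enough that the flow stays supported in $U$.

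The remaining, and essential, ingredient is the inequality $a\le 2e(U)$: it says that displacing the support of a Hamiltonian forces every bar of its Floer persistence module to be short. This is the energy–capacity estimate of filtered Hamiltonian Floer theory (Schwarz, Usher): taking $\psi\in\Ham(M,\omega)$ with $\psi(\overline U)\cap\overline U=\emptyset$ and $d_{\rm Hofer}(\psi,\mathds{1}_M)$ close to $e(U)$, one compares the filtered Floer complexes of $\psi$ and of $\psi\phi_G$ — which, because $\mathrm{supp}(G)$ is displaced by $\psi$, agree up to a controlled shift on action windows of width exceeding $2e(U)$ — and combines this with Theorem~\ref{DST} and the bottleneck triangle inequality to conclude that no bar of $\mathcal B(\phi_G)$ can exceed $2e(U)$. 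Granting this, Steps above yield $e(U)\ge a/2>0$ and hence $d_{\rm Hofer}(\phi,\mathds{1}_M)>0$. The main obstacle is precisely this last step: the vanishing/continuation statement for filtered Floer homology over a set displaced by $\psi$ is genuinely analytic — it is not formal from the persistence-module formalism of Chapters~\ref{chp1_definition_first_examples}--\ref{chp: isometry_thm_proof} — and is proved by studying continuation maps between the $\phi_G$-perturbed and unperturbed complexes. Everything else (the local reduction, the model computation, and the appeals to Theorem~\ref{DST}, to non-degeneracy of $d_{bot}$, and to the description of $\mathcal B(\mathds{1}_M)$ in Example~\ref{ex-special}) is routine, which is also why this corollary is, in effect, the $\pi_2(M)=0$ case of Theorem~\ref{thm-nondeg}.
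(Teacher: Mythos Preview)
Your approach is valid in outline but takes a much longer route than the paper's. The paper invokes the dichotomy of Exercise~\ref{alternative} (a consequence of the simplicity of $\Ham(M,\omega)$, Theorem~\ref{simple}): a bi-invariant pseudo-metric is either non-degenerate or identically zero, so it suffices to exhibit a \emph{single} $\phi$ with positive Hofer norm. For this the paper takes a $C^\infty$-small Morse function $H$ with zero mean; by Example~\ref{ex-special} the barcode $\mathcal B(\phi_H^1)$ contains the infinite ray $[\max H,\infty)$ with $\max H>0$ while $\mathcal B(\mathds{1}_M)$ consists only of copies of $[0,\infty)$, and Theorem~\ref{DST} gives $d_{\rm Hofer}(\phi_H^1,\mathds{1}_M)\geq d_{bot}\geq \max H>0$---done. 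You instead bypass the dichotomy and attack every $\phi\neq\mathds{1}_M$ directly via $d_{\rm Hofer}(\phi,\mathds{1}_M)\geq e(U)$, then bound $e(U)$ from below by an energy--capacity inequality. That is the classical route to non-degeneracy, but its hard step is a genuine Floer-theoretic theorem considerably deeper than anything in the paper's two-line argument (indeed the paper runs the implication the other way, deducing $e(U)>0$ in Corollary~\ref{cor-disp} \emph{from} the present result via the commutator trick). A smaller wrinkle: a radial bump supported in a Darboux ball, after normalization on the closed manifold and Morse perturbation of the constant region outside, does not obviously produce a \emph{finite} bar of controlled positive length; the natural quantity in the energy--capacity argument is the spectral norm $c(\phi_G,[M])-c(\phi_G,[pt])$, i.e., the gap between endpoints of the infinite bars, rather than the boundary depth.
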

\begin{proof} Exercise \ref{alternative} says we have a dichotomy that either $d_{\rm Hofer}$ is non-degenerate or vanishes identically. Take a $C^{\infty}$-small autonomous Morse function $H$ with the zero mean,
and denote $\phi = \phi_H^1$. Part (i) in Example \ref{ex-special} says $[A, \infty) \in \mathcal B(\phi)$ where $A = \max_M H >0$. Then (ii) in Example \ref{ex-special} together with Theorem \ref{DST} implies the following inequality,
\[ 0 < A \leq d_{bot}(\mathcal B(\phi), \mathcal B(\mathds{1}_M))
	\leq d_{\rm Hofer}(\phi, \mathds{1}_M). \]
This rules out the case of being vanished identically, therefore, $d_{\rm Hofer}$ is non-degenerate.
\end{proof}

The key to the proof of Theorem \ref{DST} is the following well-known result in Hamiltonian Floer theory, see Section 6 in \cite{SZ92}. For two Hamiltonian functions $H, G: \R/\Z \times M \to \R$, denote $E_{H, G} := \int_0^1 \max_M(G-H)(t, \cdot) - \min_M (G -H)(t,  \cdot) dt$.

\medskip\noindent
\begin{theorem} \label{SZ92-result} Consider a symplectic manifold $(M, \omega)$ with $\pi_2(M) =0$ and two Hamiltonian functions $H, G$. For any $\lambda \in \R$ and degree $\ast \in \Z$, there exist chain maps $\phi_{\lambda}: \CF_*^{\lambda}(H) \to \CF_*^{\lambda + E_{H, G}}(G)$ and $\psi_{\lambda}: \CF_*^{\lambda}(G) \to \CF_*^{\lambda+ E_{H, G}}(H)$ such that $\psi_{\lambda + E_{H, G}} \circ \phi_{\lambda}$ is homotopic to the inclusion $\CF_*^{\lambda}(H) \hookrightarrow \CF_*^{\lambda + 2 E_{H,G}}(H)$ and $\phi_{\lambda + E_{H, G}} \circ \psi_{\lambda}$ is homotopic to the inclusion $\CF_*^{\lambda}(G) \hookrightarrow \CF_*^{\lambda + 2 E_{H,G}}(G)$. \end{theorem}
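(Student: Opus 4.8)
The plan is to construct the chain maps $\phi_\lambda$ and $\psi_\lambda$ via the standard continuation-map machinery in Hamiltonian Floer theory, keeping careful track of how the symplectic action changes along continuation cylinders. First I would choose a homotopy $\{H^s\}_{s\in\R}$ of Hamiltonians with $H^s \equiv H$ for $s \ll 0$ and $H^s \equiv G$ for $s \gg 0$, together with a homotopy of $\omega$-compatible almost complex structures, and consider the $s$-dependent Floer equation $\partial_s u + J^s_t(u)(\partial_t u - X_{H^s_t}(u)) = 0$ with asymptotics $x \in P(H)$ at $s = -\infty$ and $y \in P(G)$ at $s = +\infty$. Counting isolated solutions of this equation mod $2$ defines a $\Z_2$-linear map $\CF_*(H) \to \CF_*(G)$ which is a chain map by the usual broken-trajectory/gluing argument. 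The key analytic input is the \emph{a priori energy identity}: for such a continuation cylinder $u$ one has
\begin{equation*}
\calA_G(y) - \calA_H(x) = -\int \|\partial_s u\|^2\, ds\, dt + \int_0^1 \int_{-\infty}^{+\infty} (\partial_s H^s_t)(u(s,t))\, ds\, dt \leq \int_0^1 \max_M(G-H)(t,\cdot) - \min_M(G-H)(t,\cdot)\, dt = E_{H,G},
\end{equation*}
provided the homotopy $H^s$ is taken to be \emph{monotone in a suitable sense} — concretely, choosing $H^s_t$ so that $\partial_s H^s_t$ is pointwise bounded above by $\max_M(G-H)(t,\cdot) - \min_M(G-H)(t,\cdot)$ times a bump function of total mass $1$ in $s$ (a routine arrangement; one can also just use $E_{H,G}^{\pm}$ separately and then symmetrize). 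This shows the continuation map sends $\CF_*^\lambda(H)$ into $\CF_*^{\lambda + E_{H,G}}(G)$, giving $\phi_\lambda$; reversing the roles of $H$ and $G$ gives $\psi_\lambda$.

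Next I would establish the homotopy statement. Concatenating the homotopy from $H$ to $G$ with the homotopy from $G$ back to $H$ yields a homotopy of Hamiltonians that begins and ends at $H$, and the standard ``homotopy of homotopies'' argument (counting isolated solutions of a two-parameter family of Floer equations) produces a chain homotopy $K$ with $\psi \circ \phi - \mathrm{id} = \partial K + K \partial$, where $\mathrm{id}$ is the continuation map of the constant homotopy at $H$, which is chain-homotopic to the identity. The only point requiring attention is the filtration bookkeeping: the chain homotopy $K$ raises the action filtration by at most $2E_{H,G}$, because each of the two concatenated homotopies contributes at most $E_{H,G}$ to the action shift, and one checks the same monotone-homotopy estimate applies to the parametrized moduli spaces defining $K$. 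Hence $\psi_{\lambda + E_{H,G}} \circ \phi_\lambda$ is homotopic, as a map $\CF_*^\lambda(H) \to \CF_*^{\lambda + 2E_{H,G}}(H)$, to the inclusion; symmetrically for $\phi_{\lambda + E_{H,G}} \circ \psi_\lambda$.

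The main obstacle is not conceptual but bookkeeping and transversality: one must choose the homotopies $\{H^s, J^s\}$ generically enough that all the relevant moduli spaces (for $\phi$, for $\psi$, and for the two chain homotopies) are cut out transversally and are compact in the expected dimensions, while \emph{simultaneously} keeping the homotopies monotone so the action estimates hold with the stated constant $E_{H,G}$ rather than some larger quantity. Since $\pi_2(M) = 0$ there is no bubbling, so Gromov compactness gives the needed compactness directly, and the transversality is the usual generic-perturbation statement for Floer continuation data; the monotonicity constraint is an open condition compatible with genericity, so the two requirements do not conflict. I would cite \cite{SZ92} (Section 6) for the precise construction and merely indicate the action-filtration refinement, which is exactly what is needed for the bottleneck-distance bound in Theorem \ref{DST}.
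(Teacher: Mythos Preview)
Your outline is correct and matches the paper's approach: the paper does not give its own proof of this theorem but simply cites Section~6 of \cite{SZ92}, and what you sketch is precisely the continuation-map construction carried out there, with the filtration refinement. One small point of bookkeeping: the displayed energy inequality as you wrote it is slightly garbled---for a linear homotopy $H^s = (1-\rho(s))H + \rho(s)G$ the bound on $\partial_s H^s_t$ gives $\calA_G(y)-\calA_H(x) \leq \int_0^1 \max_M(G_t-H_t)\,dt =: E^+_{H,G}$, not the full oscillation $E_{H,G}$; the reverse map shifts by $-E^-_{H,G}$, and it is the \emph{composition} that shifts by $E^+_{H,G}-E^-_{H,G}=E_{H,G}$. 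You acknowledge this with your $E^\pm$ aside, and the theorem's stated (weaker) version with $E_{H,G}$ for each map and $2E_{H,G}$ for the composition follows since for normalized Hamiltonians $E^\pm_{H,G}$ have the right signs.
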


\begin{proof} (Proof of Theorem \ref{DST}) Suppose $\phi$ is generated by $H$ and $\psi$ is generated by $G$. By Theorem \ref{SZ92-result}, for $\lambda \in \R$ and degree $* \in \Z$, there exist maps
\[ \Phi_\lambda: \HF_*^\lambda(H) \to \HF_*^{\lambda + E_{H,G}}(G)\,\,\,\,\mbox{and} \,\,\,\, \Psi_\lambda: \HF_*^{\lambda}(G) \to \HF_*^{\lambda + E_{H,G}}(H) \]
such that the following diagrams commute,
\[ \xymatrixcolsep{5pc} \xymatrix{
\HF_*^\lambda (H) \ar[r]^-{\Phi_\lambda} \ar@/_1.5pc/[rr]_{\iota_{\lambda, \lambda + 2E_{H,G}}} & \HF_*^{\lambda + E_{H,G}}(G) \ar[r]^-{\Psi_{\lambda+E_{H,G}}} & \HF_*^{\lambda+ 2E_{H,G}}(H)}
\]
and
\[\xymatrixcolsep{5pc} \xymatrix{ \HF_*^\lambda (G) \ar[r]^-{\Psi_\lambda} \ar@/_1.5pc/[rr]_{\iota_{\lambda, \lambda + 2E_{H,G}}} & \HF_*^{\lambda + E_{H,G}}(H) \ar[r]^-{\Phi_{\lambda+E_{H,G}}} & \HF_*^{\lambda+2E_{H,G}}(G)}.
\]
In terms of Definition 1.3.1 in Chapter 1, $\mathbb{HF}_*(\phi)$ and $\mathbb{HF}_*(\psi)$ are $E_{H, G}$-interleaved. Then by Isometry Theorem,
\[ d_{bot}(\mathcal B_*(\phi), \mathcal B_*(\psi))  = d_{int}(\mathbb{HF}_*(\phi), \mathbb{HF}_*(\psi)) \leq E_{H,G}. \]
Finally, inequality $d_{bot}(\mathcal B(\phi), \mathcal B(\psi)) \leq \max_{*\in \Z} d_{bot}(\mathcal B_*(\phi), \mathcal B_*(\psi))$ implies the desired conclusion.
\end{proof}

\medskip

In general, the information extracted from the Floer homological barcode $\mathcal B(\phi)$
of a Hamiltonian diffeomorphism $\phi$ is closely related to some invariants which have been intensively studied in symplectic topology. For example, spectral invariants of $\phi$ introduced by C.~Viterbo \cite{Vit92}, M.~Schwarz \cite{Sch00}  and Y.-G.~Oh \cite{Oh05} can be read directly from the left endpoints of infinite bars.
Let $\phi$ be any non-degenerate Hamiltonian diffeomorphism of a closed symplectic manifold
with $\pi_2(M)=0$. Observe that for large values of $\lambda$ the filtered Floer homology ${\rm HF}^\lambda_*(\phi)$
of $\phi$ coincides with the homology $H_*(M,\calF)$ of the manifold. Let $c(\phi,\cdot): H_*(M,\calF) \to \R$
be the associated characteristic exponent introduced in Chapter \ref{chp3_read_from_a_barcode}.  Its spectrum
coincides with the set of  spectral invariants . It is a non-trivial fact that the maximal spectral invariant equals $c(\phi,[M])$ and the minimal one equals $c(\phi, [pt])$, where $[M]$ is the fundamental class of $[M]$ and $[pt]$ is the class of the point. The difference
\begin{equation}\label{eq-spectral-norm}
\gamma(\phi)= c(\phi,[M])- c(\phi, [pt])\;,
\end{equation}
which is called {\it the spectral norm} of $\phi$, defines an interesting geometry on ${\rm Ham}(M, \omega)$.
In particular,  S.~Seyfaddini \cite{Sey13} proved that the spectral norm is continuous in $C^0$-topology
on Hamiltonian diffeomorphisms.

It is an immediate consequence of Theorem \ref{DST} and Corollary \ref{cor: cor_matching_lower_bound_on_botneck_dist}
that $c(\phi,[M])$, $c(\phi, [pt])$ and the spectral norm are Lipschitz in Hofer's metric.

Let us mention also that a recent paper by A.~Kislev and E.~Shelukhin \cite{KS18} extends Theorem \ref{DST}
to the spectral norm:
\begin{equation} \label{c0-sn}
d_{bot}(\mathcal B(\phi), \mathcal B(\psi)) \leq \frac{1}{2} \gamma(\psi^{-1} \circ \phi)\;.
\end{equation}

Another example of an invariant of a Hamiltonian diffeomorphism contained in its barcode is the  \emph{boundary depth},
that is the length of the longest finite bar, see Chapter \ref{chp3_read_from_a_barcode} above.
It was first introduced by M.~Usher in \cite{Ush11}, \cite{Ush13}.

\medskip

\begin{remark} Observe when $\pi_2(M) \neq 0$, the value of symplectic action functional (\ref{act-fcn}) on a contractible Hamiltonian 1-periodic orbit $x$ may depend on its spanning disk. To overcome this difficulty, \cite{HS95} studied an extended version of Hamiltonian Floer theory. Later on, \cite{UZ16} constructed barcodes in this Hamiltonian Floer theory and proved a stability result as an analog of the classical Isometry Theorem in persistent homology theory.  \end{remark}

\section{Constraints on full powers}\label{sec-full-powers}
Let us start this section with the following example of a persistence representation (see Section \ref{sec: persistence_modules_with_involution}) coming from Hamiltonian Floer theory.

\begin{ex} \label{zp-ham} Let $(M, \omega)$ be a symplectic manifold with $\pi_2(M)=0$ and $\phi \in \Ham(M, \omega)$ where $\phi = \phi_H^1$ generated by some Hamiltonian function $H: \R/\Z \times M \to \R$.
Every Hamiltonian diffeomorphism $\theta \in \Ham(M, \omega)$ induces a ``push-forward'' morphism
 there exists a well-defined morphism between filtered Hamiltonian Floer homologies,
\begin{equation} \label{conj-1}
(P_{\theta})_*:  \HF_*^{\lambda}(\phi) \to  \HF_*^{\lambda}( \theta  \circ  \phi  \circ \theta^{-1}).
\end{equation}
To see this, observe that the Hamiltonian diffeomorphism $\theta \circ \phi \circ \theta^{-1}$ is generated by the Hamiltonian function $H':= H \circ \theta^{-1}$. The diffeomorphism $\theta$ extends to the loop space
$\calL M$ by $x(t) \mapsto \theta(x(t))$. One easily checks that $\theta^* \calA_{H'} = \calA_H$.
Furthermore, $\theta$ sends the loop of compatible almost-complex structures
$J(t)$ on $M$ to another such loop, $J'(t)$. In this way $\theta$ identifies the Floer complex of $H$ associated $J(t)$ with the one of $H'$ associated to $J'(t)$.

Observe that 1-periodic orbits of the Hamiltonian flow $\phi_H^t$ are in one to one correspondence with 1-periodic orbits of the Hamiltonian flow $\theta \circ \phi_H^t  \circ \theta^{-1}$. Explicitly, $x(t)$ corresponds to $\theta(x(t))$. Furthermore, $\theta$ acts in a natural way on loops of almost-complex
structures defining the metric on the loop space, and hence

Let us focus now on a particular case when $\theta = \psi$ and $\phi = \psi^p$. The equality $\psi \circ \psi^p \circ \psi^{-1} = \psi^p$ and (\ref{conj-1}) imply a morphism
\begin{equation} \label{push}
(P_{\psi})_*: \HF_*^{\lambda}({\psi}^p) \to \HF_*^{\lambda}({\psi}^p).
\end{equation}
One can check that $(\mathbb{HF}_*(\psi^p), (P_{\psi})_*)$ is a persistence representation of group $G = \Z_p$. What needs to be emphasized is that $\psi$ acting on itself, i.e., $\psi \circ  \psi \circ \psi^{-1} = \psi$, only induces the identity map on the Hamiltonian Floer homology, but $\psi$ acting on higher power $\psi^p$ where $p \geq 2$ sometimes generates non-trivial morphisms. In fact, under a homotopy argument as in Lemma 3.1 in \cite{PS16}, $(P_{\psi})_*$ is the same as the morphism induced by the loop rotation $x(t) \to x(t + 1/p)$.
\end{ex}

Note that Example \ref{zp-ham} demonstrates that higher powers of Hamiltonian diffeomorphisms can admit non-trivial automorphisms on Hamiltonian persistence modules. Therefore, it will be interesting to investigate which Hamiltonian diffeomorphisms can be written as a full $p$-th powers (of another Hamiltonian diffeomorphism) for $p \geq 2$.
For the sake of simplicity, we shall focus on the case $p=2$, thus addressing the question about obstructions
to existence of square roots in the context of Hamiltonian diffeomorphisms.


In the set-up of diffeomorphisms, J.~Milnor \cite{Mil83} found an obstruction for a diffeomorphism $\phi$ of a manifold $M$ to be a full square. Given a diffeomorphism $\phi: M \to M$, consider the space $X(\phi)$ of its {\it primitive $2$-periodic orbits}. By definition, an element of $X(\phi)$
is given by a  non-ordered pair of distinct points $(x,y) \in M$ with $\phi x=y$ and $\phi y=x$.
J.~Milnor observed that if $\phi=\psi^2$ and the set $X(\phi)$ is finite, {\it  it necessarily contains
an even number of elements}. Indeed, $\psi$ acts on $X(\phi)$ by sending $(x,y)$ to $(\psi(x),\psi(y))$,
and it is an elementary exercise to show that this action is free.

P.~Albers and U.~Frauenfelder \cite{AF14} extended Milnor's approach in the context Hamiltonian diffeomorphisms. In what follows, we will use the barcodes of Hamiltonian persistence modules developed in Section \ref{sec-FHF} to provide an obstruction of a Hamiltonian diffeomorphism to be a full square. For general $p$, we leave it as an exercise to interested readers.

Let $(M, \omega)$ be a closed symplectic manifold with $\pi_2(M)=0$. By Example \ref{zp-ham}, $(\mathbb{HF}_*(\phi^2), (P_{\phi})_*)$ is a persistence representation of $\Z_2$. Consider eigenvalue $\xi = -1$ in Example \ref{ex-power}. The eigenspaces $(L_{-1})_t$, $t \in \R$ forms a persistence subrepresentation of $(\mathbb{HF}_*(\phi^2), (P_{\phi})_*)$, denoted by $\mathbb L_{-1}(\phi^2)$. If, in addition, $\phi$ is a full power $\phi = \psi^2$, then Example \ref{zp-ham} says $(P_{\psi})_*$ is a $\Z_4$-action on persistence module $\mathbb{HF}_*(\phi^2)$. Meanwhile, it is easy to see that
\[ (P_{\psi})^2_* = (P_{\phi})_* = -\mathds{1} \,\,\,\,\mbox{on $\mathbb L_{-1}(\phi^2)$}. \]
In other words, $(P_{\psi})_*$ restricts to a complex structure on $\mathbb L_{-1}(\phi^2)$. By using the multiplicity function defined in Section \ref{sec: the_multiplicity_function_and_Z_k_persistence_modules}, Claim \ref{claim: d_int_to_complex_bounded_below} implies the following obstruction.

\begin{prop}\label{prop-obst} For any bar ${\rm I} \in \mathcal B(\mathbb L_{-1}(\phi^2))$, the multiplicity of ${\rm I}$ is even. \end{prop}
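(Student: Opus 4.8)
The plan is to reduce Proposition \ref{prop-obst} to the already-established machinery about complex persistence modules and the multiplicity function. Suppose $\phi = \psi^2$ is a full square of a Hamiltonian diffeomorphism. By Example \ref{zp-ham}, conjugation by $\psi$ induces a morphism $(P_\psi)_*$ on $\mathbb{HF}_*(\phi^2) = \mathbb{HF}_*(\psi^4)$, and since $\psi^4$ is the fourth power of $\psi$, this makes $(\mathbb{HF}_*(\phi^2),(P_\psi)_*)$ into a persistence representation of $\Z_4$, with $(P_\psi)^2_* = (P_{\psi^2})_* = (P_\phi)_*$. Restricting to the $(-1)$-eigenspace subrepresentation $\mathbb L_{-1}(\phi^2)$ (which is a persistence subrepresentation by Example \ref{ex-power}), the operator $(P_\phi)_*$ acts as $-\mathds{1}$ by construction of the eigenspace, so $(P_\psi)_*$ restricts to an endomorphism $J$ of the underlying persistence module $\mathbb L_{-1}(\phi^2)$ with $J^2 = -\mathds{1}$. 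This is exactly Definition \ref{defn: complex_pm}: $\mathbb L_{-1}(\phi^2)$ admits a complex structure.

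The first step I would carry out carefully is verifying that $(P_\psi)_*$ indeed preserves the eigenspace filtration $(L_{-1})_t$, i.e.\ that it restricts to a genuine morphism of the persistence submodule $\mathbb L_{-1}(\phi^2)$; this follows because $(P_\psi)_*$ commutes with $(P_\phi)_* = (P_\psi)^2_*$, so it carries $\ker((P_\phi)_{*,t} + \mathds{1})$ into itself for every $t$, and it is compatible with the persistence structure maps $\iota_{\lambda,\eta}$ since all the morphisms in Example \ref{zp-ham} are induced by the same loop-space map. The second step is to invoke Claim \ref{claim: complex_pm_even_mult_func}: for a persistence module admitting a complex structure, $m(\calB, I)$ is even for every interval $I$, where $\calB = \calB(\mathbb L_{-1}(\phi^2))$. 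The final step is a translation: the statement ``the multiplicity of each bar $\mathrm{I} \in \calB(\mathbb L_{-1}(\phi^2))$ is even'' is equivalent to the statement that $m(\calB, I)$ is even for every interval $I$. Indeed, by the Normal Form Theorem \ref{thm: normal_form_thm} the barcode is a finite multiset of intervals, and the multiplicity of a given bar $(a,b]$ can be recovered as a suitable alternating sum of the numbers $m(\calB, I)$ over slightly shrunk/enlarged intervals near $(a,b]$ (this is the content of the reconstruction formula \eqref{eq: m_ij_formula_from_ranks}); parity of each $m(\calB, I)$ therefore forces parity of each multiplicity. Alternatively, and more directly, one reruns the argument of Claim \ref{claim: complex_pm_even_mult_func} with $I$ taken to be an interval strictly contained in $(a,b]$ but containing no other bar endpoints, so that $m(\calB, I)$ equals precisely the multiplicity of the bar $(a,b]$.

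I do not expect a serious obstacle here; the proposition is essentially a corollary of the theory already built. The one point requiring genuine (if routine) care is the $\Z_4$-equivariance bookkeeping of Step 1 — namely confirming that the push-forward maps of Example \ref{zp-ham}, restricted to the $(-1)$-eigenspaces, actually assemble into a persistence morphism squaring to $-\mathds{1}$ rather than merely a family of linear maps on the graded pieces. Once that is in hand, Claim \ref{claim: complex_pm_even_mult_func} does all the real work, and the even-multiplicity conclusion is immediate. If one wants the stronger quantitative statement analogous to Theorem \ref{thm: d_int_pmi_Z4_pmi_lower_bound}, one would additionally record that $d_{\Z_2-int}(\mathbb{HF}_*(\phi^2), \text{$\Z_4$-pmi}) \geq \mu_{odd}(\mathbb L_{-1}(\phi^2))$ via Claim \ref{claim: d_int_to_complex_bounded_below}, but for Proposition \ref{prop-obst} as stated the parity argument above suffices.
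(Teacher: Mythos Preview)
Your proposal is correct and follows essentially the same approach as the paper: the paper's argument is contained in the paragraph immediately preceding the proposition, where it observes that when $\phi=\psi^2$ the morphism $(P_\psi)_*$ restricts to a complex structure on $\mathbb L_{-1}(\phi^2)$ and then invokes the even-multiplicity claim for complex persistence modules. Your write-up is in fact more careful than the paper's, explicitly verifying that $(P_\psi)_*$ preserves the $(-1)$-eigenspace submodule and spelling out the translation from ``$m(\calB,I)$ is even for every interval $I$'' (Claim~\ref{claim: complex_pm_even_mult_func}) to ``each bar has even multiplicity,'' a step the paper leaves implicit.
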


This can be viewed as a Hamiltonian dynamics analog to Milnor's obstruction. \\

\medskip\noindent
{\bf Outlook.} It has been conjectured in \cite{PS16} that for every $p \geq 2$, the complement
of the set
\[ {\rm Power}_p (M) = \{ \phi = \psi^p\,| \, \psi \in \Ham(M) \} \]
contains an arbitrarily large Hofer ball. This conjecture was confirmed in \cite{PS16,Zha16,PSS17}
by using the obstruction described in Proposition \ref{prop-obst} for certain symplectic manifolds, including
closed surfaces of genus $\geq 4$. Recently A.~Chor handled the case of surfaces of genus $2$ and $3$
(unpublished). The problem is still open for most manifolds, including the two-dimensional sphere and torus.

Let us mention also that the set ${\rm Power}_p (M)$ contains all autonomous Hamiltonian diffeomorphisms,
i.e. the ones generated by time-independent Hamiltonian functions. In dimension $2$, the energy conservation
law guarantees that the level curves of the autonomous Hamiltonian are invariant under the Hamiltonian flow.
Therefore, autonomous Hamiltonian diffeomorphisms exhibit deterministic dynamical behavior and  provide the simplest example of integrable systems of classical mechanics. This suggests that one should look for  Hamiltonian diffeomorphisms lying far from the autonomous ones among chaotic dynamical systems. And indeed, the centers of large Hofer balls lying in the complement of ${\rm Power}_p (M)$ can be chosen as chaotic maps known as egg-beaters (see \cite{PS16}).

\section{Non-contractible class version} \label{sec-ham-non}
The Hamiltonian Floer homology in Section \ref{sec-FHF} is constructed from contractible loops. There is a different version of Hamiltonian Floer theory that is constructed from non-contractible loops. This will be used in \Cref{chp8_symplectic_persistence_modules}. Here we give a brief description of its construction, and interested readers can check Section 2 in \cite{PS16} for more details.

Given a symplectic manifold $(M, \omega)$ with $\pi_2(M) = 0$, fix a non-zero homotopy class of the free loop space $\alpha \in \pi_0(\mathcal LM)$, and denote $\mathcal L_{\alpha}(M) = p^{-1}(\alpha)$ where $p: \mathcal LM \to \pi_0(\mathcal LM)$ is the natural projection. Assume that $\alpha$ satisfies the following {\it symplectically atoroidal} condition, for any loop in $\mathcal L_{\alpha} M$ which is a topological torus $\rho: \mathbb T^2 \to M$,
\begin{equation} \label{atoroidal}
\int_{\mathbb T^2} \rho^*\omega = \int_{\mathbb T^2} \rho^* c_1 = 0,
\end{equation}
where $c_1 = c_1(TM, \omega)$ is the first Chern class of $(M, \omega)$. Under this assumption, we will use elements from $\mathcal L_{\alpha} M$ to construct another version of Hamiltonian Floer homology. The different aspect in this version is that first a reference point $x_{\alpha} \in \mathcal L_{\alpha}M$ will be fixed, and all the ingredients in the construction of this Hamiltonian Floer homology will be defined in a relative sense, i.e., relative to the data from $x_{\alpha}$.

For any time-dependent Hamiltonian function $H: \R/\Z \times M \to \R$, define the {\it symplectic action functional} $\mathcal A_H: \mathcal L_{\alpha} M \to \R$ by
\[ \mathcal A_H(x) = - \int_{\bar{x}} \omega + \int_0^1 H(t, x(t))dt,\]
where $\bar{x}$ is any cylinder connecting $x$ and the reference point $x_{\alpha}$. This is similar to the symplectic action functional defined in (\ref{act-fcn}) in Section \ref{sec-FHF}. Observe that the first condition in (\ref{atoroidal}) implies that $\mathcal A_H(x)$ is independent of the choice of cylinder $\bar{x}$. Moreover, similarly to Proposition \ref{prop-lap} (Least action principle), one can check that $x \in \mathcal L_{\alpha}M$ is a critical point of $\mathcal A_H$ if and only if $x$ is a 1-periodic orbit of the Hamiltonian flow of $H$ such that $[x] = \alpha$. Denote by $P_{\alpha}(H)$ the collection of all 1-periodic orbits of the Hamiltonian flow of $H$ in the class $\alpha$.

Furthermore, the grading of $x \in P_{\alpha}(H)$ is also well-defined. Explicitly, we first choose a non-canonical trivialization of the symplectic vector bundle $x^*_{\alpha}TM$ over $S^1$. Then any cylinder $\bar{x}$ connecting $x\in P_{\alpha}(H)$ with $x_{\alpha}$ defines a trivialization of $x^*TM$. Based on the machinery developed in Section \ref{sec-CZ}, one can compute the Conley-Zehnder index of $x$. Under the normalization in (\ref{normalize-index}), this defines the index of $x \in P_{\alpha}(H)$. Observe that the second condition in (\ref{atoroidal}) implies that this index is independent of the choice of $\bar{x}$.

For a fixed homotopy class $\alpha \in \pi_0(\mathcal LM)$, a filtration $\lambda \in \R$ and degree $k \in \Z$, denote
\[ \CF_k^{\lambda}(M, H)_{\alpha} = {\rm Span}_{\Z_2} \left\{x \in P_{\alpha}(H)\,| \, {\rm Ind}(x) = k, \,\,\mbox{and}\,\, \mathcal A_H(x) < \lambda \right\}. \]
The general construction of Hamiltonian Floer theory as demonstrated in Section \ref{sec-FHF} generates a $\Z_2$-linear map $\partial_k: \CF_k^{\lambda}(M, H)_{\alpha} \to \CF_{k-1}^{\lambda}(M, H)_{\alpha}$ which can be proved to be a differential. Denote the {\it filtered} Hamiltonian Floer homology in the class $\alpha$ by $\HF_k^{\lambda}(H)_{\alpha}$, the $k$-th homology of the chain complex $( \CF_*^{\lambda}(M, H)_{\alpha}, \partial_*)$.

Now we can form a persistence module, called a {\it Hamiltonian persistence module in the class $\alpha$ in degree $*$}, denoted by
$$\mathbb{HF}_*(H)_{\alpha} = \left\{\{\HF_*^{\lambda}(H)_{\alpha}\}_{\lambda \in \R}; \{\iota_{\lambda, \eta}: \HF_*^{\lambda}(H)_{\alpha} \to \HF_*^{\eta}(H)_{\alpha}\}_{\lambda \leq \eta} \right\}.$$
Moreover, denote by $\mathcal B(H)_{\alpha}$ the total barcode of $\mathbb{HF}_*(H)_{\alpha}$. It has the following special property.

\begin{exercise} Suppose $\alpha$ is a non-zero homotopy class of free loop space of $(M, \omega)$. Then $\mathcal B(H)_{\alpha}$ consists of only finite length bars. \end{exercise}

\medskip

The Dynamical Stability Theorem (Theorem \ref{DST}) extends to Floer homological barcodes associated to non-contractible loops. This was used in \cite{PS16} in order to construct Hamiltonian diffeomorphisms lying
arbitrarily far from the set  ${\rm Power}_p (M)$ described at the end of the previous section.

\section{Barcodes for Hamiltonian homeomorphisms}\label{sec-barhomeo}
The celebrated Eliashberg-Gromov Theorem \cite{MS98, PR14} states that a $C^0$-limit of symplectomorphisms, whenever it is smooth, is also symplectic. This result serves as the starting point of a rapidly developing area called
$C^0$-symplectic topology. Its objective is the study of a delicate interplay between rigidity and flexibility
for the group of  {\it Hamiltonian homeomorphisms} $\overline{\rm Ham}(M, \omega)$ of a symplectic manifold $(M,\omega)$.
By definition, this group consists of all $C^0$-limits of Hamiltonian diffeomorphisms.

On the flexible side, a recent striking result due to L.~Buhovsky, V.~Humiliere and S.~Seyfaddini \cite{BHS18A} provides  a $C^0$-counterexample to the Arnold conjecture. It turns out that every closed and connected symplectic manifold of dimension at  least four admits Hamiltonian homeomorphisms with just a single fixed point. Nevertheless such a point still carries footprints of symplectic rigidity!  In fact, as it was shown by
F.~Le Roux, S.~Seyfaddini and C.~Viterbo \cite{LSV18} for surfaces of genus $\geq 1$ and by L.~Buhovsky, V.~Humiliere and S.~Seyfaddini
\cite{BHS18} for all closed aspherical manifolds, one can associate with every Hamiltonian homeomorphism a Floer homological barcode {\it up to a shift}. Here is the precise statement. Consider the completion
of the space of barcodes with respect to the bottleneck distance. This space consists of infinite barcodes with the following property: for every $\epsilon >0$ such a barcode possesses a finite number of bars (with multiplicities)
of length greater than $\epsilon$. Denote by $\overline{\rm Barcodes}$ the quotient of this space by the group
of translations. Note that the bottleneck distance descends to this space. It turns out that
the mapping sending a Hamiltonian diffeomorphism to its Floer homological barcode
extends to a continuous map
\begin{equation} \label{c0-b}
\mathcal B: ({\rm Ham}(M, \omega), d_{C^0}) \to (\overline{\rm Barcodes}, d_{bot})\;.
\end{equation}
The proof involves Kislev-Shelukhin inequality \eqref{c0-sn} discussed above.

Furthermore, F.~Le Roux, S.~Seyfaddini and C.~Viterbo \cite{LSV18} explored, by using barcodes,
the group of Hamiltonian homeomorphisms of a surface of genus $\geq 1$.
In order to formulate their result, call two Hamiltonian homeomorphisms $f$ and $g$ {\it weakly conjugate} if there
exists a finite chain $h_1=f$, $h_2,\dots, h_{N-1}$, $h_N = g$ such that the closures of the conjugacy classes of $h_i$ and $h_{i+1}$ intersect for all $i=1,\dots, N-1$. Roughly speaking, weakly conjugate elements cannot be distinguished by
any continuous functional on the group. It turns out that the Floer homological barcode is a weak conjugacy invariant.
Furthermore, the paper exhibits an example of a Hamiltonian homeomorphism whose barcode has the following ``exotic" property: the set of endpoints of its barcode is unbounded. As a corollary, such a homeomorphism is not weakly conjugate to
any smooth Hamiltonian diffeomorphism! Existence of a dense conjugacy class in a groups is known as {\it the Rokhlin property}, see e.g. a paper by E.~Glasner and B.~Weiss \cite{glasner2008topological} for a historical account. Thus, the
group of Hamiltonian homeomorphisms is not Rokhlin in quite a strong sense.

\chapter{Symplectic persistence modules}\label{chp8_symplectic_persistence_modules}
\section{Liouville manifolds} \label{sec-Liouville}

\begin{dfn} \label{dfn-lm} A {\it Liouville manifold} $(M, \omega, X)$ is a connected symplectic manifold with a fixed complete vector field $X$ of $M$ generating a flow $X^t$ such that
\begin{itemize}
\item[{(i)}] $\omega = d\lambda$ where $\lambda = \theta_X\omega$;
\item[{(ii)}] there exists a closed connected hypersurface $P \subset M$ such that $P$ is transversal to $X$, bounds an open domain $U$ of $M$ with compact closure and $M = U \sqcup \bigcup_{t \geq 0} X^t(P)$. This vector field $X$ is called a {\it Liouville vector field} and its flow $X^t$ is called a {\it Liouville flow}. Any such hypersurface $P$ and any such domain $U$ are called {\it star-shaped}.
    \end{itemize} \end{dfn}

\begin{exercise} Given a Liouville manifold $(M, \omega, X)$, the Liouville flow $X^t$ acts on $M$ by conformal symplectomorphisms: $(X^t)^* \omega = e^t \omega$. \end{exercise}

Let $(M, \omega, X)$ be a Liouville manifold. A star-shaped hypersurface $P$ from the defining properties of $(M, \omega, X)$ can be used to decompose $M$ into the following two pieces,
\begin{equation} \label{decomp-lm}
M = M_{*,P} \sqcup {\rm Core}_P(M)
\end{equation}
where $M_{*,P} = \bigcup_{t \in \R} X^t(P)$ and ${\rm Core}_P(M) = \bigcap_{t <0} X^t(U)$ where $U$ is the open domain bounded by $P$. One can show that this decomposition is independent of the choice of the start-shaped hypersurface $P$, see Section 1.5 in \cite{EKP06}. Here are two standard examples of Liouville manifolds.

\begin{ex} \label{ex-lm-1} The symplectic linear space equipped with the radial vector field as follows is a Liouville manifold:
\[ (M, \omega_{std}, X_{rad}) = \left(\R^{2n}, \,\,\sum_{i=1}^n dp_i \wedge dq_i, \,\,\frac{1}{2} \sum_{i=1}^n \left(q_i \frac{\partial}{\partial q_i} + p_i \frac{\partial}{\partial p_i} \right) \right). \]
The decomposition (\ref{decomp-lm}) is
\[ \R^{2n} = \left(\R^{2n} \backslash \{{0}\} \right) \sqcup \{{0}\}. \]
An important observation is that a domain $U \subset \R^{2n}$ which contains ${0}$ is star-shaped in the sense of Definition \ref{dfn-lm} if and only if it is {\it strictly} star-shaped with respect to ${0}$ in the standard sense. Here ``strictly'' means $\partial {\overline{U}}$ is transversal to the radial vector field $X_{rad}$.
\end{ex}

\begin{ex}\label{ex-lm-2}
Fix a closed Riemannian manifold $N$. Its cotangent bundle is a Liouville manifold with respect to a canonical vector field $X_{can}$ as follows,
\[ (M, \omega_{can}, X_{can}) = \left(T^*N, \,\,\sum_{i=1}^n dp_i \wedge dq_i, \,\,\sum_{i=1}^n p_i \frac{\partial}{\partial p_i} \right). \]
Here $q_i$ is the position coordinate and $p_i$ is the momentum coordinate. In this case, decomposition (\ref{decomp-lm}) is
\[ T^*N = \left(T^*N \backslash 0_N \right) \sqcup 0_N \]
where $0_N$ is the zero-section of $T^*N$.  A standard example of a star-shaped domain is the open unit codisc bundle $U_g^*N$ associated to any Riemannian metric $g$ on $N$, that is, $U_g^*N : = \{({q}, {p}) \in T^*N \,| \, |{p}|_{g^*_{{q}}} <1 \}$.
\end{ex}

\begin{dfn}  Given a Liouville manifold $(M, \omega, X)$, denote $\lambda =\omega(X,\cdot)$. A symplectomorphism $\phi$ of a Liouville manifold is called {\it exact} if $\phi^*\lambda - \lambda = dF$ for some function $F$ on $M$.
Compactly supported exact symplectomorphisms form a group which we denote by $\Symp_{ex}(M,\omega,X)$.
The identity component of this group is denoted by $\Symp^0_{ex}(M,\omega,X)$. We shall often abbreviate
$\Symp_{ex}(M)$ and $\Symp^0_{ex}(M)$.
\end{dfn}

\begin{ex} \label{above-coor} Consider the Liouville manifold $(\R^{2n}, \omega_{std}, X_{rad})$, and its star-shaped domains which contains ${0}$. Prove that any symplectomorphism of this manifold is exact. Find an example
of a non-exact symplectomorphism of $T^*\T^2$.
 \end{ex}

Given a Liouville manifold $(M, \omega, X)$ and a star-shaped hypersurface $P$ from the defining properties of $(M, \omega, X)$, every point $m \in M_{*,P}$ in the decomposition (\ref{decomp-lm}) can be identified with a point $(x,u) \in P \times \R_+$, explicitly, $m = X^{\ln u}(x)$. In particular, $P = \{u=1\}$ and the star-shaped domain $U \subset M$ that is enclosed by $P$ is $\{u<1\}$. Finally, we take the convention that ${\rm Core}_P(M) = \{u=0\}$. \\

For any $x \in P$, consider the $\omega$-orthogonal complement of $T_xP$ in $T_xM$ which is defined by
\[ (T_xP)^{\omega} : = \left\{v \in T_x M \,| \, \omega_x(v, w) = 0 \,\,\mbox{$\forall w \in T_x P$} \right\}. \]
\begin{exercise} Check that $\dim (T_x P)^{\omega} = 1$ and $(T_xP)^{\omega} \subset T_x P$. \end{exercise}

These $1$-dimensional subspaces of $TP$ integrate to give a $1$-dimensional foliation $\mathcal F(P)$ of $P$ called {\it the characteristic foliation of $P$}. Denote by $C(P)$ the set of all closed leafs of $\mathcal F(P)$. Consider the restriction of the $1$-form $\theta_X \omega|_P$ to $P$.

For the rest of this chapter, we will make a non-degeneracy assumption which  will be important for the construction of symplectic persistence modules below. We say that a star-shaped hypersurface $P$ of a Liouville manifold $(M, \omega, X)$ is {\it non-degenerate} if its {\it action spectrum}
\begin{equation} \label{dfn-non-deg}
{\rm Spec}: = \left\{ \int_{\gamma} \theta_X \omega|_P \,\bigg| \, \gamma \in C(P) \right\} \,\,\,\,\mbox{is a discrete subset of $\R$}.
\end{equation}
Any star-shaped domain $U$ where $\partial {\overline{U}}$ satisfies the non-degenerate condition (\ref{dfn-non-deg}) is called a {\it non-degenerate star-shaped domain}.

\section{Symplectic persistence module} \label{sec-spm}
In Chapter \ref{chp7_hamiltonian_persistence_modules} we have studied filtered Floer homology for functions
on symplectic manifolds. In a seminal work \cite{floer1994symplectic} A.~Floer and H.~Hofer defined invariants
of an open domain in a symplectic manifold by taking direct or inverse limits of Floer homologies of special
collections of functions on this domain. Below we discuss this approach in the context of Floer-homological
persistence modules. We start with a brief reminder on the inverse limits.

\begin{dfn} A partially ordered set $(I, \preceq)$ is {\it downward directed} if for every $i, j \in I$, there exists a $k \in I$ such that $k \preceq i$ and $k \preceq j$. One can view this $(I, \preceq)$ as a category where an object is an element in $I$, and the morphism set between $i$ and $j$ contains a single element if and only if $i \preceq j$ and empty otherwise.

An {\it inverse system of vector spaces over $\Z_2$} is a functor $(A, \sigma)$ from a downward directed partially ordered set $(I, \preceq)$ to the category of vector spaces. Explicitly, $A$ assigns to each $i \in I$ a vector space $A_i$ over $\Z_2$ and $\sigma$ assigns to each pair $i, j \in I$ with $i \preceq j$ a $\Z_2$-linear map $\sigma_{ij}: A_i \to A_j$, such that $\sigma_{ik}  =\sigma_{jk} \circ \sigma_{ij}$, and $\sigma_{ii} = \mathds{1}_{A_i}$, the identity map on $A_i$.\end{dfn}


\begin{dfn} \label{dfn-invlim}
Let $(A, \sigma)$ be an inverse system of vector spaces over $\Z_2$. The {\it inverse limit} of $(A, \sigma)$ is defined as
\[ \varprojlim_{i \in I} A: = \left\{ \{x_i\}_{i \in I}  \in \Pi_{i \in I} A_i \,\big| \, i \preceq j \,\,\Rightarrow\,\, \sigma_{ij}(x_i) = x_j \right\}.  \]
Note that for any $i \in I$ there is a canonical projection map $\pi_i: \varprojlim_{i \in I} A \to A_i$ such that for $i\preceq j$, $\sigma_{ij} \circ \pi_i = \pi_j$.
\end{dfn}

\begin{exercise} \label{invlim-unversal}
Let $(A, \sigma)$ be an inverse system of vector spaces over $\Z_2$, and $\varprojlim_{i \in I} A$ denotes the inverse limit of $(A, \sigma)$. Prove that $\varprojlim_{i \in I} A$ satisfies the following universal property: for any pair $(B, \{\tau_i\}_{i \in I})$ where $\tau_i: B \to A_i$ such that $\sigma_{ij} \circ \tau_i = \tau_j$, there exists a unique morphism $\Phi: B \to \varprojlim_{i \in I} A$ such that $\pi_i \circ \Phi = \sigma_i$ for any $i \in I$, where $\pi_i: \varprojlim_{i \in I} A \to A_i$ is the canonical projection (see Definition \ref{dfn-invlim}).
 \end{exercise}

In Hamiltonian Floer theory, inverse system appears in the following construction. Given a non-degenerate star-shaped domain $U$ of a Liouville manifold $(M, \omega, X)$, denote by $\mathcal H(U)$ the collection of all autonomous Hamiltonian functions on $M$ that are compactly supported in $U$. Define a partial order in $\mathcal H(U)$ by $H\preceq G$ if and only if $H(x,u) \geq G(x,u)$ for any $(x,u) \in M$. Recall that the existence of coordinate $(x,u)$ is elaborated after Example \ref{above-coor}.

Following the argument in subsection 4.4 and 4.5 in \cite{BPS03}, given $H, G \in \mathcal H(U)$ with $H \preceq G$, one can consider a {\it monotone homotopy} from $H$ to $G$, i.e., a smooth homotopy $\{H_s\}_{s \in [0,1]}$ such that $H_0 = H$, $H_1 = G$, and $\partial_{s}H_s \leq 0$. This homotopy induces a $\Z_2$-linear map
\begin{equation}\label{monotone-htp}
\sigma_{H,G}: \HF_*^{(a, \infty)}(H) \to \HF_*^{(a, \infty)}(G) \,\,\,\,\,\mbox{for any $a>0$.}
\end{equation}
Here, $\HF_*^{(a, \infty)}(H)$ stands for Hamiltonian Floer homology of the function  $H$ with coefficients in $\Z_2$ and within the action window $(a, \infty)$. The monotonicity of our homotopy guarantees that action window $(a, \infty)$ is preserved under the map $\sigma_{H,G}$. Let us mention that in order to define Hamiltonian Floer homology, one has to work with arbitrarily small generic perturbations of functions involved. For the sake of simplicity, we shall ignore this nuance.

Moreover, one can easily check that if $H_1 \preceq H_2 \preceq H_3$ in $\mathcal H(U)$, then $\sigma_{H_1, H_3}  = \sigma_{H_2, H_3} \circ \sigma_{H_1, H_2}$. In other words, over the partially ordered set $\mathcal H(U)$, we obtain an inverse system of vector spaces over $\Z_2$.

\begin{dfn} \label{dfn-fsh} Let $U$ be a non-degenerate star-shaped domain of a Liouville manifold $(M, \omega, X)$. For any $a>0$, the {\it filtered symplectic homology of $U$} is defined as
\[ \SH_*^{(a,\infty)}(U) : = \varprojlim_{H \in \mathcal H(U)} \HF_*^{(a, \infty)}(H). \]
\end{dfn}

From this definition, we can directly check the following two properties.

\begin{exercise} \label{exe-fsh-str}
$\;$

(1) For any $a>0$ and degree $* \in \Z$, $\SH_*^{(a,\infty)}(U)$ is finite dimensional over $\Z_2$.

(2) For any $a \leq b$, the canonical morphism $\HF_*^{(a,\infty)}(H) \to \HF_*^{(b,\infty)}(H)$
induces a $\Z_2$-linear map $\theta_{a,b}:  \SH_*^{(a,\infty)}(U) \to \SH_*^{(b,\infty)}(U)$.
\end{exercise}


Let $U$ be a non-degenerate star-shaped domain of a Liouville manifold $(M,\omega,X)$. For any $a>0$, set $\SH_*^{\ln a}(U):= \SH_*^{(a,\infty)}(U)$ ({\bf mind the logarithmic scale!}).  It follows that the collection of data
\[ \mathbb{SH}_*(U) = \left\{ \left\{ \SH_*^{\ln a}(U) \right\}_{a >0}, \left\{\theta_{s,t}: \SH_*^{\ln a}(U) \to \SH_*^{\ln b}(U)\right\}_{a \leq b} \right\} \]
forms a proper persistence module (see Section \ref{sec-prop-pm}).

\begin{dfn} \label{dfn-spm} The proper persistence module $\mathbb{SH}_*(U)$
is called {\it symplectic persistence module of $U$}.
\end{dfn}

\noindent
 By Normal Form Theorem (see Section \ref{sec-prop-pm}), this module possesses
 a proper barcode denoted by $\mathcal B_*(U)$. For the sake of brevity,
 we omit the adjective ``proper" throughout this chapter.

Let us finish this section with a useful construction. By using the Liouville vector field  $X$ on a Liouville manifold $(M, \omega, X)$, one can rescale a star-shaped domain $U$ as follows.  For any $C>0$, put $CU : = \phi_{X}^{\ln C} (U)$. It is easy to see that there exists an isomorphism
\begin{equation} \label{dfn-rescale}
r_C: \SH_*^{t+ \ln C}(CU) \simeq \SH_*^{t}(U) \,\,\,\,\mbox{for any $t \in \R$ and degree $* \in \Z$}.
\end{equation}
In fact, $r_C$ is induced by rescaling all the ingredients in the construction of filtered symplectic homology. Note that this rescaling results in a uniform shift of $\mathcal B_*(U)$ by $\ln C$.

\medskip\noindent
\begin{remark}\label{rem-noncontr}{\rm While defining Hamiltonian Floer homology, sometimes it is useful to consider closed orbits in a given free homotopy class $\alpha$ of loops on a symplectic manifold, see Section \ref{sec-ham-non} above. This, in a
straightforward way, gives rise to the symplectic persistence module of a domain $U$ in the class $\alpha$.
}
\end{remark}

\section{Examples of \texorpdfstring{$\mathbb{SH}_*(U)$}{SH(U)}}
In this section we present some examples of  symplectic persistence modules.

\medskip\noindent
\begin{ex} \label{ex-ellipsoid} Denote by  $E(1, N, ..., N)$ the ellipsoid in $\R^{2n} (= \C^n)$  defined by
\[ E(1,N, ..., N) = \left\{ (z_1, ..., z_n) \in \C^n \,\bigg| \, \pi \left( \frac{|z_1|^2}{1} + \frac{|z_2|^2}{N} + \ldots \frac{|z_n|^2}{N}\right) < 1  \right\}, \]
where $N \geq 1$ is an integer, see Section \ref{sec-6}. It is easy to check that its action spectrum equals $\Z$. In particular, $E(1, N, ..., N)$ is a non-degenerate star-shaped domain of the Liouville manifold $(\R^{2n}, \omega_{std}, X_{rad})$.
 We shall prove in  Section \ref{sec-comp} that
for any $a >0$,
\begin{equation} \label{sh-ell} \SH_*^{(a,\infty)}(E(1, N, ..., N)) = \Z_2 \,\,\,\,\mbox{when $* = - 2 \big|\ceil*{-a}\big| - 2(n-1)\bigg| \ceil*{\frac{-a}{N}} \bigg|$},
\end{equation}
and the homologies vanish in all other degrees. This readily yields

\begin{equation} \label{sh-ell-0}
\mathbb{SH}_0(E(1, N, ..., N)) = \Z_2(-\infty, 0),
\end{equation}
where $\Z_2(-\infty, 0)$ denotes the interval module $(-\infty, 0)$ over the field $\Z_2$.
In particular, for the ball
$B^{2n}(1) = E(1, ..., 1)$
\begin{equation} \label{sh-ball}
 \SH_*^{(a,\infty)}(B^{2n}(1)) = \Z_2 \,\,\,\,\mbox{only when $* = - 2n \big|\ceil*{-a}\big|$}, \;\; \forall a>0\;.
 \end{equation}
For instance, $\mathbb{SH}_0(B^{2n}(1)) = \Z_2(-\infty, 0).$
\end{ex}

\begin{ex} \label{ex-codisc} (I) Let $N$ be a closed manifold and $g$ be a Riemannian metric on $N$. Consider the unit codisc bundle $U_g^*N$ over $N$. For a generic choice of the metric $g$, $U_g^*N$ is a non-degenerate star-shaped domain of $(T^*N, \omega_{can}, X_{can})$. Fix a non-zero homotopy class $\alpha$ of the free loop space of $N$
(and hence of $T^*N$, since $T^*N$ retracts to the zero section). Consider the symplectic persistence module
$\mathbb{SH}_*(U_g^*N)_{\alpha}$ of $U$ in the class $\alpha$ (cf. Remark \ref{rem-noncontr};
this notation emphasizes the dependence on the class $\alpha$).

According to \cite[Theorem 3.1.(i)]{Web06},  for any $a>0$, we have an isomorphism between the following two vector spaces,
\begin{equation} \label{web-iso}
\SH_*^{(a, \infty)}(U_g^*N)_{\alpha} \simeq {\rm H}_*(\Lambda_{\alpha}^{a} N)
\end{equation}
where $\Lambda_{\alpha}^{a} N$ is the space of loops in $N$ in the class $\alpha$
of length $< a$.  Moreover, it can be shown that isomorphism (\ref{web-iso})  extends to an isomorphism of the persistence modules $\mathbb{SH}_*(U_g^*N)_{\alpha}$ and $V(N,g)_\alpha$, where the latter persistence module
is defined exactly as in Example \ref{exm-geodesics}) for loops in the class $\alpha$.
\medskip

(II) Represent a torus $N = \mathbb T^2$ as a surface a revolution with a profile function that has two local minima, with open ends identified, see Figure \ref{torus-g}. Equip $N$ with the Riemannian metric $g$ induced
from the Euclidean one in $\R^3$. The local minima of the profile function generate two simple closed geodesics denoted by $\gamma_1$ and $\gamma_2$, and its maximum generates a closed geodesic $\Gamma$. Assume that $N$
is pinched at $\gamma_1$ and $\gamma_2$ in the following sense: the length of $\Gamma$ is $> 2$, and that the lengths of $\gamma_1,\gamma_2$ are $< 1$. Put $s_i = -\ln \length_g(\gamma_i)$. In what follows we emphasize the dependence
of $g$ on the vector $s=(s_1,s_2)$ with $s_1 \geq s_2$ and write $g=g_s$.

\begin{figure}[H]
\centering
\includegraphics[width=11cm]{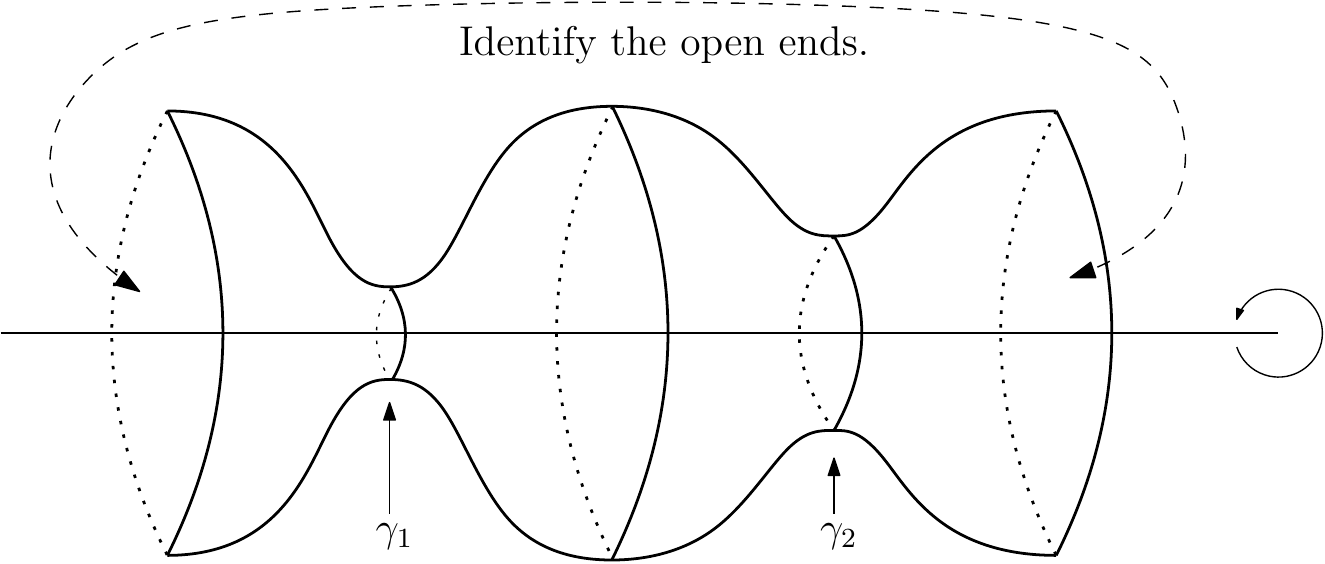}
\caption{A Riemannian metric $g$ on $\mathbb T^2$}
\label{torus-g}
\end{figure}

Note that $\gamma_1,\gamma_2$ and $\Gamma$ are the only geodesics in their homotopy class which we denote
by $\alpha$. Consider the persistence module $V(N,g_s)_\alpha$ truncated on the ray $(-\infty, \ln(3/2))$
(see definition before Exercise \ref{exr: truncated_interleaving}). An elementary argument from differential geometry (see Section 6 in \cite{SZ18}) implies that the barcode $\calB^{(s)}$ of this truncated module looks as in Figure \ref{par-barcode}.

\begin{figure}[h]
\centering
\includegraphics[width=8cm]{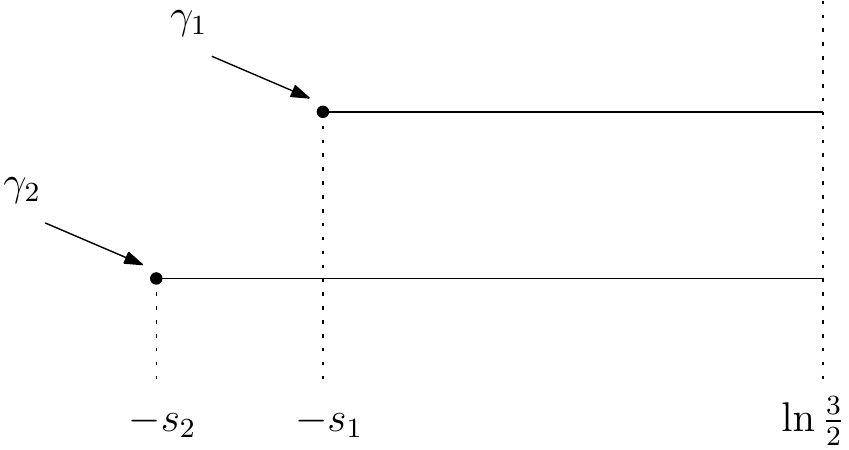}
\caption{Barcode $\calB^{(s)}$}
\label{par-barcode}
\end{figure}

Later we will see that this partial information on the symplectic persistence modules is already useful enough to quantitatively compare unit codisc bundles $U_g^*N$ corresponding to different Riemannian metrics $g$.
\end{ex}

\section{Symplectic Banach-Mazur distance}\label{sec-sbm}
In Section \ref{sec-FHF} we have seen that the barcodes of Hamiltonian persistence modules can be helpful to study Hofer's geometry on Hamiltonian diffeomorphisms. In particular, Dynamical Stability Theorem (see Theorem \ref{thm-dst}) states that the bottleneck distance provides a lower bound of Hofer's metric. In what follows, a pseudo-metric $d_{\SBM}$ between two star-shaped domains will be defined, and similarly we will see that barcodes of symplectic persistence modules can be used to study this distance.

Denote by $\mathcal S^{2n}$ the set of all the non-degenerate star-shaped domains of a Liouville manifold
$(M, \omega, X)$. For an exact symplectomorphism $\psi \in \Symp_{ex}(M)$ and $C >0$, define its rescaling
$$\phi(C)= X^{\ln C} \circ \phi \circ X^{-\ln C} \in \Symp_{ex}(M)\;,$$
where as above $X^t$ stands for the Liouville flow.

For $U,V \in \mathcal S^{2n}$, a {\it Liouville
morphism $\phi$ from $U$ to $V$} is a compactly supported exact symplectomorphism $\phi$ of $M$ such that
$\phi(\overline{U}) \subset V$. Sometimes we denote such a morphism by $U \xhookrightarrow{\phi} V$.

\begin{exercise}\label{ex-rescalings} Suppose $\overline{U} \subset V$, and $U \xhookrightarrow{\phi} V$ with $\phi \in \Symp^0_{ex}(M)$.
Let $\phi_t$, $t \in [0,1]$ be an isotopy joining the identity with $\phi$. Show that by a suitable choice of the rescaling
factor $C(t)$ one can modify this isotopy to $\psi_t = \phi_t(C(t))$ so that $\psi_t=\phi_t$ for $t=0,1$ and
$\psi_t(\overline{U}) \subset V$ for all $t \in [0,1]$.
\end{exercise}

\begin{dfn} \label{dfn-SBM} Let $U, V \in \mathcal S^{2n}$.
A real number $C > 1$ is called {\it $(U,V)$-admissible} if there exists a pair of symplectomorphisms $\phi,\psi \in \Symp_{ex}(M)$ such that $\frac{1}{C}U \xhookrightarrow{\phi} V \xhookrightarrow{\psi} CU$ and
$\psi \circ \phi \in \Symp^0_{ex}(M)$.
\end{dfn}

\begin{exercise} \label{ex-BM-sym} Show that for a $C$-admissible pair $\phi,\psi$ one has
$\frac{1}{C}V \xhookrightarrow{\psi(C^{-1})} U \xhookrightarrow{\phi(C)} CV$ and furthermore
$\phi(C) \circ \psi(C^{-1}) \in \Symp^0_{ex}(M)$. {\it Hint:} For the latter statement, use that $\Symp^0_{ex}(M)$ is a normal subgroup of $\Symp_{ex}(M)$.
\end{exercise}

\begin{dfn}  (Ostrover, Polterovich, Usher \cite{Ush18}) \label{dfn-SBM-2} Define the {\it symplectic Banach-Mazur distance between $U$ and $V$} by
\[ d_{\SBM}(U,V) = \inf \;\{\ln C\,| \,C \;\text{is}\;\mbox{$(U,V)$-admissible}\}\;.\]
\end{dfn}

This distance can be considered as a non-linear analogue of the Banach-Mazur classical distance on convex bodies,
see e.g. \cite{rudelson2000distances}. The importance of the assumption $\psi \circ \phi \in \Symp^0_{ex}(M)$
in Definition \ref{dfn-SBM} was understood in \cite{GU17} in a more general context of unknotted Liouville embeddings.

\begin{remark} \label{rmk-fix-class} One can modify the notion of the symplectic Banach-Mazur distance $d_{\rm SBM}$ by considering Liouville morphisms coming from exact symplectomorphisms acting trivially on the fundamental group, or preserving a fixed free homotopy class $\alpha$. \end{remark}

\begin{exercise} Check that $d_{\SBM}$ is a pseudo-metric on $\mathcal S^{2n}$. In particular, use Exercise \ref{ex-BM-sym}
to show that $d_{\SBM}$ is symmetric.
\end{exercise}

\begin{exercise} Show that if $U, V \in {\mathcal S}^{2n}$ are exactly symplectomorphic, then
$d_{\SBM}(U, V) = 0$. An interesting open question is whether $d_{\SBM}$ is a genuine metric on the quotient space ${\mathcal S}^{2n}/\Symp_{ex}(M)$. \end{exercise}

\begin{exercise} Show that $d_{\SBM}(U, CU) =|\ln C|$ for any $U \in \mathcal S^{2n}$ and $C > 0$. This implies that, as a pseudo-metric space, $(\mathcal S^{2n}, d_{\SBM})$ has infinite diameter.
\end{exercise}

The following theorem is a stability result that involves star-shaped domains. We will prove it in Section \ref{sec-app}.

\begin{theorem} \label{tst} (Topological Stability Theorem) Let $U,V \in \mathcal S^{2n}$. Denote the barcodes of persistence modules $\mathbb{SH}_*(U)$ and $\mathbb{SH}_*(V)$ by $\mathcal B_*(U)$ and $\mathcal B_*(V)$ respectively. Then \[d_{bot}(\mathcal B_*(U), \mathcal B_*(V)) \leq d_{\SBM}(U,V).\] \end{theorem}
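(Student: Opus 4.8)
The plan is to mimic the proof of the Dynamical Stability Theorem (Theorem \ref{DST}) in the Liouville setting, replacing the energy estimate of Theorem \ref{SZ92-result} by a monotonicity-plus-rescaling argument for symplectic homology. The key point is that a $(U,V)$-admissible constant $C$ yields, via the Liouville morphisms, interleaving morphisms between the symplectic persistence modules $\mathbb{SH}_*(U)$ and $\mathbb{SH}_*(V)$ with interleaving parameter $\ln C$; then the Isometry Theorem (in its form for proper persistence modules, Exercise \ref{exr-normal-form-proper}) converts this into the desired bottleneck bound.

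First I would fix a $(U,V)$-admissible $C>1$, so there are $\phi,\psi\in\Symp_{ex}(M)$ with $\frac{1}{C}U\xhookrightarrow{\phi}V\xhookrightarrow{\psi}CU$ and $\psi\circ\phi\in\Symp^0_{ex}(M)$. For a non-degenerate star-shaped domain $W$ and $a>0$, recall $\SH_*^{\ln a}(W)=\SH_*^{(a,\infty)}(W)=\varprojlim_{H\in\mathcal H(W)}\HF_*^{(a,\infty)}(H)$. The construction of the Floer-Hofer maps associated to an inclusion of star-shaped domains (the ``transfer'' or ``restriction'' morphisms of \cite{floer1994symplectic, BPS03}) produces, out of $\frac{1}{C}U\xhookrightarrow{\phi}V$, a morphism of persistence modules $f\colon \mathbb{SH}_*(\tfrac{1}{C}U)\to\mathbb{SH}_*(V)$ compatible with the persistence structure; and similarly $V\xhookrightarrow{\psi}CU$ gives $g\colon\mathbb{SH}_*(V)\to\mathbb{SH}_*(CU)$. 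Next I would use the rescaling isomorphism \eqref{dfn-rescale}: $r_C\colon\SH_*^{t+\ln C}(CU)\simeq\SH_*^t(U)$, together with $r_{1/C}\colon\SH_*^{t-\ln C}(\tfrac1C U)\simeq\SH_*^t(U)$. Composing, $\mathbb{SH}_*(\tfrac1C U)=\mathbb{SH}_*(U)[\ln C]$ and $\mathbb{SH}_*(CU)=\mathbb{SH}_*(U)[-\ln C]$ as proper persistence modules (shift by $\pm\ln C$ in the logarithmic action scale). Under these identifications $f$ becomes a morphism $F\colon\mathbb{SH}_*(U)[\ln C]\to\mathbb{SH}_*(V)$, equivalently $\widetilde F\colon\mathbb{SH}_*(U)\to\mathbb{SH}_*(V)[-\ln C]$, and $g$ becomes $G\colon\mathbb{SH}_*(V)\to\mathbb{SH}_*(U)[-\ln C]$; by symmetry (Exercise \ref{ex-BM-sym}) the reverse composition $\phi(C)\circ\psi(C^{-1})$ likewise gives a morphism the other way. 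Setting $\delta=\ln C$, I then need to verify that $G[\delta/?]\circ\widetilde F$ equals the $2\delta$-shift morphism $\chi^{U}_{2\delta}$ of $\mathbb{SH}_*(U)$, and the symmetric identity for $\mathbb{SH}_*(V)$ — this is the functoriality/naturality of the Floer-Hofer transfer maps under composition of Liouville embeddings, i.e. the fact that the transfer map for $\frac1C U\hookrightarrow V\hookrightarrow CU$ is the composition of the two transfer maps, and that the transfer map for the inclusion $\frac1C U\hookrightarrow CU$ induced by the (isotopically trivial, thanks to $\psi\circ\phi\in\Symp^0_{ex}$) embedding $\psi\circ\phi$ is the canonical ``shift'' map $\SH_*^{(a/C^2,\infty)}\to\SH_*^{(a,\infty)}$ after rescaling.

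Granting the above, $\mathbb{SH}_*(U)$ and $\mathbb{SH}_*(V)$ are $\delta$-interleaved with $\delta=\ln C$, hence $d_{int}(\mathbb{SH}_*(U),\mathbb{SH}_*(V))\le\ln C$; taking the infimum over admissible $C$ gives $d_{int}(\mathbb{SH}_*(U),\mathbb{SH}_*(V))\le d_{\SBM}(U,V)$. By the Isometry Theorem for proper persistence modules (which the text asserts extends from the finite-type case, see Section \ref{sec-prop-pm} and Exercise \ref{exr-normal-form-proper}), $d_{bot}(\mathcal B_*(U),\mathcal B_*(V))=d_{int}(\mathbb{SH}_*(U),\mathbb{SH}_*(V))$, and the theorem follows.

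\textbf{Main obstacle.} The routine part is the interleaving bookkeeping; the genuinely substantial step is establishing the two commutativity relations $G\circ\widetilde F=\chi^{U}_{2\delta}$ and $F\circ\widetilde G=\chi^{V}_{2\delta}$. This requires (i) the naturality of Floer-Hofer transfer maps with respect to composition of Liouville embeddings, (ii) the identification of the transfer map associated to an ambient Hamiltonian-isotopically-trivial symplectomorphism with the identity (up to the canonical persistence/shift maps) — this is exactly where the hypothesis $\psi\circ\phi\in\Symp^0_{ex}(M)$, i.e. the unknottedness condition of \cite{GU17}, is used, via an argument in the spirit of Exercise \ref{ex-rescalings} allowing the isotopy to be performed within star-shaped domains — and (iii) the compatibility of all these maps with the inverse-limit structure defining $\SH_*^{(a,\infty)}$ and with the action-window persistence maps $\theta_{a,b}$. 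I would isolate (i)–(iii) as lemmas about filtered symplectic homology, proved by the standard continuation/monotone-homotopy machinery, and then the proof of Theorem \ref{tst} reduces to the formal interleaving argument above.
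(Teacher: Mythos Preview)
Your approach is essentially the paper's: fix a $(U,V)$-admissible $C$, use the functorial properties of filtered symplectic homology (your lemmas (i)--(iii) are exactly items (1), (3), and (2) of Theorem \ref{thm-fc-sh}) together with the rescaling isomorphisms \eqref{dfn-rescale} to exhibit a $\ln C$-interleaving between $\mathbb{SH}_*(U)$ and $\mathbb{SH}_*(V)$ (the paper works out the key commutative diagram in Example \ref{top-inter}), then invoke the Isometry Theorem and let $C\to d_{\SBM}(U,V)$. One correction: in this paper's conventions the transfer map is \emph{contravariant}---a Liouville morphism $U\hookrightarrow V$ induces $f^a_\phi:\SH_*^{(a,\infty)}(V)\to\SH_*^{(a,\infty)}(U)$---so your $f$ and $g$ point the wrong way; this is harmless for the interleaving (it just swaps the roles of $F$ and $G$), but you should reverse the arrows when writing it up.
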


\begin{ex} Consider the $4$-dimensional ellipsoids $E(1,8)$ and $E(2, 4)$. Observe that they have the same volume. By Example \ref{ex-ellipsoid}, at degree $* = 0$,
\[ \mathcal B_0(E(1,8)) = (-\infty, 0) \,\,\,\,\mbox{and}\,\,\,\, \mathcal B_0(E(2,4)) = (-\infty, \ln 2). \]
Therefore, Theorem \ref{tst} implies that
\[ d_{\rm SBM}(E(1,8), E(2,4)) \geq \ln 2. \]
In the same spirit, one can check that $d_{\rm SBM}(E(r, rN, ..., rN), B^{2n}(R)) \geq |\ln r - \ln R|$.
\end{ex}

\begin{ex} Let $g_s$ and $g_t$ be two metrics of revolution on the two-dimensional torus as in
Example \ref{ex-codisc}. Note that by Exercise \ref{exr: truncated_interleaving} and Figure \ref{par-barcode}
\[ d_{bot}\left(\mathcal B_*(U^*_{g_{s}}\mathbb T^2)_{\alpha}, \mathcal B_*(U^*_{g_{t}}\mathbb T^2)_{\alpha}\right) \geq d_{bot}(\calB^{(s)},\calB^{(t)}) \geq  \frac{1}{2} |s - t|_{\infty}. \]
Therefore, Theorem \ref{tst} implies that
\[ d_{\rm SBM}\left(U^*_{g_{s}} \mathbb T^2, U^*_{g_{t}} \mathbb T^2\right) \geq \frac{1}{2} |s - t|_{\infty}. \]
Interested readers can refer to a recent work \cite{SZ18} for a generalization of this result.
\end{ex}

\section{Functorial properties}

As sample applications of symplectic persistence modules, in Section \ref{sec-app} below we shall deduce a version of Gromov's famous  non-squeezing theorem, as well as establish Topological Stability Theorem, Theorem \ref{tst}. To this end, we discuss some useful functorial properties of filtered symplectic homology.

\begin{theorem} \label{thm-fc-sh} Let $(M, \omega, X)$ be a Liouville manifold, and $U, V$ are two non-degenerate star-shaped domains of $(M, \omega, X)$.
\begin{itemize}
\item[(1)]  Every Liouville morphism $\phi$ from $U$ to $V$ induces a $\Z_2$-linear map $f^a_{\phi}: \SH_*^{(a, \infty)}(V) \to \SH_*^{(a, \infty)}(U)$, for every $a>0$ and degree $* \in \Z$. Moreover, denote by $\theta^U$ and $\theta^V$ the structure maps of the symplectic persistence modules of $U$ and $V$, respectively. Then we have the following commutative diagram. For any $0<a\leq b$ and degree $* \in \Z$,
\[ \xymatrix{
\SH_*^{(a, \infty)}(V) \ar[r]^-{f^a_\phi} \ar[d]_-{\theta^{V}_{a,b}} & \SH_*^{(a, \infty)}(U) \ar[d]^-{\theta^{U}_{a,b}} \\
\SH_*^{(b, \infty)}(V) \ar[r]_-{f^b_\phi} & \SH_*^{(b, \infty)}(U).} \]
If $W$ is another non-degenerate star-shaped domain such that $U \xhookrightarrow{\phi} V \xhookrightarrow{\psi}W$, then for every $a>0$, $f^a_{\psi \circ \phi} = f^a_{\phi} \circ f^a_{\psi}$.

\item[(2)] Write $r_C$, $C >  1$, for the rescaling isomorphism from (\ref{dfn-rescale}) above.
Denote by $\theta$ the structure maps of the symplectic persistence modules. Set $i= f_{\mathds{1}}$ to be the morphism
induced by the identity map $\mathds{1}$ on $M$ (viewed as a Liouville morphism from $U$ to $CU$) as in item (1) above.
Then we have the following two commutative diagrams. For every $a>0$ and degree $* \in \Z$,
\[ \xymatrix{
\SH_{*}^{(a, \infty)}(U) \ar[rd]_{\theta_{a,Ca}} && \SH_{*}^{(Ca, \infty)}(C U) \ar[ll]_{r_C}^{\simeq} \ar[ld]^{i^{Ca}} \\
& \SH_{*}^{(Ca, \infty)}(U) & }\]
and
\[ \xymatrix{
\SH_{*}^{(a, \infty)}(U)  && \SH_{*}^{(Ca, \infty)}(C U) \ar[ll]_{r_C}^{\simeq}  \\
& \SH_{*}^{(a, \infty)}(CU) \ar[ru]_{\theta_{a,Ca}} \ar[lu]^{i^{a}} & }\;.\]
A similar conclusion can be drawn for $0< C <1$.

\item[(3)] Suppose $\overline{U} \subset V$, and $\phi$ is a Liouville morphism from $U$ to $V$. If $\phi \in \Symp^0_{ex}(M)$,
then $f_{\phi} = i$, where $f_{\phi}$ is the morphism induced by $\phi$ and $i= f_{\mathds{1}}$ is the morphism induced by the identity map $\mathds{1}$ on $M$ (viewed as a Liouville morphism from $U$ to $V$).

\end{itemize}
\end{theorem}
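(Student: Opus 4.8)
\textbf{Proof proposal for Theorem \ref{thm-fc-sh}(3).}

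The plan is to trace through the construction of the morphism $f_\phi$ on filtered symplectic homology and reduce the statement to a corresponding assertion at the level of the filtered Hamiltonian Floer homologies $\HF_*^{(a,\infty)}$, where it becomes a homotopy-invariance statement for continuation maps. First I would fix a non-degenerate star-shaped domain $U$ with $\overline U \subset V$ and a Liouville morphism $\phi \in \Symp^0_{ex}(M)$ from $U$ to $V$. Recall that $f_\phi$ is assembled by pushing forward autonomous Hamiltonians compactly supported in $U$ to ones compactly supported in $\phi(U) \subset V$, using that $\phi$ is an exact symplectomorphism, and then passing to the inverse limit over $\mathcal H(U)$, resp. $\mathcal H(V)$; the map $i = f_{\mathds 1}$ is built the same way but using the inclusion-induced monotone homotopies (since functions supported in $U$ are also supported in $V$, and one compares $H$ with a pushed-down function dominated by it). So the goal is to show these two families of maps on $\{\HF_*^{(a,\infty)}\}$ agree, compatibly with the inverse-limit structure maps $\sigma$.

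The key step is the following: since $\phi \in \Symp^0_{ex}(M)$, choose an isotopy $\{\phi_t\}_{t\in[0,1]}$ in $\Symp_{ex}(M)$ from $\mathds 1$ to $\phi$. By Exercise \ref{ex-rescalings} (with the roles of $U,V$ as here, using $\overline U \subset V$), after modifying the isotopy by a suitable time-dependent Liouville rescaling $C(t)$ we may assume $\psi_t(\overline U) \subset V$ for all $t$, while $\psi_0 = \mathds 1$ and $\psi_1 = \phi$. Thus for any fixed $H \in \mathcal H(U)$ we obtain a \emph{path} of Hamiltonians $H \circ \psi_t^{-1}$, each compactly supported in $\psi_t(U) \subset V$, interpolating between $H$ (itself) and $\phi_*H = H\circ\phi^{-1}$. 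One then invokes the standard fact in Floer theory that the continuation maps $\HF_*^{(a,\infty)}(H) \to \HF_*^{(a,\infty)}(H')$ associated to two homotopies of Hamiltonians which are themselves homotopic (through homotopies keeping the relevant action window and compact-support conditions) coincide; here the action filtration is respected because each $\psi_t$ is an \emph{exact} symplectomorphism, so conjugation by $\psi_t$ preserves the symplectic action functional up to the same constant and hence preserves the action windows uniformly along $t$ (this uses exactness crucially — a non-exact symplectomorphism would shift the action spectrum). Therefore the map on $\HF_*^{(a,\infty)}$ induced by $\phi$ equals the one induced by the identity, i.e. the monotone continuation map $\sigma_{\phi_*H,\,H}$ appearing in the definition of $i$.

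Finally I would check compatibility with the inverse limit: the equality of maps $\HF_*^{(a,\infty)}(H) \to \HF_*^{(a,\infty)}(\cdot)$ holds naturally in $H \in \mathcal H(U)$ (one runs the same isotopy argument for a monotone pair $H \preceq G$ and gets a commuting cube), so the induced maps on $\varprojlim$, and hence on $\SH_*^{(a,\infty)}$, agree; commutation with the structure maps $\theta_{a,b}$ is automatic since both $f_\phi$ and $i$ are morphisms of persistence modules by item (1). This yields $f_\phi = i$.

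The main obstacle I anticipate is being careful about the action-window bookkeeping: one must verify that along the modified isotopy $\{\psi_t\}$ the pushed-forward Hamiltonians stay in a controlled family so that the relevant continuation maps are all defined on the \emph{same} window $(a,\infty)$ and the homotopy-of-homotopies argument genuinely applies — this is where the hypothesis $\phi \in \Symp^0_{ex}(M)$ (connectedness to the identity \emph{inside} the exact symplectomorphism group) and Exercise \ref{ex-rescalings} do the real work. The rest is a routine, if somewhat lengthy, unwinding of definitions.
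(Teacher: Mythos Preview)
Your proposal is correct and follows essentially the same route the paper sketches: the paper only gives an outline for (3), pointing to Exercise \ref{ex-rescalings} (to keep the isotopy inside $V$) and to the homotopy-invariance of continuation maps (via \cite{Gut15}), which is precisely the skeleton you flesh out. Your handling of the action-window issue via exactness and of the passage to inverse limits is the expected unwinding of definitions.
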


\begin{remark} Similarly to Remark \ref{rmk-fix-class}, if a non-zero homotopy class $\alpha$ of the free loop space is fixed, then all the morphisms in Theorem \ref{thm-fc-sh} are required to fix this class. \end{remark}

Instead of presenting the complete proof of Theorem \ref{thm-fc-sh}, we only give the outline. Suppose that
$\phi$ is a Liouville morphism from $U$ to $V$. Item (1) in Theorem \ref{thm-fc-sh} comes from the observation that any $\phi$ induces a morphism  $\phi_*: \calH(U) \to \calH(V)$ of the function spaces, which is given by the push-forward by $\phi$. Note also that for every $F \in \calH(U)$ there exists $G \in \calH(V)$ such that $G \geq \phi_*(F)$. Thus $\phi_*$ induces a morphism $\tau_F: \SH_*^{(a, \infty)}(V) \to \HF_*^{(a, \infty)}(F)$ which is the composition of the following morphisms,
\[ \SH_*^{(a, \infty)}(V) \xrightarrow{\pi_G} \HF_*^{(a, \infty)}(G) \xrightarrow{\sigma_{G, \phi_*(F)}} \HF_*^{(a, \infty)}(\phi_*F) \simeq \HF_*^{(a, \infty)}(F) \]
where $\pi_G$ is the canonical projection (see Definition \ref{dfn-invlim}), and $\sigma_{G, \phi_*(F)}$ is the morphism induced by a monotone homotopy from $G$ to $\phi_*(F)$ (see (\ref{monotone-htp})). It is readily to check that for any $H \geq F$ in $\mathcal H(U)$, $\sigma_{H, F} \circ \tau_H  = \tau_F$. Hence, by Exercise \ref{invlim-unversal}, there exists a well-defined morphism from $\SH_*^{(a, \infty)}(V)$ to $\SH_*^{(a, \infty)}(U)$. Ideas of the proofs of (2) and (3) are similar, and they can be checked using the proof of Lemma 4.15 in \cite{Gut15} by carefully studying the moduli space of connecting trajectories. Additionally, the proof of (3) involves
Exercise \ref{ex-rescalings}.

\begin{ex} \label{ex-res} Let $0 < 1< R$. For brevity, denote by $B_1$ and $B_2$ the balls $B^{2n}(1)$ and $B^{2n}(R)$ of $\R^{2n}$, respectively. Note that $B_2 = R B_1$. For every $a>0$ and degree $* \in \Z$, denote by $\theta_a$ the structure morphism $\theta_{a/R, a}: \SH_{*}^{(a/R, \infty)}(B_1) \to \SH_{*}^{(a, \infty)}(B_1)$, and by $i^a$ the morphism $f^a_{\mathds{1}}: \SH_{*}^{(a, \infty)}(B_2) \to \SH_{*}^{(a, \infty)}(B_1)$ induced by the identity map $\mathds{1}$ on $\R^{2n}$ (viewed as a Liouville morphism from $B_1$ to $B_2$). Then item (2) in Theorem \ref{thm-fc-sh} implies the following commutative diagram,
\[ \xymatrix{
\SH_{*}^{(a/R, \infty)}(B_1) \ar[rd]_-{\theta_a} && \SH_{*}^{(a, \infty)}(B_2) \ar[ll]_{r_{1/R}}^{\simeq} \ar[ld]^{\,\,\,\,\,\,i^a} \\
& \SH_{*}^{(a, \infty)}(B_1) & }\]
where $r_{1/R}$ is the rescaling isomorphism from (\ref{dfn-rescale}).
\end{ex}

\begin{ex} \label{top-inter} Let $(M, \omega, X)$ be a Liouville manifold, and $U, V$ are two non-degenerate star-shaped domains of $(M, \omega, X)$. Suppose there exist Liouville morphisms
\begin{equation} \label{comp-liouville}
U/C \xhookrightarrow{\phi} V\xhookrightarrow{\psi} C U \,\,\,\,\mbox{for some $C > 1$}
\end{equation}
such that the composition $\psi \circ \phi$ lies in $\Symp^0_{ex}(M)$.  First, by item (1) in Theorem \ref{thm-fc-sh}, for any $a >0$, we have the following commutative diagram,
\[ \xymatrix{
\SH^{(a, \infty)}_{*}(C  U) \ar[rr]^{f^{a}_{\psi}} \ar@/_1.5pc/[rrrr]_{f^{a}_{\psi \circ \phi}} && \SH^{(a, \infty)}_{*}(V) \ar[rr]^-{f^{a}_{\phi}} && \SH^{(a, \infty)}_{*}(U/C)}. \]
Then, by item (3) in Theorem \ref{thm-fc-sh}, $f^{a}_{\psi \circ \phi} = i^{a}_{CU, U/C}$ where $i_{CU, U/C} = f_{\mathds{1}}$ is the morphism
induced by the identity map $\mathds{1}$ on $M$ (viewed as a Liouville morphism from $U/C$ to $CU$). Denote by $i_{CU, U}$ and $i_{U, U/C}$ the induced morphisms in the same manner. Last but not least, item (2) in Theorem \ref{thm-fc-sh} implies the following commutative diagram,
\[ \xymatrix{
\SH^{(a, \infty)}_{*}(C U) \ar[rr]^{i^{a}_{CU, U/C}} \ar[dd]^{\simeq}_{r_{1/C}} \ar[rd]^{i^{a}_{CU, U}} & & \SH^{(a, \infty)}_{*}(U/C)\\
& \SH^{(a, \infty)}_{*}(U) \ar[ru]^{i^{a}_{U, U/C}} \ar[rd]_{\theta_{a, Ca}} & \\
\SH^{(a/C, \infty)}_{*}(U) \ar[ru]_{\theta_{a/C, a}} \ar[rr]_{\theta_{a/C, Ca}} && \SH^{(Ca, \infty)}_{*}(U) \ar[uu]^{\simeq}_{r_{1/C}} } \]
where $r_{1/C}$ is the rescaling isomorphism from (\ref{dfn-rescale}).

Looking at the lower horizontal arrow of this diagram and passing to the logarithmic scale as in Definition \ref{dfn-spm}, we see that  Liouville morphisms as in (\ref{comp-liouville}) whose composition lies in
$\Symp^0_{ex}(M)$ induce the structure morphism $\theta_{a - \log C, a + \log C}$ of the symplectic persistence module $\mathbb{SH}_{*}(U)$.
\end{ex}

\section{Applications} \label{sec-app}
The first application of symplectic persistence modules is a proof of the following version of Gromov's non-squeezing theorem.

\begin{theorem} Let $B^{2n}(r)$ be a ball and $E(R, R_{\dagger}, ..., R_{\dagger})$ be an ellipsoid of $\R^{2n}$ (see Example \ref{ex-ellipsoid}). Assume $R_{\dagger} \geq R$. Suppose there exists a Liouville morphism from $B^{2n}(r)$ to $E(R, R_{\dagger}, ..., R_{\dagger})$. Then $R \geq r$. \end{theorem}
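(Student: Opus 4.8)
The strategy is to reduce the non-squeezing statement to a comparison of symplectic persistence modules via Theorem \ref{thm-fc-sh}, in the spirit of the computation of $\mathbb{SH}_0$ for ellipsoids in Example \ref{ex-ellipsoid}. First I would reduce to the case $r > R$ and aim for a contradiction. Suppose $\phi$ is a Liouville morphism from $B^{2n}(r)$ into $E:=E(R, R_{\dagger}, \dots, R_{\dagger})$. After rescaling by the Liouville flow $X^t_{rad}$ (using the rescaling isomorphism \eqref{dfn-rescale}), we may normalize, say so that $E = E(1, N, \dots, N)$ with $N = R_{\dagger}/R \geq 1$, and then $\phi$ embeds $\frac{1}{C} B^{2n}(1)$ into $E$ where $\frac{1}{C} = r/R > 1$; equivalently we have a Liouville morphism $\frac{1}{C}B^{2n}(1) \xhookrightarrow{\phi} E$ with $C < 1$. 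I would also observe that $E \subset B^{2n}(N)$ (since $\pi(|z_1|^2 + \dots + |z_n|^2/N) \le \pi(|z_1|^2/1 + \dots + |z_n|^2/N) < 1$ forces $\pi|z|^2 < N$), giving a second Liouville morphism $E \xhookrightarrow{\psi} N\cdot B^{2n}(1)$ induced by the identity, so that we are exactly in the setting of a chain $\frac{1}{C}B^{2n}(1) \xhookrightarrow{\phi} E \xhookrightarrow{\psi} C' B^{2n}(1)$ after absorbing constants. (One must check the unknottedness-type hypothesis $\psi\circ\phi \in \Symp^0_{ex}$ is not needed here — only item (1) of Theorem \ref{thm-fc-sh} and functoriality are used, plus the structure morphisms; if a version of item (3) is needed I would invoke that $\R^{2n}$ is simply connected and that compactly supported symplectomorphisms of $\R^{2n}$ are isotopic to the identity, so $\Symp_{ex} = \Symp^0_{ex}$ here, as in Example \ref{above-coor}.)

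The key computational input is the degree-$0$ part of the symplectic persistence module of a ball, namely $\mathbb{SH}_0(B^{2n}(a)) = \Z_2(-\infty, \ln a)$ from \eqref{sh-ell-0}–\eqref{sh-ball} together with the rescaling that shifts barcodes by $\ln a$. Applying the contravariant functor $\mathbb{SH}_0$ to the chain of Liouville morphisms, and chasing the commutative diagrams in Theorem \ref{thm-fc-sh}(1)–(2), I would obtain that the composition
\[
\mathbb{SH}_0\big(C' B^{2n}(1)\big) \longrightarrow \mathbb{SH}_0(E) \longrightarrow \mathbb{SH}_0\big(\tfrac{1}{C}B^{2n}(1)\big)
\]
factors the structure (shift) morphism of $\mathbb{SH}_0(B^{2n}(1))$ between the appropriate action windows. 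Concretely, at each filtration level $a$ the maps fit into a diagram exhibiting $\theta_{a,C'' a}$ of $\mathbb{SH}_0(B^{2n}(1))$ as passing through $\SH_0^{(\cdot,\infty)}(E)$. Now the point is that the interval module $\Z_2(-\infty, \ln a)$ has nonzero structure map $\theta_{s,t}$ precisely when $s,t < \ln a$; so the factorization forces the relevant structure morphism of the ball to be \emph{nonzero}, which in turn forces an inequality between the two radii — specifically $R \ge r$ after unwinding the normalization. The heart is therefore purely formal diagram-chasing once \eqref{sh-ball} is in hand.

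The main obstacle I anticipate is making the bookkeeping of scales precise: the rescaling isomorphisms $r_C$ shift the logarithmic filtration, the identity-induced morphisms $i^a$ and the structure morphisms $\theta_{a,b}$ must be threaded through Theorem \ref{thm-fc-sh}(2), and one must verify that the composite indeed equals a nonzero structure morphism of the ball's persistence module in a window where $\mathbb{SH}_0(B^{2n}(1)) = \Z_2(-\infty,0)$ is still nonzero. A clean way to organize this is to work entirely at degree $0$, pick a single convenient value of the action parameter (close to the relevant endpoint), and reduce everything to the statement: if $\frac1C B^{2n}(1)$ Liouville-embeds in $E$ which Liouville-embeds in $C' B^{2n}(1)$, then the structure map $\theta$ of $\mathbb{SH}_0(B^{2n}(1))$ between filtrations $-\ln C$ and $\ln C'$ is nonzero, hence $-\ln C < 0$, i.e. $C>1$ — contradicting $C<1$. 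The only genuinely non-formal ingredient, \eqref{sh-ball}, is to be proved separately in Section \ref{sec-comp}, so here I would simply cite it. A secondary technical point worth isolating as a lemma: the ellipsoid $E(R,R_\dagger,\dots,R_\dagger)$ is a non-degenerate star-shaped domain (its action spectrum is discrete — indeed $R\cdot\Z_{>0}$ up to scaling), which is needed for $\mathbb{SH}_*$ to be defined; this follows as in Example \ref{ex-ellipsoid}.
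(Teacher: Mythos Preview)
Your strategy---sandwich the ellipsoid between two balls, apply the contravariant functor $\mathbb{SH}_0$, and read off the inequality from the interval modules \eqref{sh-ell-0}, \eqref{sh-ball}---is exactly the paper's approach. The paper's setup differs in two small but clarifying ways: it normalises $r=1$ rather than $R=1$, and it chooses the outer ball $B_2=B^{2n}(R_\bullet)$ large enough to contain the \emph{support} of $\phi$, so that $\phi(B_2)=B_2$ and the identification $\SH(\phi(B_2))\simeq\SH(B_2)$ is tautological. This lets the whole diagram assemble from items (1)--(2) of Theorem \ref{thm-fc-sh} and Example \ref{ex-res} alone. Your appeal to ``$\Symp_{ex}=\Symp^0_{ex}$ on $\R^{2n}$'' is not what Example \ref{above-coor} says (that example only asserts exactness, not connectedness), so this shortcut is not justified as written; the paper's choice of outer ball sidesteps the issue.

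More seriously, the logical flow in your final paragraph is inverted. The chain of Liouville morphisms shows that the composition through $\SH_0(E)$ \emph{equals} the structure morphism of $\mathbb{SH}_0(B^{2n}(1))=\Z_2(-\infty,0)$; it does not ``force the structure morphism to be nonzero''. One rather \emph{knows} the structure morphism is nonzero on an appropriate window, hence the factor $f_\phi:\mathbb{SH}_0(E)\to\mathbb{SH}_0(\tfrac{1}{C}B)$ is a nonzero morphism $\Z_2(-\infty,0)\to\Z_2(-\infty,-\ln C)$ of interval modules, and Exercise \ref{exr: morphism_between_Q(segment)s} then forces $-\ln C\leq 0$, i.e.\ $C\geq 1$---the desired contradiction. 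In the paper's normalisation this is the nonzero map $i:\Z_2(-\infty,\ln R)\to\Z_2(-\infty,0)$, whence $\ln R\geq 0$ directly, with no contradiction argument needed. Your stated reasoning ``$\theta$ between filtrations $-\ln C$ and $\ln C'$ is nonzero, hence $-\ln C<0$'' does not work: with $C<1$ and $C'>1$ both of these filtration values are nonnegative, where $\Z_2(-\infty,0)$ already vanishes.
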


\begin{proof} Without loss of generality, assume $r = 1$. Denote by $\phi$ the compactly supported exact symplectomorphism on $\R^{2n}$ such that $\phi(\overline{B^{2n}(1)}) \subset E(R, R_{\dagger}, ..., R_{\dagger})$. Suppose that the support of $\phi$ is contained in a balls $B^{2n}(R_{\bullet})$ with a sufficiently large $R_{\bullet}$. For brevity, denote by $B_1$ and $B_2$ the balls $B^{2n}(1)$ and $B^{2n}(R_{\bullet})$, respectively. Then one has the following relation,
\[ \phi(B_1) \subset E(R, R_{\dagger}, ..., R_{\dagger}) \subset B_2 = \phi(B_2). \]
By item (1), item (2) in Theorem \ref{thm-fc-sh} and Example \ref{ex-res}, we have the following commutative diagram. For any $a>0$ and degree $* \in \Z$,
\[ \xymatrix{
\SH_*^{(a, \infty)}(\phi(B_2)) \ar[r] \ar[d]_{\simeq} & \SH_*^{(a, \infty)}(E(R, R_{\dagger}, ..., R_{\dagger})) \ar[r]^-{i^a} & \SH_*^{(a, \infty)}(\phi(B_1)) \ar[d]^{\simeq} \\
\SH^{(a, \infty)}_*(B_2) \ar[rr]^{i^{a}_{B_2, B_1}} \ar[rd]_{\simeq} & & \SH_*^{(a, \infty)}(B_1) \\
& \SH_*^{(a/R_{\bullet}, \infty)} (B_1) \ar[ru]_{\,\,\,\,\,\,\theta_{a/R_{\bullet}, a}} & }\]
where both $i$ and $i_{B_2, B_1}$ are morphisms induced by the identity map $\mathds{1}$ on $\R^{2n}$, viewed as Liouville morphisms from $\phi(B_1)$ to $E(R, R_{\dagger}, ..., R_{\dagger})$ and from $B_1$ to $B_2$, respectively. At degree $* = 0$, by Example \ref{ex-ellipsoid} and rescaling,
\[ \mathbb{SH}_0(E(R, R_{\dagger}, ..., R_{\dagger})) = \Z_2(-\infty, \ln R) \,\,\,\,\mbox{and}\,\,\,\, \mathbb{SH}_0(B_1) = \Z_2(-\infty, 0). \]
For any $a < R_{\bullet}$, $\theta_{a/R_{\bullet}, a} \neq 0$, which implies that $i^{a}_{B_2, B_1}\neq 0$. Then at degree $\ast = 0$, $i: \Z_2(-\infty, \ln R) \to \Z_2(-\infty, 0)$ is nonzero. Hence, by Exercise \ref{exr: morphism_between_Q(segment)s}, $\ln R \geq 0$, that is, $R \geq 1$.
\end{proof}

The second application is the proof of Topological Stability Theorem, Theorem \ref{tst}.

\begin{proof} [Proof of Theorem \ref{tst}] By Definition \ref{dfn-SBM}, for any $\ep>0$, there exists some $C > 1$, and $\frac{1}{C}  U \xhookrightarrow{\phi} V \xhookrightarrow{\psi} C  U$ such that $\psi \circ \phi \in \Symp^0_{ex}(M)$ and $\ln C \leq d_{\SBM}(U,V) + \ep$. Applying functor $\SH_{*}^{(a, \infty)}(\cdot)$ and Example \ref{top-inter}, one can show that $\mathbb{SH}_*(U)$ and $\mathbb{SH}_*(V)$ are $\ln C$-interleaved. We leave details as an exercise to the readers.

By Isometry Theorem,
\[ d_{bot}(\mathcal B_*(U), \mathcal B_*(V)) = d_{int}(\mathbb{SH}_{*}(U), \mathbb{SH}_{*}(V)) \leq \ln C \leq d_{\SBM}(U, V) + \ep. \]
Letting $\ep \to 0$, we get the conclusion.
\end{proof}

\begin{remark} With the help of Theorem \ref{tst}, we are able to answer some coarse geometry questions that are elaborated in Section \ref{sec-cg}. For instance, one of the main results in Usher's recent work \cite{Ush18} shows that when $(M, \omega, X) = (\R^{2n}, \omega_{std}, X_{rad})$ with $n \geq 2$, the pseudo-metric space $(\mathcal S^{2n}, d_{\SBM})$ admits a quasi-isometric embedding from $(\R^N, d_{\infty})$ to $(\mathcal S^{2n}, d_{\SBM})$ for any $N \in \mathbb N$. See \cite{SZ18} for a similar result but in the set-up of cotangent bundles. It will be interesting and worthwhile to explore more applications of Theorem \ref{tst} to coarse geometry of the space of symplectic embeddings. \end{remark}

\section{Computations} \label{sec-comp}

We end this chapter a computation of the filtered symplectic homology of the ellipsoid $E(1, N, ..., N)$. The result was stated in Example \ref{ex-ellipsoid}, and the idea of its computation is quite enlightening. We want to emphasize that the filtered symplectic homology of an ellipsoid is one of the very few cases that can be computed explicitly. As a comparison, more advanced work is required to attain the isomorphism in Example \ref{ex-codisc} between the filtered symplectic homology of a unit codisc bundle and the filtered loop space homology.

\medskip
The computation of $\SH_*^{(a, \infty)}(E(1, N, ..., N))$ is based on the following two principles.
\begin{itemize}
\item{(Principle One)} It is difficult to analyze the inverse system directly from its definition, see Definition \ref{dfn-invlim}. The following proposition is useful from the computational perspective, which reduces 
    computation of the inverse limit of our system to the one of a special sequence.
\begin{prop} \label{prop-il} Let $(A, \sigma)$ be an inverse system of vector spaces over $\Z_2$. A sequence $\{i_{\nu}\}_{\nu \in \N}$ is {\it downward exhausting} for $(A, \sigma)$ if for every $i_{\nu+1} \preceq i_{\nu}$, $\sigma_{i_{\nu+1}i_{\nu}}: A_{i_{\nu+1}} \to A_{\nu}$ is an isomorphism, and for every $i \in I$, there exists $\nu \in \N$ such that $i_{\nu} \preceq i$. Then for any downward exhausting sequence $\{i_{\nu}\}_{\nu \in \N}$ for $(A, \sigma)$, the canonical projection $\pi_{i_{\nu}}: \varprojlim_{i \in I} A \to A_{i_{\nu}}$ is an isomorphism.
\end{prop}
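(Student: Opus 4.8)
\textbf{Proof plan for Proposition \ref{prop-il}.}

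The plan is to verify the two defining properties of an inverse limit (see Exercise \ref{invlim-unversal}) directly for the vector space $A_{i_\nu}$ together with the morphisms one obtains by composing structure maps with the canonical projections. First I would fix a downward exhausting sequence $\{i_\nu\}_{\nu \in \N}$ and observe that, without loss of generality, we may reindex so that $i_1 \succeq i_2 \succeq i_3 \succeq \dots$; indeed, since $(I,\preceq)$ is downward directed and the sequence is cofinal, one can pass to a subsequence that is monotone, and the restrictions of the structure maps between consecutive terms remain isomorphisms because compositions of isomorphisms are isomorphisms. Thus I reduce to the case of a descending chain $A_{i_1} \xleftarrow{\sigma_{i_2 i_1}} A_{i_2} \xleftarrow{\sigma_{i_3 i_2}} A_{i_3} \xleftarrow{} \dots$ in which every arrow is an isomorphism, and whose index set is cofinal in $I$.

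Next I would construct the inverse to $\pi_{i_\nu}$ for a fixed $\nu$; by symmetry of the argument it suffices to treat $\nu = 1$. Given $x \in A_{i_1}$, define $x_{i_\mu} \in A_{i_\mu}$ for each $\mu$ by $x_{i_\mu} = (\sigma_{i_\mu i_1})^{-1}(x)$, using that $\sigma_{i_\mu i_1} = \sigma_{i_2 i_1} \circ \dots \circ \sigma_{i_\mu i_{\mu-1}}$ is an isomorphism (composition of the consecutive isomorphisms). For a general $i \in I$, pick $\mu$ with $i_\mu \preceq i$ and set $x_i = \sigma_{i_\mu i}(x_{i_\mu})$. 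I would then check this is well defined: if $i_\mu \preceq i$ and $i_\lambda \preceq i$, then choosing $\rho \geq \max(\mu,\lambda)$ (so $i_\rho \preceq i_\mu$ and $i_\rho \preceq i_\lambda$), the compatibility relations $\sigma_{i_\mu i} \circ \sigma_{i_\rho i_\mu} = \sigma_{i_\rho i} = \sigma_{i_\lambda i} \circ \sigma_{i_\rho i_\lambda}$ together with $\sigma_{i_\rho i_\mu}(x_{i_\mu}) = x_{i_\rho} = \sigma_{i_\rho i_\lambda}(x_{i_\lambda})$ (all following from $x_{i_\bullet} = (\sigma_{i_\bullet i_1})^{-1}(x)$) give $\sigma_{i_\mu i}(x_{i_\mu}) = \sigma_{i_\lambda i}(x_{i_\lambda})$. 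One then verifies $\{x_i\}_{i\in I}$ satisfies the compatibility condition $\sigma_{ij}(x_i) = x_j$ defining an element of $\varprojlim_{i\in I} A$, again by inserting a common lower bound $i_\rho$ below both $i$ and $j$ and using functoriality of $\sigma$. This yields a map $s: A_{i_1} \to \varprojlim_{i \in I} A$.

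Finally I would check that $s$ and $\pi_{i_1}$ are mutually inverse. The composite $\pi_{i_1} \circ s$ sends $x$ to $x_{i_1} = x$ by construction, so it is the identity. For $s \circ \pi_{i_1}$, start with an arbitrary compatible family $\{y_i\} \in \varprojlim_{i\in I} A$, let $x = y_{i_1}$, and form $\{x_i\} = s(x)$; I must show $x_i = y_i$ for all $i$. For $i = i_\mu$ this is immediate since $y$ is compatible: $\sigma_{i_\mu i_1}(y_{i_\mu}) = y_{i_1} = x$ forces $y_{i_\mu} = (\sigma_{i_\mu i_1})^{-1}(x) = x_{i_\mu}$. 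For general $i$, pick $i_\mu \preceq i$ and use $y_i = \sigma_{i_\mu i}(y_{i_\mu}) = \sigma_{i_\mu i}(x_{i_\mu}) = x_i$. Hence $\pi_{i_1}$ is an isomorphism, and the same argument applies verbatim to each $\pi_{i_\nu}$.

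I do not expect a serious obstacle here; the only mildly delicate point is the bookkeeping around well-definedness when passing through arbitrary indices $i \in I$ rather than just the cofinal sequence, which is handled uniformly by always descending to a common lower bound $i_\rho$ in the sequence and invoking the functoriality $\sigma_{ik} = \sigma_{jk}\circ\sigma_{ij}$. The reduction to a monotone cofinal subsequence at the start is what keeps the notation manageable.
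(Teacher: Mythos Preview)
Your argument is correct. The paper states Proposition~\ref{prop-il} without proof (it is invoked as a standard fact about inverse limits in Section~\ref{sec-comp}), so there is no paper proof to compare against; your direct construction of the inverse $s:A_{i_1}\to\varprojlim A$ via the cofinal sequence, together with the well-definedness and compatibility checks, is exactly the expected verification. One small remark: the phrasing in the paper almost certainly intends the sequence to already satisfy $i_{\nu+1}\preceq i_\nu$ for all $\nu$, so your preliminary reduction to a monotone subsequence is not strictly needed, but it is harmless and makes the argument robust to the ambiguous wording.
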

\item{(Principle Two)} Recall that for $H, G \in \mathcal H(U)$ with $H \preceq G$, a monotone homotopy from $H$ to $G$ induces a $\Z_2$-linear map $\sigma_{H,G}: \HF_*^{(a, \infty)}(H) \to \HF_*^{(a, \infty)}(G)$ for any $a>0$. In general, $\sigma_{H,G}$ is neither injective nor surjective. However, the following proposition says that under a certain condition, $\sigma_{H,G}$ will be an isomorphism.

\begin{prop} \label{prop-htp-iso} Let $U$ be a non-degenerate star-shaped domain of Liouville manifold $(M, \omega, X)$, $H \preceq G$ in $\mathcal H(U)$ and $a>0$. Suppose there exists a monotone homotopy $\{H_s\}_{s \in [0,1]}$ from $H$ to $G$ such that for any $s \in [0,1]$, $H_s$ does not have 1-periodic orbit with action equal to $a$, then $\sigma_{H,G}: \HF_*^{(a, \infty)}(H) \to \HF_*^{(a, \infty)}(G)$ is an isomorphism. \end{prop}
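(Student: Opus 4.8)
\textbf{Proof plan for Proposition \ref{prop-htp-iso}.}

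The plan is to reduce the statement to a standard invariance/continuation argument in filtered Floer theory, carefully tracking the action window $(a,\infty)$. First I would recall that a monotone homotopy $\{H_s\}_{s\in[0,1]}$ induces a continuation chain map which, on the full (unfiltered) complexes, is a chain homotopy equivalence; the content here is entirely about whether this map and its homotopy inverse respect the subcomplexes $\CF^{(a,\infty)}_*$, and whether the chain homotopies do as well. The key mechanism is the standard energy estimate: for a solution $u$ of the $s$-dependent Floer equation associated to the homotopy $\{H_s\}$, running from an orbit $x$ of $H=H_0$ to an orbit $y$ of $G=H_1$, one has
\[
\calA_G(y) - \calA_H(x) \;=\; -\int \left|\partial_s u\right|^2 \, ds\, dt \;+\; \int_0^1 \left( \partial_s H_s\right)(u)\, ds\, dt \;\leq\; 0\;,
\]
using monotonicity $\partial_s H_s \le 0$. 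Hence the continuation map is \emph{action-nonincreasing}, so it sends $\CF^{(a,\infty)}_*(H)$ into $\CF^{(a,\infty)}_*(G)$ and descends to $\sigma_{H,G}$ on homology. This much holds without the no-resonance hypothesis.

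Next I would exploit the hypothesis that no $H_s$ has a $1$-periodic orbit with action exactly $a$. The reverse monotone homotopy $\{H_{1-s}\}$ is \emph{not} monotone, so I instead use the standard trick: concatenate the forward homotopy with its reverse and deform the resulting homotopy-of-homotopies to the constant homotopy, obtaining chain homotopies $K$ with $\sigma_{G,H}^{rev}\circ\sigma_{H,G} \simeq \mathds{1}$ and likewise on the other side, where $\sigma^{rev}_{G,H}$ comes from a (non-monotone) homotopy passing through an auxiliary large Hamiltonian. The orbits of the interpolating Hamiltonians appearing in these homotopies-of-homotopies are, by a compactness/genericity argument, confined to a small neighborhood of the orbits of the original family $\{H_s\}$ in action; since the action values of those orbits avoid the level $a$ by hypothesis, for a sufficiently small deformation the entire two-parameter family avoids action $a$ as well. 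Consequently every continuation map and every chain homotopy in sight is action-nonincreasing \emph{and} can be chosen to not cross the threshold $a$, so they all restrict to the filtered subcomplexes $\CF^{(a,\infty)}_*$. The filtered chain homotopy equivalences then show $\sigma_{H,G}$ is an isomorphism on $\HF^{(a,\infty)}_*$.

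The main obstacle I anticipate is the bookkeeping in the previous paragraph: making precise the claim that the orbits (and relevant Floer trajectories) of the two-parameter families stay uniformly away from the action level $a$. Concretely, one needs (i) a Gromov-type compactness statement ensuring that for a small enough deformation the action spectra of all interpolating Hamiltonians lie within $\epsilon$ of $\bigcup_s \mathrm{Spec}(H_s)$, and (ii) the observation that $\bigcup_s \mathrm{Spec}(H_s)$ is a compact subset of $\R\setminus\{a\}$, so some $\epsilon$-neighborhood still avoids $a$. Granting the usual transversality (small generic perturbations, as flagged in the surrounding text) and the compactness of $[0,1]$, this is routine but is where all the care goes; everything else is the formal machinery of continuation maps in filtered Floer homology, which I would cite rather than reprove. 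As a sanity check I would verify the statement directly in the model case of Example \ref{ex-ellipsoid}, where the action spectrum is $\Z$ and the claim says that two compactly supported autonomous Hamiltonians in $\calH(U)$ comparable under $\preceq$ have isomorphic $\HF^{(a,\infty)}_*$ whenever one can interpolate without creating an orbit of action exactly $a$ — consistent with the fact that $\SH^{(a,\infty)}_*$ there jumps only at integer values of $\ln a$, i.e.\ at $a\in\exp(\Z)$.
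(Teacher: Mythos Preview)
The paper does not give a proof of this proposition; it is stated as ``Principle Two'' and used as a black box, with the surrounding discussion implicitly deferring to the literature (e.g.\ \cite{BPS03}, cited for monotone homotopies). So there is no paper proof to compare against, and I will comment on your argument directly.

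Your outline has a genuine gap. The forward direction is fine: the energy identity shows that the continuation map along the monotone homotopy is action-nonincreasing, hence preserves the subcomplex $\CF^{(-\infty,a]}_*$ and descends to the quotient $\CF^{(a,\infty)}_*$. The problem is the reverse map $\sigma^{rev}_{G,H}$. Along a non-monotone homotopy the same energy identity gives an action \emph{increase} bounded by $\int_0^1 \max_M(\partial_s H_s)_+\,ds$, which can be as large as $\|H-G\|_0$. Your sentence ``every continuation map and every chain homotopy in sight is action-nonincreasing'' is therefore false for the reverse direction, and the hypothesis that no $H_s$ has an orbit of action exactly $a$ does not by itself stop orbits from crossing the level $a$ under a large action shift. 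The ``auxiliary large Hamiltonian'' does not help here, and the compactness statement you flag in your obstacle paragraph (spectra of nearby Hamiltonians are close to $\bigcup_s\Spec(H_s)$) addresses a different issue than the size of the action shift of the continuation map.

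The missing idea is subdivision. By compactness of $[0,1]$ and the hypothesis, there is $\delta>0$ with $[a-\delta,a+\delta]\cap\Spec(H_s)=\emptyset$ for all $s$. Partition $[0,1]$ finely enough that on each subinterval $[s_i,s_{i+1}]$ one has $\|H_{s_i}-H_{s_{i+1}}\|_0<\delta$. Then the continuation maps in \emph{both} directions, and the chain homotopies relating their compositions to the identity, shift action by less than $\delta$; since there are no orbits with action in $(a-\delta,a]$, each of these maps sends $\CF^{(-\infty,a]}_*$ to itself and hence descends to $\CF^{(a,\infty)}_*$. This gives isomorphisms $\HF^{(a,\infty)}_*(H_{s_i})\simeq\HF^{(a,\infty)}_*(H_{s_{i+1}})$ whose composite is $\sigma_{H,G}$.

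A minor point: your sanity check is garbled. The action spectrum of $E(1,N,\dots,N)$ is $\Z$, so $\SH^{(a,\infty)}_*$ changes when $a$ crosses a positive integer, i.e.\ when $\ln a\in\ln\N$, not at integer values of $\ln a$.
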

\end{itemize}

Let $U$ be a non-degenerate star-shaped domain of $(\R^{2n}, \omega_{std}, X_{rad})$.  For any $a>0$, consider a sequence of functions in $\mathcal H(U)$ denoted by $h_a(U) = \{H_i\}_{i \in \N}$ and  shown in Figure \ref{downseq}.
\begin{figure}[H]
\centering
\includegraphics[width=6.5cm]{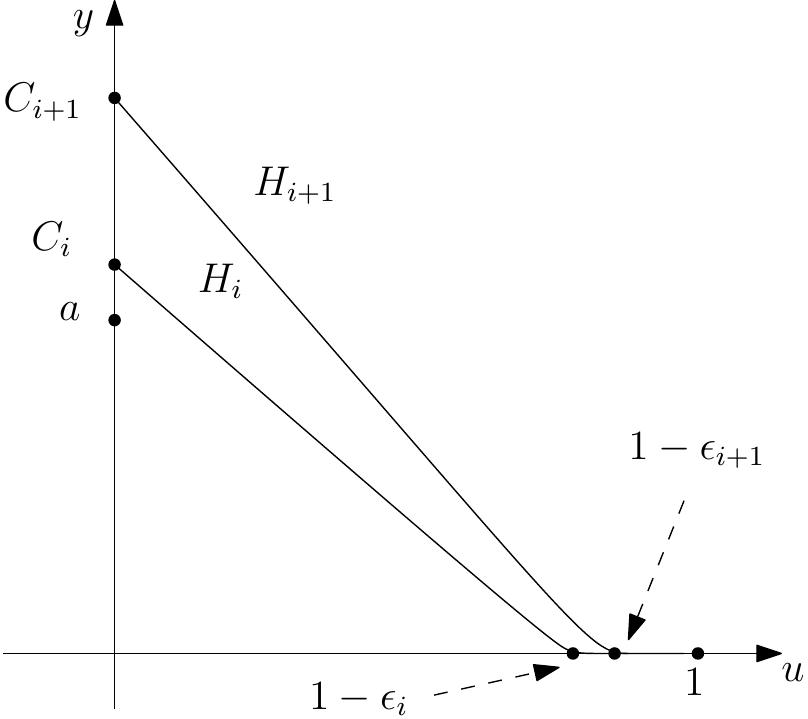}
\caption{An example of downward exhausting sequence}
 \label{downseq}
\end{figure}
Explicitly, $h_a(U) = \{H_i\}_{i \in \N}$ satisfies the following properties,
\begin{itemize}
\item{} $H_{i+1} \geq H_i$;
\item{} $H_1(0) >a$ and $H_i (0) = C_i$ where $C_i \to \infty$ as $i \to \infty$;
\item{} $H_i$ is identically zero for $u  \geq 1- \ep_i$ where $\ep_i \to 0$ as $i \to \infty$.
\end{itemize}

Since $C_i$ diverges, for any $H \in \mathcal H(U)$, there exists $i \in \N$ such that $H_i \preceq H$ for some $H_i \in h_a(U)$. Moreover, by Proposition \ref{prop-htp-iso}, there exists a monotone homotopy from $H_{i+1}$ to $H_i$ such that the induced map $\sigma_{H_{i+1}, H_{i}}: \HF_*^{(a, \infty)}(H_{i+1}) \to \HF_*^{(a, \infty)}(H_{i})$ is an isomorphism. In other words, for any $a>0$, $h_a(U)$ defines a downward exhausting sequence for inverse system $(\HF_*^{(a, \infty)}(H), \sigma_{H,G})$. Then Proposition \ref{prop-il} implies the following useful formula for the computation of filtered symplectic homology,
\begin{equation} \label{comp-fsh}
\SH^{(a, \infty)}_*(U)  =  \HF_*^{(a, \infty)}(H_{i})
\end{equation}
for any $H_i \in h_a(U)$.

\medskip

Let $U = E(1, N, ..., N)$. View each point $z = (z_1, ..., z_n) \in \C^n \setminus \{0\}$ as a pair $(x,u)$ where $u(z)= \pi \left( \frac{|z_1|^2}{1} + \frac{|z_2|^2}{N} + \ldots \frac{|z_n|^2}{N}\right)$ and $x(z) = \frac{z}{\sqrt{u(z)}}$. For any $a >0$, consider
\[ H_a (x,u) = \frac{-a- \delta}{1-\ep} u + (a + \delta) \]
for some small $\ep>0$, and smoothen $H_a(x,u)$ at  $u = 1-\ep$. Note that the function $u(z)$, and therefore $H_a(z)$, extend
smoothly to $z=0$.  Moreover, the value of $\delta$ is so small that the interval $(a, \frac{a+\delta}{1-\ep})$ contains no values of symplectic actions of 1-periodic Hamiltonian orbits of $H_a(x,u)$. Then, in the action window $(a, \infty)$, there exists only one 1-periodic orbit of $H_a(x,u)$ and it is the global maximum at $u=0$. Therefore, the filtered Floer homology is a 1-dimensional vector space over $\Z_2$ generated by this fixed point.

With a proper choice of $\ep$ and $\delta$, in the neighborhood of $u=0$,
\begin{align*}
H_a(z_1, ..., z_n) & = \frac{- a - \delta}{1-\ep}  \left( \pi |z_1|^2 + \pi \frac{|z_2|^2}{N} + \ldots \pi\frac{|z_n|^2}{N}\right) + (a + \delta)\\
& = \pi \ceil*{- a} |z_1|^2+ \sum_{i=2}^n \pi \ceil*{\frac{-a}{N}} |z_i|^2 + \sum_{i=1}^n \pi \alpha_i |z_i|^2 + (a + \delta)
\end{align*}
where each $\alpha_i \in (-1, 0)$. Then by the discussion in Section \ref{sec-CZ},
\begin{equation} \label{CZ}
{\rm Ind}(0) = - 2 \big|\ceil*{-a}\big| - 2(n-1)\bigg| \ceil*{\frac{-a}{N}} \bigg|.
\end{equation}
Therefore, we conclude
\begin{equation*}  \SH_*^{(a,\infty)}(E(1, N, ..., N)) = \Z_2 \,\,\,\,\mbox{when $* = - 2 \big|\ceil*{-a}\big| - 2(n-1)\bigg| \ceil*{\frac{-a}{N}} \bigg|$},
\end{equation*}
and the homologies vanish in all other degrees. Thus we proved (\ref{sh-ell}).

\backmatter

\bibliographystyle {plain}
\bibliography {biblio_listApr8-2019}

\begin{multicols}{2} 
\noindent\\ Leonid Polterovich \\ Faculty of Exact Sciences\\ School of Mathematical Sciences\\Tel Aviv University\\69978 Tel Aviv, Israel\\polterov@tauex.tau.ac.il

\columnbreak

\noindent \\
Daniel Rosen \\ Fakult\"{a}t f\"{u}r Mathematik\\ Universit\"{a}tstr. 150\\Ruhr-Universit\"{a}t Bochum\\44780 Bochum, Germany\\daniel.rosen@rub.de
\end{multicols}

\begin{multicols}{2} 
\noindent\\Karina Samvelyan \\ Faculty of Exact Sciences\\ School of Mathematical Sciences\\Tel Aviv University\\69978 Tel Aviv, Israel\\karina.samvelyan@gmail.com

\columnbreak

\noindent\\Jun Zhang \\ Faculty of Exact Sciences\\ School of Mathematical Sciences\\Tel Aviv University\\69978 Tel Aviv, Israel\\junzhang@mail.tau.ac.il
\end{multicols}

\end{document}